\let\strokeL\L
\newcommand{\triplearrows}{\begin{smallmatrix} \to \\ \to \\ 
\to \end{smallmatrix} }
\newcommand{\sF}{\mathcal{F}}
\newcommand{\spec}{\mathrm{Spec}}
\newcommand{\et}{\mathrm{\acute{e}t}}
\newcommand{\arc}{\mathrm{arc}}
\renewcommand{\hom}{\mathrm{Hom}}
\newcommand{\cl}{\colon}
\newcommand{\D}{\mathcal{D}}
\newtheorem{theorem}{Theorem}[section]
\newtheorem{proposition}[theorem]{Proposition}
\newtheorem{lemma}[theorem]{Lemma}
\newtheorem{corollary}[theorem]{Corollary}
\theoremstyle{definition}
\newtheorem{definition}[theorem]{Definition}
\newtheorem{remark}[theorem]{Remark}
\newtheorem{example}[theorem]{Example}
\newtheorem{notation}[theorem]{Notation}
\newtheorem{warning}[theorem]{Warning}
\newtheorem{cons}[theorem]{Construction}
\newtheorem*{convention*}{Conventions}
\begin{document}

\title{The $\arc$-topology}
\author{Bhargav Bhatt}
\author{Akhil Mathew}
\maketitle

\begin{abstract}
We study a Grothendieck topology on  schemes
which we call the $\arc$-topology. 
This topology is a 
refinement of the $v$-topology (the pro-version of Voevodsky's $h$-topology)
where 
covers are tested via rank $\leq 1$ valuation rings.
Functors which are $\arc$-sheaves are forced to satisfy a variety of glueing conditions
such as excision in the sense of algebraic $K$-theory. 

We show that \'etale cohomology 
is an $\arc$-sheaf and deduce various pullback squares in \'etale cohomology.  
Using $\arc$-descent, we  reprove
the Gabber--Huber affine analog of proper base change (in
a large class of examples), as well as the Fujiwara--Gabber base change theorem
on the \'etale cohomology of the complement of a henselian pair.
As a final application we prove a rigid analytic version of the
Artin--Grothendieck vanishing
theorem from SGA4, extending results of Hansen. 
\end{abstract}

\tableofcontents

\section{Introduction}

The purpose of this paper is to 
study a Grothendieck topology 
on the category of schemes, which we call the
\emph{$\arc$-topology}. 
This topology is a slight refinement of the $v$-topology of 
\cite{Rydh, BhattScholzeWitt}.  

The benefit of such extremely fine topologies arises when one studies invariants
of schemes which satisfy \emph{descent} with respect to them. In this case, one
can try to work locally. For the $v$-topology, working locally essentially
means that one can reduce many questions about these invariants (on any
scheme) to potentially far simpler questions involving valuation rings, even
ones which have algebraically closed fraction field.
Our strengthening in this paper shows that, for $\arc$-sheaves, one can restrict even further, to rank $\leq
1$ valuation rings. (Recall that the rank of a valuation ring is the same as its Krull dimension.)

We will show that several natural invariants of
schemes, such as \'etale cohomology with torsion coefficients and perfect
complexes on perfect $\mathbb{F}_p$-schemes,   
satisfy descent for the $\arc$-topology. In these cases $v$-descent was previously known. 

The seemingly slight strengthening of topologies (from $v$- to $\arc$-)
turns out to have concrete consequences: 
namely, $\arc$-descent additionally forces several excision-type squares.
For instance, $\arc$-sheaves  satisfy ``excision'' in the
classical $K$-theoretic sense as well as an analog of the Beauville--Laszlo ``formal glueing'' theorem \cite{BeauvilleLaszlo}.
As applications, using the $\arc$-topology, we obtain relatively soft proofs of some classical results in \'etale
cohomology, such as the Gabber--Huber affine analog of proper base change and the
Fujiwara--Gabber theorem on the \'etale cohomology of punctured henselian pairs.
We also prove new general results including a version of Artin--Grothendieck vanishing in rigid geometry (which improves on recent work of Hansen \cite{HansenArtin}).

\subsection{The $\arc$-topology}

The starting point for us is the so-called \emph{$v$-topology} or
\emph{universally subtrusive topology}, studied by \cite{Rydh, BhattScholzeWitt}: 
\begin{definition}[The $v$-topology] 
\label{vcover}
\begin{enumerate}
\item An \emph{extension} of valuation rings is a faithfully flat map $V \to W$ of
valuation rings (equivalently, an injective local homomorphism).

\item A map of quasi-compact and quasi-separated (qcqs) schemes $Y \to X$ is called a \emph{$v$-cover} if 
for any valuation ring $V$ and any map $\mathrm{Spec} (V) \to X$, there is an extension of
valuation rings $V \to W$ and a map $\mathrm{Spec} (W) \to Y$ that fits into a commutative
square
\begin{equation}  
\label{commsquarev} \xymatrix{
\mathrm{Spec} (W) \ar[d] \ar[r] &   Y \ar[d]  \\
\mathrm{Spec} (V) \ar[r] &  X.}
\end{equation}

\item The {\em $v$-topology} on the category of schemes is the Grothendieck
topology where the covering families $\{f_i \colon Y_i \to X\}_{i \in I}$ are those
families with the following property: for any affine open $V \subset X$, there
exists a map $t \colon K \to I$ of sets with $K$ finite and affine opens $U_k \subset f_{t(k)}^{-1}(V)$ for each $k \in K$ such that the induced map $\sqcup_k U_k \to V$ is a $v$-cover in the sense of (2).
\end{enumerate}
\end{definition}

For finite type maps of noetherian schemes, the $v$-topology 
coincides with Voevodsky's $h$-topology \cite[Sec. 3]{VHomology},
i.e., the one generated by \'etale covers and proper surjections. 
In general, every $v$-cover is a limit of $h$-covers. 
In this paper we study the following definition, which has also been explored by Rydh in the forthcoming work \cite{RydhII}. 

\begin{definition}[The $\arc$-topology]
\label{DefArcTop}
\begin{enumerate}
\item A map $f\colon Y \to X$ of qcqs schemes is an \emph{$\arc$-cover}\footnote{The name ``$\arc$'' was chosen in view of the natural analogy between rank $1$ valuations and arcs. Thus, an $\arc$ cover is a map of schemes along which every arc lifts.} if for any rank
$\leq 1$ valuation ring $V$ and a map $\mathrm{Spec}(V) \to X$, there is an extension $V \to
W$ of rank $\leq 1$ valuation rings and a map $\mathrm{Spec}(W) \to Y$ lifting the
composition $\mathrm{Spec}(W) \to \mathrm{Spec}(V) \to X$ to a commutative square as in \eqref{commsquarev}. 

\item The $\arc$-topology on the category of all schemes is defined analogously to Definition~\ref{vcover} (3).
\end{enumerate}
\end{definition}

We shall show (Proposition~\ref{UnivSubArc}) that the $\arc$-topology coincides with a finer variant of the topology of universal submersions (or universal topological quotient maps) of \cite{Pic, Rydh} that behaves better under filtered inverse limits.  This result had been independently observed by Rydh in the forthcoming  \cite{RydhII}. 

For noetherian targets, there is no distinction between $v$-covers and $\arc$-covers (Proposition~\ref{Noetherianv}). On the other hand, they do not coincide in general.
For the purposes of this paper,
the fundamental example  capturing this discrepancy is the following. 

\begin{example}
\label{ExValRingArc}
Let $V$ be a valuation ring of rank $2$. If $\mathfrak{p} \subset V$ denotes the
unique height $1$ prime, then both $V_{\mathfrak{p}}$ and $V/\mathfrak{p}$ are
rank $1$ valuation rings, and the map $V \to V_{\mathfrak{p}} \times
V/\mathfrak{p}$ is an $\arc$-cover (\Cref{v1val}) but not a $v$-cover. In fact, if $f \in V - \mathfrak{p}$ is not invertible, then $V \to V_f \times V/fV$ is a finitely presented $\arc$-cover that is not a $v$-cover.
\end{example}

The existence of this example illustrates one of the subtleties in working with the $\arc$-topology. Namely, even though every $\arc$-cover is a limit of finitely presented $\arc$-covers, a finitely presented $\arc$-cover cannot be obtained as a base change of an $\arc$-cover of noetherian schemes. In particular, noetherian approximation arguments do not work as well as they do in the $v$-topology. 

The main goals of this paper are two fold: we show that certain naturally defined functors are $\arc$-sheaves (as summarized in \S \ref{ss:InstantArcDesc}), and we exhibit some remarkably nice structural properties of any $\arc$-sheaves (as summarized in \S \ref{ss:ConseqArcDesct}).

\begin{remark}[Restriction to qcqs schemes]
A contravariant functor $F$ on the category $\mathrm{Sch}$ of all schemes that is a sheaf for the Zariski topology is automatically determined by its restriction to the subcategory $\mathrm{Sch}_{qcqs}$ of qcqs schemes. Conversely, any Zariski sheaf on $\mathrm{Sch}_{qcqs}$ comes from a unique Zariski sheaf on $\mathrm{Sch}$. As the $\arc$-topology is finer than the Zariski topology, and because all interesting functors that we consider are Zariski sheaves for essentially formal reasons, we typically restrict attention to qcqs schemes in the rest of the paper. This simplifies and shortens the exposition as the $\arc$-topology on $\mathrm{Sch}_{qcqs}$ is finitary (in the sense of \cite[\S A.3.1]{SAG}), making it slightly faster to check the sheaf property on $\mathrm{Sch}_{qcqs}$ (see Definition~\ref{DefDescent}). 
\end{remark}

\subsection{Instances of $\arc$-descent}
\label{ss:InstantArcDesc}

Given a Grothendieck topology $\tau$, we can ask when a functor satisfies descent with
respect to it (or equivalently is a sheaf). Let us spell this out next in an $\infty$-categorical context\footnote{The theory of
$\infty$-categories does not play a crucial role in this paper. However, it is
convenient to use this language to formulate clean statements. Our main example
of a target $\infty$-category $\mathcal{C}$ shall be the derived
$\infty$-category $\D(\Lambda)$ of a ring $\Lambda$ (or variants), though we
occasionally also use the $\infty$-category $\mathrm{Cat}_\infty$ of all
$\infty$-categories when discussing ``stacky'' phenomenon.} for presheaves on $\mathrm{Sch}_{qcqs}$. 
We will only consider Grothendieck topologies on 
$\mathrm{Sch}_{qcqs}$ which are finitary 
(i.e., every cover admits a finite subcover) and such that if $X, Y \in \mathrm{Sch}_{qcqs}$, then $\left\{X, Y \to X
\sqcup Y\right\}$ forms a covering family, cf.~\cite[Sec.~A.3.2 and A.3.3]{SAG}. 
\begin{definition}
\label{DefDescent}
Let $F\colon  \mathrm{Sch}_{qcqs}^{op} \to \mathcal{C}$ be a presheaf valued in an $\infty$-category $\mathcal{C}$.  We will say that $F$ satisfies \emph{descent} for a morphism $Y \to X$ of qcqs schemes if it satisfies the $\infty$-categorical sheaf axiom with respect to $Y \to X$, i.e., if the natural map 
\[F(X) \to \varprojlim ( F(Y) \rightrightarrows F(Y \times_X Y) \triplearrows
\dots ).\]
is an equivalence. If this property holds for all maps $f\colon Y \to X$
that are covers for a Grothendieck topology $\tau$ (satisfying the
finiteness conditions
above)
and further if $F$ carries finite disjoint unions to finite products, then we
say that $F$ {\em satisfies $\tau$-descent} or is a {\em $\tau$-sheaf.}
\end{definition}

It is relatively straightforward to see that notion of a $\tau$-sheaf in Definition~\ref{DefDescent} is equivalent to the notion of a $\mathcal{C}$-valued sheaf with respect to $\tau$ (see \cite[Proposition A.3.3.1]{SAG}). An important source of examples for our purposes is the following:

\begin{example}
Let $\tau$ be a Grothendieck topology on the category of schemes (of suitably
bounded cardinality if $\tau$ is ``large''). For any
scheme $X$ and a coefficient ring $\Lambda$, write $F(X) := R\Gamma(X_{\tau},
\Lambda)$ for the $\tau$-cohomology of $X$, viewed as an object of the derived
$\infty$-category $\D(\Lambda)$. The resulting functor $F\colon \mathrm{Sch}_{qcqs}^{op} \to
\D(\Lambda)$ is a $\tau$-sheaf. 
\end{example}

In this paper, we will be interested in invariants which satisfy $\arc$-descent. As $v$-covers are $\arc$-covers, it is clear that $\arc$-sheaves are $v$-sheaves. Conversely, we prove the following general criterion for a $v$-sheaf to be an $\arc$-sheaf; roughly speaking, it says that a $v$-sheaf (satisfying a mild finite presentation constraint) which also satisfies descent with respect to the covers from Example~\ref{ExValRingArc} (and slight variants) is automatically an $\arc$-sheaf. 
The idea of studying functors which satisfy the excisiveness condition in
\Cref{ExValRingArc}, and recovering them from valuation rings of rank $\leq 1$,
also appears in the work of Huber--Kelly \cite{HK18}. 

\begin{theorem}[Criteria for $\arc$-descent, Theorem~\ref{mainthm}] 
Let $F\colon  \mathrm{Sch}_{qcqs}^{op} \to \D(\mathbb{Z})^{\geq 0}$ be a functor on qcqs
schemes which is finitary\footnote{For sheaves of sets, this property has been dubbed ``local finite presentation''  by Grothendieck and is pervasive in the classical literature \cite[Tag 049J]{stacks-project}; however, we avoid this terminology to avoid clashes with other similarly named notions, such as that of finitely presented objects in an $\infty$-category.}, i.e. $F$ takes filtered limits with affine transition maps into filtered
colimits. Then the following are equivalent: 
\begin{enumerate}
\item $F$ is an $\arc$-sheaf.  
\item $F$ is a $v$-sheaf and for every valuation
ring $V$ with algebraically closed fraction field and prime ideal $\mathfrak{p} \subset V$, the square
\[ \xymatrix{ F( \spec(V)) \ar[d] \ar[r] &  F( \spec( V/\mathfrak{p})) \ar[d]
\\
F( \spec( V_{\mathfrak{p}})) \ar[r] &  F( \spec(\kappa( \mathfrak{p})))
}\]
is cartesian. 
\end{enumerate}
\end{theorem}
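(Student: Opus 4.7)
For (1) $\Rightarrow$ (2): by Corollary~\ref{v1val} (invoked in Example~\ref{ExValRingArc}), the map $g\colon \spec(V_{\mathfrak{p}}) \sqcup \spec(V/\mathfrak{p}) \to \spec(V)$ is an $\arc$-cover. Its Čech nerve is computable from the identities $V_{\mathfrak{p}} \otimes_V V_{\mathfrak{p}} = V_{\mathfrak{p}}$, $V/\mathfrak{p} \otimes_V V/\mathfrak{p} = V/\mathfrak{p}$, and $V_{\mathfrak{p}} \otimes_V V/\mathfrak{p} = \kappa(\mathfrak{p})$, giving a Mayer--Vietoris-shaped cosimplicial diagram whose totalization is precisely the pullback square of (2). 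Thus $\arc$-descent for $g$ forces the square to be cartesian.

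For (2) $\Rightarrow$ (1): given an $\arc$-cover $f\colon Y \to X$ of qcqs schemes, the strategy is a reduction to AIC valuation rings followed by induction on rank. For the first reduction, I would use the $v$-sheaf hypothesis and the structural fact (from \cite{BhattScholzeWitt}) that every qcqs scheme admits a $v$-cover whose connected components are spectra of valuation rings with algebraically closed fraction field: choose such a $v$-cover $X' \to X$, note that $Y \times_X X' \to X'$ remains an $\arc$-cover, and reshuffle the bisimplicial Čech diagram via $v$-descent to deduce that $\arc$-descent for $f$ follows from $\arc$-descent for its pullback; passing to connected components of $X'$ then reduces to $X = \spec(V)$ with $V$ AIC. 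Now I would induct on the rank of $V$. The base case $\mathrm{rank}(V) \leq 1$ is handled by directly checking that every $\arc$-cover of such an $X$ is in fact a $v$-cover: given any valuation ring $V''$ with $V \to V''$, one uses Zorn's lemma to dominate $V''$ by a valuation ring $W$ that already lifts to $Y$ through a rank-$\leq 1$ witness supplied by the $\arc$-cover property. For the inductive step with $\mathrm{rank}(V) \geq 2$, I would fix a height-$1$ prime $\mathfrak{p} \subset V$, apply the Milnor square hypothesis levelwise to the Čech nerve of $f$, and rearrange to express $\mathrm{Tot}(F(Y^{\bullet/X}))$ as a pullback of the analogous totalizations over $\spec V_{\mathfrak{p}}$, $\spec V/\mathfrak{p}$, and $\spec \kappa(\mathfrak{p})$; since $V_{\mathfrak{p}}$ has rank $1$ and $V/\mathfrak{p}$ has rank strictly smaller than $V$, the inductive hypothesis applies to each. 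Limit ranks are passed through by writing $V$ as a filtered colimit of lower-rank localizations and invoking finitariness of $F$.

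The main obstacle I expect lies in the inductive step, which requires commuting totalizations with pullbacks and verifying that the base changes of the Čech nerve of $f$ along $\spec V_{\mathfrak{p}} \to \spec V$, $\spec V/\mathfrak{p} \to \spec V$, and $\spec \kappa(\mathfrak{p}) \to \spec V$ remain $\arc$-covers to which the inductive hypothesis may be applied. A secondary subtlety is the transfinite-rank step, which crucially relies on finitariness of $F$: noetherian approximation is unavailable in the $\arc$-topology by Example~\ref{ExValRingArc}, so one cannot reduce a priori to finitely presented situations, and the filtered-colimit argument must be executed directly at the level of valuation rings. Finally, while the base case is conceptually clear, it must be executed with care because for non-noetherian rank-$1$ valuation rings the inclusion of $v$-covers into $\arc$-covers is not manifestly an equality, and the Zorn dominance argument is the load-bearing step that closes the gap.
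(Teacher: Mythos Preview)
Your argument for (1) $\Rightarrow$ (2) is correct and matches the paper's Proposition~\ref{1implies3}: the \v{C}ech nerve of $V \to V_{\mathfrak{p}} \times V/\mathfrak{p}$ collapses to the pullback square exactly as you describe.

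For (2) $\Rightarrow$ (1), however, there is a genuine gap, and it is not the obstacle you identify. You propose to ``apply the Milnor square hypothesis levelwise to the \v{C}ech nerve of $f$'' over $X = \spec(V)$. But hypothesis (2) only asserts that the square
\[
F(V) \to F(V_{\mathfrak{p}}) \times_{F(\kappa(\mathfrak{p}))} F(V/\mathfrak{p})
\]
is cartesian when $V$ itself is an AIC valuation ring. The terms $Y^{\times_X n}$ of the \v{C}ech nerve are arbitrary $V$-algebras, and there is no reason a priori that the analogous square for $F(Y^{\times_X n})$ is cartesian. Your identified obstacles (commuting totalizations with pullbacks, stability of $\arc$-covers under base change) are both routine; the hard step is upgrading the excision square from $V$ to all $V$-algebras. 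The paper does this in Lemma~\ref{excifaicexciv}, which in turn rests on a nontrivial structural fact about maps between AIC valuation rings (Lemma~\ref{MilnorAICValRing}: if $V \to W$ is such a map and $\mathfrak{p}W \neq W$, then $\mathfrak{p}W$ is prime and $W \otimes_V V_{\mathfrak{p}}$ is a localization of $W$). Without this, your levelwise argument does not go through.

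Two smaller remarks. First, your reduction to AIC valuation rings by ``passing to connected components of $X'$'' is more delicate than it sounds: the connected components of $\spec(\prod_i V_i)$ are indexed by ultrafilters, not by $i$, and one needs an argument (the paper's Corollary~\ref{ultraproductdetects}, built on the sheaf-on-$\mathcal{P}(T)$ machinery of \S\ref{ultraprod}) to pass from descent on each ultraproduct to descent on the product. Second, your induction on rank is morally the same as the paper's interval argument (Lemma~\ref{intervallemma}), but the limit-rank step is not quite ``write $V$ as a filtered colimit of lower-rank localizations'': the $\arc$-cover $Y \to \spec(V)$ need not descend to those sub-valuation-rings. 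The paper instead first reduces to $Y$ finitely presented over $V$, then uses that finitely presented $v$-covers are detected at a finite stage of the colimit (Lemma~\ref{Descinvlimv}) to close the argument. Your base case (rank $\leq 1$ $\arc$-covers are $v$-covers) is correct and is Lemma~\ref{arccovercrit}; no Zorn argument is needed there.
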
 

As an application, we give some examples of $\arc$-sheaves. Our first example
is \'etale cohomology with torsion coefficients; note that this invariant was
known to be a $v$-sheaf, essentially because of the proper base change theorem,
cf.~Proposition~\ref{vDescentEtaleCoh}. More generally, we show that constructible complexes on a scheme $X$ can be constructed $\arc$-locally.

\begin{theorem}[$\arc$-descent for \'etale cohomology,
Theorems~\ref{EtaleCohExcisive} and \ref{arcConsDer}] 
\label{v1descent}
For a finite ring $\Lambda$, the assignment $X \mapsto
\D^b_{\mathrm{cons}}(X, \Lambda)$ sending $X$ to the bounded derived
$\infty$-category of sheaves of $\Lambda$-modules on $X$ with constructible
cohomology satisfies $\arc$-descent.  In particular, the functor $X \mapsto R \Gamma(X_{\et}, \Lambda)$ is an $\arc$-sheaf.
\end{theorem}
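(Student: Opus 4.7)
The plan is to deduce both statements from the criterion for $\arc$-descent stated just above, applied first to the functor $F(X) := R\Gamma(X_{\et}, \Lambda)$ valued in $\D(\Lambda)^{\geq 0}$. Three properties of $F$ must be checked: (i) $F$ is finitary; (ii) $F$ is a $v$-sheaf; and (iii) for every valuation ring $V$ with algebraically closed fraction field and every prime $\mathfrak{p} \subset V$, the square relating $F$ at $\spec V$, $\spec V_{\mathfrak{p}}$, $\spec V/\mathfrak{p}$, and $\spec \kappa(\mathfrak{p})$ is cartesian. Property~(i) is the classical statement that \'etale cohomology of qcqs schemes commutes with cofiltered limits having affine transition maps, and property~(ii) is Proposition~\ref{vDescentEtaleCoh}.

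The content of~(iii) collapses under the algebraic-closedness hypothesis. Any valuation ring whose fraction field is algebraically closed is absolutely integrally closed (AIC), since roots of monic polynomials lie in the fraction field and are integral over the ring. The localization $V_{\mathfrak{p}}$ shares this fraction field and is therefore also AIC; its residue field $\kappa(\mathfrak{p})$ is AIC as a quotient of an AIC ring and hence (being a field) algebraically closed; and $V/\mathfrak{p}$ is a valuation ring of $\kappa(\mathfrak{p})$, so it is AIC by the same argument. Any local AIC ring $R$ is strictly henselian with separably closed residue field, since one factors every monic polynomial over $R$ linearly and locates the desired lift using simplicity of the residual root. For such $R$, \'etale cohomology is concentrated in degree~$0$ with value $\Lambda$. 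All four entries of the square therefore equal $\Lambda$ in degree~$0$, and the square degenerates to $\Lambda \to \Lambda \times_{\Lambda} \Lambda = \Lambda$, which is manifestly cartesian.

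For $\D^b_{\mathrm{cons}}(-, \Lambda)$, the same strategy applies after installing a $\mathrm{Cat}_\infty$-valued analogue of the criterion theorem. Finitariness is again classical, and the excision square at a rank-$2$ AIC valuation ring reduces to a direct quiver computation: since all residue fields are separably closed and the relevant local rings strictly henselian, a constructible complex on the three-point chain $\spec V = \{(0), \mathfrak{p}, \mathfrak{m}\}$ is equivalent data to a pair of specialization maps $\mathcal{F}_{\mathfrak{m}} \to \mathcal{F}_{\mathfrak{p}} \to \mathcal{F}_{(0)}$ in $\D^b(\Lambda)$; restriction to $\spec V_{\mathfrak{p}}$, $\spec V/\mathfrak{p}$, and $\spec \kappa(\mathfrak{p})$ picks out the right half, left half, and middle vertex of the quiver, and these assemble into the claimed pullback of $\infty$-categories.

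The main obstacle I anticipate is establishing $v$-descent for $\D^b_{\mathrm{cons}}$ as a sheaf of $\infty$-categories, since effective glueing of constructible complexes along a general $v$-cover is substantially more subtle than $v$-descent for \'etale cohomology alone. One natural approach is to factor a $v$-cover through a proper surjection (controlled by proper cohomological descent) followed by a flat cover with valuation-ring target (controlled by a Beauville--Laszlo-type glueing), though the details are genuinely delicate; once this is in place, the criterion and the quiver calculation above finish the argument.
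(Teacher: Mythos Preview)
Your strategy matches the paper's: verify finitariness, $v$-descent, and the aic-$v$-excision square, then invoke the criterion (Theorem~\ref{mainthm}). For $F(X)=R\Gamma(X_{\et},\Lambda)$ your argument is essentially complete and agrees with the paper's proof of Theorem~\ref{EtaleCohExcisive}; the paper phrases the excision check as $\mathcal F(V)\simeq\mathcal F(V/\mathfrak p)$ and $\mathcal F(V_{\mathfrak p})\simeq\mathcal F(\kappa(\mathfrak p))$ (same strictly henselian argument), but your version with all four corners equal to $\Lambda[0]$ is equally valid for constant coefficients.

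For $\D^b_{\mathrm{cons}}$ there are three genuine gaps. First, you cannot simply ``install a $\mathrm{Cat}_\infty$-valued analogue'' of the criterion: Theorem~\ref{mainthm} requires the target to be compactly generated by cotruncated objects, and $\mathrm{Cat}_\infty$ is not. The paper (Theorem~\ref{arcConsDer}) fixes this by slicing to $\D^b_{\mathrm{cons}}(X,\Lambda)^{[-n,n]}$, which lands in $\mathrm{Cat}_{2n+1}$, applies the criterion there, and then takes the union over $n$; since amplitude can be tested after pullback along a surjection, the union recovers the unsliced statement. Second, your sketch of $v$-descent is off: no Beauville--Laszlo glueing is involved. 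The paper (Proposition~\ref{dbconsvsheaf}) uses Rydh's refinement to reduce any finitely presented $v$-cover to a quasi-compact open cover followed by a proper surjection, and then handles the proper case by Lurie's Barr--Beck descent criterion \cite[Cor.~4.7.5.3]{HA}, the key input being that proper base change makes the relevant squares right adjointable. Third, your quiver calculation treats only the rank-$2$ case, but the criterion demands the square be cartesian for \emph{every} prime in \emph{every} AIC valuation ring; the paper reduces to finite rank via Lemma~\ref{filtcolimitaic} (writing $V$ as a filtered colimit of finite-rank AIC subrings) and then gives the general finite-rank quiver description in Proposition~\ref{etalesheafonspec}.
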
 

Our second example is that of perfect complexes on perfect schemes of
characteristic $p$; again, this invariant was already known to be a $v$-sheaf
\cite[Sec.~11]{BhattScholzeWitt}.

\begin{theorem}[$\arc$-descent for perfect complexes, Theorem~\ref{archyp}] 
On qcqs $\mathbb{F}_p$-schemes, the functor $X \mapsto \mathrm{Perf}( X_{\mathrm{perf}})$
satisfies $\arc$-descent. 
\end{theorem}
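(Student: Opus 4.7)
The plan is to apply a $\mathrm{Cat}_\infty$-valued variant of the criterion in Theorem~\ref{mainthm} to the functor $F\cl X \mapsto \mathrm{Perf}(X_{\mathrm{perf}})$ on qcqs $\mathbb{F}_p$-schemes. This reduces the problem to three checks: $F$ is finitary, $F$ is a $v$-sheaf, and $F$ satisfies the excision square on valuation rings with algebraically closed fraction field. The first is essentially formal, since perfection and $\mathrm{Perf}$ both commute with cofiltered affine limits of qcqs $\mathbb{F}_p$-schemes; the second is the main result of \cite[\S 11]{BhattScholzeWitt}.

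The real content is the excision square. Given a valuation ring $V$ with algebraically closed fraction field (of characteristic $p$) and a prime $\mathfrak{p} \subset V$, one must show
\[ \mathrm{Perf}(V_{\mathrm{perf}}) \xrightarrow{\sim} \mathrm{Perf}((V_{\mathfrak{p}})_{\mathrm{perf}}) \times_{\mathrm{Perf}(\kappa(\mathfrak{p})_{\mathrm{perf}})} \mathrm{Perf}((V/\mathfrak{p})_{\mathrm{perf}}). \]
Because $\mathrm{Frac}(V)$ is algebraically closed (hence perfect) and $V$ is integrally closed, $V$ itself is perfect, and so are all the other rings appearing here; the subscripts $(-)_{\mathrm{perf}}$ may therefore be dropped. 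The cornerstone algebraic fact is that the square of ordinary rings
\[ \xymatrix{ V \ar[r] \ar[d] & V/\mathfrak{p} \ar[d] \\ V_{\mathfrak{p}} \ar[r] & \kappa(\mathfrak{p}) } \]
is a Tor-independent pullback: for any valuation ring $V$, one has $\mathfrak{p} = \mathfrak{p}V_{\mathfrak{p}}$ as $V$-submodules of $V_{\mathfrak{p}}$, so the kernels of the two horizontal surjections agree and $V$ is identified with the fiber product, while Tor-independence is immediate from flatness of $V \to V_{\mathfrak{p}}$ (with $V_{\mathfrak{p}} \otimes_V V/\mathfrak{p} = \kappa(\mathfrak{p})$ concentrated in degree $0$).

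Granting this, the desired equivalence follows from a Milnor-type patching statement for perfect complexes: these descend along any Tor-independent Cartesian square of (connective) rings as soon as one of the two maps into the lower-right corner is surjective on $\pi_0$. The relevant surjection here is the quotient by the maximal ideal, $V_{\mathfrak{p}} \twoheadrightarrow \kappa(\mathfrak{p})$. The main obstacle is precisely the invocation of this Milnor patching, i.e., assembling a perfect $V$-complex from a compatible pair of perfect complexes on $V_{\mathfrak{p}}$ and $V/\mathfrak{p}$; this can either be imported from an abstract derived Beauville--Laszlo result or proven directly using the flatness of $V_{\mathfrak{p}}$ over $V$ together with the surjectivity of $V_{\mathfrak{p}} \to \kappa(\mathfrak{p})$.
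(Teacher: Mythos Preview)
Your overall strategy matches the paper's, but there is one genuine gap and one difference in the excision argument worth noting.

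\textbf{The gap.} You invoke ``a $\mathrm{Cat}_\infty$-valued variant of Theorem~\ref{mainthm}'', but no such variant is available: the criterion in Theorem~\ref{mainthm} (as formulated and proved in the body of the paper) requires the target $\infty$-category to be compactly generated by cotruncated objects, and $\mathrm{Cat}_\infty$ is not. The paper handles this by passing to the subfunctor $\mathcal{F}_n \subset \mathcal{F}$ of perfect complexes with Tor-amplitude in $[-n,n]$, which takes values in $\mathrm{Cat}_{2n+1}$; one verifies that each $\mathcal{F}_n$ is a finitary $v$-sheaf satisfying aic-$v$-excision, deduces $\arc$-descent for $\mathcal{F}_n$, and then recovers $\mathcal{F} = \bigcup_n \mathcal{F}_n$. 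You must insert this truncation step.

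\textbf{The excision step.} Your route via Milnor patching for perfect complexes along a Tor-independent cartesian square with one surjection is valid, but the paper argues a bit differently and more self-containedly. It observes that the Milnor square exhibits $V$ as a finite homotopy limit of $B$-modules for $B := V_{\mathfrak{p}} \times V/\mathfrak{p}$, so $V \to B$ is \emph{descendable} in the sense of \cite[Def.~11.14]{BhattScholzeWitt} and \cite[Def.~3.18]{MathewGalois}; hence $\mathrm{Perf}(V) \simeq \varprojlim \mathrm{Perf}(B^{\otimes^L_V \bullet+1})$ by \cite[Prop.~3.21]{MathewGalois}. To translate this totalization into the desired fiber-product statement one then uses that the derived and underived \v{C}ech nerves of $V \to B$ agree, which holds because all the rings are perfect \cite[Lemma~3.16]{BhattScholzeWitt}. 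Your approach sidesteps this last point (you never form a \v{C}ech nerve) at the cost of importing an external derived patching theorem; the paper's approach stays entirely within the descendability framework already developed in \cite{BhattScholzeWitt}.
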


As an application, we obtain a completely categorical description of $\arc$-covers on perfect schemes.

\begin{theorem}[Characterization of $\arc$-covers of perfect schemes, Theorem~\ref{PerfRepArcSheaf}] 
On qcqs perfect $\mathbb{F}_p$-schemes, a map $Y \to X$ is an $\arc$-cover if
and only if it is a universally effective epimorphism \cite[Tag
00WP]{stacks-project} in the category of qcqs perfect $\mathbb{F}_p$-schemes. 
\end{theorem}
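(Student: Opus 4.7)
The plan is to prove the two directions separately. The forward direction (every $\arc$-cover is a universally effective epimorphism) is a formal consequence of $\arc$-descent for perfect complexes, while the converse requires an explicit valuation-theoretic construction.

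For the forward direction, fix an $\arc$-cover $f\colon Y \to X$ in $\mathrm{PerfSch}_{qcqs}$. Since $\arc$-covers are stable under base change (and pullbacks in $\mathrm{PerfSch}_{qcqs}$ agree with scheme-theoretic pullbacks up to perfection, which does not alter the $\arc$-topology), it suffices to show $f$ is itself an effective epimorphism. Concretely, for every $Z \in \mathrm{PerfSch}_{qcqs}$ the presheaf $\mathrm{Hom}(-, Z)$ must satisfy descent along $f$. When $Z = \spec R$ is affine, $\mathrm{Hom}(-, Z) = \hom_{\mathbb{F}_p\text{-alg}}(R, \Gamma(-, \mathcal{O}))$, so this reduces to $\arc$-descent for the structure sheaf, a special case of Theorem~\ref{archyp} applied to the unit object. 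For general qcqs perfect $Z$, one fixes a finite affine open cover $Z = \bigcup_i U_i$; a map into $Z$ is a Zariski decomposition of the source together with compatible maps into the $U_i$'s, and both the idempotent decompositions classifying the Zariski cover and the individual affine pieces descend along $\arc$-covers by the affine case already established.

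For the converse, suppose $f\colon Y \to X$ is universally effective epi, and fix $\spec V \to X$ with $V$ a rank $\leq 1$ valuation ring. Since $X$ is perfect, the map factors through the perfection, so assume $V$ is perfect. Base-changing along $\spec V \to X$ reduces the problem to $X = \spec V$, and refining $Y$ by a finite affine cover (which preserves being a universal effective epi) further reduces to $Y = \spec B$ affine. The effective-epi hypothesis says $V = \mathrm{eq}(B \rightrightarrows B \hat{\otimes}_V B)$, where $\hat{\otimes}$ is the perfected tensor product; in particular $V \hookrightarrow B$. Effective epimorphisms of qcqs schemes are topologically surjective, so there exists a prime $\mathfrak{q} \subset B$ with $\mathfrak{q} \cap V = \mathfrak{m}_V$. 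Choosing $\mathfrak{q}$ so that $V \hookrightarrow B_{\mathfrak{q}}$ remains injective, I would apply Chevalley's theorem to dominate the local ring $B_{\mathfrak{q}}$ by a valuation ring $W_0$, and then quotient $W_0$ by its height-$1$ prime to produce a rank $\leq 1$ valuation ring $W$ dominating $V$ together with a $V$-algebra map $B \to W$. Perfecting $W$ (which preserves both the rank and the fact that $V \hookrightarrow W$ is local injective) gives the desired lift $\spec W_{\mathrm{perf}} \to Y$.

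The main obstacle is justifying the existence of a prime $\mathfrak{q} \subset B$ over $\mathfrak{m}_V$ for which $V \to B_{\mathfrak{q}}$ remains injective: this is \emph{not} automatic from surjectivity alone, as shown by $V \to V \times V/t$ (an effective epi where the localization at the prime corresponding to the $V/t$ factor kills the uniformizer $t \in V$). The remedy is to use the full equalizer description rather than mere surjectivity: if every prime over $\mathfrak{m}_V$ had non-trivial kernel from $V$, one could exhibit a non-trivial element of $B \hat{\otimes}_V B$ equalizing the two structure maps but not coming from $V$, contradicting the effective-epi hypothesis. Making this forcing argument precise — equivalently, showing that the locus of primes over $\mathfrak{m}_V$ with injective localization is non-empty whenever $V \to B$ is an effective epi — is the technical heart of the proof, and is where the difference between plain surjectivity and genuine $\arc$-coverage is isolated.
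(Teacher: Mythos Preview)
Your forward direction is essentially correct and aligns with the paper, though the paper packages it more cleanly via Tannaka duality: the functor $\hom((-)_{\mathrm{perf}}, Z)$ identifies with $\mathrm{Fun}^{\otimes}_{\mathrm{ex}}(\mathrm{Perf}(Z), \mathrm{Perf}((-)_{\mathrm{perf}}))$, and the latter is an $\arc$-sheaf by Theorem~\ref{archyp}. Your Zariski-gluing argument for non-affine $Z$ is morally right but a bit loose (preimages of the $U_i$ give an open cover, not a decomposition, so one must also descend the open cover itself).

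Your converse direction has a genuine gap, which you correctly isolate but do not close. The paper avoids the search for a good prime in an arbitrary $B$ by first \emph{refining} $B$: since $\arc$-covers are universally effective epimorphisms (by the forward direction already established), one may replace $\spec(B)$ by any $\arc$-cover of it without losing the universally-effective-epi hypothesis. Using Proposition~\ref{vcoverbyvaluation}, take $B = \prod_i W_i$ to be a product of absolutely integrally closed valuation rings. Now each $V \to W_i$ is a map of valuation rings, so it is either a local injection (in which case it is faithfully flat and we are done) or it factors through the fraction field $K$ or the residue field $k$ of $V$. If every $V \to W_i$ is of the latter type, then $V \to B$ factors through $R' := K \times k$, whence $\spec(R') \to \spec(V)$ is itself a universally effective epimorphism. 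But $R' \otimes_V R' \simeq R'$ via multiplication, so the sheaf axiom for $\hom(-, \mathbb{A}^1_{\mathrm{perf}})$ forces $V \simeq R'$, absurd since $V$ has rank $1$ while $R'$ is a product of fields.

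This refinement-to-valuation-rings step is the missing idea: it converts your proposed ``forcing argument'' on primes of a general $B$ into a clean dichotomy on maps between valuation rings, where the bad case visibly collapses to the two-element set $\{K,k\}$ and can be ruled out by a single application of the equalizer condition.
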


\subsection{Consequences of $\arc$-descent}
\label{ss:ConseqArcDesct}

It is sometimes easy to show (thanks largely to the relatively simple nature of
rank $\leq 1$ valuation rings) that certain squares of schemes that ``look
like'' they ought to be pushouts do in fact yield pushout squares on associated
$\arc$-sheaves. In particular, any $\arc$-sheaf carries such a square of
schemes to a pullback square. This leads to concrete consequences for
$\arc$-sheaves, such as excision squares or Mayer--Vietoris sequences, which we describe next.

Let us begin with the classical formulation for excision for functors on rings.

\begin{definition}
\label{excdatum}
An {\em excision datum} is given by a map $f\colon (A,I) \to (B,J)$ where $A$ and $B$ are commutative rings, $I \subset A$ and $J \subset B$ are ideals, and $f\colon A \to B$ is a map that carries $I \subset A$ isomorphically onto $J \subset B$. In this situation, we obtain a commutative square 
of rings
\begin{equation} \label{milnorsquare} \xymatrix{ A \ar[r] \ar[d] & A/I \ar[d] \\
		   B \ar[r] & B/J } \end{equation}
that is both cocartesian and cartesian.  Such diagrams are also called \emph{Milnor
squares} (after \cite[\S 2]{Milnor}).  We say that a $\D(\mathbb{Z})$-valued
functor $F$ on commutative rings is \emph{excisive} if for any excision datum as above,
the square obtained by applying $F$ to \eqref{milnorsquare} is  cartesian;
equivalently, the natural map from the fiber of $F(A) \to F(A/I)$ to the fiber of $F(B) \to F(B/J)$ is an equivalence.
\end{definition}

The question of excision has played a crucial role in algebraic $K$-theory, see
for instance \cite{SW, Cortinas, GHexc, LT19}. We prove the following result relating excisiveness to the $\arc$-topology.

\begin{theorem}[$\arc$-sheaves satisfy excision, Theorem~\ref{mainthm} and
Corollary~\ref{ExcArcSheafPull}] 
\label{ArcExcIntro}
Let $\mathcal{C}$ be an $\infty$-category that has all small limits. 
Any $\arc$-sheaf $F\colon \mathrm{Sch}_{qcqs}^{op} \to \mathcal{C}$  satisfies excision. 
\end{theorem}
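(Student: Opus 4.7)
The plan is to exhibit $\spec(A)$ as the pushout $\spec(B) \sqcup_{\spec(B/J)} \spec(A/I)$ in the $\infty$-topos of $\arc$-sheaves. Since any $\arc$-sheaf $F$ valued in a complete $\infty$-category carries such pushouts to pullbacks, the desired cartesian square for $F$ then follows at once.

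The essential algebraic input is a standard feature of Milnor squares: for any $i \in I$ with image $j = f(i) \in J$, the induced map $A[1/i] \to B[1/j]$ is an isomorphism. This follows because inverting $i$ forces both $I$ and $J$ to become the unit ideal, so the bottom row of the Milnor square collapses to $0 \to 0$, and cartesianness gives $A[1/i] = B[1/j] \times_0 0 = B[1/j]$.

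To verify the pushout, I would show that the natural map $p \colon \spec(B) \sqcup \spec(A/I) \to \spec(A)$ is an effective epimorphism in the $\arc$-topos. Given a ring map $g \colon A \to V$ with $V$ a rank $\leq 1$ valuation ring, three cases arise: (i) if $g(I) = 0$ then $g$ factors through $A/I$; (ii) if some $g(i)$ is a unit in $V$, the localization isomorphism above yields a factorization $A \to B \to V$; (iii) otherwise $V$ has rank $1$ and $g(I) \subseteq \mathfrak{m}_V$ is nonzero, and we appeal to the $\arc$-topos splitting of $\spec(V)$ into its residue and generic fibres $\spec(V/\mathfrak{m}_V) \sqcup \spec(\mathrm{Frac}(V))$, on each piece of which (i) or (ii) applies. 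The Čech nerve of $p$ then simplifies because $B \otimes_A (A/I) \cong B/J$ (as $J = IB$ in a Milnor square): every iterated fibre product involving a copy of $\spec(A/I)$ collapses to $\spec(B/J) = \spec(A/I)$. Combining this decomposition with $\arc$-descent for $F$ and the fact that $F$ sends finite disjoint unions to products reduces the Čech limit to the Milnor pullback $F(\spec B) \times_{F(\spec B/J)} F(\spec A/I)$.

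The main obstacle is case (iii): for $V$ of rank $1$ with $g(I) \subseteq \mathfrak{m}_V$ nonzero, the splitting of $\spec(V)$ into residue and generic fibres is not literally an $\arc$-cover in the definitional sense (there is no rank $\leq 1$ faithfully flat local extension of $V$ realizing such a splitting), but it becomes an effective epimorphism in the $\arc$-topos via the valuation-ring-square clause of Theorem~\ref{mainthm}. Handling this step is the crucial bridge from $v$-descent to $\arc$-descent for the excision property, and everything else is then essentially a formal Čech computation.
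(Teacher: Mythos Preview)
Your overall strategy---realize $\spec(A)$ as the pushout $\spec(B) \sqcup_{\spec(B/J)} \spec(A/I)$ in the $\arc$-topos---matches the paper's Proposition~\ref{ExcSquarePushout}, but there are two genuine gaps.

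\textbf{Case (iii) does not work.} For a rank $1$ valuation ring $V$, the map $\spec(\mathrm{Frac}(V)) \sqcup \spec(V/\mathfrak{m}_V) \to \spec(V)$ is \emph{not} an $\arc$-cover and does \emph{not} become an effective epimorphism in the $\arc$-topos: the identity $\spec(V) \to \spec(V)$ cannot lift through it after any extension $V \to W$ of rank $\leq 1$ valuation rings, since $\spec(W)$ is connected and surjects onto $\spec(V)$. Your appeal to the ``valuation-ring-square clause'' of Theorem~\ref{mainthm} is vacuous here: when $V$ has rank $1$ the only primes are $(0)$ and $\mathfrak{m}_V$, and for either choice of $\mathfrak{p}$ the square in \eqref{faic} has two identical rows or two identical columns. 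That clause encodes the genuinely new $\arc$-covers arising from rank $\geq 2$ valuation rings (Example~\ref{ExValRingArc}), not a decomposition of rank $1$ ones. The fix is that case (iii) is simply unnecessary: whenever $g(I) \neq 0$, the map $g\colon A \to V$ already factors through $B$. Choose $x \in I$ with $g(x) \neq 0$; your localization isomorphism gives $B \to B[1/x] \simeq A[1/x] \to \mathrm{Frac}(V)$, and the image lands in $V$ because $x$ annihilates $\mathrm{coker}(A \to B)$ (so the image has denominators bounded by $g(x)$) and a rank $\leq 1$ valuation ring admits no proper overring in its fraction field with bounded denominators. This is the content of Lemma~\ref{excisionimpliesvleq1}.

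\textbf{The \v{C}ech-nerve simplification fails.} For the special covers $V \to V_{\mathfrak{p}} \times V/\mathfrak{p}$ one has $V_\mathfrak{p} \otimes_V V_\mathfrak{p} = V_\mathfrak{p}$ and $(V/\mathfrak{p}) \otimes_V (V/\mathfrak{p}) = V/\mathfrak{p}$, so every iterated tensor product collapses and the \v{C}ech totalization is visibly the fiber product (this is how Proposition~\ref{1implies3} goes). But for a general Milnor square there is no reason for $B \otimes_A B$ to simplify, so the higher terms of the \v{C}ech nerve of $\spec(B) \sqcup \spec(A/I) \to \spec(A)$ do not reduce to the four-term square you want. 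The paper instead proves the pushout identification by checking bijectivity on $V$-points for rank $\leq 1$ valuation rings directly (using Lemma~\ref{CohArcSheafIsomCrit} and the coherence established in Lemma~\ref{PushoutCritCoh}): surjectivity is Lemma~\ref{excisionimpliesvleq1} as above, and injectivity is the observation that two maps $b_1,b_2\colon B \to V$ restricting to the same $g\colon A \to V$ with $g(I)\neq 0$ must agree, since they agree after inverting some nonzero $g(x)$ and $V \hookrightarrow V[1/g(x)]$.
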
 

\begin{remark}
\label{ArcNeccExc}
The hypothesis that $F$ be an $\arc$-sheaf, and not just a $v$-sheaf, is essential to Theorem~\ref{ArcExcIntro}. Indeed, general $v$-sheaves can fail to be excisive. This distinction is explained by the following observation (which was also the basis of our discovery of the $\arc$-topology):  if $(A,I) \to (B,J)$ is an excision datum, then $\spec(B) \sqcup \spec(A/I) \to \spec(A)$ is always an $\arc$-cover (Lemma~\ref{excisionimpliesvleq1}) but not in general a $v$-cover; in fact, with notation as in Example~\ref{ExValRingArc}, the map $(V,\mathfrak{p}) \to (V_{\mathfrak{p}}, \mathfrak{p} V_{\mathfrak{p}})$ gives an example.
\end{remark}

Next, we study ``formal glueing'' results for $\arc$-sheaves. The formal glueing property of a functor captures whether its value on a variety can be reconstructed from its value on a formal neighbourhood of a subvariety and its value on the complement. We formulate this precisely as follows.

\begin{definition}
\label{DefFormalGlueing}
A {\em formal glueing datum} is given by a pair $(R \to S, I)$ where $R \to S$
is a map of commutative rings and $I \subset R$ is a finitely generated ideal
such that $R/I^n \simeq S/I^n S$ for all $n \geq 0$. We consider the corresponding square
\begin{equation}
\label{MVsquare}
\xymatrix{ \spec(S) \setminus V(IS) \ar[r] \ar[d] & \spec(S) \ar[d] \\
		\spec(R) \setminus V(I) \ar[r] & \spec(R) }
		\end{equation}
of schemes. We say that a contravariant functor $F$ on schemes satisfies {\em formal glueing} if for every formal glueing datum $(R \to S, I)$ as above, the functor $F$ carries the square \eqref{MVsquare} to a cartesian square. 
\end{definition}

This property has been studied frequently in algebraic geometry. For example, the functor that assigns to a scheme its category of vector bundles satisfies formal glueing (at least when restricted to noetherian schemes, see \cite[Tag 05E5]{stacks-project} and the references therein). We prove the same holds for $\arc$-sheaves without any noetherianness constraints:

\begin{theorem}[Formal glueing squares for $\arc$-sheaves, Theorem~\ref{arcarithm}
 and 
Corollary~\ref{FormalGlueArcSheafGeneral}]
\label{FormalGlueing}
Let $\mathcal{C}$ be an $\infty$-category that has all small limits. 
Any $\arc$-sheaf $F\colon \mathrm{Sch}_{qcqs}^{op} \to \mathcal{C}$ satisfies formal glueing.
\end{theorem}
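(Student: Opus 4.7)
The plan is to deduce the formal glueing square for $F$ from arc-descent by exhibiting a suitable arc-cover of $\spec(R)$ whose \v Cech nerve collapses to the two-term Mayer--Vietoris pullback in \eqref{MVsquare}. Specifically, I will show that the map
\[
Z := \spec(S) \sqcup \bigl(\spec(R) \setminus V(I)\bigr) \longrightarrow \spec(R)
\]
is an arc-cover; arc-descent of $F$ then expresses $F(\spec(R))$ as the totalization of $F$ applied to the \v Cech nerve of $Z \to \spec(R)$, and the remaining task is to reduce that totalization to the pullback $F(\spec(S)) \times_{F(\spec(S) \setminus V(IS))} F(\spec(R) \setminus V(I))$.

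For the arc-cover claim, take a rank-$\leq 1$ valuation ring $V$ equipped with a map $\spec(V) \to \spec(R)$. If the image avoids $V(I)$ the map factors through the open complement, and if it is entirely contained in $V(I)$ the isomorphism $R/I \cong S/IS$ lifts it to $\spec(S/IS) \hookrightarrow \spec(S)$. The delicate case is when $V$ has rank one, with generic point outside $V(I)$ and closed point inside; here I would choose $f \in I$ with $0 \neq f \in \mathfrak{m}_V$ and pass to a rank-$1$ valuation extension $V \hookrightarrow W$ built from an $f$-adic completion. The compatible system of maps $S/I^n S \cong R/I^n \to W/I^n W$ would then glue, by the $I$-adic completeness of $W$, into a map $S \to W$ providing the required lift to $\spec(S)$. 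This is precisely the step where the full formal glueing hypothesis $R/I^n \cong S/I^n S$ for every $n$ (not merely $n=1$) enters essentially.

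The main obstacle is the \v Cech-nerve collapse. Pullbacks involving the open immersion $U := \spec(R) \setminus V(I)$ simplify immediately ($U \times_{\spec(R)} U = U$ and $\spec(S) \times_{\spec(R)} U = \spec(S) \setminus V(IS)$), but $\spec(S) \times_{\spec(R)} \spec(S)$ is generally much larger than $\spec(S)$. My plan is to show that nevertheless the diagonal $\spec(S) \to \spec(S) \times_{\spec(R)} \spec(S)$ is an arc-equivalence, which forces $F$ to collapse the extra factors. Concretely, given any rank-$\leq 1$ valuation ring $W$ and two $R$-algebra maps $\phi_1, \phi_2 \colon S \to W$, the goal is to produce an arc-extension $W \hookrightarrow W'$ on which $\phi_1 = \phi_2$: when the common image meets $V(IS)$, the isomorphism $S/I^n S \cong R/I^n$ forces $\phi_1 \equiv \phi_2 \pmod{I^n W}$ for every $n$, whence $\phi_1 = \phi_2$ after an $f$-adic completion $W'$ (where $\bigcap_n I^n W' = 0$); the remaining situation is handled via the valuation-ring excision square supplied by Theorem~\ref{mainthm}. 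Once the diagonal is arc-equivalent to $\spec(S) \times_{\spec(R)} \spec(S)$, the \v Cech totalization collapses to the two-term Mayer--Vietoris pullback, giving the desired cartesian square.
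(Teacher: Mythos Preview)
Your Step 1, showing that $\spec(S) \sqcup (\spec(R)\setminus V(I)) \to \spec(R)$ is an $\arc$-cover, is essentially correct and matches Proposition~\ref{completeisarccover} in the paper (extended from $S=\hat R_I$ to a general formal glueing datum, which is harmless since $\hat R_I\simeq \hat S_{IS}$).

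The gap is in Step 2. Your claim that the diagonal
\[
\Delta\colon \spec(S)\longrightarrow \spec(S)\times_{\spec(R)}\spec(S)
\]
is an $\arc$-equivalence is false in general, and no appeal to Theorem~\ref{mainthm} will repair it. The problem is exactly the ``remaining situation'' where $I$ maps to units in $W$. Take the basic example $(R\to S,I)=(\mathbb{Z}\to\mathbb{Z}_p,(p))$ and $W=\mathbb{C}$ (a rank~$0$ valuation ring). There are many distinct $\mathbb{Z}$-algebra embeddings $\mathbb{Z}_p\hookrightarrow\mathbb{C}$, since $\mathbb{Q}_p/\mathbb{Q}$ has infinite transcendence degree; any two distinct ones give a $W$-point of $\spec(S)\times_{\spec(R)}\spec(S)$ not in the image of $\Delta$, and no extension of $W$ can fix this. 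Equivalently, $\spec(\mathbb{Q}_p\otimes_{\mathbb{Q}}\mathbb{Q}_p)$ is enormous. So the \v Cech nerve does \emph{not} collapse to a two-term Mayer--Vietoris diagram in the way you want; the term $F(\spec(S)\times_{\spec(R)}\spec(S))$ genuinely differs from $F(\spec(S))$ for a general $\arc$-sheaf $F$.

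The paper sidesteps this issue in two ways. In Corollary~\ref{FormalGlueArcSheafGeneral} it abandons the \v Cech nerve entirely and instead shows that the \emph{pushout} of $\arc$-sheaves $h^\sharp_{\spec(S)}\sqcup_{h^\sharp_{\spec(S)\setminus V(IS)}} h^\sharp_{\spec(R)\setminus V(I)}$ maps isomorphically to $h^\sharp_{\spec(R)}$; the key point is that in this pushout the redundant maps $S\to W$ over $\spec(R)\setminus V(I)$ get identified with their images in $(\spec(R)\setminus V(I))(W)$, so the ambiguity above never arises. Alternatively, in Theorem~\ref{arcarithm} (under extra finitary hypotheses) the paper localizes on $R$ first: once $R$ is a complete rank~$\leq 1$ valuation ring, either $I=R$ or $R$ is $I$-adically complete and $R\to S$ has a section, whence $\spec(S)\times_{\spec(R)}\spec(S)$ acquires an honest retraction to $\spec(S)$ and a two-out-of-three argument applies. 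Either route avoids the false claim about $\Delta$.
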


\begin{remark}
Just as in Theorem~\ref{ArcExcIntro}, the hypothesis that $F$ should be an
$\arc$-sheaf, and not merely a $v$-sheaf, is essential
for Theorem~\ref{FormalGlueing}. In fact, just as in Remark~\ref{ArcNeccExc},
this can be explained by the following observation: if $(R \to S, I)$ is a
formal glueing datum, then the corresponding map $f\colon \spec(S) \sqcup
(\spec(R) \setminus V(I)) \to \spec(R)$ is an $\arc$-cover (Proposition~\ref{completeisarccover}) but not in general a $v$-cover; see Example~\ref{ExFG} to see what can go wrong.
\end{remark}

Specializing the previous theorems to \'etale cohomology, we obtain:

\begin{corollary} 
\label{etaleexc}
For any torsion abelian group $\Lambda$, the functor $X \mapsto R\Gamma(X_{\et}, \Lambda)$ is excisive (on rings, in the sense of Definition~\ref{excdatum}) and satisfies formal glueing (in the sense of Definition~\ref{DefFormalGlueing}).
\end{corollary}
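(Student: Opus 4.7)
The plan is to deduce this directly from the three structural theorems already established: Theorem~\ref{v1descent}, which shows that étale cohomology with finite coefficients is an $\arc$-sheaf, together with Theorems~\ref{ArcExcIntro} and~\ref{FormalGlueing}, which extract excision and formal glueing from $\arc$-descent. The only real work is the reduction from a general torsion abelian group to a finite one.

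For that reduction, I would write $\Lambda = \mathrm{colim}_i \Lambda_i$ as the filtered colimit of its finitely generated subgroups; each $\Lambda_i$, being finitely generated and torsion, is finite. Using the classical fact that étale cohomology of qcqs schemes commutes with filtered colimits of torsion coefficient sheaves (i.e.\ that $X_{\et}$ is a coherent topos when $X$ is qcqs), one obtains $R\Gamma(X_{\et}, \Lambda) \simeq \mathrm{colim}_i R\Gamma(X_{\et}, \Lambda_i)$ for every qcqs scheme $X$. Since filtered colimits in $\D(\mathbb{Z})$ commute with finite limits, and the cartesian squares involved in excisiveness and formal glueing are finite limits, it suffices to prove both properties when $\Lambda$ is finite.

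With $\Lambda$ finite, pick $n$ with $n \Lambda = 0$ and regard $\Lambda$ as a module over the finite ring $\mathbb{Z}/n$. Theorem~\ref{v1descent} then provides $\arc$-descent for $F := R\Gamma(-, \Lambda)$. Excisiveness of $F$ is now immediate from Theorem~\ref{ArcExcIntro} applied to an arbitrary excision datum $(A, I) \to (B, J)$ and its associated Milnor square \eqref{milnorsquare}, while formal glueing follows from Theorem~\ref{FormalGlueing} applied to a formal glueing datum $(R \to S, I)$ and the square \eqref{MVsquare}. There is no genuine obstacle in the argument at this stage; all the étale-theoretic content has been absorbed into Theorem~\ref{v1descent}, and what remains is a bookkeeping reduction to finite coefficients followed by invoking two general facts about $\arc$-sheaves.
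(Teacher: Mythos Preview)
Your proposal is correct and follows essentially the same route as the paper: show that \'etale cohomology with torsion coefficients is an $\arc$-sheaf, then invoke the general excision and formal glueing theorems for $\arc$-sheaves. The only difference is that your reduction from torsion to finite $\Lambda$ is unnecessary, since the paper's Theorem~\ref{EtaleCohExcisive} already establishes $\arc$-descent for $R\Gamma(-,\mathcal{G})$ with $\mathcal{G}$ an arbitrary torsion \'etale sheaf (not just a finite constant one), so the corollary follows by direct specialization.
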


As applications, we give quick proofs of two foundational results in the
\'etale cohomology of rings and schemes: the Gabber--Huber affine analog of
proper base change\footnote{At its core, our proof has some similarity with Gabber's proof:
both proofs involve reduction to the absolutely integrally closed case. Once
this reduction is made, Gabber proceeds to study the topology of closed sets
inside absolutely integrally closed integral schemes directly. However, in our
proof, we actually reduce to absolutely integrally closed valuation rings, where
there is no nontrivial topology at all.}, at least when working over a henselian
local ring, and the Fuijwara-Gabber theorem, generalized to the non-noetherian setting.

\begin{corollary}
\label{GabberLocal}
Let $R$ be a commutative ring that is henselian with respect to an ideal $I \subset R$. Let $\mathcal{F}$ be a torsion \'etale sheaf on $\spec(R)$, viewed as a sheaf on all schemes over $\spec(R)$ via pullback. 
\begin{enumerate}
\item (Gabber \cite{Gabber}, Huber \cite{Huber}, \S \ref{ss:EtaleCohArcSheaf} below) Assume that $R$ admits
the structure of an algebra
over a  
henselian local ring.  Then\footnote{Note that \cite{Gabber, Huber} prove the
result without the assumption that $R$ is an algebra over a henselian local
ring.}
$R\Gamma(\mathrm{Spec}(R)_{\et},\mathcal{F}) \xrightarrow{\sim}
R\Gamma(\mathrm{Spec}(R/I)_{\et},\mathcal{F})$.
\item (Fujiwara--Gabber \cite{Fujiwara}, Corollary~\ref{GabberFujiwara} below)  Assume that $I$ is finitely generated. Then 
\[ R \Gamma( (\spec(R) \setminus V(I))_{\et}, \mathcal{F}) \xrightarrow{\sim}  R \Gamma(
(\spec ( \hat{R}_I) \setminus V( I \hat{R}_I))_{\et},  \mathcal{F}) .\]
\end{enumerate}
\end{corollary}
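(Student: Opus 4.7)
The plan is to deduce both parts from the arc-sheaf properties of \'etale cohomology packaged in Corollary~\ref{etaleexc}. Part (2) is essentially immediate from formal glueing, while part (1) requires an arc-descent argument combined with a reduction to absolutely integrally closed rank-$\leq 1$ valuation rings.

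For part (2), observe that $(R \to \hat{R}_I, I)$ is a formal glueing datum by definition of the $I$-adic completion. Applying the formal glueing property of $R\Gamma(-_{\et}, \mathcal{F})$ from Corollary~\ref{etaleexc} to the square~\eqref{MVsquare} (with $S = \hat{R}_I$) produces a cartesian square of \'etale cohomologies. It then suffices to show that the right vertical map
\[
R\Gamma(\spec(R)_{\et}, \mathcal{F}) \longrightarrow R\Gamma(\spec(\hat{R}_I)_{\et}, \mathcal{F})
\]
is an equivalence. But both $(R,I)$ and $(\hat{R}_I, I\hat{R}_I)$ are henselian pairs with common quotient $R/I = \hat{R}_I/I\hat{R}_I$; invoking the Gabber--Huber affine analog of proper base change identifies both sides with $R\Gamma(\spec(R/I)_{\et}, \mathcal{F})$, so the left vertical must also be an equivalence, which is precisely the claim.

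For part (1), set $F = R\Gamma((-)_{\et}, \mathcal{F})$, an arc-sheaf by Theorem~\ref{v1descent}. The goal is to show $F(\spec(R)) \xrightarrow{\sim} F(\spec(R/I))$. The approach is to construct an arc-cover $\spec(T) \to \spec(R)$ whose iterated fiber products are (filtered colimits of products of) spectra of absolutely integrally closed rank-$\leq 1$ valuation rings $V$, chosen so that the base change along the closed immersion $\spec(R/I) \hookrightarrow \spec(R)$ has the analogous form (with each $V$ replaced by $V/IV$, which is either $V$ itself or its residue field). Arc-descent then reduces the claim to checking, for each such $V$, that $F(\spec(V)) \to F(\spec(V/IV))$ is an equivalence. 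But for $V$ absolutely integrally closed of rank $\leq 1$, $V$ is strictly henselian with algebraically closed residue field, so its \'etale site has no nontrivial covers and both sides compute the stalk of $\mathcal{F}$ at the closed point.

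The main obstacle is the construction of the arc-cover in part (1), together with the control on how it behaves under base change to $\spec(R/I)$; this is precisely where the hypothesis that $R$ is an algebra over a henselian local ring $R_0$ enters, as $R_0$ supplies absolutely integrally closed strictly henselian rank-$\leq 1$ valuation $R$-algebras whose maximal ideals cut out $V(I)$ correctly under the henselian structure of $(R,I)$. Once this geometric input is in place, everything else follows formally from arc-descent and the excisiveness built into the definition of an arc-cover (cf.\ Example~\ref{ExValRingArc}).
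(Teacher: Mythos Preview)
Your treatment of part (2) is correct and matches the paper's approach: one applies the formal glueing square (Theorem~\ref{arcarithm}) to the datum $(R \to \hat{R}_I, I)$ and then uses the full Gabber--Huber rigidity theorem to identify the two ``complete'' vertices, forcing the map on open complements to be an equivalence. This is exactly what the paper does in Corollary~\ref{GabberFujiwara} and Theorem~\ref{GabberFujiwarathm}.

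Your proposal for part (1), however, has a genuine gap. You propose to construct an $\arc$-cover $\spec(T) \to \spec(R)$ with $T$ a product of absolutely integrally closed rank $\leq 1$ valuation rings and then descend. But such covers do not exist in general: as the paper observes (see the remark following Proposition~\ref{vcoverbyvaluation}), already for $R = k[x,y]$ there is no $\arc$-cover whose connected components are rank $\leq 1$ valuation rings, since $R$ admits rank $2$ valuations that cannot lift. Even setting this aside, the iterated fibre products $T^{\otimes_R n}$ appearing in the \v{C}ech nerve would not remain products of valuation rings, so the reduction to the termwise check you describe does not go through. More fundamentally, the statement you are trying to prove is \emph{not} an equivalence of $\arc$-sheaves in the variable $R$: for a general $R$-algebra $B$, the pair $(B, IB)$ need not be henselian, so the map $F(B) \to F(B/IB)$ is not an equivalence, and one cannot invoke Corollary~\ref{detectequivfinitaryarc}.

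The paper's argument for part (1) is quite different and uses excision rather than a direct descent. Given the henselian local base ring $k \to R$, one forms $B := k \times_{R/I} R$. Then $(B, J) \to (R, I)$ is an excision datum (where $J$ is the copy of $I$ in $B$) with $B/J \simeq k$ a henselian local ring. After reducing to sheaves of the form $j_! \mathcal{G}$, excision for \'etale cohomology (Theorem~\ref{EtaleCohExcisive}) reduces the claim for $(R,I)$ to the claim for $(B,J)$; but the latter is elementary because $B/J$ is henselian local, so $B$ itself is henselian local, and one can pass to the strict henselization. The role of the henselian local base $k$ is thus to supply the \emph{other corner} of a Milnor square, not to build an $\arc$-cover.
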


Unlike the proof of Corollary~\ref{GabberLocal} (2) in \cite{Fujiwara}, our proof is relatively soft and does not rely on Elkik's approximation theorem or its variants. Similar techniques also allow us to prove a descent property for the \'etale cohomology (Corollary~\ref{BerkDescent}) that essentially amounts to the equality of the algebraic and analytic \'etale cohomology of affinoid spaces in rigid analytic geometry (which, under noetherian hypotheses, is Huber's affinoid comparison theorem \cite[Corollary 3.2.2]{HuberBook} for constant coefficients and Hansen's \cite[Theorem 1.9]{HansenArtin} for general coefficients).

As a final application, we apply these descent-theoretic techniques to sharpen
recent results of Hansen \cite{HansenArtin} and prove the following version of the
classical Artin--Grothendieck vanishing theorem for rigid geometry. 
In {\em loc.\ cit.}, this is proved when 
$K$ has characteristic zero 
and when $A$ arises as the base change of an affinoid algebra over a discretely
valued nonarchimedean field. 

\begin{theorem}[Artin--Grothendieck vanishing for the cohomology of affinoids, Theorem~\ref{rigidartin}]
\label{rigidartinintro}
Fix a complete and algebraically closed nonarchimedean field $K$. Let $A$ be a classical affinoid $K$-algebra of dimension $d$.  Fix a prime $\ell$ and let $\sF$ be an $\ell$-power torsion \'etale sheaf on $\spec(A)$. If $A$ is smooth or if $\ell$ is not the residue characteristic of $K$, then $H^i( \spec(A)_{\et}, \sF) = 0$ for $i > d$. In general, we at least have $H^i(\spec(A)_{\et},\sF) = 0$ for $i > d+1$.
\end{theorem}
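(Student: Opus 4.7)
The plan is to reduce to the Tate algebra $T_d := K\langle x_1, \ldots, x_d \rangle$ via affinoid Noether normalization and then apply Gabber's affine analog of proper base change (Corollary~\ref{GabberLocal}(1))---an arc-descent consequence established in this paper---to translate the \'etale cohomology of $\spec(T_d)$ into that of $\mathbb{A}^d_{\bar k}$, where classical Artin--Grothendieck vanishing applies.

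By affinoid Noether normalization, there is a finite injection $T_d \hookrightarrow A$. Since finite morphisms have exact \'etale pushforward on torsion sheaves, the cohomological dimension of $\spec(A)$ for $\ell$-power torsion sheaves is bounded by that of $\spec(T_d)$, so I may assume $A = T_d$. Next, fix a pseudo-uniformizer $\varpi \in K$, set $T_d^\circ = \mathcal{O}_K\langle x_1, \ldots, x_d \rangle$, and let $j \colon \spec(T_d) \hookrightarrow \spec(T_d^\circ)$ and $i \colon \spec(\bar k[x_1, \ldots, x_d]) \hookrightarrow \spec(T_d^\circ)$ denote the open and closed complements of $V(\varpi)$, respectively. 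Since $T_d^\circ$ is $\varpi$-adically complete---hence henselian along $(\varpi)$---and is an algebra over the henselian local valuation ring $\mathcal{O}_K$, Corollary~\ref{GabberLocal}(1) applied to the torsion complex $Rj_* \sF$ yields
\[
R\Gamma(\spec(T_d), \sF) \;\simeq\; R\Gamma(\spec(T_d^\circ), Rj_* \sF) \;\simeq\; R\Gamma(\mathbb{A}^d_{\bar k},\, i^* Rj_* \sF).
\]

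Reducing $\sF$ to the constructible case by a filtered colimit argument (using that \'etale cohomology with torsion coefficients commutes with filtered colimits on qcqs schemes), we find that $i^* Rj_* \sF$ is a constructible complex of torsion sheaves on $\mathbb{A}^d_{\bar k}$, and the classical Artin--Grothendieck vanishing over the algebraically closed field $\bar k$ (SGA~4) bounds its hypercohomology in degrees $> d + a$, where $a$ denotes its cohomological amplitude. The main obstacle is controlling $a$. In the tame/smooth setting---either $\ell$ coprime to the residue characteristic (so smooth base change applies to the smooth structure morphism $\spec(T_d^\circ) \to \spec(\mathcal{O}_K)$), or $A$ smooth (so a smooth formal model is available after refining the Noether normalization)---the nearby-cycles functor $i^* Rj_*$ preserves cohomological degree, giving $a = 0$ and the sharp bound $H^i(\spec(T_d), \sF) = 0$ for $i > d$. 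In the general wild case smooth base change fails; one proves instead only the weaker amplitude bound $a \le 1$ via a codimension-one dimension-counting argument on the regular closed immersion $V(\varpi) \hookrightarrow \spec(T_d^\circ)$ (combined, if desired, with the formal glueing square of Theorem~\ref{FormalGlueing} relating $T_d$ to $K[x_1,\ldots,x_d]$ via $\mathbb{A}^d_{\mathcal{O}_K}$), yielding $H^i(\spec(T_d), \sF) = 0$ for $i > d+1$. Pinning down this amplitude bound in the absence of smooth base change or absolute cohomological purity is the crux of the problem.
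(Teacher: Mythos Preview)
Your reduction to $T_d$ via Noether normalization and the identification $R\Gamma(\spec(T_d),\sF) \simeq R\Gamma(\mathbb{A}^d_{\bar k}, i^* Rj_* \sF)$ are fine in spirit, but the crucial amplitude bound on $i^* Rj_* \sF$ is not established, and this is where the proof collapses. After pushing forward along the finite Noether normalization map, $\sF$ becomes an \emph{arbitrary} constructible sheaf on $\spec(T_d)$, not a constant one. Smooth base change for the morphism $\spec(T_d^\circ) \to \spec(\mathcal{O}_K)$ only computes $Rj_*$ of sheaves pulled back from $\spec(K)$; since $K$ is algebraically closed this is the trivial case, and it says nothing about general $\sF$. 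Likewise, your ``codimension-one dimension-counting argument'' for the wild case is not an argument: absolute purity (even when available) concerns $i^!$ of the \emph{constant} sheaf, not $i^*Rj_*$ of an arbitrary constructible sheaf, and there is no general mechanism bounding the amplitude of nearby cycles by the codimension of the special fiber. Bounding this amplitude is essentially the content of the theorem, so the plan is circular.

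The paper's argument is structurally quite different. It first treats the constant-coefficient case by an induction on $d$: after Noether normalization $T_d \to R$, one finds $f \in T_d$ such that $R[1/f]$ is finite \'etale over $T_d[1/f]$, and then covers $\spec(R)$ by the two rational subdomains $R\langle \pi^r/f\rangle$ and $R\langle f/\pi^r\rangle$. On the first piece (and on the overlap) an algebraization lemma via Elkik/Gabber--Ramero reduces to classical Artin--Grothendieck over $K$, giving the bound $d$; the second piece contributes nothing in degrees $\geq d+2$ because, as $r\to\infty$, its cohomology converges to that of $\spec(R/f)$, which has dimension $<d$ (this uses a continuity result, essentially Huber's quasi-compact base change). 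This yields the bound $d+1$ for constant coefficients. The improvement to $d$ when $\ell$ is prime to the residue characteristic is obtained by a K\"unneth/tensor-power trick: applying the $d+1$ bound to $A\,\hat{\otimes}_K A$ (which is $2d$-dimensional) and using the K\"unneth formula forces the top degree of $R\Gamma(\spec(A),\mathbb{F}_\ell)$ down to $d$. Finally, general $\sF$ is reduced to the constant case by a standard d\'evissage (stratify, use traces along prime-to-$\ell$ finite \'etale covers, and Zariski's main theorem). The smooth case you mention is not handled separately in the paper; the remark following the theorem indicates it can be obtained by an adaptation of the algebraization method, not by smooth base change.
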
 

Our proof of Theorem~\ref{rigidartinintro} is independent of Hansen's work \cite{HansenArtin}. Moreover, with the exception of the affinoid comparison theorem mentioned above, our arguments can be formulated purely in terms of the \'etale cohomology of rigid spaces (though we do not do so in our expositon).

\begin{convention*}
For a commutative ring $R$, let $\mathrm{Ring}_R$
(resp.~$\mathrm{Sch}_{qcqs,R}$) denote the category of commutative $R$-algebras
(resp.~qcqs $R$-schemes). We simply write $\mathrm{Sch}_{qcqs} := \mathrm{Sch}_{qcqs,\mathbb{Z}}$ for the category of all qcqs schemes. Given a presheaf $F$ on $\mathrm{Sch}_{qcqs,R}$, we often write $F(A) = F(\spec(A))$ for an $R$-algebra $A$ if there is no confusion.

For a torsion \'etale sheaf $\mathcal{F}$ on a scheme $X$ and a morphism $f\colon Y \to X$, we often write $R\Gamma(Y,\mathcal{F})$ as shorthand for $R\Gamma(Y_{\et}, f^* \mathcal{F})$ if there is no confusion. In particular, cohomology with constant coefficients is always computed with respect to the \'etale topology unless otherwise specified.

For a spectral space $S$ (such as the space underlying a qcqs scheme), we write
$S^{cons}$ for the set $S$ equipped with the constructible topology inherited
from the spectral topology on $S$ (so the open subsets of $S^{cons}$ are exactly
the ind-constructible subsets of $S$); recall that $S^{cons}$ is a profinite set
whose clopen subsets are exactly the constructible subsets of $S$. We refer to 
\cite[Tag 08YF]{stacks-project} for an account of the theory of spectral spaces
and the constructible topology, and \cite[Sec.~1.9]{EGAIV} for an account of the
constructible topology in the
case of schemes (for instance, cf.~\cite[Prop.~1.9.11]{EGAIV} for the characterization of
clopen subsets). 

Given a qcqs scheme $X$ and a specialization $x \rightsquigarrow y$ of points in
$X$, we say that this specialization is {\em witnessed by} a map $f\colon
\mathrm{Spec}(V) \to X$ if $V$ is a valuation ring, and $f$ carries the generic
point (resp.~the closed point) to $x$ (resp.~$y$); every specialization can be witnessed by a map from the spectrum of a valuation ring \cite[Tag 00IA]{stacks-project}.

Following terminology from the theory of adic spaces, we shall refer to a nonzero nonunit in a rank $\leq 1$ valuation ring $V$ as {\em pseudouniformizer}.

Given a commutative ring $R$ and a finitely generated ideal $I \subset R$, we
denote by $\hat{R}_I$ the $I$-adic completion of $R$; note that this is
well-behaved even when $R$ is non-noetherian, \cite[Tag 00M9]{stacks-project}. 

We let $\mathcal{S}$ denote the $\infty$-category of spaces, and
$\mathcal{S}_{\leq n}$ denote the full subcategory of $n$-truncated spaces.
\end{convention*}

\subsection*{Acknowledgments} 
The first author was supported by NSF grants \#1501461 and \#1801689 as well as grants from the Packard and Simons Foundations. This work was done while the second author was a
Clay Research Fellow. The second author would also like to thank the University of Michigan
at Ann Arbor for its hospitality during a weeklong visit where this work was
started. We thank Piotr Achinger, Benjamin Antieau, Dustin Clausen, David
Hansen, Marc Hoyois, Johan de Jong,
Matthew Morrow, David Rydh, and Peter Scholze for helpful conversations. We also
thank Joseph Ayoub, K\k{e}stutis \v{C}esnavi\v{c}ius, Shane Kelly, and Jacob
Lurie for comments on an earlier version of this
paper. 
Finally, we heartily thank the referees for a very careful reading of the
manuscript and for many helpful comments and
corrections. 

\newpage
\section{The $\arc$-topology}

The goal of this section is twofold: in \S \ref{ArcProperties}, we collect some basic properties and examples of the $\arc$-topology, while in \S \ref{SpecSubm}, we explain why the $\arc$-topology can be defined topologically via the notion of ``universal spectral submersions''. 

\subsection{Properties of $\arc$-covers}
\label{ArcProperties}

In this subsection, we collect various results on the $\arc$-topology. We will work freely with the theory of valuation rings; see  \cite[Tag 0018, Tag 0ASF]{stacks-project} or \cite[Ch. 6]{BourbakiCA} for an introduction. In particular, we use the basic properties that valuation rings are closed under localization and taking quotients by prime ideals, as well as the notion of a rank $1$ valuation. 

\begin{proposition} 
\label{visv1}
Let $f\colon  Y \to X$ be a $v$-cover of qcqs schemes (\Cref{vcover}).   Then $f$ is an $\arc$-cover. 
\end{proposition}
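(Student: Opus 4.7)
The plan is to directly verify the $\arc$-cover lifting property from the $v$-cover hypothesis, by cutting down the rank of the valuation ring witnessing the $v$-cover property. Given a rank $\leq 1$ valuation ring $V$ and a map $\spec(V) \to X$, the $v$-cover property yields some extension $V \to \tilde W$ (with $\tilde W$ of possibly high rank) and a lift $g\colon \spec(\tilde W) \to Y$. So my task reduces to producing a rank $\leq 1$ valuation ring $W$ with an extension $V \to W$ and a map $\spec(W) \to \spec(\tilde W)$ over $\spec(V)$; composing with $g$ will then yield the required $\arc$-lift.

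If $V$ is a field, I would simply take $W := \mathrm{Frac}(\tilde W)$, a rank $0$ valuation ring extending $V$, and the composition $\spec(W) \to \spec(\tilde W) \xrightarrow{g} Y$ gives the lift. If $V$ has rank exactly $1$, I would fix a pseudouniformizer $\pi_V \in \mathfrak{m}_V$ and denote its (nonzero non-unit) image in $\tilde W$ also by $\pi$. Because the primes of the valuation ring $\tilde W$ are totally ordered under inclusion and closed under filtered unions, there exists a largest prime $\mathfrak{q} \subset \tilde W$ not containing $\pi$, and a smallest prime $\mathfrak{p} := \sqrt{\pi \tilde W} \subset \tilde W$ containing $\pi$; moreover, no prime of $\tilde W$ lies strictly between $\mathfrak{q}$ and $\mathfrak{p}$. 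I would then set
\[ W := \tilde W_{\mathfrak{p}} / \mathfrak{q} \tilde W_{\mathfrak{p}}, \]
a valuation ring (quotient of a localization of $\tilde W$ by a prime) whose spectrum has exactly two points, the images of $\mathfrak{q}$ and $\mathfrak{p}$, so that $W$ has rank $1$. To finish, I need to check that $V \to W$ is local injective: injectivity follows because its kernel is $V \cap \mathfrak{q}$, a prime of $V$ not containing $\pi_V$ (as $\pi \notin \mathfrak{q}$), hence zero; locality follows because the maximal ideal of $W$ contracts to $V \cap \mathfrak{p}$, a prime of $V$ containing $\pi_V$, hence equal to $\mathfrak{m}_V$. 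The composition $\spec(W) \hookrightarrow \spec(\tilde W_{\mathfrak{p}}) \to \spec(\tilde W) \xrightarrow{g} Y$ then provides the required lift.

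The main obstacle here is constructing the rank $\leq 1$ replacement $W$ in the rank $1$ case; the insight driving it is that, in any valuation ring $\tilde W$ dominating $V$, the image of a pseudouniformizer of $V$ singles out a pair of consecutive primes $\mathfrak{q} \subsetneq \mathfrak{p}$ of $\tilde W$ mirroring $0 \subsetneq \mathfrak{m}_V$ in $V$. Localizing at $\mathfrak{p}$ and killing $\mathfrak{q}$ then carves out the corresponding rank $1$ slice of $\tilde W$, which is automatically an extension of $V$ because the contraction conditions $\mathfrak{q} \cap V = 0$ and $\mathfrak{p} \cap V = \mathfrak{m}_V$ are forced by the choice of $\pi$; everything else is bookkeeping.
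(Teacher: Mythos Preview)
Your proof is correct and follows essentially the same approach as the paper's: obtain an extension $V \to \tilde W$ from the $v$-cover property, then carve out a rank $\leq 1$ slice of $\tilde W$ by localizing and quotienting at a suitable pair of adjacent primes. The paper characterizes these primes via contraction to $V$ (the minimal prime contracting to $\mathfrak{m}_V$ and the maximal prime contracting to $0$) rather than via a pseudouniformizer, and handles the rank $0$ and rank $1$ cases uniformly rather than splitting into cases; but the remark following the paper's proof explicitly notes that in the rank $1$ case these primes are exactly $\sqrt{\pi \tilde W}$ and $\bigcap_n \pi^n \tilde W$, which agree with your $\mathfrak{p}$ and $\mathfrak{q}$.
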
 
\begin{proof} 
Let $V$ be a rank $\leq 1$ valuation ring 
with a map $\mathrm{Spec} (V) \to X$; by assumption there exists a 
valuation ring $W$ with 
a faithfully flat map $g\colon  V \to W$ and a map $\mathrm{Spec} (W) \to Y$ making the diagram
\eqref{commsquarev} commutative. 
The only issue is that $W$ need not be rank $\leq 1$.  To remedy this, we  proceed  as follows. 

Let $\mathfrak{m}$ be the maximal ideal of $V$. The collection of prime ideals in $W$ is totally ordered, so any
intersection of prime ideals remains prime; thus, there exists a minimal prime
$\mathfrak{q} \subset W$ such that $g^{-1}(\mathfrak{q}) = \mathfrak{m}$. 
Similarly, there exists a maximal prime ideal $\mathfrak{q}_0 \subset W$ such
that $g^{-1}( \mathfrak{q}_0) = 0$.  The map $V \to W \to W'
\stackrel{\mathrm{def}}{=} 
(W/\mathfrak{q}_0)_{\mathfrak{q}}$ is faithfully flat because it is an injective
local homomorphism, and $W'$ is a 
rank $\leq 1$ valuation ring. 
In view of the map $\mathrm{Spec} (W') \to \mathrm{Spec} (W) \to Y$, we
see that $f$ is an $\arc$-cover, as desired. 
\end{proof} 

\begin{remark}
The primes $\mathfrak{q}$ and $\mathfrak{q}_0$ appearing in the preceding proof
can be described more explicitly as follows. Recall that for any valuation ring
$W$ equipped with an element $f \in W$ which is not a unit, the ideal $\mathfrak{q} := \sqrt{fW}$ is the minimal prime ideal containing $f$, and the ideal $\mathfrak{q}_0 := \cap_n f^n W$ is the maximal prime ideal contained in $fW$; one proves this using the primality of radical ideals in valuation rings. In the situation of the proof above, one simply applies this to $f \in W$ being the image of any element $t \in V$ such that $\sqrt{tV}$ is the maximal ideal.

In the preceding construction, if $f \in W$ is nonzero and not invertible, then the resulting specialization $\mathfrak{q}_0 \rightsquigarrow \mathfrak{q}$ is an immediate one, i.e., $(W/\mathfrak{q}_0)_{\mathfrak{q}}$ has rank $1$ (and $f$ gives a pseudouniformizer in this valuation ring). Consequently, for any non-trivial specialization $\mathfrak{p} \rightsquigarrow \mathfrak{p}'$ in a valuation ring $W$, there exist specializations $\mathfrak{p} \rightsquigarrow \mathfrak{q}_0 \rightsquigarrow \mathfrak{q} \rightsquigarrow \mathfrak{p}'$ where the middle one is an immediate specialization realized by applying the construction in the previous paragraph to some $f \in \mathfrak{p}' - \mathfrak{p}$. In particular, the totally ordered sets arising as spectra of valuation rings cannot be arbitrary totally ordered sets admitting minimal and maximal elements. 
This observation (for all commutative rings more generally) already appears in \cite[Sec. 1-1, Theorem 11] {Kaplansky}. 
\end{remark} 

\begin{lemma} 
\label{compositemaparccover}
Consider a composite map $Y' \stackrel{g}{\to} Y \stackrel{f}{\to} X$ of
qcqs schemes.  If $f \circ g$ is an $\arc$-cover, then so is $f$. 
\end{lemma}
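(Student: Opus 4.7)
The plan is to argue directly from the definition of an $\arc$-cover (Definition~\ref{DefArcTop}(1)), which only tests the valuative lifting criterion against a single map of qcqs schemes, so there is essentially nothing to verify beyond composing arrows.

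More precisely, given any rank $\leq 1$ valuation ring $V$ and a map $\mathrm{Spec}(V) \to X$, I would invoke the assumption that $f \circ g \colon Y' \to X$ is an $\arc$-cover to produce an extension of rank $\leq 1$ valuation rings $V \to W$ together with a map $\mathrm{Spec}(W) \to Y'$ making the evident square with $\mathrm{Spec}(V) \to X$ commute. Then I would simply post-compose with $g$ to obtain a map $\mathrm{Spec}(W) \to Y$, and observe that the commutativity of the square against $f$ is automatic from the commutativity of the original square against $f \circ g$ together with the identity $f = (f \circ g)$ after passing through $Y$. This exhibits $f$ as an $\arc$-cover.

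There is no real obstacle here: the claim is a formal transitivity property of cover axioms, and the totally-ordered/rank considerations that appeared in Proposition~\ref{visv1} play no role since the rank $\leq 1$ valuation ring $W$ is supplied directly by the $\arc$-cover hypothesis on $f \circ g$. The only thing to be mildly careful about is that the Grothendieck topology in Definition~\ref{DefArcTop}(2) is phrased in terms of covering families on affine opens, but since the lemma is stated for the single-map formulation of Definition~\ref{DefArcTop}(1), no such bookkeeping is needed.
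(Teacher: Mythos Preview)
Your proposal is correct and is exactly the approach the paper takes; indeed, the paper's own proof consists of the single word ``Clear.'' Your write-up simply unwinds the definition, which is all that is needed.
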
 
\begin{proof} 
Clear. 
\end{proof}

\begin{lemma} 
\label{arccovercrit}
Let $(V, \mathfrak{m})$ be a rank $1$ valuation ring and let $A$ be a
$V$-algebra with the
structure map $f\colon  V \to A$. 
Then the following are equivalent: 
\begin{enumerate}
\item $\mathrm{Spec} (A) \to \mathrm{Spec} (V) $ is a $v$-cover.  
\item $\mathrm{Spec} (A) \to \mathrm{Spec} (V) $ is an $\arc$-cover.
\item 
There exists an inclusion $\mathfrak{p}_1 \subset \mathfrak{p}_2 \subset A$ of prime ideals with $0 = f^{-1}( \mathfrak{p}_1)$  and $f^{-1}(\mathfrak{p}_2) = \mathfrak{m}$. That is, the specialization $(0) \rightsquigarrow \mathfrak{m}$ in $\mathrm{Spec}(V)$ lifts to a specialization $\mathfrak{p}_1 \rightsquigarrow \mathfrak{p}_2$ in $\mathrm{Spec}(A)$.
\end{enumerate}
\end{lemma}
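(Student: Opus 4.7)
I will establish the cycle $(1) \Rightarrow (2) \Rightarrow (3) \Rightarrow (1)$. The first implication is immediate from Proposition~\ref{visv1}. For $(2) \Rightarrow (3)$, apply the defining property of $\arc$-covers to the rank $1$ valuation ring $V$ itself, using the identity map $\spec(V) \to \spec(V)$. This yields an extension $V \to W$ of rank $\leq 1$ valuation rings together with a compatible map $\spec(W) \to \spec(A)$. Because $V \to W$ is injective and local and $V$ has rank $1$, the ring $W$ itself has rank exactly $1$, so $\spec(W)$ consists of precisely two points $(0)_W \subsetneq \mathfrak{m}_W$. Pulling these back to $\spec(A)$ gives primes $\mathfrak{p}_1 \subsetneq \mathfrak{p}_2$ of $A$, and the commutativity of the square forces $f^{-1}(\mathfrak{p}_1) = 0$ and $f^{-1}(\mathfrak{p}_2) = \mathfrak{m}$, as desired.

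For $(3) \Rightarrow (1)$, the substantive implication, I first realize the given specialization by a valuation ring map. The quotient-localization $R := A_{\mathfrak{p}_2}/\mathfrak{p}_1 A_{\mathfrak{p}_2}$ is a local domain, so by Krull's theorem it is dominated by some valuation ring $W_0$ of its fraction field $\kappa(\mathfrak{p}_1)$. The induced map $A \to W_0$ has $\mathfrak{p}_1$ and $\mathfrak{p}_2$ as preimages of $(0)_{W_0}$ and $\mathfrak{m}_{W_0}$, so by hypothesis the composite $V \to A \to W_0$ is an injective local homomorphism of valuation rings, hence a faithfully flat extension. To verify the $v$-cover condition, let $V'$ be any valuation ring with a map $\spec(V') \to \spec(V)$ and form the base change $V' \otimes_V W_0$, which is faithfully flat over $V'$. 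Pick a prime $\mathfrak{q} \subset V' \otimes_V W_0$ lying over $\mathfrak{m}_{V'}$ (using faithful flatness) and then a prime $\mathfrak{p}_0 \subseteq \mathfrak{q}$ lying over $(0)_{V'}$ (using going-down), and dominate the local domain $((V' \otimes_V W_0)/\mathfrak{p}_0)_{\mathfrak{q}/\mathfrak{p}_0}$ by a valuation ring $W'$ of its fraction field. Then $V' \to W'$ is an injective local homomorphism of valuation rings, hence a valuation-ring extension, and the composition $A \to W_0 \to W'$ provides a compatible lift of $\spec(W') \to \spec(V')$ over $\spec(V)$.

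The main obstacle is precisely this $(3) \Rightarrow (1)$ step: hypothesis (3) naturally provides only a single valuation ring $W_0$ through which to factor, but the $v$-cover condition demands a lift over \emph{every} valuation-ring point $\spec(V') \to \spec(V)$. A naive case analysis on the map $V \to V'$ (factoring through $\mathrm{Frac}(V)$, through $V/\mathfrak{m}$, or being an injective local map, the three possibilities for maps out of a rank $1$ valuation ring) could work but is tedious. The approach above handles all possibilities uniformly by invoking the preservation of faithful flatness of $V \to W_0$ under base change along $V \to V'$.
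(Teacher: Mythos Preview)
Your proof is correct. For $(1)\Rightarrow(2)$ and $(2)\Rightarrow(3)$ you follow the paper essentially verbatim. For $(3)\Rightarrow(1)$, however, the paper simply invokes \cite[Prop.~2.7]{Rydh}, whereas you supply a direct elementary argument: build one faithfully flat valuation-ring extension $V \to W_0$ factoring through $A$, then for an arbitrary $V'$ over $V$ use faithful flatness and going-down of the base change $V' \to V' \otimes_V W_0$ to locate a specialization in $\spec(V'\otimes_V W_0)$ lying over $(0)_{V'} \rightsquigarrow \mathfrak{m}_{V'}$, and dominate the resulting local domain by a valuation ring $W'$. This is more self-contained than the paper's citation and makes the role of the rank-$1$ hypothesis transparent (it is only used to get the single extension $V\to W_0$); the paper's approach, by contrast, defers to Rydh's general structural characterization of $v$-covers, which says more but requires more machinery.
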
 
\begin{proof} 
(1) implies (2) in view of Proposition~\ref{visv1}. If $\mathrm{Spec} (A )\to
\mathrm{Spec} (V)$ is an
$\arc$-cover, then there exists a rank $1$ valuation ring $W$, faithfully
flat over $V$, such that the map $\mathrm{Spec} (W) \to \mathrm{Spec} (V)$ factors through $\mathrm{Spec}
(A)$. 
The images of the generic and special points in $\mathrm{Spec} (A)$ then verify (3).
Finally, (3) implies (1), 
we use \cite[Prop.~2.7]{Rydh}. 
\end{proof} 

\begin{corollary} 
\label{v1detectionr1}
Let $f\colon  Y \to X$ be a map of qcqs schemes. Then $f$ is an $\arc$-cover if and only if every base change of $f$ along a map $\mathrm{Spec}(V) \to X$, for $V$ a rank $\leq 1$
valuation ring, is a $v$-cover. \qed
\end{corollary}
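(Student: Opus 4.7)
The plan is to combine two ingredients already at hand: first, that arc-covers are stable under arbitrary base change, which will follow immediately from the universal property of fiber products; and second, the equivalence between $v$-covers and arc-covers for $V$-algebras when $V$ has rank $\leq 1$, which is supplied by \Cref{arccovercrit} in the rank $1$ case and is trivial in the rank $0$ case (where both conditions reduce to the fiber being nonempty).

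For the forward direction, I would fix a rank $\leq 1$ valuation ring $V$ and a map $\mathrm{Spec}(V) \to X$, and first check that the base change $f_V \colon Y \times_X \mathrm{Spec}(V) \to \mathrm{Spec}(V)$ is itself an arc-cover: given any map $\mathrm{Spec}(V') \to \mathrm{Spec}(V)$ with $V'$ of rank $\leq 1$, the arc-cover property of $f$ applied to the composite $\mathrm{Spec}(V') \to X$ produces a rank $\leq 1$ extension $V' \to W$ together with a lift $\mathrm{Spec}(W) \to Y$, and the universal property of the fiber product then yields a factorization $\mathrm{Spec}(W) \to Y \times_X \mathrm{Spec}(V)$ as required. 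Invoking the rank $\leq 1$ equivalence between $v$-covers and arc-covers, $f_V$ is then a $v$-cover, which is exactly the claim.

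For the reverse direction, I would pass from the hypothesis that each base change $f_V$ is a $v$-cover to the statement that each $f_V$ is an arc-cover via \Cref{visv1}. Applying the arc-cover property of $f_V$ to the identity map of $\mathrm{Spec}(V)$ produces a rank $\leq 1$ extension $V \to W$ and a factorization $\mathrm{Spec}(W) \to Y \times_X \mathrm{Spec}(V)$; composing with the projection to $Y$ delivers the lift demanded by the arc-cover condition for $f$ at the chosen map $\mathrm{Spec}(V) \to X$.

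There is no substantial obstacle here: the corollary is really a compact packaging of \Cref{arccovercrit} and \Cref{visv1} together with the base-change stability of arc-covers. The only minor wrinkle is the rank $0$ case of \Cref{arccovercrit}, which is not literally covered by its statement but is immediate since both the $v$-cover and arc-cover conditions over a field amount to surjectivity of the map onto $\mathrm{Spec}(k)$.
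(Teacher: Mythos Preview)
Your proposal is correct and matches the paper's approach: the corollary is marked with a bare \qed{} in the paper, indicating it follows immediately from \Cref{arccovercrit} and \Cref{visv1}, and your argument is precisely the natural unpacking of that implication. One small point worth noting: \Cref{arccovercrit} is literally stated only for affine $V$-schemes $\spec(A)$, whereas you apply the equivalence $(1)\Leftrightarrow(2)$ to the possibly non-affine base change $Y\times_X\spec(V)$; this is harmless, since the proof of $(2)\Rightarrow(3)\Rightarrow(1)$ in that lemma goes through unchanged for any qcqs $V$-scheme (or, even more directly, the factorization $\spec(W)\to Y\times_X\spec(V)\to\spec(V)$ with $V\to W$ faithfully flat already exhibits $f_V$ as dominated by a $v$-cover, hence a $v$-cover itself).
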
 

In the noetherian setting, there is no distinction between the $v$- and
$\arc$-topologies; we record this result here, though it is not essential to
the sequel. In particular, the results of \cite{Rydh} show that for a map of
qcqs schemes with the target noetherian, one can check whether it is a
$v$-cover after base change to discrete (in particular, rank $\leq 1$) valuation rings. 

\begin{proposition}[{\cite[Theorem 2.8]{Rydh}}]
\label{Noetherianv}
Let $f\colon  Y \to X$ be a map of qcqs schemes with $X$ noetherian. Then $f$ is
a $v$-cover if and only if it is an $\arc$-cover.  \qed
\end{proposition}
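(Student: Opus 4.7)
The proposition is really a translation exercise once one has the correct input from the literature, so the plan is short.

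The forward direction is immediate from Proposition~\ref{visv1}, which establishes that every $v$-cover is an $\arc$-cover without any hypothesis on $X$. Thus I would focus entirely on the converse: assuming $f\colon Y \to X$ is an $\arc$-cover with $X$ noetherian, I want to verify the $v$-cover lifting property for an arbitrary map $\spec(V) \to X$ from an arbitrary valuation ring $V$ (not just a rank $\leq 1$ one).

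The plan is to reduce the verification, via the noetherian hypothesis on $X$, to the case when $V$ itself is of rank $\leq 1$, after which the $\arc$-cover hypothesis applies directly. The right way to package this reduction is \cite[Theorem~2.8]{Rydh}: when $X$ is noetherian, a map $f\colon Y \to X$ of qcqs schemes is a $v$-cover if and only if every map from the spectrum of a (discrete, hence rank $\leq 1$) valuation ring into $X$ admits a lift to $Y$ after a suitable extension of valuation rings. Conceptually, this works because the image of $\spec(V) \to X$ for noetherian $X$ is a chain of specializations of finite length, and each such one-step specialization can already be witnessed by a DVR; so the lifting problem for high-rank $V$ can be assembled from lifting problems for rank $\leq 1$ valuation rings.

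Given this criterion, the remaining step is formal: the $\arc$-cover hypothesis literally says that maps from spectra of rank $\leq 1$ valuation rings lift, after extension of rank $\leq 1$ valuation rings, so Rydh's criterion applies and gives that $f$ is a $v$-cover. The main obstacle is entirely absorbed by the invocation of \cite[Theorem~2.8]{Rydh}; without that input, one would have to carry out the inductive construction of a lift $\spec(W) \to Y$ for a general valuation ring $V$ by walking along the (finite image) chain of primes in $\spec(V)$ and repeatedly applying the $\arc$-cover property to build $W$ as a rank-by-rank extension, which is essentially the content of Rydh's argument.
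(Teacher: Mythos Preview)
Your proposal is correct and matches the paper's approach exactly: the paper gives no proof beyond the citation of \cite[Theorem~2.8]{Rydh} (note the \texttt{\textbackslash qed} immediately after the statement), and the sentence preceding the proposition already spells out the mechanism you describe, namely that for noetherian targets one may test the $v$-cover property using rank $\leq 1$ (indeed discrete) valuation rings. Your write-up simply unpacks why the citation suffices, which is entirely appropriate.
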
 

To deduce excision, we must necessarily work with non-noetherian rings. In this setting, the $\arc$-topology differs from the $v$-topology. To see this, let us first explain how to extract $\arc$-covers from excision data; we shall later see that these covers are typically not $v$-covers.

\begin{lemma} 
\label{excisionimpliesvleq1}
Consider an excision datum $f\colon  (A, I)  \to (B, J)$ as in
Definition~\ref{excdatum}. 
Then the map $A \to A/I \times B$ is an $\arc$-cover.
More precisely, any map from $A$ to a rank $\leq 1$ valuation ring factors
through $A/I \times B$. 
\end{lemma}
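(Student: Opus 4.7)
The plan is to prove the sharper ``more precisely'' assertion; the $\arc$-cover statement then follows formally, since any map $A \to V$ with $V$ a rank $\leq 1$ valuation ring factors through $A/I \times B$ without any need to extend $V$. To reduce the assertion to a familiar form, I observe that $V$ is a domain and $A/I \times B$ carries the two orthogonal idempotents $(1,0)$ and $(0,1)$, so any ring map $A/I \times B \to V$ must factor through one of the two projections. Hence factoring a given $\phi \colon A \to V$ through $A/I \times B$ amounts to showing that either $\phi$ kills $I$ (and so factors through the projection to $A/I$) or $\phi$ extends to a ring map $\tilde\phi \colon B \to V$. The dichotomy is on whether $\phi(I) = 0$; the first case is immediate, so the content lies entirely in constructing $\tilde\phi$ in the case $\phi(I) \neq 0$.

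To construct $\tilde\phi$, I will exploit the isomorphism $f|_I \colon I \xrightarrow{\sim} J$ together with the fact that $J$ is an ideal of $B$. For $b \in B$ and any choice of $x \in I$ with $\phi(x) \neq 0$, the product $f(x) \cdot b$ lies in $J$ and so equals $f(c)$ for a unique $c \in I$; define $\tilde\phi(b) := \phi(c)/\phi(x) \in K := \mathrm{Frac}(V)$. A short manipulation, multiplying $f(x)b = f(c)$ and $f(x')b = f(c')$ by $x'$ and $x$ respectively and invoking injectivity of $f|_I$, shows this is independent of the auxiliary choice of $x$. Once independence is in hand, checking that $\tilde\phi$ is a ring homomorphism extending $\phi$ is routine.

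The genuine obstacle is showing that $\tilde\phi(b)$ lies in $V$ and not merely in $K$; this is exactly where the rank $\leq 1$ hypothesis is used, since it realizes the value group of $V$ inside $\mathbb{R}$ and makes infima meaningful. Let $v$ denote the valuation on $K$. If $V$ is a field the claim is trivial, since $\phi(I) \neq 0$ then forces some $\phi(x) \in V^\times$; so assume $V$ has rank $1$. Set $\alpha := \inf\{v(\phi(y)) : y \in I, \phi(y) \neq 0\} \geq 0$, which is finite thanks to $\phi(I) \neq 0$. The key leverage is that $\tilde\phi(b)$ does not depend on $x$, so $v(\tilde\phi(b)) = v(\phi(c_y)) - v(\phi(y))$ for \emph{every} admissible $y \in I$ (where $f(y) b = f(c_y)$). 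For any $\epsilon > 0$, choose $y$ with $v(\phi(y)) < \alpha + \epsilon$: either $\phi(c_y) = 0$, in which case $\tilde\phi(b) = 0 \in V$, or $c_y \in I$ gives $v(\phi(c_y)) \geq \alpha$, and hence $v(\tilde\phi(b)) > -\epsilon$. Letting $\epsilon \to 0$ forces $v(\tilde\phi(b)) \geq 0$, so $\tilde\phi(b) \in V$, as needed.
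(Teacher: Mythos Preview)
Your proof is correct and follows the same overall plan as the paper: dispose of the case $\phi(I)=0$, then when $\phi(I)\neq 0$ build a ring map $B\to K=\mathrm{Frac}(V)$ and verify it lands in $V$. Your explicit formula $\tilde\phi(b)=\phi(c)/\phi(x)$ with $f(x)b=f(c)$ is exactly the paper's map $B\to B[1/x]\simeq A[1/x]\to K$, just written out by hand.

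The one genuine difference is in the final step, showing $\tilde\phi(b)\in V$. You vary the auxiliary element $y\in I$ and run an $\epsilon$-argument on the infimum $\alpha=\inf_{y}v(\phi(y))$. The paper instead fixes a single $x$ with $\phi(x)\neq 0$, observes that $x$ kills $\mathrm{coker}(A\to B)$ so that the subring $W'\subset K$ generated over $V$ by $\tilde\phi(B)$ satisfies $\phi(x)\cdot W'\subset V$, and then invokes the clean fact that a rank $\leq 1$ valuation ring admits no proper overring in its fraction field with bounded denominators (since $z\in W'\setminus V$ would give $v(\phi(x))+n\,v(z)\geq 0$ for all $n$, impossible in an archimedean value group). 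Both arguments exploit the archimedean nature of rank $\leq 1$; the paper's is slightly more structural (it isolates a reusable principle about overrings with bounded denominators), while yours is a direct analytic verification. Either is perfectly fine.
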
 
\begin{proof} 
Let $W$ be a rank $\leq 1$ valuation ring and consider a map $g\colon  A \to W$.
If $g(I) = 0$, then $g$ factors over $A \to A/I$. It thus suffices to assume
that $g(I) \neq 0$, and show that $g$ factors over $A \to B$. Choose $x \in I$
such that $g(x) \neq 0 \in W$. Inverting $x$ yields a map $A[1/x]
\xrightarrow{g[1/x]} W[1/g(x)] \subset K := \mathrm{Frac}(W)$. As $A[1/x] \to
B[1/x]$ is an isomorphism thanks to the pullback expression $A \simeq B \times_{B/J}
A/I$, we obtain a map $B \to B[1/x] \to K$. Let $W'$ be the $W$-subalgebra of $K$ generated by the image of $B$, i.e., $W' = \mathrm{im}(W \otimes_A B \to K)$. But $x$ kills the cokernel of $A \to B$: this cokernel is identified with that of $A/I \to B/J$ (by hypothesis) which is trivially killed by $x \in I$. This implies $g(x) \cdot W' \subset W$. But then $W = W'$ as any subring between $W$ and $K$ with bounded denominators is necessarily equal to $W$ as $W$ has rank $\leq 1$. The resulting map $A \to B \to W' \simeq W$ is the desired factorization of $g$.
\end{proof}

We now give the basic example (for this note) of an excision datum that will
yield an $\arc$-cover. 

\begin{proposition}[Excision data attached to valuation rings]
\label{AICValRingPrime}

Let $V$ be a valuation ring. Fix  $\mathfrak{p} \in \mathrm{Spec}(V)$. 
Fix an inclusion $\mathfrak{p}\subset \mathfrak{q}$ of prime ideals in $V$. Then $(V_{\mathfrak{q}},\mathfrak{p}V_{\mathfrak{q}}) \to (V_{\mathfrak{p}},\mathfrak{p}V_{\mathfrak{p}})$ is an excision datum.

\end{proposition}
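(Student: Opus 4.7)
To establish the excision datum, the plan is to unwind the definitions and reduce the statement to a single elementary property of valuation rings describing how elements outside a prime act on that prime.

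First, since $\mathfrak{p} \subset \mathfrak{q}$, the complement $V \setminus \mathfrak{q}$ is contained in $V \setminus \mathfrak{p}$, so there is a canonical localization map $V_{\mathfrak{q}} \to V_{\mathfrak{p}}$, and both rings sit inside $\mathrm{Frac}(V)$. It is immediate that $\mathfrak{p} V_{\mathfrak{q}}$ and $\mathfrak{p} V_{\mathfrak{p}}$ are ideals and that the map $V_{\mathfrak{q}} \to V_{\mathfrak{p}}$ sends the former into the latter; the restricted map is automatically injective because $V_{\mathfrak{q}} \to V_{\mathfrak{p}}$ is. The whole content of the proposition is therefore the reverse containment $\mathfrak{p} V_{\mathfrak{p}} \subset \mathfrak{p} V_{\mathfrak{q}}$ inside $V_{\mathfrak{p}}$.

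The key ingredient I would isolate as a preliminary lemma is: for any valuation ring $W$ and prime $\mathfrak{r} \subset W$, multiplication by any element $s \in W \setminus \mathfrak{r}$ is a \emph{bijection} on $\mathfrak{r}$. Injectivity is integrality. For surjectivity, given $p \in \mathfrak{r}$ (nonzero), the defining dichotomy of a valuation ring says that one of $p/s$, $s/p$ lies in $W$; the case $s/p \in W$ would give $s = p \cdot (s/p) \in \mathfrak{r}$, a contradiction, so $p/s \in W$ and $s$ divides $p$ in $W$. Equivalently, the localization map $\mathfrak{r} \to \mathfrak{r} \otimes_W W_{\mathfrak{r}} = \mathfrak{r} W_{\mathfrak{r}}$ is an isomorphism.

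Applying this lemma to the valuation ring $W = V_{\mathfrak{q}}$ and its prime $\mathfrak{r} = \mathfrak{p} V_{\mathfrak{q}}$, together with the standard identification $V_{\mathfrak{p}} = (V_{\mathfrak{q}})_{\mathfrak{p} V_{\mathfrak{q}}}$, will yield that $\mathfrak{p} V_{\mathfrak{q}} \to \mathfrak{p} V_{\mathfrak{p}}$ is bijective, which is exactly the excision datum condition. I do not anticipate a serious obstacle: the argument is entirely formal once the bijectivity lemma is in hand, and that lemma is a classical and elementary consequence of the valuation dichotomy.
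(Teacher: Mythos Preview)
Your proof is correct and follows essentially the same approach as the paper: both reduce to showing that $\mathfrak{p}V_{\mathfrak{q}} \to \mathfrak{p}V_{\mathfrak{p}}$ is bijective, and both establish surjectivity via the valuation dichotomy (if $a \in \mathfrak{p}$ and $s \notin \mathfrak{p}$ then $s \mid a$, since $a \mid s$ would force $s \in \mathfrak{p}$). The paper first replaces $V$ by $V_{\mathfrak{q}}$ and argues directly, while you package the same step as a standalone lemma about multiplication by elements outside a prime being bijective on that prime, but the content is identical.
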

In particular, the map $(V, \mathfrak{p}) \to (V_{\mathfrak{p}}, \mathfrak{p}
V_{\mathfrak{p}})$ is an excision datum.

\begin{proof}
We may replace $V$ with $V_{\mathfrak{q}}$ to assume $\mathfrak{q}$
is the maximal ideal. 
That is, 
we need to show that $\mathfrak{p} \simeq \mathfrak{p} V_{\mathfrak{p}}$. The
map is clearly an injection (both sides are contained in the quotient field of
$V$). Conversely, given a fraction $a/s$ with $a \in \mathfrak{p}$ and $s \in V
\setminus \mathfrak{p}$, we have necessarily $s \mid a$ and $a =sb$ for $b \in
\mathfrak{p}$. Thus, $a/s = b\in \mathfrak{p}$ as desired. 
\end{proof}

\begin{corollary}
\label{v1val}
Let $V$ be a valuation ring, and let $\mathfrak{p} \subset V$ be a prime ideal.
Then $V \to V_{\mathfrak{p}} \times V/\mathfrak{p}$ is an $\arc$-cover. \qed
\end{corollary}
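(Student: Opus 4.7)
The plan is to combine the two results immediately preceding the corollary: Proposition~\ref{AICValRingPrime} produces an excision datum from $V$ and the prime $\mathfrak{p}$, while Lemma~\ref{excisionimpliesvleq1} converts any excision datum into an $\arc$-cover of the desired shape.

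More concretely, I would first apply Proposition~\ref{AICValRingPrime} in the case $\mathfrak{q} = $ maximal ideal of $V$ (or equivalently just use the ``in particular'' consequence already noted after its statement) to conclude that $(V, \mathfrak{p}) \to (V_\mathfrak{p}, \mathfrak{p} V_\mathfrak{p})$ is an excision datum, i.e.\ that the localization map carries $\mathfrak{p}$ isomorphically onto $\mathfrak{p} V_\mathfrak{p}$. Then I would plug this excision datum into Lemma~\ref{excisionimpliesvleq1} with $A = V$, $I = \mathfrak{p}$, $B = V_\mathfrak{p}$, $J = \mathfrak{p} V_\mathfrak{p}$, which immediately yields that
\[
V \;\longrightarrow\; V/\mathfrak{p} \times V_\mathfrak{p}
\]
is an $\arc$-cover, as claimed.

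There is really no obstacle here — the corollary is a direct specialization of the two preceding results, and the only thing to check is that the hypotheses of Lemma~\ref{excisionimpliesvleq1} are met, which is exactly the content of Proposition~\ref{AICValRingPrime}. One could alternatively write out the verification by hand using Lemma~\ref{arccovercrit}: given any map $\spec(W) \to \spec(V)$ from a rank $\leq 1$ valuation ring $W$, the pullback of $\mathfrak{p}$ to $W$ is a prime either equal to $(0)$ (in which case the map factors through $V_\mathfrak{p}$) or equal to the maximal ideal of $W$ (in which case it factors through $V/\mathfrak{p}$), so the cover condition is automatic — but invoking the two previous results is cleaner.
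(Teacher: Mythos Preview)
Your proposal is correct and is exactly the argument the paper intends: the corollary is marked \qed\ precisely because it follows by feeding the excision datum $(V,\mathfrak{p}) \to (V_{\mathfrak{p}}, \mathfrak{p} V_{\mathfrak{p}})$ from Proposition~\ref{AICValRingPrime} into Lemma~\ref{excisionimpliesvleq1}. (Minor quibble on your alternative sketch: ``pullback of $\mathfrak{p}$ to $W$'' is backwards terminology since the map goes $V \to W$; you mean to look at whether $\mathfrak{p}$ maps into the maximal ideal of $W$ or not, but the main argument is the one that matters here.)
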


Thus, the $\arc$-topology is strictly finer than the $v$-topology: if $V$ has rank $\geq 2$ and $\mathfrak{p}$ is a nonzero nonmaximal prime, then the map from Corollary~\ref{v1val} is clearly not a $v$-cover.

\begin{remark}
Using the easier ``only if'' part of Proposition~\ref{UnivSubArc} below, one may also deduce Corollary~\ref{v1val} from \cite[Corollary 33]{Pic} (see also \cite[Ex. 4.5]{GooLi}, and \cite[Ex. 4.3]{Rydh}).
\end{remark}

Finally, we observe that the condition of being an $\arc$-cover is essentially
module-theoretic. 
\begin{proposition} 
Let $f\colon  \spec(B) \to \spec(A)$ be a map of affine schemes.   Then the following are
equivalent: 
\begin{enumerate}
\item  $f$ is an $\arc$-cover. 
\item The map $A \to B$ has the property that after
every base change along a map $A \to V$ with $V$ a rank $\leq 1$ valuation ring, 
it is universally injective \cite[Tag 058I]{stacks-project}. 
\end{enumerate}
\end{proposition}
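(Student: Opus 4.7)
The plan is to use Corollary~\ref{v1detectionr1} to localize the question to a single rank $\leq 1$ valuation ring. By that corollary, condition (1) is equivalent to the assertion that for every map $A \to V$ with $V$ a rank $\leq 1$ valuation ring, the base change $\spec(B \otimes_A V) \to \spec V$ is a $v$-cover. Condition (2) asks the same family of base changes to produce universally injective maps $V \to B \otimes_A V$. So it suffices to prove the following local claim: for a rank $\leq 1$ valuation ring $V$ and a $V$-algebra $C$, the map $\spec C \to \spec V$ is a $v$-cover if and only if $V \to C$ is universally injective.

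For the forward direction, apply the $v$-cover property of $\spec C \to \spec V$ to the identity $\spec V \to \spec V$; this yields an extension of valuation rings $V \to W$ together with a factorization $V \to C \to W$. Since extensions of valuation rings are faithfully flat, $V \to W$ is universally injective; hence for every $V$-module $M$, the composite $M \to M \otimes_V C \to M \otimes_V W$ is injective, forcing $M \to M \otimes_V C$ to be injective. This gives universal injectivity of $V \to C$.

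For the reverse direction, the rank $0$ case is trivial: if $V = K$ is a field, then $K \to C$ universally injective forces $C \neq 0$, and any residue field of $C$ supplies a valuation extension witnessing the $v$-cover condition. For rank $1$, fix a pseudouniformizer $\pi \in V$; by Lemma~\ref{arccovercrit} we must produce primes $\mathfrak{p}_1 \subset \mathfrak{p}_2$ in $\spec C$ lying respectively over $(0)$ and $\mathfrak{m}$. Suppose for contradiction that no such specialization exists. Then for each $\mathfrak{p}_2 \in V(\pi C)$ every prime of the localization $C_{\mathfrak{p}_2}$ contains $\pi$, so $\pi$ is nilpotent in $C_{\mathfrak{p}_2}$: there exist $s_{\mathfrak{p}_2} \in C \setminus \mathfrak{p}_2$ and $n(\mathfrak{p}_2) \geq 1$ with $s_{\mathfrak{p}_2} \pi^{n(\mathfrak{p}_2)} = 0$ in $C$. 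The sets $D(s_{\mathfrak{p}_2})$ form an open cover of the quasi-compact closed set $V(\pi C) \subset \spec C$, so finitely many $s_1, \ldots, s_r$ and a single integer $n$ suffice. The condition $V(\pi C) \cap V(s_1, \ldots, s_r) = \emptyset$ then produces a relation $1 = a\pi + \sum_i b_i s_i$ in $C$; multiplying by $\pi^n$ yields $\pi^n = a\pi^{n+1}$ in $C$. Hence $\pi^n$ maps to zero in $C/\pi^{n+1} C$, while remaining nonzero in $V/\pi^{n+1} V$ since $\pi$ is not a unit of $V$, contradicting universal injectivity of $V \to C$ applied to the module $V/\pi^{n+1}V$.

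The technical crux is this rank $1$ reverse direction. The main work lies in the quasi-compactness step that upgrades the negative topological hypothesis ``no specialization exists'' into the concrete algebraic identity $\pi^n = a\pi^{n+1}$ in $C$; once this identity is secured, testing universal injectivity on the single cyclic module $V/\pi^{n+1}V$ immediately yields the contradiction.
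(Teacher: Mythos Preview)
Your proof is correct and follows the same strategy as the paper: reduce to a rank $\leq 1$ valuation ring base, factor through a faithfully flat extension for one direction, and for the other derive the relation $\pi^n = a\pi^{n+1}$ in $C$ to contradict universal injectivity on $V/\pi^{n+1}V$. The paper reaches this relation by passing to $B' = B/(\pi\text{-power torsion})$ and arguing that $B'/\pi \neq 0$, whereas you obtain it via quasi-compactness of $V(\pi C)$; these are minor organizational variants of the same idea.
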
 
\begin{proof} 
We immediately reduce to the case where $A = V$ is a rank $1$ valuation
ring with a pseudouniformizer $t$. 
Suppose that $A \to B$ is universally injective. Let $B'$ be the quotient of $B$ by the ideal of $t$-power torsion elements.  
We claim that the map $A \to B \to B'$ is faithfully flat, which is
sufficient to prove that $A \to B$ is a $v$-cover (cf.~\cite[Rem.~2.5]{Rydh}). 
Since $B'$ is flat over $A$ as it is $t$-torsionfree, it suffices to show that 
$B'[1/t]$ and $B'/(t)$ are both nonzero. First, $B'[1/t] = B[1/t] \neq 0$. 
Second, if $B'/t = 0$, then we can write in $1 = tx + y$ in $B$ for $x,y \in B$ with $y$ being $t$-power torsion. Multiplying by some $t^m$ for $m
\gg 0$, we get the equality $t^{m} = t^{m+1} x$  in $B$; however, this contradicts 
our assumption that the map  $A/t^{m+1} \to B/t^{m+1}$ is injective. 

Conversely, 
if $A \to B$ is an $\arc$-cover with $A$ a rank $1$ valuation ring, choose an inclusion of prime ideals
$\mathfrak{p}_1 \subset \mathfrak{p}_2 \subset B$ which pull back to the zero
and maximal ideal of $A$. Then $B/\mathfrak{p}_1$ is an integral domain which
is faithfully flat over $A$, so that $A \to B/\mathfrak{p}_1$ is universally
injective. Thus $A \to B$ is also universally injective. 
\end{proof} 
\subsection{Relation to submersions}
\label{SpecSubm}

In this subsection, we relate the $\arc$-topology to the topology of universal
submersions from \cite{Pic,Rydh}. 
Recall that coverings in the latter are given
by maps of schemes that are universally submersive
in the following sense. 

\begin{definition}[Submersions]
A map $f \cl X \to Y$ of qcqs schemes is called \emph{submersive} or a
\emph{submersion}
if it induces a quotient map on underlying topological spaces, i.e., $f$ is
surjective and $Y$ has the quotient topology. We say that $f$
is \emph{universally submersive} if 
for all maps $Y' \to Y$ of qcqs schemes, the base change 
$X \times_Y Y' \to Y'$
is a submersion. 
\end{definition} 

\begin{proposition} 
Let $f \colon X \to Y$ be a $v$-cover. 
Then $f$ is universally submersive. 
\end{proposition}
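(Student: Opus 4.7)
The plan is to reduce everything to showing that a $v$-cover is submersive. First observe that the class of $v$-covers is manifestly stable under base change: the lifting condition in \Cref{vcover} only concerns maps into the target, so if $f \colon X \to Y$ is a $v$-cover and $Y' \to Y$ is any map of qcqs schemes, any $\mathrm{Spec}(V) \to Y'$ composes to a map into $Y$ to which the lifting property can be applied, and the resulting $\mathrm{Spec}(W) \to X$ factors through $X \times_Y Y'$ by the universal property. Hence it suffices to show that every $v$-cover is submersive.

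So fix a $v$-cover $f \colon X \to Y$, and recall that since both $X$ and $Y$ are qcqs, the underlying map of topological spaces is a spectral map of spectral spaces. I would verify the two classical criteria that force such a map to be submersive: (a) $f$ is surjective on underlying sets, and (b) $f$ lifts specializations (cf.\ \cite[Prop.~2.7]{Rydh} and \cite{Pic} for the statement that, for a quasi-compact morphism of schemes, these two conditions together imply submersiveness).

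For (a), given $y \in Y$, produce any map $\mathrm{Spec}(V) \to Y$ from the spectrum of a valuation ring hitting $y$ (e.g., the trivial valuation on $\kappa(y)$). The $v$-cover property yields a faithfully flat extension $V \to W$ and a map $\mathrm{Spec}(W) \to X$ compatible with the structure maps to $Y$; the image in $X$ of any point of $\mathrm{Spec}(W)$ lying above $\mathrm{Spec}(V) \to Y$ then maps to $y$. For (b), recall from the conventions in the excerpt that any specialization $y \rightsquigarrow y'$ in $Y$ is witnessed by some $\mathrm{Spec}(V) \to Y$ with $V$ a valuation ring, generic point mapping to $y$ and closed point mapping to $y'$. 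The $v$-cover property gives a faithfully flat $V \to W$ and a lift $\mathrm{Spec}(W) \to X$; faithful flatness of $V \to W$ means the specialization from generic to closed point of $\mathrm{Spec}(V)$ lifts to $\mathrm{Spec}(W)$, and its image in $X$ is the desired specialization lifting $y \rightsquigarrow y'$.

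Combining base-change stability with (a), (b), and the cited topological fact then gives universal submersiveness. The main obstacle is step (b) together with the invocation of the topological criterion: one must be a little careful about how specializations in $\mathrm{Spec}(W)$ are controlled (since $W$ can have high rank, the chain of primes is more complicated than a single generic/closed pair), but the only thing actually used is that the generic point and closed point of $\mathrm{Spec}(V)$ both have preimages in $\mathrm{Spec}(W)$ and the latter is totally ordered under specialization, so such a chain in $\mathrm{Spec}(W)$ always exists; the rest is routine.
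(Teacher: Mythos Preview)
Your proof is correct and follows essentially the same approach as the paper. Both arguments reduce to submersiveness via base-change stability of $v$-covers, then use surjectivity together with the lifting of specializations through valuation rings. The only difference is packaging: the paper spells out the topological deduction directly (surjectivity gives a quotient map in the constructible topology, lifting specializations gives stability under generization, and a subset that is constructibly open and stable under generization is open by \cite[Tag 0903]{stacks-project}), whereas you invoke this as a black-box criterion attributed to \cite{Rydh, Pic}.
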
 
\begin{proof} 
Since $v$-covers are stable under base change, it suffices to show that $f$
induces a quotient map. 
Indeed, if $U \subset Y$ is a subset such that $f^{-1}(U)$ is open, then the
hypothesis that $f$ is a $v$-cover implies that $U$ is stable under
generization (since valuation rings detect specializations, thanks to \cite[Tag 00IA]{stacks-project}). 
Now $f$ is a surjection, hence induces a quotient map in the constructible
topologies 
(as any continuous surjection of profinite sets is a quotient map); therefore, $f^{-1}(U)$ is also open in the constructible topology
and hence so is $U$. 
But any subset of $X$ which is both open in the constructible topology and
stable under generization is actually open, cf.~\cite[Tag 0903]{stacks-project}. 
If follows that $U$ is open. 
\end{proof}

In general, not every universal submersion is a $v$-cover; for example, 
if $A$ is a rank two valuation ring with 
$\spec(A) = \left\{0 , \mathfrak{p}, \mathfrak{m}\right\}$ with $0 \subsetneq
\mathfrak{p} \subsetneq \mathfrak{m}$, then $A \to A/\mathfrak{p} \times
A_{\mathfrak{p}}$ is a universal submersion \cite[Prop.~32]{Pic} which is
not a $v$-cover. 
By contrast, we will see that the notion of an $\arc$-cover is a slight
\emph{weakening} of that of a universal submersion. 
 Since universally submersive maps are not stable under limits (Remark~\ref{LargeExample}), we shall use instead the following variant where the quotient property is tested only using quasi-compact open subsets:

\begin{definition}[Spectral submersions] 
\label{SpectralSubmDef}
A map $f\colon X \to Y$ of qcqs schemes is called {\em spectrally submersive} or a {\em spectral submersion} if it satisfies the following two conditions:
\begin{enumerate}
\item $f$ is surjective; and 
\item given a subset $U \subset Y$, if the preimage $f^{-1}(U)$ is
quasi-compact and open, then $U$ is  open (and automatically quasi-compact).
\end{enumerate}
We say that $f$ is {\em universally spectrally submersive} if for all maps $Y'
\to Y$ of qcqs schemes, the base change $X \times_Y Y' \to Y'$ is a spectral submersion. 
\end{definition}

\begin{remark}
\label{AltDefSpecSubm}
Fix a surjective map $f\colon X \to Y$ of qcqs schemes. Condition (2) in Definition~\ref{SpectralSubmDef} can be relaxed to the following:
\begin{enumerate}
\item[$(2')$] Given a constructible subset $U \subset Y$, if the preimage $f^{-1}(U)$ is open, then $U$ is open. 
\end{enumerate}
To see why, fix $U \subset Y$ with $f^{-1}(U)$ quasi-compact open in $X$. We must
show that $U$ is a quasi-compact open if $f$ is surjective and satisfies $(2')$ above. As $f$ is surjective, the quasi-compactness
of $U$ is clear. To show openness, thanks to $(2')$, it is enough to show that
$U$ is constructible. 
The map $f \cl X \to Y$ is a surjective spectral map, and hence induces a quotient
map in the constructible topology. 
Now $f^{-1}(U)$ is constructible, hence clopen for the constructible topology;
thus $U$ is also clopen for the constructible topology and hence constructible. 
\end{remark}
\begin{remark}
As universally spectrally submersive maps are closed under composition and base
change, the property of being universally spectrally submersive can be checked
Zariski locally (or even $v$-locally) on the source and the target. 
That is, given a diagram
\[ \xymatrix{
X' \ar[d]^{f'} \ar[r] &  X \ar[d]^f  \\
Y' \ar[r] &  Y
}\]
of qcqs schemes with both maps $X' \to X$ and $Y' \to Y$ being $v$-covers, 
the map $f$ is universally spectrally submersive if $f'$ is
universally spectrally submersive. 
This follows easily from \Cref{AltDefSpecSubm}. 
\end{remark}

The following stability property of universal spectral submersions is crucial for our application.
In Remark~\ref{LargeExample}, we give a counterexample showing that this
property fails if we drop the adjective ``spectral.''

\begin{lemma}
\label{UnivSpecSubmLimit}
Let $f\colon X \to Y$ be a map of qcqs schemes that can be written as a cofiltered
inverse limit of morphisms $f_i\colon X_i \to Y_i$ of qcqs schemes along affine
transition maps. If each $f_i$ is a spectral submersion (resp.~a universal
spectral submersion), then  $f$ is a spectral submersion (resp.~a universal
spectral submersion).
\end{lemma}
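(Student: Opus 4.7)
The plan is to prove surjectivity and the open-quotient condition (2) of \Cref{SpectralSubmDef} separately, by descending both tasks to the finite levels $f_i$; the universal version will then follow formally from the non-universal one.

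For surjectivity, I would fix $y \in Y$ with images $y_i \in Y_i$. Each fiber $f_i^{-1}(y_i) \subset X_i$ is nonempty (by surjectivity of $f_i$) and closed in the constructible topology $X_i^{cons}$, since points are closed in $Y_i^{cons}$ and $f_i$ is a spectral map. Passing to the limit, $f^{-1}(y)$ is a cofiltered inverse limit of nonempty compact Hausdorff sets (using that affine transition maps realize $X^{cons}$ as the inverse limit of the $X_i^{cons}$ as profinite sets), and is therefore nonempty.

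For condition (2), suppose $U \subset Y$ has $V := f^{-1}(U)$ quasi-compact open. Since the transition maps are affine, there is an index $i_0$ and a quasi-compact open $V_{i_0} \subset X_{i_0}$ with $V = \pi_{i_0}^{-1}(V_{i_0})$, where $\pi_i\colon X \to X_i$ denotes the projection. On the other hand, the surjective spectral map $f$ is a quotient on constructible topologies (any continuous surjection of profinite sets is), and $V$ is clopen in $X^{cons}$, so $U$ is constructible in $Y$; hence $U = p_{j_0}^{-1}(U_{j_0})$ for some index $j_0$ and constructible $U_{j_0} \subset Y_{j_0}$, by descent of constructible subsets along affine cofiltered limits. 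Passing to a common index $i$, both $f_i^{-1}(U_i)$ and $V_i$ are constructible subsets of $X_i$ pulling back to $V$ in $X$; their symmetric difference is constructible and pulls back to $\emptyset$, hence vanishes at some later index. Replacing $i$ accordingly, we obtain $f_i^{-1}(U_i) = V_i$, which is quasi-compact open in $X_i$. Since $f_i$ is a spectral submersion and $U_i$ is constructible, \Cref{AltDefSpecSubm} (condition (2$'$)) gives that $U_i$ is open, and it is quasi-compact because $f_i$ is surjective. Pulling back, $U = p_i^{-1}(U_i)$ is quasi-compact open in $Y$, as desired.

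For the universal version, given any map $Y' \to Y$ from a qcqs scheme, base change commutes with cofiltered limits, so $X \times_Y Y' = \varprojlim_i (X_i \times_{Y_i} Y')$ along affine transition maps. Each $X_i \times_{Y_i} Y' \to Y'$ is a spectral submersion by the assumed universal spectral submersivity of $f_i$, so the non-universal case proved above yields that $X \times_Y Y' \to Y'$ is a spectral submersion. The main obstacle to watch for is the careful management of the two descent steps for constructible subsets along the affine cofiltered limit (first to find $U_i$, then to force the symmetric difference to vanish); these are standard, but must be applied in the correct order so that condition (2$'$) of \Cref{AltDefSpecSubm} for the hypothesis on $f_i$ becomes available precisely when needed.
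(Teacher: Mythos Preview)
Your proof is correct and follows essentially the same approach as the paper's: both arguments descend the quasi-compact open $f^{-1}(U)$ and the constructible set $U$ to some finite stage, use the ``symmetric difference vanishes at a later index'' trick to force $f_j^{-1}(U_j) = V_j$ to be open, and then invoke condition~(2$'$) of \Cref{AltDefSpecSubm} for $f_j$. The only cosmetic difference is that you verify condition~(2) of \Cref{SpectralSubmDef} directly (first deducing constructibility of $U$ from the quotient property of $f^{cons}$), whereas the paper works with condition~(2$'$) from the outset; the surjectivity and universal-case arguments are likewise the same in substance.
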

\begin{proof}
Let us first show that $f$ is a spectral submersion when the $f_i$ are
spectrally submersive. As each $f_i$ is surjective, the map $f$ is also
surjective: if $f_i$ is surjective, then $f_i^{cons}$ is surjective, whence
$f^{cons} := \varprojlim_i f_i^{cons}$ is surjective by Tychonoff's theorem,
and thus $f$ is surjective. For the rest, we use the criterion in
Remark~\ref{AltDefSpecSubm}. Choose a constructible subset $U \subset Y$ such
that $f^{-1}(U)$ is open. As $Y \simeq \varprojlim_i Y_i$, we also have
$Y^{cons} \simeq \varprojlim_i Y_i^{cons}$, so the constructible set $U$ arises
as the pullback of some constructible set $U_i \subset Y_i$ for some index $i$.
For $j \geq i$, write $U_j$ for the preimage of $U_i$ in $Y_j$. As each $f_j$
is spectrally submersive, it is enough to show that the constructible set
$f_j^{-1}(U_j)$ is open for $j \gg i$. As $f^{-1}(U)$ is a quasi-compact open in
$X$ and $X \simeq \varprojlim_i X_i$, we can realize $f^{-1}(U)$ as the preimage of some quasi-compact open $V_k \subset X_k$ for some index $k$; reindexing, we may assume $k = i$. Write $V_j \subset X_j$ for the preimage of $V_i$. Then $\varprojlim_{j \geq i} V_j = f^{-1}(U) = \varprojlim_{j \geq i} f_j^{-1}(U_j)$ as constructible subsets of $X$.  But then we must have $V_j = f_j^{-1}(U_j)$ for $j$ sufficiently large: by passage to the constructible topology, this reduces to the observation that in a cofiltered inverse limit $S := \varprojlim_i S_i$ of profinite sets $S_i$, if we have clopen subsets $W, W' \subset S_i$ for some $i$ with the same preimage in $S$, then the preimages of $W$ and $W'$ in $S_j$ for $j \gg i$ must coincide (as the collection of clopen subsets of $S$ is the direct limit of the collection of clopen subsets of the $S_i$'s). In particular, we learn that $f_j^{-1}(U_j)$ is open for $j \gg i$, as wanted.

It remains to check that every base change of $f$ is spectrally submersive if the $f_i$'s are universally spectrally submersive. But a base change of $f$ can be realized as the cofiltered inverse limit of base changes of the $f_i$'s. As the $f_i$'s are universally spectrally submersive, the same holds for their base changes, so we conclude using the previous paragraph. 
\end{proof}

\begin{remark}
\label{LargeExample}
We give an example of a universal spectral submersion that is not a quotient
map. 
Write $\mathbf{N}$ for the set of positive natural numbers. Let $V$ be a
valuation ring with $\mathrm{Spec}(V)$ given by the totally ordered set $T :=
\{0\} \sqcup \{\frac{1}{n} \mid n \in \mathbf{N}\} \subset [0,1]$, so $0 \in T$
corresponds to the generic point; the existence of such a valuation ring
follows from \cite[Theorem 3.9 (b), (e)]{SarussiToset}. Write $\mathfrak{p}_n
\in \mathrm{Spec}(V)$ for the prime corresponding to $\frac{1}{n} \in T$ for $n
\geq 1$; set $V_n := (V/\mathfrak{p}_n)_{\mathfrak{p}_{n-1}}$ for any $n > 1$
and set $V_1 = \kappa(\mathfrak{p}_1)$ to be the residue field of $V$. Then
each $V_n$ is a rank $\leq 1$ valuation ring (with rank exactly $1$ unless $n =
1$) with residue field $\kappa(V_n)$ identified with
$\kappa(\mathfrak{p}_{n-1})$ if $n > 1$ and $\kappa(\mathfrak{p}_1)$ when
$n=1$. Moreover, we have $\cap_n \mathfrak{p}_n = (0)$. In particular, the map
$V \to B := \prod_{n} V_n$ is injective, whence $f\colon  X:=\mathrm{Spec}(B)
\to \mathrm{Spec}(V)$ hits the generic point and is thus surjective (as
non-generic points are obviously in the image). Moreover, the map $f$ is also
an $\arc$-cover: the specializations $\mathfrak{p}_n \rightsquigarrow
\mathfrak{p}_{n-1}$ exhaust all the immediate specializations in
$\mathrm{Spec}(V)$, so any map $V \to W$ where $W$ is a rank $1$ valuation ring
either factors over some $V \to V_n$ or factors over a residue field of $V$
(we include the fraction field of $V$ as a possibility). It follows from Proposition~\ref{UnivSubArc} that $f$ is a universal spectral submersion. 

Let us show that $f$ is not a quotient map. First, one checks that $\pi_0(X) = \beta
(\mathbf{N})$ is the Stone--\v{C}ech compactification of $\mathbf{N}$
(cf.~\Cref{SCcompact} below for a review). The
connected component of $X$ corresponding to an ultrafilter $\mathfrak{U} \in
\beta(\mathbf{N})$ (i.e., the fiber of $X \to \pi_0(X) \simeq
\beta(\mathbf{N})$ over $\mathfrak{U} \in \beta (\mathbf{N})$) is given by the
spectrum of the ultraproduct $\prod_{\mathfrak{U}} V_n$ (see also
Remark~\ref{UltraprodSpec} and Lemma~\ref{ultraaic} below). We claim that the
preimage $f^{-1}((0))$ of the generic point of $\mathrm{Spec}(V)$ coincides
with the preimage in $X$ of the closed set $\beta \mathbf{N} - \mathbf{N}
\subset \beta\mathbf{N}$. Granting this claim, it is clear that $f$ is not a
quotient map: if it were, then the generic point of $\mathrm{Spec}(V)$ is
closed, which is absurd. To identify $f^{-1}((0))$ with $\beta(\mathbf{N}) -
\mathbf{N}$, note that the containment $f^{-1}((0)) \subset \beta(\mathbf{N}) -
\mathbf{N}$ is clear: the points in the preimage of $\mathbf{N}$ map down to
the $\mathfrak{p}_n$'s in $\mathrm{Spec}(V)$. Conversely, given a non-principal
ultrafilter $\mathfrak{U}$ on $\mathbf{N}$, we must show that the image on
$\mathrm{Spec}(-)$ of the map $V \to \prod_{\mathfrak{U}} V_n$ is the generic
point. Unwinding the definitions, this amounts to checking that if $\mathfrak{U}$ is not principal, then the map $V \to \prod_{\mathfrak{U}} \kappa(V_n)$ is injective. If this map were not injective, then the kernel would be nonzero. But, by definition of the ultraproduct, this kernel is exactly those $a \in V$ such that $\{n \in \mathbf{N} \mid a = 0 \in \kappa(V_n) \} \in \mathfrak{U}$; this set is finite if $a \neq 0$ since $a \notin \mathfrak{p}_n$ for $n \gg 0$, whence $a \in \kappa(V_n)^*$ for $n \gg 0$. Taking a nonzero $a$ in the kernel then shows that $\mathfrak{U}$ contains a finite set, so it must be principal, as wanted.

In this example, if we write $V$ as a filtered direct limit $\varinjlim_i W_i$ of finite rank valuation rings $W_i$ along faithfully flat maps as in Lemma~\ref{filtcolimitaic}, then the induced maps $\mathrm{Spec}(B) \to \mathrm{Spec}(W_i)$ are all (universal) submersions, but the inverse limit $\mathrm{Spec}(B) \to \mathrm{Spec}(V) \simeq \varprojlim_i \mathrm{Spec}(W_i)$ is not a quotient map. Thus, we also obtain an example of the failure of (universal) submersions to be stable under filtered inverse limits; this phenomenon cannot happen with spectral submersions by Lemma~\ref{UnivSpecSubmLimit}.
\end{remark}

We now arrive at our main topological characterization of $\arc$-covers; this was independently observed by Rydh in the forthcoming \cite{RydhII}.

\begin{proposition}[$\arc$-covers and universal spectral submersions]
\label{UnivSubArc}
Let $f\colon X \to Y$ be a map of qcqs schemes. Then $f$ is universally spectrally submersive if and only if $f$ is an $\arc$-cover.
\end{proposition}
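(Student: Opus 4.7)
I would prove the two directions separately.

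For the reverse direction, assume $f$ is universally spectrally submersive. To verify the $\arc$-cover condition, fix a rank $\leq 1$ valuation ring $V$ with a map $\spec(V) \to Y$; by base-change stability of universal spectral submersivity, we may assume $Y = \spec(V)$. If $V$ is a field, any $x \in X$ (which exists by surjectivity of $f$) provides the required lift with $W = \kappa(x)$. When $V$ has rank $1$ with pseudouniformizer $t$, I first claim that there exist $x_1 \in f^{-1}(\eta)$ and $x \in f^{-1}(s)$ with $x_1 \rightsquigarrow x$. Indeed, otherwise $f^{-1}(\eta)$, being a quasi-compact (hence constructible) open stable under specialization, would be closed by \cite[Tag 0903]{stacks-project}; its complement $f^{-1}(s)$ would then be a quasi-compact open, and spectral submersivity would force $\{s\}$ to be open in $\spec(V)$, contradicting $V$ not being a field. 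Now witness $x_1 \rightsquigarrow x$ by a map $\spec(W_0) \to X$ from a valuation ring $W_0$ (\cite[Tag 00IA]{stacks-project}); the composite $V \to W_0$ is an injective local homomorphism of valuation rings, hence faithfully flat. Following the construction in the proof of Proposition~\ref{visv1} applied to $t \in W_0$, set $\mathfrak{q} := \sqrt{tW_0}$, $\mathfrak{q}_0 := \bigcap_n t^n W_0$, and $W := (W_0/\mathfrak{q}_0)_{\mathfrak{q}}$: by the remark after Proposition~\ref{visv1}, $W$ is a rank $1$ valuation ring with $V \hookrightarrow W$ faithfully flat, and $\spec(W) \to X$ is the desired lift.

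For the forward direction, assume $f$ is an $\arc$-cover. Since base change preserves $\arc$-covers, it suffices to show $f$ is spectrally submersive. Surjectivity is immediate from applying the $\arc$-cover property to $\spec(\kappa(y)) \to Y$ for each $y \in Y$. For the openness criterion, Remark~\ref{AltDefSpecSubm} reduces us to showing that any constructible subset $U \subset Y$ with $f^{-1}(U)$ open is stable under generization. Given a specialization $y' \rightsquigarrow y$ with $y \in U$, witness it by a map $\spec(V) \to Y$ from a valuation ring $V$; base-changing reduces the claim to the case $Y = \spec(V)$, $U \subset \spec(V)$ constructible with $s \in U$, and we must show $\eta \in U$. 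Suppose not, for contradiction.

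The key combinatorial observation is that $\mathfrak{c} := \bigcap_{\mathfrak{p} \in U} \mathfrak{p}$ lies in $U$ and is a non-zero principal prime, i.e., $\mathfrak{c} = \sqrt{hV}$ for some non-zero non-unit $h \in V$. Indeed, using that primes of $V$ are totally ordered and that finitely generated ideals in $V$ are principal, any constructible subset of $\spec(V)$ can be written as a finite union of subsets of the form $V(f_j) \cap D(g_j)$, each of which in the prime order is either $[\sqrt{f_jV}, \sqrt{g_jV})$ or $[\sqrt{f_jV}, s]$ with left endpoint $\sqrt{f_jV}$ a principal prime (possibly $\eta$) included in that piece; hence the infimum of $U$ equals the minimum such left endpoint and lies in $U$. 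Since $\eta \notin U$, this infimum $\mathfrak{c}$ is non-zero principal. Setting $\mathfrak{a}_0 := \bigcap_n h^n V$, the remark after Proposition~\ref{visv1} shows that $V' := (V/\mathfrak{a}_0)_{\mathfrak{c}}$ has rank $1$. Base-changing $f$ along $\spec(V') \to \spec(V)$ yields an $\arc$-cover $f'\colon X' \to \spec(V')$, which is in fact a $v$-cover by Corollary~\ref{v1detectionr1} (since $V'$ has rank $\leq 1$); the pullback of $U$ to $\spec(V')$ is the closed point $\{\mathfrak{c}V'\}$, and $f'^{-1}(\{\mathfrak{c}V'\})$ is open in $X'$, yielding a clopen decomposition $X' = f'^{-1}(\eta_{V'}) \sqcup f'^{-1}(s_{V'})$. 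Applying the $v$-cover property to $\mathrm{id}\colon \spec(V') \to \spec(V')$ produces a faithfully flat extension $V' \to W'$ together with a lift $\spec(W') \to X'$ over $\spec(V')$; but $\spec(W')$ is connected (as $W'$ is a local domain) while its image meets both clopens (generic and closed points of $W'$ map respectively to $\eta_{V'}$ and $s_{V'}$ by faithful flatness), contradiction.

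The main obstacle will be establishing the combinatorial observation that the infimum of a non-empty constructible subset of $\spec(V)$ is a principal prime belonging to the subset; this boils down to a careful atom-by-atom description of the Boolean algebra generated by $\{D(f),V(f)\}_{f \in V}$ within the totally ordered space $\spec(V)$. Once this is secured, the passage from $V$ to the rank $1$ valuation ring $V'$ via the refinement from the remark after Proposition~\ref{visv1} and the final connectedness argument proceed cleanly, and the reverse direction likewise relies on that same refinement together with a direct use of the spectral submersion hypothesis.
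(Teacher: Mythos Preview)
Your proof is correct. The direction ``universally spectrally submersive $\Rightarrow$ $\arc$-cover'' is essentially the same as the paper's: both reduce to $Y=\spec(V)$ with $V$ of rank $1$, argue that $f^{-1}(\eta)$ cannot be closed, and extract a specialization; you then explicitly cut the witnessing valuation ring down to rank $1$, whereas the paper simply appeals to Lemma~\ref{arccovercrit}.

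The other direction, however, takes a genuinely different route. After the common reduction to $Y=\spec(V)$ for a valuation ring $V$, the paper proceeds in two stages: it first treats finite-rank $V$ by an elementary induction on the chain of immediate specializations, and then handles arbitrary $V$ by writing it as a filtered colimit of finite-rank valuation subrings and invoking Lemma~\ref{UnivSpecSubmLimit} (stability of spectral submersions under filtered inverse limits). Your argument bypasses both steps: you observe directly that a nonempty constructible subset of $\spec(V)$ attains its infimum at a principal prime $\mathfrak{c}=\sqrt{hV}$, manufacture the rank $1$ valuation ring $V'=(V/\bigcap_n h^nV)_{\mathfrak{c}}$ straddling that infimum, and derive a contradiction from the connectedness of a $v$-cover witness over $V'$. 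This is more elementary and self-contained for the proposition at hand, since it avoids the limit machinery of Lemma~\ref{UnivSpecSubmLimit} and the approximation of Lemma~\ref{filtcolimitaic}. On the other hand, the paper's approach has the virtue that Lemma~\ref{UnivSpecSubmLimit} is of independent interest (it yields the closure of $\arc$-covers under filtered limits, and underlies Remark~\ref{LargeExample}), so proving it en route is not wasted effort. Your combinatorial observation about constructible sets in $\spec(V)$ is the genuine new input, and your justification via the decomposition into pieces $V(f_j)\cap D(g_j)=[\sqrt{f_jV},\sqrt{g_jV})$ is sound.
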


We note the following immediate corollary of Proposition~\ref{UnivSubArc} and
Lemma~\ref{UnivSpecSubmLimit}. 
\begin{corollary} 
The collection of $\arc$-covers
of qcqs schemes
is closed
under filtered inverse limits with affine transition maps.  \qed 
\end{corollary}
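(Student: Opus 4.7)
The plan is to chain together the two preceding results in the obvious way; there is essentially no real obstacle, since the corollary has been set up to be a formal consequence. Concretely, let $\{f_i \colon X_i \to Y_i\}_{i \in I}$ be a cofiltered inverse system of $\arc$-covers of qcqs schemes with affine transition maps (both in source and target), and let $f \colon X \to Y$ denote the limit morphism, where $X = \varprojlim_i X_i$ and $Y = \varprojlim_i Y_i$ are qcqs (since affineness of the transition maps ensures the limits exist in the category of qcqs schemes).

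I would then apply \Cref{UnivSubArc} in the forward direction to conclude that each $f_i$ is a universal spectral submersion. Next, \Cref{UnivSpecSubmLimit} applies directly, since its hypothesis is precisely that one has a cofiltered inverse limit of morphisms along affine transition maps with each term universally spectrally submersive; the conclusion is that $f$ itself is universally spectrally submersive. Finally, I would invoke \Cref{UnivSubArc} once more, in the reverse direction, to conclude that $f$ is an $\arc$-cover.

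Since the result is purely formal once \Cref{UnivSubArc} and \Cref{UnivSpecSubmLimit} are in hand, no genuinely new ingredient is required, and the ``proof'' should amount to little more than this citation chain. The hard work has already been done in \Cref{UnivSpecSubmLimit}, where the compactness arguments (via Tychonoff on constructible topologies, and comparing quasi-compact opens with clopen subsets of the limit profinite space) are what make the limit statement go through for \emph{spectral} submersions as opposed to ordinary submersions.
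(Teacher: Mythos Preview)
Your proposal is correct and matches the paper's own proof exactly: the corollary is stated with a \qed and the paper simply notes it follows from Proposition~\ref{UnivSubArc} and Lemma~\ref{UnivSpecSubmLimit}, which is precisely the citation chain you describe.
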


 The proof of Proposition~\ref{UnivSubArc} will rely on a few (standard) facts about valuation rings that we recall next; these will also be useful later. 
For future reference, we also include assertions about absolutely integrally
closed valuation rings (see Definition~\ref{aic}), i.e., those valuation
rings with algebraically closed fraction field.

\begin{lemma} 
Let $W$ be a valuation ring with fraction field $K$. Let $K_0 \subset K$ be a
subfield. Then $W_0 := K_0 \cap W$ is a valuation ring, and
the map $W_0 \to W$ is faithfully flat. 
If, in addition, $K_0$ is algebraically closed, then $W_0$ is absolutely integrally closed. 
\label{aicflatlemma}
\end{lemma}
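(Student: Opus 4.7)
The plan is to verify the three assertions in turn, each via a short direct argument using the standard characterization of valuation rings (a subring $R$ of a field $F$ is a valuation ring of $F$ if and only if for every $x \in F^*$ either $x \in R$ or $x^{-1} \in R$).

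\emph{Step 1 ($W_0$ is a valuation ring of $K_0$).} Given $x \in K_0^*$, since $W$ is a valuation ring of $K \supset K_0$, one of $x, x^{-1}$ lies in $W$; intersecting with $K_0$ gives the required dichotomy. So $W_0$ is a valuation subring of its fraction field, which is contained in $K_0$.

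\emph{Step 2 ($W_0 \to W$ is faithfully flat).} I would split this into flatness plus locality. For flatness: $W \subset K$ is torsion-free over $W_0 \subset W$, and any torsion-free module over a valuation ring is flat (since finitely generated torsion-free ideals over a valuation ring are principal, and flatness can be tested on finitely generated ideals). For locality, let $\mathfrak{m}_0, \mathfrak{m}$ be the maximal ideals of $W_0, W$. An element $x \in W_0$ is a unit in $W_0$ iff $x^{-1} \in W_0$, which (since $x^{-1}$ automatically lies in $K_0$) is equivalent to $x^{-1} \in W$, i.e.\ to $x$ being a unit of $W$. Hence $\mathfrak{m} \cap W_0 = \mathfrak{m}_0$, so $W_0 \to W$ is a local homomorphism. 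A flat local homomorphism of local rings is faithfully flat, proving the claim.

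\emph{Step 3 (AIC under algebraic closure of $K_0$).} I would observe that $\mathrm{Frac}(W_0) = K_0$: if $W_0 = K_0$ this is trivial, and otherwise any $x \in K_0 \setminus W_0$ satisfies $x^{-1} \in W_0$ by Step~1, so $x = 1/x^{-1} \in \mathrm{Frac}(W_0)$. Thus $\mathrm{Frac}(W_0) = K_0$ is algebraically closed, i.e.\ $W_0$ is absolutely integrally closed.

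None of these steps is genuinely hard; the only mildly delicate point is remembering that faithful flatness over valuation rings decomposes cleanly as (torsion-free $\Rightarrow$ flat) plus (locality of the inclusion), both of which are immediate here. The algebraically-closed refinement is essentially automatic once one sees that $\mathrm{Frac}(W_0) = K_0$.
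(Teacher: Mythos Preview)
Your proof is correct and follows essentially the same approach as the paper, which is simply a terser version of what you wrote: the paper just says $W_0$ is a valuation ring in $K_0$, that $W_0 \to W$ is a local injective homomorphism of valuation rings and hence faithfully flat, and that in the algebraically closed case $W_0$ is AIC because it is integrally closed with algebraically closed fraction field. One small tightening: in your Step~3 the implication ``$\mathrm{Frac}(W_0)$ algebraically closed $\Rightarrow$ $W_0$ absolutely integrally closed'' also needs $W_0$ to be integrally closed in its fraction field---this holds because valuation rings are normal (which you established in Step~1), and the paper makes this point explicit.
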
 
\begin{proof} 
We see immediately that $W_0$ is a valuation ring in the field $K_0$. 
Moreover, the map $W_0 \to W$ is a local injective homomorphism 
of valuation rings, so it is faithfully flat. 

Suppose now that $K_0$ is algebraically closed. 
Then $W_0$ is absolutely integrally closed, since it is integrally closed and has algebraically closed
fraction field.  
\end{proof} 

\begin{lemma} 
\label{filtcolimitaic}
Let $V$ be  a valuation ring. Then $V$ can be written as a filtered colimit of a family of valuation subrings
$V_i \subset V$ such that: 
\begin{enumerate}
\item Each $V_i$ has finite rank.  
\item Each map $V_i \to V_j$ is faithfully flat. 
\end{enumerate}
If in addition $V$ is absolutely integrally closed, we may also choose each $V_i$ to be absolutely integrally closed. 
\end{lemma}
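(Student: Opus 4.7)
My plan is to build each $V_i$ by intersecting $V$ with a sufficiently small subfield of the fraction field $K$. Specifically, I would let the index set consist of subfields $K_i \subset K$ of finite transcendence degree over the prime subfield $\pi \subset K$: in the general case, take all finitely generated subfields, while in the absolutely integrally closed case take algebraic closures (computed inside $K$) of finitely generated subfields, so that each such $K_i$ is itself algebraically closed. In either case $K$ is the filtered union of these $K_i$, since any finite subset of $K$ lies in some such $K_i$.

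Setting $V_i := V \cap K_i$, the previous Lemma~\ref{aicflatlemma} gives for free that $V_i$ is a valuation ring of $K_i$, that the inclusion $V_i \hookrightarrow V$ is faithfully flat, and, in the absolutely integrally closed case, that $V_i$ is absolutely integrally closed. Because $V_i = V_j \cap K_i$ whenever $K_i \subset K_j$, the same lemma applied inside $K_j$ shows the transition map $V_i \hookrightarrow V_j$ is faithfully flat. The identity $V = \bigcup_i V_i$ is immediate, since any $x \in V$ sits in $V_i$ as soon as $x \in K_i$.

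The one nontrivial input I would use is Abhyankar's inequality, which ensures that each $V_i$ has finite rank. Concretely, the rank of any valuation is bounded by its rational rank, and the additivity of rational rank in the short exact sequence of value groups combined with Abhyankar gives $\mathrm{rat.rank}(v) \leq \mathrm{rat.rank}(v|_\pi) + \mathrm{tr.deg}(L/\pi)$; since $\pi$ is a prime field the restriction $v|_\pi$ is either trivial or $p$-adic, so $\mathrm{rat.rank}(v|_\pi) \leq 1$, and hence $\mathrm{rank}(V_i) \leq 1 + \mathrm{tr.deg}(K_i / \pi) < \infty$ by construction. This Abhyankar bound (and the mild care needed when $v|_\pi$ is nontrivial in characteristic zero) is the only substantive ingredient; everything else is formal bookkeeping from Lemma~\ref{aicflatlemma}.
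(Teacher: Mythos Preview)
Your proof is correct and follows essentially the same route as the paper: both intersect $V$ with (algebraically closed, in the aic case) subfields of $K$ of finite transcendence degree over the prime field, invoke Lemma~\ref{aicflatlemma} for the valuation-ring and faithful-flatness assertions, and use the Abhyankar-type bound (which the paper cites from \cite[Theorem VI.10.3.1]{BourbakiCA}) to control the rank. You are in fact slightly more explicit than the paper in checking the faithful flatness of the transition maps $V_i \to V_j$ and in handling the ``$+1$'' coming from the possible $p$-adic valuation on $\mathbb{Q}$.
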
 
\begin{proof} 
Consider first the case of valuation rings (not assumed absolutely integrally closed). 
Let $K$ be the  fraction field of $V$. 
For each finitely generated subfield $K_i \subset K$, we let $V_i = K_i \cap V$;
this is a valuation subring of $K_i$, and is finite rank since the
transcendence degree of $K_i$ is finite, cf.~\cite[Theorem
VI.10.3.1]{BourbakiCA}; furthermore $V_i \to V$ is an extension of valuation
rings by \Cref{aicflatlemma}. 
This gives the desired expression of $V$ as a filtered colimit. When $V$ is absolutely integrally closed,
we run a similar argument but take the $\left\{K_i\right\}$ to be the family of
algebraically closed subfields of $K$ of finite transcendence degree over the
prime field. 
\end{proof} 

\begin{proof}[Proof of Proposition~\ref{UnivSubArc}]
Assume first that $f$ is universally spectrally submersive. We must show $f$ is
an $\arc$-cover. By base change and Lemma~\ref{arccovercrit}, it is enough to
show that the following holds: if $Y := \mathrm{Spec}(V)$ for a valuation ring
$V$ of rank $1$, the nontrivial specialization $(0) \rightsquigarrow
\mathfrak{m}$ in $\mathrm{Spec}(V)$ lifts to a specialization in $X$.  Now
$f^{-1}( (0)) \subset X$ is a nonempty quasi-compact open subset that is not
closed: if it were closed, its complement $f^{-1}(\mathfrak{m}) \subset X$
would be a quasi-compact open, which, by the spectral submersiveness of $f$, would imply the absurd conclusion that the closed point of $\mathrm{Spec}(V)$ is open.  As points in the closure of a quasi-compact open of a spectral space are given by specializations \cite[Tag 0903]{stacks-project}, there must exist a specialization $x_0 \rightsquigarrow x_{\mathfrak{m}}$ in $X$ such that $x_0 \in f^{-1}( (0))$ and $x_{\mathfrak{m}} \in X - f^{-1}( (0))$. But then $x_{\mathfrak{m}} \in f^{-1}(\mathfrak{m})$, so we have found the desired specialization.

Conversely, assume that $f$ is an $\arc$-cover. We must show $f$ is universally
spectrally submersive. In fact, since $\arc$-covers are stable under base
change, it is enough to show that $f$ is spectrally submersive. By
Remark~\ref{AltDefSpecSubm}, we must show the following: a constructible subset
$U \subset Y$ is open if $f^{-1}(U) \subset X$ is open. We may assume that $U$ is non-empty. 

First, assume $Y = \mathrm{Spec}(V)$ is the spectrum of a finite rank valuation ring. Then $\mathrm{Spec}(V) = \{\mathfrak{p}_0
\rightsquigarrow \mathfrak{p}_1 \rightsquigarrow ... \rightsquigarrow
\mathfrak{p}_n\}$ is a finite totally ordered set of prime ideals $\mathfrak{p}_i \subset V$. Each immediate specialization
$\mathfrak{p}_i \rightsquigarrow \mathfrak{p}_{i+1}$ gives rise to a map $V \to
V_i := (V/\mathfrak{p}_i)_{\mathfrak{p}_{i+1}}$ where the target is a rank $1$
valuation ring. As $f$ is an $\arc$-cover, these maps lift to $X$ (after extending $V_i$ if necessary). It is thus enough to check the claim when $f$ equals the
map $\sqcup_{i=0}^n \mathrm{Spec}( V_i) \to \mathrm{Spec}(V)$. But this is an
elementary fact about finite totally ordered sets. Our hypothesis on $U$ implies that each $U_i := U \cap \mathrm{Spec}(V_i)$ is open in $\mathrm{Spec}(V_i)$ for all $i$. As $U = \cup_i U_i$ is non-empty, we may choose $j$ maximal such that $U_j \neq \emptyset$. If $j=0$, the claim is clear. If not, then $U_j \subset \mathrm{Spec}(V_j)$ is either just the generic point $\{\mathfrak{p}_j\} \subset \mathrm{Spec}(V_j)$ or the whole space $\{\mathfrak{p}_j,\mathfrak{p}_{j+1}\} = \mathrm{Spec}(V_j)$. In either case, we have $\mathfrak{p}_j \in U$, which then implies $\mathfrak{p}_{j-1} \in U$ since $U_{j-1} = U \cap \mathrm{Spec}(V_{j-1})$ is open in $\mathrm{Spec}(V_{j-1}) = \{\mathfrak{p}_{j-1},\mathfrak{p}_j\}$. Continuing this way, we find $U = \{\mathfrak{p}_0 \rightsquigarrow ... \rightsquigarrow \mathfrak{p}_k\}$ for $k \in \{j,j+1\}$, so $U$ is open. 

Next, assume $Y = \mathrm{Spec}(V)$ is the spectrum of an arbitrary valuation ring $V$. Write $V$ as a filtered colimit $\varinjlim V_i$ of finite rank valuation rings $V_i \subset V$ as in Lemma~\ref{filtcolimitaic}, and let $g_i\colon \mathrm{Spec}(V) \to \mathrm{Spec}(V_i)$ be the projection. The induced map $f_i\colon X \to \mathrm{Spec}(V_i)$ is then an $\arc$-cover by Proposition~\ref{visv1}, and thus a spectral submersion by the previous paragraph.  It follows from Lemma~\ref{UnivSpecSubmLimit} that the inverse limit $f\colon X \to \mathrm{Spec}(V) \simeq \varprojlim_i \mathrm{Spec}(V_i)$ of the $f_i$'s is also a spectral submersion.

Finally, we tackle the general case of any  qcqs $Y$. As a constructible set in a spectral space is open exactly when it is stable under generalizations \cite[Tag 0903]{stacks-project}, it is enough to show that $U$ is stable under generalizations, i.e., if $u \in U$ and $y \in Y$ is a generalization of $u$ in $Y$, then $y \in U$. Choose a valuation ring $V$ and a map $g\colon \mathrm{Spec}(V) \to Y$ such that the specialization $y \rightsquigarrow u$ lifts to a specialization $y' \rightsquigarrow u'$ in $\mathrm{Spec}(V)$ along $g$. Assume that the base change $f_V\colon X \times_Y \mathrm{Spec}(V) \to \mathrm{Spec}(V)$ of $f$ along $g$ is already known to be spectrally submersive. Now $f_V^{-1}(g^{-1}(U))$ is a quasi-compact open in $X \times_Y \mathrm{Spec}(V)$ by hypothesis, so $g^{-1}(U)$ must be open by spectral submersiveness of $f_V$. In particular, since $u' \in g^{-1}(U)$, we must have $y' \in g^{-1}(U)$, whence $y = g(y') \in U$, as wanted. In other words, we have made the promised reduction. 
\end{proof}

\newpage
\section{Studying $v$-sheaves via valuation rings}

In this section, we shall describe how to control $v$-sheaves in terms of their behaviour on absolutely integrally closed valuation rings. We begin in \S \ref{Finvsheaf} by isolating a class of $v$-sheaves that we call {\em finitary}; these are the ones that commute with filtered colimits of rings, and are thus determined by their values on finitely presented rings. In \S \ref{ultraprod}, we collect some tools for describing the behaviour of finitary $v$-sheaves on (infinite) products of rings in terms of ultraproducts. In \S \ref{DetectUnivDescent}, we prove some general results concerning morphisms of universal $\mathcal{F}$-descent with respect to certain functors $\mathcal{F}$. Finally, these ingredients are put together in \S \ref{vSheafAICVR} to explain why finitary $v$-sheaves can be controlled by their behaviour on absolutely integrally closed valuation rings.

\subsection{Finitary $v$-sheaves}
\label{Finvsheaf}

In the sequel, we will need to work with sheaves for the $v$- and
$\arc$-topology.
Throughout, we let $\mathcal{C}$ be an $\infty$-category that has all small
limits; we will later impose additional assumptions on $\mathcal{C}$. 
\begin{definition} 
\label{DefFinitary}
Let $R$ be a ring, and let $\mathcal{C}$ be an $\infty$-category that has
small limits. A functor $\mathcal{F}\colon  \mathrm{Sch}_{qcqs,R}^{op} \to \mathcal{C}$ is
 a \emph{$v$-sheaf} (resp.~\emph{$\arc$-sheaf}) if $\mathcal{F}$
carries finite disjoint unions of schemes to products and for every $v$-cover
(resp.~$\arc$-cover) $Y \to X$ in $\mathrm{Sch}_{qcqs,R}$, the natural map  
\[\mathcal{F}(X) \to \varprojlim( \mathcal{F}(Y) \rightrightarrows \mathcal{F}(Y \times_X Y) \triplearrows \dots )\]
is an equivalence.
Note that the limit is indexed over the simplex category $\Delta$, i.e., it 
is a totalization. 

Suppose that $\mathcal{C}$ has filtered colimits.  We will say that
$\mathcal{F}$ is \emph{finitary} if whenever $\left\{Y_\alpha\right\}_{\alpha
\in A}$ is a tower of qcqs $R$-schemes (indexed over some cofiltered partially
ordered set $A$) with affine transition maps, then $
\varinjlim_{A} \mathcal{F}(Y_\alpha) \xrightarrow{\simeq}
\mathcal{F}( \varprojlim_\alpha
Y_\alpha) $. In this case, by ``relative approximation'' \cite[Tag 09MU]{stacks-project}, $\mathcal{F}$ is determined by its restriction to $R$-schemes of finite presentation. 

For functors defined on $\mathrm{Ring}_R$ instead of $\mathrm{Sch}_{qcqs,R}^{op}$, we have analogous definitions. 
\end{definition} 

We next review generalities on descent. 
To abstract the sheaf property, we use the following definition. 
\begin{definition}[Universal $\mathcal{F}$-descent] 
Let $\mathcal{F}\colon  \mathrm{Sch}_{qcqs,R}^{op} \to \mathcal{C}$ be a functor, with
$\mathcal{C}$ as in \Cref{DefFinitary}. 
We say that a map $f\colon  Y \to X $ in $\mathrm{Sch}_{qcqs,R}$ is of
\emph{$\mathcal{F}$-descent} if the
natural map
\[ \mathcal{F}(X) \to \varprojlim( \mathcal{F}(Y) \rightrightarrows \mathcal{F}(Y \times_{X}
Y) \triplearrows \dots )  \]
is an equivalence in $\mathcal{C}$. 
We say that $f$ is of \emph{universal $\mathcal{F}$-descent} if all base changes $f'\colon  Y' \to X'$ of $f$ (along maps $X' \to X$ in
$\mathrm{Sch}_{qcqs,R}$) are of $\mathcal{F}$-descent. 
\end{definition}  

\begin{example}
For an abelian group $A$, let $\mathcal{F}(X) = R\Gamma(X_{\et},A)$ valued in $\mathcal{C} = \mathcal{D}(\mathbf{Z})$. Then any \'etale surjection $Y \to X$ is of universal $\mathcal{F}$-descent: this is a general fact about cohomology on a site. More interesting, for $A$ torsion, any proper surjection is of universal $\mathcal{F}$-descent by proper cohomological descent for torsion \'etale cohomology; see Proposition~\ref{vDescentEtaleCoh}  for a more general assertion.
\end{example}

We recall the following basic result (cf. \cite[Lemma 3.1.2]{LZ}), which gives
basic closure properties for the class of maps of universal $\mathcal{F}$-descent. 
Compare also 
\cite[10.10 and 10.11]{Gir64} and \cite[Vbis, 3.3.1]{SGA4} for closely related results. 

\begin{lemma} 
\label{Fsorite}
Fix a functor $\mathcal{F}\colon  \mathrm{Sch}_{qcqs,R}^{op} \to \mathcal{C}$ as before. 
Let $f\colon  Y \to X $ and $g\colon  Z \to Y$ be maps in $\mathrm{Sch}_{qcqs,R}$. \begin{enumerate}
\item If $f$ has a section, then $f$ is of universal $\mathcal{F}$-descent. 
\item  
If $f, g$ are of universal $\mathcal{F}$-descent, 
then $f \circ g$ is of universal $\mathcal{F}$-descent. 
\item  If $f \circ g$ is of universal $\mathcal{F}$-descent, then $f$ is of universal
$\mathcal{F}$-descent. 
\item The collection of maps of universal $\mathcal{F}$-descent is closed under
base change. 
\end{enumerate}
\end{lemma}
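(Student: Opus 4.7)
The plan is to treat the four parts separately, with (4) being immediate, (1) using the standard extra-degeneracy argument, and (2) and (3) both handled by a single bisimplicial scheme together with Fubini, in the spirit of \cite[Lemma 3.1.2]{LZ}.

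First I would dispatch the two easy parts. Part (4) is direct from the definition: a base change of a base change of $f$ is itself a base change of $f$, so the property of universal $\mathcal{F}$-descent survives. For part (1), a section $s\colon X \to Y$ of $f$ induces an extra degeneracy on the augmented \v{C}ech nerve $\check{C}(f)_\bullet$ with $\check{C}(f)_n = Y^{(n+1)/X}$, exhibiting the augmentation as a split augmented simplicial object. Applying $\mathcal{F}$ yields a split augmented cosimplicial object in $\mathcal{C}$, which is automatically a limit diagram (a classical fact in any $\infty$-category). Since sections are stable under base change, this yields universal $\mathcal{F}$-descent.

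For parts (2) and (3), write $h := f \circ g$ and consider the bisimplicial qcqs $R$-scheme
\[
A_{n, k} := Y^{(n+1)/X} \times_X Z^{(k+1)/X}, \qquad n, k \geq 0,
\]
with the evident simplicial structures in each variable coming from the \v{C}ech nerve projections. Applied to $A_{n, k}$, the functor $\mathcal{F}$ yields a bicosimplicial object in $\mathcal{C}$ whose two iterated totalizations agree by Fubini (since limits commute with limits). I would then observe that for each fixed $n$, the cosimplicial object $[k] \mapsto A_{n, k}$ is the \v{C}ech nerve of the second projection $\pi_n\colon Z \times_X Y^{(n+1)/X} \to Y^{(n+1)/X}$. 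This $\pi_n$ is simultaneously a base change of $h$ (along the augmentation $Y^{(n+1)/X} \to X$) and, less obviously, a base change of $g$: indeed $Z \times_X Y \to Y$ (second projection) is the base change of $g$ along $Y \times_X Y \to Y$ (first projection), and $\pi_n$ is then the further base change along $Y^{(n+1)/X} \to Y$ (first projection). Dually, for each fixed $k$, the cosimplicial object $[n] \mapsto A_{n, k}$ is the \v{C}ech nerve of $f_k\colon Y \times_X Z^{(k+1)/X} \to Z^{(k+1)/X}$, which is a base change of $f$ along $Z^{(k+1)/X} \to X$. Crucially, $f_k$ admits a section $(z_0, \dots, z_k) \mapsto (g(z_0), z_0, \dots, z_k)$, well-defined because all $z_i$ have the same image under $h$.

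With this setup, part (2) goes as follows: assuming universal $\mathcal{F}$-descent of $f$ and $g$, we apply descent for $g$ to $\pi_n$ and descent for $f$ to $f_k$, obtaining $\mathrm{Tot}_k\,\mathcal{F}(A_{n, k}) \simeq \mathcal{F}(Y^{(n+1)/X})$ and $\mathrm{Tot}_n\,\mathcal{F}(A_{n, k}) \simeq \mathcal{F}(Z^{(k+1)/X})$. Fubini combined with $f$-descent then gives
\[
\mathcal{F}(X) \simeq \mathrm{Tot}_n\,\mathcal{F}(Y^{(n+1)/X}) \simeq \mathrm{Tot}_k\,\mathcal{F}(Z^{(k+1)/X}),
\]
which is $\mathcal{F}$-descent of $h$. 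For part (3), assuming universal $\mathcal{F}$-descent of $h$, we instead apply descent for $h$ to $\pi_n$ (viewed as a base change of $h$) and part (1) to the sectioned map $f_k$, getting the same two identifications; Fubini with $h$-descent then delivers $\mathcal{F}(X) \simeq \mathrm{Tot}_n\,\mathcal{F}(Y^{(n+1)/X})$, i.e., $\mathcal{F}$-descent of $f$. In both cases, universality follows by applying the same argument to every base change, since all hypotheses descend by (4). The main obstacle is to correctly identify $\pi_n$ as a base change of both $h$ and $g$ (so as to deploy different hypotheses in (2) vs.\ (3)), and to verify the section on $f_k$; once these are in hand, the rest is a routine application of Fubini for iterated limits in $\mathcal{C}$.
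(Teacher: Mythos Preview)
Your arguments for (1), (3), and (4) are correct, and the bisimplicial setup is the right framework. The paper itself does not give a proof; it simply cites \cite[Lemma 3.1.2]{LZ} (and Giraud, SGA4), so there is no ``paper's proof'' to compare against beyond that reference.

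There is, however, a genuine error in your treatment of (2). You assert that $\pi_0\colon Z\times_X Y \to Y$ (second projection) is the base change of $g$ along $p_1\colon Y\times_X Y \to Y$. This is false: that base change is a map $Z\times_X Y \to Y\times_X Y$, not a map to $Y$. More to the point, $\pi_0$ is the base change of $h = f\circ g$ along $f$, and in general it is \emph{not} a base change of $g$ alone. So ``apply descent for $g$ to $\pi_n$'' is not justified, and the argument for (2) as written is circular (you would need (2) itself to conclude that the composite $Z\times_X Y \to Y\times_X Y \to Y$ is of universal $\mathcal{F}$-descent).

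The fix is to prove (3) first---which you do correctly and independently of (2)---and then use it. Observe that $g$ factors as
\[
Z \xrightarrow{\ (\mathrm{id}_Z,\,g)\ } Z\times_X Y \xrightarrow{\ \pi_0\ } Y.
\]
Since $g$ is of universal $\mathcal{F}$-descent, part (3) applied to this factorization shows that $\pi_0$ is of universal $\mathcal{F}$-descent. Your observation that $\pi_n$ is a base change of $\pi_0$ along the first projection $Y^{(n+1)/X}\to Y$ is correct, so each $\pi_n$ is of universal $\mathcal{F}$-descent by (4). With this in hand, the remainder of your Fubini argument for (2) goes through unchanged.
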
 

We now specialize to cases such as $\mathcal{C} = \D( \Lambda)^{\geq 0}$,
for $\Lambda$ a ring, where we have additional properties. 
To begin with, we isolate the relevant desired feature, following \cite{EHIK}. 

\begin{definition}[{\cite[Def.~3.1.4]{EHIK}}]  
We say that an $\infty$-category $\mathcal{C}$ is
\emph{compactly generated by cotruncated objects} if: 
\begin{enumerate}
\item $\mathcal{C}$ is compactly generated 
(i.e., $\mathcal{C}$ is presentable and is generated under colimits by its compact objects). Compare
\cite[Sec.~5.5.7]{HTT} for an account of the theory of
compactly generated $\infty$-categories. 

\item 
Every compact object $x \in \mathcal{C}$ is $n$-cotruncated for some $n$
(which may depend on $x$), i.e.,  $\hom_{\mathcal{C}}(x, y)$ is $n$-truncated for all $y \in \mathcal{C}$. 
\end{enumerate}
\end{definition}

\begin{example} 
The following give examples of $\infty$-categories which are compactly generated by cotruncated objects. 
\begin{enumerate}
\item Let $\Lambda$ be any ring. Then the $\infty$-category $\D( \Lambda)^{\geq
0}$, the coconnective part of the derived $\infty$-category $\D( \Lambda)$, is
compactly generated by cotruncated objects. In fact, the compact objects have
the form $\tau^{\geq 0} P$ for a perfect complex $P \in \D(\Lambda)$ (since
perfect complexes compactly generate $\D(\Lambda)$), and this
object is $n$-cotruncated if $P \in \D(\Lambda)^{\leq n}$. 

\item Let $n \geq 0$ be an integer. 
Then the $\infty$-category $\mathcal{S}_{\leq n}$ of $n$-truncated spaces is
compactly generated by cotruncated objects. This follows because $\mathcal{S}$
is compactly generated by $\ast$, which becomes cotruncated in
$\mathcal{S}_{\leq n}$. 
\item 
Let $n \geq 0$ be an integer. 
Then the $\infty$-category $\mathrm{Cat}_{ n}$ of $(n, 1)$-categories is
compactly generated by cotruncated objects. This follows because
the $\infty$-category $\mathrm{Cat}_\infty$ of $(\infty, 1)$-categories is compactly generated by $\ast, \Delta^1$. 
\end{enumerate}
In contrast, the ``unbounded'' versions of each of these examples (i.e., the
full derived $\infty$-category $\mathcal{D}(\Lambda)$, the $\infty$-category
$\mathcal{C}$ of spaces, and the $\infty$-category $\mathrm{Cat}_{\infty}$) are not compactly generated by cotruncated objects. 
\end{example} 

The following lemma captures some essential features of this notion for our purposes.

\begin{lemma}
\label{univFclosedfiltlimit}
Let $\mathcal{C}$ be an $\infty$-category compactly generated cotruncated objects. 
\begin{enumerate}
\item Totalizations in $\mathcal{C}$ commute with filtered colimits. 
\item Suppose that $\mathcal{F}\colon  \mathrm{Sch}_{qcqs,R}^{op} \to
\mathcal{C}$ is finitary. Then the collection of maps which are of universal
$\mathcal{F}$-descent (resp.~$\mathcal{F}$-descent) is closed under filtered limits with affine transition maps. 
\end{enumerate}
\end{lemma}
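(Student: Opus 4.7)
My plan is to prove part (1) first, using the cotruncation hypothesis to reduce to a finite-limit statement in truncated spaces, and then to deduce part (2) as a formal consequence using the finitary property of $\mathcal{F}$. The overall strategy is to bootstrap descent for a limit from descent for each $f_i$ by commuting the totalization in the $\mathcal{F}$-descent formula past the filtered colimit provided by the finitary hypothesis.

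For part (1), I would use that the compact cotruncated generators jointly detect equivalences in $\mathcal{C}$. Given a filtered diagram of cosimplicial objects $\{Y_i^\bullet\}$ in $\mathcal{C}$, one wants the natural map
\[
\varinjlim_i \mathrm{Tot}(Y_i^\bullet) \to \mathrm{Tot}(\varinjlim_i Y_i^\bullet)
\]
to be an equivalence. Mapping out of a compact, $n$-cotruncated object $x$, compactness passes $\hom(x,-)$ through the filtered colimit, and representable functors preserve totalizations, so the question reduces to showing that $\mathrm{Tot}$ commutes with filtered colimits on cosimplicial objects in $\mathcal{S}_{\leq n}$. For the latter, the canonical map $\mathrm{Tot}(Z^\bullet) \to \mathrm{Tot}_N(Z^\bullet)$ to the partial totalization over $\Delta_{\leq N}$ is an equivalence for any cosimplicial $n$-truncated space $Z^\bullet$ once $N$ is sufficiently large (a Bousfield--Kan/Postnikov-tower convergence argument); consequently $\mathrm{Tot}$ restricted to such objects is a finite limit, which commutes with filtered colimits in $\mathcal{S}$.

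For part (2), I would first address the non-universal version. Let $f = \varprojlim_i f_i\colon Y \to X$ for a filtered system $\{f_i\colon Y_i \to X_i\}$ of $\mathcal{F}$-descent maps with affine transition maps. The \v{C}ech nerve $Y^\bullet$ of $f$ is the degreewise limit of the \v{C}ech nerves $Y_i^\bullet$ (finite fiber products commute with cofiltered limits of qcqs schemes), with affine transition maps in each cosimplicial degree (affine morphisms are stable under base change and composition, and one uses the cancellation property to see that $Y_j \to Y_i \times_{X_i} X_j$ is affine). The finitary property yields $\mathcal{F}(Y^k) \simeq \varinjlim_i \mathcal{F}(Y_i^k)$ in each cosimplicial degree. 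Combining this with part (1), the $\mathcal{F}$-descent hypothesis for each $f_i$, and the finitary property once more,
\[
\mathcal{F}(X) \simeq \varinjlim_i \mathcal{F}(X_i) \simeq \varinjlim_i \mathrm{Tot}(\mathcal{F}(Y_i^\bullet)) \simeq \mathrm{Tot}(\varinjlim_i \mathcal{F}(Y_i^\bullet)) \simeq \mathrm{Tot}(\mathcal{F}(Y^\bullet)),
\]
so $f$ is of $\mathcal{F}$-descent. For universality, given any base change $f'\colon Y \times_X X' \to X'$, one realizes $f'$ as the cofiltered limit of the base-changed maps $f'_i\colon Y_i \times_{X_i} X' \to X'$ over the constant target $X'$; the transition maps remain affine as base changes of the affine transition maps in the original system. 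Each $f'_i$ is of $\mathcal{F}$-descent by universal $\mathcal{F}$-descent for $f_i$, so the non-universal case applies.

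The hard part will be the appeal to convergence of partial totalizations in $\mathcal{S}_{\leq n}$ in part (1); this is the essential structural input that makes the whole argument work, and is precisely where the cotruncation hypothesis is used. Once this is in hand, the rest is formal manipulation combining the compactness of generators, the finitary property of $\mathcal{F}$, and the stability of affine morphisms under base change and composition.
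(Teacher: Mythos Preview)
Your proposal is correct and follows essentially the same approach as the paper: reduce part (1) via compact cotruncated generators to the fact that $\mathrm{Tot} \simeq \mathrm{Tot}_N$ for cosimplicial $n$-truncated spaces (the paper phrases this as ``$\mathcal{S}_{\leq n}$ is an $(n+1,1)$-category, so $\mathrm{Tot} \simeq \mathrm{Tot}^{n+2}$''), and then deduce part (2) formally from the finitary hypothesis. The paper's argument is just a terser version of yours; your added care with affine transition maps in the \v{C}ech nerves and the universal case is correct but unwritten in the original.
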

\begin{proof}
For part (1), observe that totalizations and filtered colimits in $\mathcal{C}$ can both be detected by applying $\hom_{\mathcal{C}}(x, \cdot)$ for $x$ compact.  Now we use that $\mathcal{S}_{\leq n}$ is an $(n+1, 1)$-category, so $\mathrm{Tot} \simeq \mathrm{Tot}^{n+2}$, and finite limits commute with filtered colimits. 
Part (2) follows from part (1) as $\mathcal{F}$ is finitary.
\end{proof}

In addition,  the  $v$-topology has the following basic finiteness property.  The analogous result is \emph{not} true with the $\arc$-topology.

\begin{proposition} 
\label{finitevtopology}
Let $R$ be a ring. Let $\mathcal{C}$ be an $\infty$-category that is compactly generated by cotruncated objects. 
A finitary functor $\mathcal{F}\colon  \mathrm{Sch}_{qcqs,R}^{op} \to \mathcal{C}$ is a $v$-sheaf if and only
if it satisfies the following conditions:
\begin{enumerate}
\item $\mathcal{F}$ carries finite disjoint unions of finitely presented $R$-schemes to
products. 
\item For every $v$-cover of finitely presented $R$-schemes $Y \to X$,  the map 
\[ \mathcal{F}(X) \to \varprojlim ( \mathcal{F}(Y) \rightrightarrows \mathcal{F}(Y \times_X Y) \triplearrows \dots )\]
is an equivalence. 
\end{enumerate}
\end{proposition}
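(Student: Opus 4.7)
The ``only if'' direction is immediate, so the work lies in the converse: assuming (1) and (2), verify that $\mathcal{F}$ is a $v$-sheaf on all qcqs $R$-schemes. The plan is to reduce each sheaf axiom from the finitely presented case to the qcqs case using the finitariness of $\mathcal{F}$, together with two noetherian approximation inputs: first, that any qcqs $R$-scheme is an affine cofiltered limit of finitely presented $R$-schemes (\cite[Tag 09MU]{stacks-project}), and second, that any $v$-cover of qcqs $R$-schemes can be written as an affine cofiltered limit of $v$-covers of finitely presented $R$-schemes.

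For the finite disjoint union axiom, I would approximate each of the given qcqs schemes $X_i$ by a tower of finitely presented $X_{i,\alpha}$. Since affine cofiltered limits commute with finite disjoint unions, one has $\bigsqcup_i X_i \simeq \varprojlim_\alpha \bigsqcup_i X_{i,\alpha}$ with affine transition maps. Applying $\mathcal{F}$, invoking finitariness together with hypothesis (1), and using that filtered colimits commute with finite products in $\mathcal{C}$ (this is the analogue of Lemma~\ref{univFclosedfiltlimit}(1) for finite products in place of totalizations, proved by the same detection argument via compact cotruncated generators), one concludes that $\mathcal{F}(\bigsqcup_i X_i) \simeq \prod_i \mathcal{F}(X_i)$.

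For the $v$-descent axiom, let $f \colon Y \to X$ be an arbitrary $v$-cover of qcqs $R$-schemes. Writing $f = \varprojlim_\alpha f_\alpha$ as an affine cofiltered limit of $v$-covers $f_\alpha \colon Y_\alpha \to X_\alpha$ between finitely presented $R$-schemes, hypothesis (2) provides that each $f_\alpha$ is of $\mathcal{F}$-descent. Lemma~\ref{univFclosedfiltlimit}(2), which is precisely where the cotruncation hypothesis on $\mathcal{C}$ intervenes, then promotes this to $\mathcal{F}$-descent for $f$, as desired.

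The main obstacle in the plan is the second approximation input: realizing an arbitrary $v$-cover as an affine cofiltered limit of $v$-covers between finitely presented schemes. This is a genuine feature of the $v$-topology and fails for the $\arc$-topology, as flagged in the paragraph following Example~\ref{ExValRingArc}; in practice one reduces to the affine case, approximates the source and target compatibly, and then uses that being a $v$-cover is spreadable along such approximations. I would import this from Rydh's work \cite{Rydh}; once it is in hand, the remainder of the argument is a routine combination of finitariness and the stability of $\mathcal{F}$-descent under affine cofiltered limits.
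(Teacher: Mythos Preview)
Your strategy is sound and the conclusion is correct, but the execution differs from the paper's and leans on an approximation lemma that you defer to Rydh without proof. The paper does not attempt to write an arbitrary $v$-cover $Y \to X$ directly as a cofiltered limit of $v$-covers between finitely presented $R$-schemes. Instead it proceeds in stages: first it shows that $v$-covers between finitely presented $R$-schemes are of \emph{universal} $\mathcal{F}$-descent (by approximating any base change by finitely presented ones); next it deduces that $\mathcal{F}$ is a Zariski sheaf; then for a general $v$-cover it localizes to affine $X$, reduces to $Y \to X$ finitely presented, and invokes Rydh's refinement theorem \cite[Theorem~3.12]{Rydh} to replace the cover by a composite of a quasi-compact open covering and a proper finitely presented surjection, each of which descends to a finitely presented base by standard limit arguments.

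Your route is more direct and would work, but the step ``being a $v$-cover is spreadable along such approximations'' is not a one-line citation from Rydh: it is precisely what the paper later isolates as Lemma~\ref{Descinvlimv}, and its proof again goes through \cite[Theorem~3.12]{Rydh}. So both arguments rest on the same structural input from Rydh; yours packages it as an approximation statement for $v$-covers, while the paper applies the refinement theorem directly to the given cover. The advantage of the paper's route is that it makes the dependence on Rydh explicit and avoids having to carefully combine two filtered systems (one for $X$, one for $Y$) into a single cofiltered diagram of morphisms. The advantage of yours is conceptual cleanliness once the approximation lemma is in hand. One small point: your ``reduce to the affine case'' is either unnecessary (Tag~09MU applies to qcqs schemes) or requires Zariski descent for $\mathcal{F}$, which you have not established; the paper handles this by proving the Zariski sheaf property as an intermediate step.
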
 
\begin{proof} 
Let $Y' \to X'$ be a $v$-cover between qcqs $R$-schemes. 
We want to show that this map is of universal $\mathcal{F}$-descent. 
Suppose that $Y', X'$ are finitely presented over $R$. Because any qcqs $X'$-scheme is a filtered
limit of finitely presented $X'$-schemes under affine transition maps \cite[Tag
09MU]{stacks-project}, it follows that  any base change of $Y'
\to X'$ in this case is a filtered limit of $v$-covers between finitely
presented $R$-schemes. 
Our hypotheses thus show that $Y' \to X'$ is of universal $\mathcal{F}$-descent. 

A similar argument as in the preceding 
paragraph shows that $\sF$ is a Zariski sheaf. 
Indeed, any Zariski cover $Y' \to X'$ of qcqs $R$-schemes can be written by
general limit formalism (cf.~\cite[Tag 01YT]{stacks-project}) as a
filtered limit with affine transition maps of Zariski covers of finitely
presented $R$-schemes; by assumption, these latter Zariski covers are of
$\sF$-descent, and hence $Y' \to X'$ is of $\sF$-descent. 

Now suppose that $Y' \to X'$ is an arbitrary $v$-cover between qcqs
$R$-schemes, which we need to show is of universal $\sF$-descent. 
Without loss of generality, we can assume that $X'$ is affine since $\sF$ is
a Zariski sheaf (and by \Cref{Fsorite}, since Zariski covers are of universal
$\sF$-descent). 
Since $Y' \to X'$ is a filtered limit of finitely presented maps $Y'_\alpha \to
X'$ (each of which is forced to be a $v$-cover) with affine transition maps, we
can assume that $Y' \to X'$ is finitely presented. Then using \cite[Th.~3.12]{Rydh}, $Y' \to X'$ admits a refinement $Y'' \to X'$ which is a composite of a
quasi-compact open cover and a proper finitely presented surjection. It suffices
(by \Cref{Fsorite}) to show that this refinement is of universal $\mathcal{F}$-descent. 
By general results (see, e.g., \cite[Tag 01YT]{stacks-project}) we can 
descend quasi-compact open covers and proper finitely presented surjections to a
base
which is finitely presented 
over $R$. Therefore, $ Y'' \to X' $ is a filtered limit (with affine transition
maps) of $v$-covers between finitely presented $R$-schemes. Since we saw earlier
that these maps are of universal $\mathcal{F}$-descent, it follows that $Y'' \to X'$ is
of universal $\mathcal{F}$-descent, as desired. The statement that
$\mathcal{F}$ carries finite disjoint unions of qcqs $R$-schemes to 
finite products is similar and easier. 
\end{proof} 

\begin{corollary} 
\label{finitaryv}
Let $\mathcal{C}$ be an $\infty$-category that is compactly generated by cotruncated objects. 
The $\infty$-category of finitary $\mathcal{C}$-valued $v$-sheaves on $\mathrm{Sch}_{qcqs,R}$ is
equivalent to the $\infty$-category of $\mathcal{C}$-valued $v$-sheaves on finitely
presented $R$-schemes. 
\qed
\end{corollary}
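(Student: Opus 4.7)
The plan is to exhibit mutually inverse equivalences via restriction and extension along the fully faithful inclusion $\iota\colon \mathrm{Sch}_{fp,R} \hookrightarrow \mathrm{Sch}_{qcqs,R}$ of finitely presented $R$-schemes into qcqs $R$-schemes. In one direction, restriction $\mathcal{F} \mapsto \mathcal{F} \circ \iota$ visibly sends a finitary $\mathcal{C}$-valued $v$-sheaf on $\mathrm{Sch}_{qcqs,R}$ to a $\mathcal{C}$-valued $v$-sheaf on $\mathrm{Sch}_{fp,R}$, since $v$-covers and finite disjoint unions of finitely presented $R$-schemes remain $v$-covers (resp.\ disjoint unions) in $\mathrm{Sch}_{qcqs,R}$.

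For the inverse, given a $\mathcal{C}$-valued $v$-sheaf $\mathcal{G}$ on $\mathrm{Sch}_{fp,R}$, I would define its extension $\tilde{\mathcal{G}}$ on a qcqs $R$-scheme $X$ by writing $X \simeq \varprojlim_\alpha X_\alpha$ as a cofiltered limit of finitely presented $R$-schemes with affine transition maps (via relative approximation, \cite[Tag 09MU]{stacks-project}) and setting
\[
\tilde{\mathcal{G}}(X) := \varinjlim_\alpha \mathcal{G}(X_\alpha).
\]
By construction $\tilde{\mathcal{G}}$ is finitary, and evaluating at a constant pro-system identifies $\tilde{\mathcal{G}}|_{\mathrm{Sch}_{fp,R}} \simeq \mathcal{G}$.

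To see that $\tilde{\mathcal{G}}$ is a $v$-sheaf, I would invoke Proposition~\ref{finitevtopology}: the finite-disjoint-union property and $v$-descent against $v$-covers of finitely presented $R$-schemes both reduce immediately to the corresponding properties for $\mathcal{G}$, which hold by hypothesis (the disjoint union property additionally using that finite limits commute with filtered colimits in $\mathcal{C}$, as in Lemma~\ref{univFclosedfiltlimit}(1)). Conversely, if $\mathcal{F}$ is a finitary $v$-sheaf on $\mathrm{Sch}_{qcqs,R}$ and $X \simeq \varprojlim_\alpha X_\alpha$ is as above, the natural comparison map $\widetilde{\mathcal{F}\circ\iota}(X) = \varinjlim_\alpha \mathcal{F}(X_\alpha) \to \mathcal{F}(X)$ is an equivalence, precisely by finitariness of $\mathcal{F}$. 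Thus the two constructions are mutually inverse equivalences.

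The main technical point requiring care is the $\infty$-categorical well-definedness and functoriality of the formula defining $\tilde{\mathcal{G}}(X)$, since a priori different pro-presentations of $X$ give different indexing diagrams. The cleanest way to handle this is to reinterpret the extension as the canonical ind-extension of $\mathcal{G}$ under the identification of $\mathrm{Sch}_{qcqs,R}^{op}$ with a suitable quotient of the ind-completion of $\mathrm{Sch}_{fp,R}^{op}$ (keeping affine transition maps); once this formalism is in place, well-definedness, functoriality, and the identities above are automatic.
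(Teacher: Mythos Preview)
Your proposal is correct and follows exactly the approach the paper intends: the corollary is marked \qed\ in the paper as an immediate consequence of Proposition~\ref{finitevtopology}, and you have simply spelled out the restriction/left-Kan-extension argument that this proposition is designed to make work. The only cosmetic remark is that your ``ind-completion'' discussion is more cleanly phrased as taking the left Kan extension of $\mathcal{G}$ along $\iota^{op}$ (which exists since $\mathcal{C}$ has filtered colimits), with relative approximation ensuring that the relevant comma categories are filtered so that the pointwise formula $\tilde{\mathcal{G}}(X)\simeq\varinjlim_\alpha\mathcal{G}(X_\alpha)$ holds.
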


\subsection{Ultraproducts and sheaves}
\label{ultraprod}

To proceed further, we will need to show that  equivalences of $v$-sheaves can
be detected by their values on absolutely integrally closed valuation rings (at
least in the noetherian case, this is a classical result). In order to do so, we
first review some facts about ultraproducts and sheaves; our goal is to prove
Corollary~\ref{ultradetectsequivalence}, explaining how certain functors defined
on all commutative rings can be controlled when evaluated on an infinite product
of rings. We also refer to \cite{Jardine} for a more detailed treatment.  For the rest of this section, we fix a base ring $R$ and an $\infty$-category $\mathcal{C}$ admitting small limits and filtered colimits.

Recall that under Stone duality (see \cite{Johnstone} for a general reference),
there is a duality between Boolean algebras and profinite sets given by sending such a space to its collection of clopen subsets. One can also
describe sheaves on profinite sets in terms of the corresponding Boolean algebra:

\begin{proposition}
\label{SheafStone}
Let $X$ be a profinite set, and let $\mathcal{B}$ be the poset of quasi-compact
open (or equivalently clopen) subsets of $X$. The $\infty$-category of
$\mathcal{C}$-valued sheaves on $X$ is identified via restriction with the
$\infty$-category of functors $\mathcal{B}^{op} \to \mathcal{C}$ that carry finite disjoint unions to products.
\end{proposition}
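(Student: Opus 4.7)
The plan is to verify the equivalence in two standard steps: a basis-to-topology reduction, followed by a simplification of the sheaf condition on $\mathcal{B}$.

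First, since $\mathcal{B}$ is a basis for the topology of $X$ closed under finite intersections, and $\mathcal{C}$ admits small limits, I would invoke the $\infty$-categorical basis-to-sheaf equivalence (an $\infty$-analog of \cite[Tag 009H]{stacks-project}, with the inverse to restriction given by right Kan extension along $\mathcal{B} \hookrightarrow \mathrm{Op}(X)$; here filtered unions of clopens exhaust every open of $X$, which makes the Kan extension compute correctly). This identifies $\mathcal{C}$-valued sheaves on $X$ with $\mathcal{C}$-valued sheaves on the site $\mathcal{B}$, whose covers are the jointly covering families in $\mathcal{B}$. The claim thus reduces to showing that a presheaf $F \colon \mathcal{B}^{op} \to \mathcal{C}$ is a sheaf on $\mathcal{B}$ if and only if it carries finite disjoint unions to products.

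For this simplified claim, the forward direction is immediate: given $U = U_1 \sqcup U_2$ in $\mathcal{B}$, the family $\{U_1, U_2\}$ covers $U$ with all pairwise intersections empty, so the \v{C}ech nerve is essentially constant with value $F(U_1) \times F(U_2)$ (using $F(\emptyset) \simeq \ast$, coming from the empty disjoint union case), yielding $F(U) \simeq F(U_1) \times F(U_2)$. For the reverse direction, I would combine two facts about $\mathcal{B}$: quasi-compactness of any clopen $U \in \mathcal{B}$ reduces any cover to a finite cover $\{U_1, \ldots, U_n\}$, and the Boolean algebra structure of $\mathcal{B}$ allows refining this finite cover to a finite disjoint partition $U = W_1 \sqcup \ldots \sqcup W_m$ given by the atoms of the finite Boolean subalgebra generated by $\{U_i\}$. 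For the partition $\{W_j\}$, the \v{C}ech descent condition is immediate from the hypothesis, as the \v{C}ech nerve collapses to the constant cosimplicial object on $\prod_j F(W_j)$. For the original cover $\{U_i\}$, each intersection $U_{i_0} \cap \ldots \cap U_{i_k}$ is a disjoint union of some subset of the $W_j$'s, and hence its $F$-value is computed as a finite product of $F(W_j)$'s.

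The main technical obstacle is the final combinatorial step: showing that the \v{C}ech totalization for the original cover $\{U_i\}$ also equals $\prod_j F(W_j) \simeq F(U)$. The cleanest route is a refinement-invariance argument: once the sheaf condition is verified for the disjoint partition $\{W_j\}$, the refinement map $\{W_j\} \to \{U_i\}$ induces a morphism of \v{C}ech nerves which, after applying $F$ and using the finite disjoint union hypothesis to rewrite each term, becomes the cotensor of a constant cosimplicial object on $\prod_j F(W_j)$ against the nerve of a surjection of finite sets; contractibility of the latter nerve then forces the induced map on totalizations to be an equivalence. This completes the reduction and hence the proof.
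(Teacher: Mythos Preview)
Your proof is correct and follows essentially the same approach as the paper: both reduce to sheaves on the basis $\mathcal{B}$ via right Kan extension (the paper writes this explicitly as $\widetilde{g}(U) = \varprojlim_{V \subset U} g(V)$), and both use the Boolean algebra structure of $\mathcal{B}$ to reduce the sheaf condition to the finite disjoint union condition. The paper is much terser, citing \cite[Corollary 1.1.4.5]{SAG} for the general statement and giving only the key hint that any inclusion $V \subset U$ in $\mathcal{B}$ splits as $U = V \sqcup (U \setminus V)$; your argument spells out the refinement-to-partition step and the \v{C}ech comparison in more detail than the paper does.
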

See \cite[Corollary 1.1.4.5]{SAG} for a more general statement. 

\begin{proof}
Given a $\mathcal{C}$-valued sheaf $f$ on $X$, restriction certainly gives a
functor $\overline{f}\colon \mathcal{B}^{op} \to \mathcal{C}$ carrying finite
disjoint unions to coproducts. Conversely, given such a functor $g\colon
\mathcal{B}^{op} \to \mathcal{C}$, we can define a presheaf $\widetilde{g}$ on
$X$ by setting $\widetilde{g}(U) = \varprojlim g(V)$ where the limit runs over
all quasi-compact open subsets $V \subset U$. To see that $\widetilde{g}$ is a
sheaf, it is enough to check that its restriction $g$ to quasi-compact open
subsets forms a sheaf; but this follows from the assumption on $g$ together
with the observation that if $V \subset U$ is an inclusion of quasi-compact open
subsets of $X$, then both $U$ and $V$ are clopen, and $U = V \sqcup (U-V)$ is
the disjoint union of $V$ with its complement in $U$. It is straightforward to
check that these constructions give inverse equivalences.  \end{proof}

The following definition is partially motivated by Proposition~\ref{SheafStone}.

\begin{definition} 
\label{SheafPowerSet}
Let $T$ be a set. We let $\mathcal{P}(T)$ denote the poset of all subsets of $T$.  We say that a functor $f\colon  \mathcal{P}(T)^{op} \to \mathcal{C}$ is a \emph{sheaf} if it carries finite disjoint unions to products.
\end{definition}
\begin{example} 
\label{prodfunctor}
Suppose that we have a functor $u\colon  T \to \mathcal{C}$ (where $T$ is considered as a
discrete category). Then we have a sheaf $f\colon  \mathcal{P}(T)^{op} \to
\mathcal{C} $ defined via $f(T') = \prod_{t \in T'} u(t)$. 
\end{example}

This notion of a sheaf turns out to be equivalent to the classical notion of a sheaf on the profinite set corresponding to the Boolean algebra $\mathcal{P}(T)$.

\begin{cons}[Stone--\v{C}ech compactification] 
\label{SCcompact}
\label{StoneCech}
Let $T$ be a set. An \emph{ultrafilter} $\mathfrak{U}$ on $T$ is a collection
of subsets such that $\emptyset \notin \mathfrak{U}$, the
collection $\mathfrak{U}$ is closed
under finite intersections, and for every $T' \subset T$, either $T'$ or $T
\setminus T'$ belongs to $\mathfrak{U}$. Each element $t \in T$ defines the {\em
principal ultrafilter} $\mathfrak{U}_t$ of all subsets of $T$ containing $t$.

The collection of all ultrafilters on $T$ is naturally a profinite set $\beta
T$, called the {\em Stone--\v{C}ech compactification} of $T$ (considered as a
topological space with the discrete topology), cf.~\cite[Ch.~6]{GJ60} for an
account of the Stone--\v{C}ech compactification more generally. The assignment $t
\mapsto \mathfrak{U}_t$ gives an open embedding $T \hookrightarrow \beta T$ with
dense image, where $T$ again has the discrete topology. The Boolean algebra of all
clopen subsets of $\beta T$ is identified with the power set $\mathcal{P}(T)$
via $(V \subset \beta T) \mapsto (V \cap T \subset T)$. Under the equivalence of
Proposition~\ref{SheafStone}, the $\infty$-category of sheaves on $\mathcal{P}(T)$ in the sense of Definition~\ref{SheafPowerSet} is equivalent to the category of sheaves on $\beta T$ in the classical sense. In particular, given a sheaf $f\colon  \mathcal{P}(T)^{op} \to \mathcal{C}$ and an ultrafilter
$\mathfrak{U}$ on $T$, we may define {\em the stalk of $f$ at $\mathfrak{U}$} by the formula  
\[ f_{\mathfrak{U}} = \varinjlim_{T' \in \mathfrak{U}} f(T'),\] 
where the colimit is taken over the (filtered) partially ordered set of all $T' \in \mathfrak{U}$.
\end{cons}

\begin{lemma} 
\label{checkequivalence}
Assume that $\mathcal{C} $ is compactly generated. Fix a map $\eta\colon f \to
g$ of sheaves in $\mathrm{Fun}( \mathcal{P}(T)^{op}, \mathcal{C})$. Then $\eta$
is an equivalence if and only if $\eta_{\mathfrak{U}} \colon f_{\mathfrak{U}} \to g_{\mathfrak{U}}$ is an equivalence  for each ultrafilter $\mathfrak{U}$ on $T$.
\end{lemma}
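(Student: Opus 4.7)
The forward direction is immediate: if $\eta\colon f \to g$ is an equivalence of sheaves, then for each ultrafilter $\mathfrak{U}$ on $T$, the induced map $\eta_{\mathfrak{U}}$ is a filtered colimit (indexed by $T' \in \mathfrak{U}$) of the equivalences $\eta(T')$, hence an equivalence.

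For the converse, the plan is to use compact generation to reduce to the case $\mathcal{C} = \mathcal{S}$, and then invoke the standard fact that equivalences of sheaves of spaces on a profinite set can be detected stalkwise. Concretely, since $\mathcal{C}$ is compactly generated, a map in $\mathcal{C}$ is an equivalence if and only if it becomes an equivalence after applying $\mathrm{Map}_{\mathcal{C}}(x,-)$ for every compact $x \in \mathcal{C}$. For such $x$, the functor $\mathrm{Map}_{\mathcal{C}}(x,-)\colon \mathcal{C} \to \mathcal{S}$ preserves all small limits (by definition of $\mathrm{Map}$) and, by compactness of $x$, filtered colimits. Consequently, $\mathrm{Map}_{\mathcal{C}}(x, f)$ and $\mathrm{Map}_{\mathcal{C}}(x, g)$ are again sheaves on $\mathcal{P}(T)^{op}$ in the sense of Definition~\ref{SheafPowerSet}, and their stalks at $\mathfrak{U}$ are obtained by applying $\mathrm{Map}_{\mathcal{C}}(x,-)$ to $f_{\mathfrak{U}}$ and $g_{\mathfrak{U}}$. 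The hypothesis thus transfers, and it suffices to treat $\mathcal{C} = \mathcal{S}$.

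Next, by Proposition~\ref{SheafStone} and Construction~\ref{StoneCech}, $\mathcal{S}$-valued sheaves on $\mathcal{P}(T)$ correspond to $\mathcal{S}$-valued sheaves on the Stone space $\beta T$, with stalks at ultrafilters on $T$ matching the usual stalks at the corresponding points of $\beta T$. It therefore suffices to show that a map of $\mathcal{S}$-valued sheaves on $\beta T$ is an equivalence whenever it is a stalkwise equivalence. This will follow from the fact that Stone spaces have homotopy dimension $0$: any open cover of a quasi-compact open $U \subseteq \beta T$ admits a refinement by a finite partition of $U$ into clopen pieces, and each such clopen admits a global section (pick a point). Hence the $\infty$-topos $\mathrm{Shv}(\beta T; \mathcal{S})$ is hypercomplete, and the stalk functors at the points of $\beta T$ (i.e.\ the ultrafilters on $T$) form a jointly conservative family.

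The main obstacle is supplying the stalk-conservativity statement for $\mathcal{S}$-valued sheaves on $\beta T$: while this is routine for set-valued sheaves, in the $\infty$-categorical setting it genuinely requires hypercompleteness. The rescue is that $\beta T$ is a Stone space, so the homotopy-dimension-$0$ argument above applies and forces all sheaves to be hypercomplete, after which joint conservativity of stalks at the points of $\beta T$ is a standard consequence of the existence of enough points.
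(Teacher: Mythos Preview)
Your proof is correct. It differs from the paper's at exactly one juncture: after fixing a compact object $x$, the paper immediately passes to $\pi_0\hom_{\mathcal{C}}(x,-)$ and works with ordinary sheaves of \emph{sets} on $\beta T$, where stalkwise conservativity is classical; you instead keep the full mapping space $\mathrm{Map}_{\mathcal{C}}(x,-)$ and work with $\mathcal{S}$-valued sheaves, invoking that $\mathrm{Shv}(\beta T;\mathcal{S})$ is hypercomplete (homotopy dimension $0$) to get stalkwise conservativity in the $\infty$-categorical sense. Both arguments arrive at the same reduction to $\beta T$, but they trade off different nontrivial inputs. The paper's route needs the assertion that the functors $\pi_0\hom_{\mathcal{C}}(x,-)$ for compact $x$ jointly detect equivalences in $\mathcal{C}$; this is phrased as ``Yoneda in the homotopy category'' together with compact generation, but that step is less immediate than it looks (compact generation gives conservativity for $\mathrm{Map}(x,-)$, not a priori for $\pi_0\mathrm{Map}(x,-)$), and ultimately relies on closure of compacts under tensoring with finite spaces. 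Your route sidesteps that subtlety entirely at the cost of citing \cite[Cor.~7.2.1.12, Th.~7.2.3.6]{HTT} for hypercompleteness of sheaves on profinite sets---an input the paper itself uses in the very next result (Corollary~\ref{truncatedultra}), so it is already in the ambient toolkit.
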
 
\begin{proof} 
The ``only if'' direction is clear. Thus, assume $\eta_{\mathfrak{U}}$ is an equivalence for each ultrafilter $\mathfrak{U}$ on $T$. 
Applying $\mathrm{Hom}_{\mathcal{C}}(x, -)$ for each compact object $x \in
\mathcal{C}$, we can reduce to the case 
where $\mathcal{C} = \mathcal{S}$. 
In this case, the result follows because sheaves of spaces on the 
Stone--\v{C}ech compactification $\beta T$ (which has covering dimension zero
in the sense of \cite[Def.~7.2.3.1]{HTT}) are automatically hypercomplete, by
\cite[Th.~7.2.3.6]{HTT}, whence equivalences can be tested on stalks. 
\end{proof} 

To proceed further, recall the following classical definition.
Fix a base ring $R$. 
\begin{definition}[Ultraproducts of rings]
Given a set $\left\{A_t\right\}_{t \in T}$ of $R$-algebras and an ultrafilter  $\mathfrak{U}$ on $T$, we define the \emph{ultraproduct} $\prod_{\mathfrak{U}} A_t$ via the formula
\[ \prod_{\mathfrak{U}} A_t = \varinjlim_{T' \in \mathfrak{U}} \left(  \prod_{t
\in T'} A_t \right). \]
Note that the colimit appearing above is filtered.
\end{definition} 

\begin{remark}[Ultraproducts via the spectrum of a product]
\label{UltraprodSpec}
Given a set $\{A_t\}_{t \in T}$ of commutative rings, the space $\mathrm{Spec}(\prod_{t \in T} A_t)$ comes equipped with a natural projection map $\pi\colon \mathrm{Spec}(\prod_{t \in T} A_t) \to \beta T$ determined by requiring that the preimage $\pi^{-1}(U)$ of a quasi-compact open $U \subset \beta T$ corresponding to a subset $T' \subset T$ is the clopen subscheme $\mathrm{Spec}(\prod_{t \in T'} A_t) \subset \mathrm{Spec}(\prod_{t \in T} A_t)$. The ultraproduct $\prod_{\mathfrak{U}} A_t$ is then simply the coordinate ring of the closed (and pro-open) subscheme $\pi^{-1}(\mathfrak{U}) \subset \mathrm{Spec}(\prod_{t \in T} A_t)$. 
Explicitly, the map 
carries a prime ideal $\mathfrak{p} \subset \prod_{t \in T} A_t$ to the
following ultrafilter on $T$: we consider all subsets $T' \subset T$ such that
the characteristic function $1_{T'} \in \prod_{t \in T} A_t$ does not belong to
$\mathfrak{p}$.\footnote{For more details, see
\url{https://math.stackexchange.com/questions/1533237/spectrum-of-mathbbz-mathbbn}.} 
\end{remark}

Next, given a set $\left\{A_t\right\}_{t \in T}$ of $R$-algebras, let us explain how certain functors defined on all rings give sheaves on $\mathcal{P}(T)$.

\begin{cons}
\label{varcons}
Let $\left\{A_t\right\}_{t \in T}$ be a  set of $R$-algebras. Let
$\mathcal{F}\colon  \mathrm{Ring}_R \to \mathcal{C}$ be a functor which preserves finite products and filtered colimits.  Then we obtain a functor (using Example~\ref{prodfunctor})
\[ f\colon  \mathcal{P}(T)^{op}  \to \mathcal{C} \]
sending 
\[ T' \subset T \mapsto \mathcal{F}( \prod_{T'} A_t) .  \]
The hypothesis that $\mathcal{F}$ preserves finite products implies that $f$ is a sheaf.  Moreover, for any ultrafilter $\mathfrak{U}$ on $T$, we have an identification $f_{\mathfrak{U}} \simeq \mathcal{F}( \prod_{\mathfrak{U}} A_t)$ since $\mathcal{F}$ commutes with filtered colimits. That is, the stalks of $f$ can be identified with the values of $\mathcal{F}$ on the ultraproducts. 

For future reference, we record a slight variant of the preceding paragraph. Fix an $A$-algebra $B$.  Then we obtain a functor 
$f_B\colon  \mathcal{P}(T)^{op} \to \mathcal{C}$ defined by 
\[ T' \subset T \mapsto \mathcal{F}( B \otimes_A \prod_{t \in T'} A_t ).  \]
Then $f_B$ is again a sheaf on $\mathcal{P}(T)$, and $(f_B)_{\mathfrak{U}} \simeq \mathcal{F}( B  \otimes_A \prod_{\mathfrak{U}} A_t)$.  That is, the stalk of $f_B$ at an ultrafilter $\mathfrak{U}$ is $\mathcal{F}$ applied to the base change of $B$ along the map from $A$ to the $\mathfrak{U}$-ultraproduct. 
\end{cons}

We can now prove the promised result, explaining how to control the behaviour on infinite products of certain functors defined on commutative rings.

\begin{corollary} 
\label{ultradetectsequivalence}
Suppose that $\mathcal{C}$ is a compactly generated, presentable $\infty$-category. 
Fix a base ring $R$. Let $\mathcal{F}_a, \mathcal{F}_b\colon  \mathrm{Ring}_R \to
\mathcal{C}$ be two functors which  commute with filtered colimits and finite
products. Suppose given a map $\mathcal{F}_a \to \mathcal{F}_b$ of functors.
Fix a set $\left\{A_t\right\}_{t \in T}$ of $R$-algebras.  Suppose that for each ultrafilter $\mathfrak{U}$ on $T$, the map $\mathcal{F}_a(\prod_{\mathfrak{U}} A_t ) \to \mathcal{F}_b( \prod_{\mathfrak{U}} A_t )$ is an equivalence.  Then the map $\mathcal{F}_a( \prod_T A_t) \to \mathcal{F}_b( \prod_T A_t )$ is an equivalence. 
\end{corollary}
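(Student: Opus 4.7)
The plan is to package the given data into a pair of sheaves on $\mathcal{P}(T)$ and apply the stalk-wise detection criterion of Lemma~\ref{checkequivalence}. Concretely, I would invoke Construction~\ref{varcons} with the family $\{A_t\}_{t \in T}$ to produce sheaves $f_a, f_b \colon \mathcal{P}(T)^{op} \to \mathcal{C}$ defined by
\[
f_a(T') = \mathcal{F}_a\bigl(\textstyle\prod_{t \in T'} A_t\bigr), \qquad f_b(T') = \mathcal{F}_b\bigl(\textstyle\prod_{t \in T'} A_t\bigr).
\]
Here the fact that $f_a$ and $f_b$ are sheaves in the sense of Definition~\ref{SheafPowerSet} uses precisely the hypothesis that $\mathcal{F}_a$ and $\mathcal{F}_b$ commute with finite products. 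The given natural transformation $\mathcal{F}_a \to \mathcal{F}_b$ induces a map $\eta \colon f_a \to f_b$ of sheaves on $\mathcal{P}(T)$.

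Next, I would identify the stalks. Since $\mathcal{F}_a$ and $\mathcal{F}_b$ commute with filtered colimits, Construction~\ref{varcons} yields natural equivalences
\[
(f_a)_{\mathfrak{U}} \simeq \mathcal{F}_a\bigl(\textstyle\prod_{\mathfrak{U}} A_t\bigr), \qquad (f_b)_{\mathfrak{U}} \simeq \mathcal{F}_b\bigl(\textstyle\prod_{\mathfrak{U}} A_t\bigr)
\]
for every ultrafilter $\mathfrak{U}$ on $T$. Under these identifications, the stalk $\eta_{\mathfrak{U}}$ is exactly the map $\mathcal{F}_a(\prod_{\mathfrak{U}} A_t) \to \mathcal{F}_b(\prod_{\mathfrak{U}} A_t)$, which by hypothesis is an equivalence.

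Finally, since $\mathcal{C}$ is compactly generated, Lemma~\ref{checkequivalence} applies and shows that $\eta$ is an equivalence of sheaves on $\mathcal{P}(T)$. Evaluating on the top element $T \in \mathcal{P}(T)$ gives the desired conclusion that $\mathcal{F}_a(\prod_T A_t) \to \mathcal{F}_b(\prod_T A_t)$ is an equivalence. There is essentially no hard step here: all the work has been done in setting up the sheaf-on-$\mathcal{P}(T)$ formalism, and the proof is a direct application of Construction~\ref{varcons} combined with Lemma~\ref{checkequivalence}. If anything, the one point to double-check is that the hypotheses of Lemma~\ref{checkequivalence} (compact generation of $\mathcal{C}$) are indeed present—which they are, as the corollary assumes $\mathcal{C}$ is compactly generated and presentable.
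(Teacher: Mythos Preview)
Your proof is correct and follows exactly the same approach as the paper: the paper's proof is a single sentence stating that the result follows from Lemma~\ref{checkequivalence} in light of Construction~\ref{varcons}, and your argument is precisely the unpacking of that sentence.
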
 
\begin{proof} 
This follows from Lemma~\ref{checkequivalence} in light of the above
constructions. 
\end{proof} 

\begin{corollary} 
\label{truncatedultra}
Fix a base ring $R$. 
Let $\sF\colon \mathrm{Ring}_R \to \mathcal{S}$ be a functor which commutes with
filtered colimits and finite products. 
Let $\left\{A_t\right\}_{t \in T}$ be a set of $T$-algebras. 
Suppose that for each ultrafilter $\mathfrak{U}$ on $T$, 
$\sF( \prod_{\mathfrak{U}} A_t)$ is $n$-truncated for some fixed $n$. 
Then $\sF( \prod_T A_t)$ is $n$-truncated. 
\end{corollary}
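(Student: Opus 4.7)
The plan is to reduce this truncatedness statement to the already established Corollary~\ref{ultradetectsequivalence}. The natural candidate for the comparison is the Postnikov truncation: set $\sF' := \tau_{\leq n} \circ \sF \colon \mathrm{Ring}_R \to \mathcal{S}$, and consider the canonical unit transformation $\eta \colon \sF \to \sF'$. To show $\sF(\prod_T A_t)$ is $n$-truncated, it then suffices to show that $\eta$ evaluated at $\prod_T A_t$ is an equivalence.

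First I would verify that $\sF'$ still satisfies the hypotheses of Corollary~\ref{ultradetectsequivalence}: that is, $\sF'$ commutes with filtered colimits and with finite products. This is a formality: $\tau_{\leq n} \colon \mathcal{S} \to \mathcal{S}_{\leq n}$ is a left adjoint (to the inclusion of $n$-truncated spaces), so it preserves all colimits; and the full subcategory $\mathcal{S}_{\leq n} \subset \mathcal{S}$ is closed under finite products, so $\tau_{\leq n}$ commutes with finite products as well. Composing with $\sF$, which by assumption commutes with filtered colimits and finite products, one sees that $\sF'$ inherits both properties.

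Next I would check the hypothesis of Corollary~\ref{ultradetectsequivalence} for $\eta$: for each ultrafilter $\mathfrak{U}$ on $T$, the map
\[
\sF(\textstyle\prod_{\mathfrak{U}} A_t) \longrightarrow \sF'(\textstyle\prod_{\mathfrak{U}} A_t) = \tau_{\leq n}\sF(\textstyle\prod_{\mathfrak{U}} A_t)
\]
is an equivalence. But this is immediate from our standing hypothesis that each $\sF(\prod_{\mathfrak{U}} A_t)$ is already $n$-truncated, since $\tau_{\leq n}$ acts as the identity on $n$-truncated spaces. Applying Corollary~\ref{ultradetectsequivalence} (with $\mathcal{C} = \mathcal{S}$, which is compactly generated and presentable) to the natural transformation $\eta$, we conclude that $\sF(\prod_T A_t) \to \tau_{\leq n}\sF(\prod_T A_t)$ is an equivalence, which is precisely the statement that $\sF(\prod_T A_t)$ is $n$-truncated.

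There is no serious obstacle here; the only subtle point, which one should state explicitly, is the verification that truncation preserves the structure (filtered colimits and finite products) needed to apply the prior corollary. Once that is noted, the result follows formally by reducing from ``$n$-truncatedness of a value'' to ``equivalence of two functors on a value,'' which is exactly the content of Corollary~\ref{ultradetectsequivalence}.
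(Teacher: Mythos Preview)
Your argument is correct. It is a slightly different packaging than the paper's own proof. The paper argues directly with the sheaf $f$ on $\beta T$ furnished by Construction~\ref{varcons}: since $\beta T$ is profinite, the $\infty$-topos of sheaves on it is hypercomplete (it has homotopy dimension $0$), so a sheaf of spaces with $n$-truncated stalks is itself $n$-truncated; applying this to $f$ gives the result. Your approach instead recycles Corollary~\ref{ultradetectsequivalence} by comparing $\sF$ with $\tau_{\leq n}\sF$, thereby avoiding any direct appeal to hypercompleteness results from \cite{HTT} beyond what was already absorbed into Lemma~\ref{checkequivalence}. Both routes are short; yours is a bit more self-contained, while the paper's explains the underlying reason (hypercompleteness of sheaves on profinite sets) more transparently. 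The one point worth stating crisply in your write-up is that the inclusion $\mathcal{S}_{\leq n}\hookrightarrow\mathcal{S}$ preserves filtered colimits (so that $\sF'=\tau_{\leq n}\circ\sF$, viewed as landing in $\mathcal{S}$, still commutes with filtered colimits); you allude to this but it is the only place one could stumble.
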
 
\begin{proof} 
This also follows in view of the above constructions. 
Here we use that a sheaf of spaces $\mathcal{P}(T) \to \mathcal{S}$ with $n$-truncated
stalks at all ultrafilters $\mathfrak{U}$ is automatically $n$-truncated; 
this follows for instance by comparison with sheaves on the topological space
$\beta T$, which are automatically hypercomplete,
cf.~\cite[Cor.~7.2.1.12 and Th.~7.2.3.6]{HTT}.
\end{proof}

\subsection{Detection of universal $\mathcal{F}$-descent}
\label{DetectUnivDescent}
Throughout this section, we let $\mathcal{C}$ be an $\infty$-category
that is compactly generated by cotruncated objects. 
Let $\mathcal{F}\colon  \mathrm{Sch}_{qcqs,R}^{op} \to \mathcal{C}$ be a
functor. The goal of this section is prove a result (\Cref{ultraproductdetects}) that explains how to detect $\mathcal{F}$-descent properties of morphisms of schemes by base changing to ultraproducts instead of products. To formulate this precisely, it is convenient to make the following definition. 

\begin{definition}[Detection of universal $\mathcal{F}$-descent]
Let $X \in \mathrm{Sch}_{qcqs,R}$. 
Consider a family of maps $X_i \to X $ of qcqs $R$-schemes for $ i \in I$. We say that  
the family $\left\{X_i \to X \right\}_{i \in I}$
\emph{detects universal
$\mathcal{F}$}-descent if a map $f\colon  Y \to X$ in $\mathrm{Sch}_{qcqs,R}$ is of universal $\mathcal{F}$-descent
if and only if the base change $f_i\colon  Y \times_X X_i \to X_i$ is of universal
$\mathcal{F}$-descent for each $i \in I$. 
\end{definition}

\begin{example} 
\label{F1ex}
Suppose a map $Y \to X$ is of universal $\mathcal{F}$-descent. 
Then the map $Y \to X$ detects universal $\mathcal{F}$-descent. This follows in view of Lemma~\ref{Fsorite}. 
\end{example}

\begin{example} 
Suppose we have a family $\left\{X_i \to X\right\}_{i \in I}$ which detects
universal $\mathcal{F}$-descent. Suppose for each
$i \in I$, we have a set $J_i$ and a family of maps $\{Y_j \to X_i\}_{ j \in
J_i}$ which detects universal $\mathcal{F}$-descent. 
Then the family $\left\{Y_j \to X\right\}_{j \in J_i, \  \mathrm{some} \ i}$
detects universal $\mathcal{F}$-descent. 
\label{transitivityofdetect}
\end{example} 

\begin{example} 
\label{F2ex}
\label{detectsimpliesproductF}
Suppose that $\left\{\mathrm{Spec}(A_i)\to \mathrm{Spec}(A)\right\}_{i \in I}$ detects universal $\mathcal{F}$-descent. 
Then the map $f\colon  \mathrm{Spec}( \prod_{i \in I} A_i) \to \mathrm{Spec}(A)$ is of universal $\mathcal{F}$-descent. In fact,
the map admits a section after base change to $\mathrm{Spec}(A_j)$ for each $j$ and
each such base change is therefore of universal $\mathcal{F}$-descent. By
assumption, this implies that $f$ is of universal $\mathcal{F}$-descent. As a
partial converse, under mild hypotheses on $\mathcal{F}$, we shall show in
\Cref{ultraproductdetects} that if the singleton family $\{\mathrm{Spec}(\prod_i A_i) \to \mathrm{Spec}(A)\}$ detects universal $\mathcal{F}$-descent, then the family $\{\mathrm{Spec}(\prod_{\mathfrak{U}} A_i) \to \mathrm{Spec}(A)\}_{\mathfrak{U} \in \beta I}$ (obtained by localizing $\prod_i A_i$ at all possible ultrafilters on $I$) detects universal $\mathcal{F}$-descent. 
\end{example}

\begin{lemma}[Detection and filtered colimits] 
\label{detectfiltcolimit}
Let $\mathcal{C}$ be an $\infty$-category
that is compactly generated by cotruncated objects. 
Let $\mathcal{F}\colon  \mathrm{Sch}_{qcqs,R}^{op} \to \mathcal{C}$ be a finitary functor. 
Let $X \in \mathrm{Sch}_{qcqs,R}$ be a qcqs scheme and let $\left\{Y_i\right\}_{i \in I}$ be a family of
qcqs $X$-schemes. Suppose we can write $X$ as a filtered limit $X = \varprojlim_{j
\in J} X_j$ in $\mathrm{Sch}_{qcqs,R}$ with affine transition maps. Suppose that 
for each $j$, the family of maps $\{Y_i \to X_j\}_{i \in I}$ detects
universal $\mathcal{F}$-descent. 
Then the family of maps $\left\{Y_i \to X\right\}$ detects universal $\mathcal{F}$-descent. 
\end{lemma}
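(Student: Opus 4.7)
The forward implication is immediate from Lemma~\ref{Fsorite}(4), since universal $\mathcal{F}$-descent is preserved under base change. The content lies in the converse, which I plan to establish as follows. Given $f\colon Y' \to X$ in $\mathrm{Sch}_{qcqs,R}$ such that each base change $f_i\colon Y' \times_X Y_i \to Y_i$ is of universal $\mathcal{F}$-descent, I will first show that the composition $Y' \to X \to X_j$ is of universal $\mathcal{F}$-descent for every $j \in J$, and then recover $f$ by a filtered-limit argument.

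For the first step, fix $j \in J$. Since the transition maps in the system $\{X_k\}$ are affine, the structure map $X \to X_j$ is affine, hence separated, so the diagonal $\Delta\colon X \to X \times_{X_j} X$ is a closed immersion. Pulling $\Delta$ back along the evident map $Y' \times_{X_j} Y_i \to X \times_{X_j} X$ exhibits the canonical comparison
\[
Y' \times_X Y_i \;\hookrightarrow\; Y' \times_{X_j} Y_i
\]
as a closed immersion whose composition with the projection $Y' \times_{X_j} Y_i \to Y_i$ recovers the hypothesis map $f_i$. Since $f_i$ is of universal $\mathcal{F}$-descent, Lemma~\ref{Fsorite}(3) forces $Y' \times_{X_j} Y_i \to Y_i$ to be of universal $\mathcal{F}$-descent for each $i$. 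The assumed detection property of $\{Y_i \to X_j\}_{i \in I}$ then yields that $Y' \to X_j$ is of universal $\mathcal{F}$-descent.

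For the second step, set $Y'_j := Y' \times_{X_j} X$, viewed as an $X$-scheme via the second projection. The commutation of fiber products with inverse limits identifies
\[
\varprojlim_{j \in J} Y'_j \;=\; Y' \times_{\varprojlim_j X_j} X \;=\; Y' \times_X X \;=\; Y'
\]
as $X$-schemes, and the transition maps in $\{Y'_j\}$ are themselves pullbacks of diagonals and hence closed immersions (in particular, affine). Each $Y'_j \to X$ is a base change of the universal-$\mathcal{F}$-descent map $Y' \to X_j$ along $X \to X_j$, so Lemma~\ref{Fsorite}(4) ensures it is of universal $\mathcal{F}$-descent. Applying Lemma~\ref{univFclosedfiltlimit}(2)---which uses precisely the hypotheses that $\mathcal{F}$ is finitary and $\mathcal{C}$ is compactly generated by cotruncated objects---to the cofiltered inverse limit $Y' = \varprojlim_j Y'_j$ with affine transition maps then yields that $f\colon Y' \to X$ is of universal $\mathcal{F}$-descent, as desired.

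The main obstacle is the mismatch between the fiber products $Y' \times_X Y_i$ (over which we are given descent information) and $Y' \times_{X_j} Y_i$ (which the level-$j$ detection hypothesis wants to see). The affineness of the transition maps in $\{X_j\}$ makes this mismatch a closed immersion, which is exactly what allows Lemma~\ref{Fsorite}(3) to transport the descent property across it; everything else is formal manipulation using the sorite (Lemma~\ref{Fsorite}) and the filtered-limit closure property (Lemma~\ref{univFclosedfiltlimit}(2)).
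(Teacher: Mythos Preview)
Your proof is correct and follows essentially the same approach as the paper: factor $f_i$ through $Y' \times_{X_j} Y_i \to Y_i$, invoke Lemma~\ref{Fsorite}(3), use the level-$j$ detection hypothesis to conclude $Y' \to X_j$ is of universal $\mathcal{F}$-descent, and then pass to the limit via Lemma~\ref{univFclosedfiltlimit}(2). The paper's limit step is slightly more direct than yours---it applies Lemma~\ref{univFclosedfiltlimit}(2) immediately to the tower $\{Y' \to X_j\}_j$ (fixed source, varying target) rather than first base-changing to form $Y'_j = Y' \times_{X_j} X$ and then taking the limit of $\{Y'_j \to X\}_j$; your detour is harmless but unnecessary, and the observation that the relevant comparison maps are closed immersions, while correct, is not actually needed for Lemma~\ref{Fsorite}(3).
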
 

\begin{proof} 
Let $X' \to X $ be a map. Suppose that each base change $Y_i \times_X X' \to Y_i$ is  of universal $\mathcal{F}$-descent. 
For each $i, j$, we have a factorization
\[  Y_i \times_X X' \to  
Y_i \times_{X_j} X' \to Y_i
\]
and since the composition is of universal $\mathcal{F}$-descent, it follows that 
$Y_i \times_{X_j} X' \to Y_i$ is of universal $\mathcal{F}$-descent (Lemma~\ref{Fsorite}). 
Since $\{Y_i \to X_j\}_{i \in I}$ detects universal $\mathcal{F}$-descent, we find that
each map 
$X' \to X_j$ is of universal $\mathcal{F}$-descent. Taking the limit over $j$,
we find that $X' \to X$ is of universal $\mathcal{F}$-descent
(Lemma~\ref{univFclosedfiltlimit}). 
\end{proof}

\begin{lemma} 
\label{ultraproductdetects}
Let $\mathcal{C}$ be 
an $\infty$-category that is compactly generated by cotruncated objects. 
Let $\mathcal{F}\colon  \mathrm{Ring}_R \to \mathcal{C}$ be a finitary functor which
preserves finite products. Let $\left\{A_t\right\}_{t \in T}$ be a set of $R$-algebras and let $A =
\prod_{t \in T} A_t$. 
Then the maps $\left\{A \to \prod_{\mathfrak{U}} A_t\right\}$,
where $\mathfrak{U}$
ranges over the ultrafilters on $T$, detect universal $\mathcal{F}$-descent. 
\end{lemma} 

\begin{proof} 

Let $B \to C$ be a map of $A$-algebras. 
Suppose that $B \otimes_A \prod_{\mathfrak{U}} A_t \to C \otimes_A
\prod_{\mathfrak{U}} A_t$ is of $\mathcal{F}$-descent for each $\mathfrak{U}$ on $T$. Then we claim that 
$B \to C$ is of $\mathcal{F}$-descent, which will imply the result. 
In fact, 
consider the augmented cosimplicial object of $\mathcal{C}$
\begin{equation}  \mathcal{F}(B) \to  \mathcal{F}(C) \rightrightarrows \mathcal{F}( C \otimes_B C)
\triplearrows \dots   \end{equation}
which we want to show to be a limit diagram. 
By Construction~\ref{varcons}, 
the augmented cosimplicial diagram upgrades to a diagram of sheaves (with values
in $\mathcal{C}$) on
$\mathcal{P}(T)$. Furthermore, by assumption, the diagram of
$\mathfrak{U}$-stalks is a limit diagram for each ultrafilter $\mathfrak{U}$. 
Since totalizations and filtered colimits commute in $\mathcal{C}$ by
assumption, it follows by Lemma~\ref{checkequivalence} that 
we have a limit diagram of sheaves, and thus $B \to C$ is of
$\mathcal{F}$-descent, as
desired. 
\end{proof}

\subsection{Detection in the $v$-topology}
\label{vSheafAICVR}

The goal of this section is to prove that finitary $v$-sheaves are controlled by their behaviour on (certain) valuation rings (Propositions~\ref{vcoverbyvaluation} and \ref{aicvaldetect}). To formulate these, we first recall the following basic definition:

\begin{definition} 
\label{aic}
A ring $R$ is called \emph{absolutely integrally closed} if every monic polynomial in $R[x]$ admits a root in $R$. 
\end{definition}

We will only use this definition in the case where $R$ is an integral domain, in
which case it is equivalent to the condition that $\mathrm{Frac}(R)$ is
algebraically closed and that $R$ is normal. Note that the class of absolutely
integrally closed domains is preserved 
by localizations and quotients by prime ideals. 

\begin{lemma} 
\label{ultraaic}
An ultraproduct of absolutely integrally closed valuation rings is an absolutely integrally closed valuation ring. 
\end{lemma}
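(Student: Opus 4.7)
The plan is to verify directly that $V := \prod_{\mathfrak{U}} V_t$ is (i) an integral domain, (ii) a valuation ring, and (iii) has algebraically closed fraction field. All three steps will proceed by the same ultrafilter reasoning: write elements as equivalence classes $[(a_t)]$ of sequences in $\prod_t V_t$, identify a subset $S \subset T$ of indices at which the desired property holds, and use that $\mathfrak{U}$ is an ultrafilter to conclude $S \in \mathfrak{U}$.

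For (i), I would observe that if $a = [(a_t)]$ and $b = [(b_t)]$ satisfy $ab = 0$, then $S := \{t : a_t b_t = 0\} \in \mathfrak{U}$; since each $V_t$ is a domain, $S = \{t : a_t = 0\} \cup \{t : b_t = 0\}$, and since $\mathfrak{U}$ is an ultrafilter closed under finite unions in the sense that it meets one factor of any finite union, one of these pieces lies in $\mathfrak{U}$, so $a = 0$ or $b = 0$. For (ii), given $a,b \in V$ as above, let $S_1 := \{t : a_t \mid b_t\}$ and $S_2 := \{t : b_t \mid a_t\}$; since each $V_t$ is a valuation ring, $S_1 \cup S_2 = T$, so one of them, say $S_1$, belongs to $\mathfrak{U}$. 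Choose witnesses $c_t \in V_t$ with $b_t = a_t c_t$ for $t \in S_1$ (and $c_t := 0$ otherwise); then $c := [(c_t)] \in V$ satisfies $b = ac$, so the ideals of $V$ are totally ordered by inclusion and $V$ is a valuation ring.

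For (iii), I would use the equivalent characterization in \Cref{aic}: let $p(x) = x^n + c_{n-1} x^{n-1} + \cdots + c_0 \in V[x]$ be monic, write each $c_i = [(c_{i,t})]$, and consider the monic polynomial $p_t(x) = x^n + c_{n-1,t} x^{n-1} + \cdots + c_{0,t} \in V_t[x]$. By hypothesis each $V_t$ is absolutely integrally closed, so we can choose a root $r_t \in V_t$ of $p_t$. Then $r := [(r_t)]$ is a root of $p$ in $V$, since the image of $p(r)$ in each $V_t$ is $p_t(r_t) = 0$, and hence the set of indices where $p(r)$ vanishes is all of $T$.

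There is no serious obstacle here; the argument is essentially a special case of \L o\'s's theorem applied to the first-order axioms of an integral domain, a valuation ring, and algebraic closedness. The only minor subtlety is extracting the divisor $c$ in step (ii) in an honest way from pointwise witnesses $c_t$, but the axiom of choice (to select a witness for each $t \in S_1$) handles this immediately.
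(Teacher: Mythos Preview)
Your proof is correct and takes essentially the same approach as the paper: both observe that being an absolutely integrally closed valuation ring is a first-order property, so the result follows from \strokeL o\'s's theorem, and you have simply unwound that argument explicitly for the three axioms. The paper also records a slight variant of the argument for the absolutely integrally closed condition: the full product $\prod_t V_t$ is already absolutely integrally closed (solve monic polynomials coordinatewise), and this property passes to any localization such as the ultraproduct.
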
 
\begin{proof} 
The condition that a ring should be an absolutely integrally closed valuation ring is a first-order
property in the language of commutative rings. Therefore, the result follows
from \strokeL o\'s's  theorem (cf.~\cite[Th.~4.1.9]{CKmodel} for an account).  

Alternately, one can argue directly as in \cite[Lemma 6.2]{BhattScholzeWitt} to show that an ultraproduct $\prod_{\mathfrak{U}} V_t$ of a collection $\{V_t\}_{t \in T}$ of valuation rings is a valuation ring. If each $V_t$ is absolutely integrally closed, the same holds for $\prod_{t \in T} V_t$, and thus also for any localization such as $\prod_{\mathfrak{U}} V_t$.
\end{proof}

Our next goal is to show that every ring admits a $v$-cover by a product of
absolutely integrally closed valuation rings
(Proposition~\ref{vcoverbyvaluation}); this is a variant of \cite[Lemma
6.2]{BhattScholzeWitt}, and is closely related to the fact that absolutely
integrally closed valuation rings give a conservative system of points for the
$h$-topology (at least for noetherian rings, see \cite[Prop.~2.2, Cor.~3.8]{GooLi} and \cite{GabberKelly}).  To get there, let us first study absolute integral closures of valuation rings.

\begin{lemma} 
\label{aicvcover}
Let $V$ be a valuation ring. Then there exists an absolutely integrally closed valuation ring $V'$ and a
map $V \to V'$ which is faithfully flat (hence a $v$-cover). 
\end{lemma}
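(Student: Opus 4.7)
The plan is to build $V'$ by extending the valuation on $K := \mathrm{Frac}(V)$ to an algebraic closure $\overline{K}$ and taking the resulting valuation ring.

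First, I would choose an algebraic closure $K \hookrightarrow \overline{K}$. By Chevalley's extension theorem for valuations (e.g. \cite[Ch.~VI, \S 1, No.~3, Prop.~5]{BourbakiCA}), the valuation on $K$ corresponding to $V$ extends to a valuation on $\overline{K}$; let $V' \subset \overline{K}$ be its valuation ring. By construction $V' \cap K = V$, so the inclusion $V \hookrightarrow V'$ is an injective local homomorphism of valuation rings, and $\mathrm{Frac}(V') = \overline{K}$ is algebraically closed.

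Next, I would record that $V \to V'$ is faithfully flat: an injective local homomorphism between valuation rings is automatically faithfully flat (this is exactly the content already used in the proof of \Cref{aicflatlemma}, and appears in the definition of an extension of valuation rings in \Cref{vcover}(1)). In particular $V \to V'$ is a $v$-cover by \Cref{vcover}(2). Finally, since $V'$ is integrally closed in $\overline{K}$ (being a valuation ring) and its fraction field $\overline{K}$ is algebraically closed, $V'$ is absolutely integrally closed in the sense of \Cref{aic}, as required.

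The only nontrivial ingredient is the extension-of-valuations step; aside from that the argument is formal. An equivalent route that avoids citing Chevalley's theorem directly would be to let $\widetilde{V}$ denote the integral closure of $V$ inside $\overline{K}$, pick (via Zorn or going-up for the integral extension $V \to \widetilde{V}$) a maximal ideal $\mathfrak{m}' \subset \widetilde{V}$ lying over the maximal ideal of $V$, and set $V' := \widetilde{V}_{\mathfrak{m}'}$; a short check then shows $V'$ is a valuation ring with fraction field $\overline{K}$, and $V \to V'$ is an injective local homomorphism of valuation rings. Either way, the sole real content is the classical fact that valuations extend along algebraic extensions.
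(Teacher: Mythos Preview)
Your proposal is correct and follows essentially the same route as the paper: extend the valuation on $K=\mathrm{Frac}(V)$ to an algebraic closure $\overline{K}$, take $V'$ to be the corresponding valuation ring, and invoke \Cref{aicflatlemma} for faithful flatness and absolute integral closedness. The paper's proof is simply a terser version of your first paragraph; your added citation of Chevalley's theorem and the alternate integral-closure construction are helpful elaborations but not substantively different.
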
 
\begin{proof} 
Consider  
an extension of the valuation to $\overline{\mathrm{Frac}(V)}$, and take $V'$
to be the
valuation ring associated to that valuation. 
By Lemma~\ref{aicflatlemma}, 
we see that 
$V'$ satisfies the desired claims. 
\end{proof}

To avoid set-theoretic inconsistencies, we bound the size of the absolutely integrally closed valuation rings required to probe a given ring. 

\begin{lemma} 
\label{boundcardaic}
Let $A$ be an arbitrary commutative ring and let $\kappa = \max(
\mathrm{card}(A), \aleph_0)$. 
Then for every absolutely integrally closed valuation ring $W$ with a map $A \to W$, there exists a
factorization $A \to W' \to W$ such that: 
\begin{enumerate}
\item $W'$ is an absolutely integrally closed valuation ring with $\mathrm{card}(W') \leq \kappa$.  
\item  $W' \to W$ is faithfully flat. 
\end{enumerate}
\end{lemma}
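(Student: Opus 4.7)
The plan is to realize $W'$ as the intersection $W \cap K_0$ of $W$ with a suitably small algebraically closed subfield $K_0 \subset K := \mathrm{Frac}(W)$ containing the image of $A$. The key tool will be Lemma~\ref{aicflatlemma}, which already provides both the faithful flatness of $W' \to W$ and the absolute integral closedness of $W'$, provided $K_0$ is algebraically closed in $K$. The only remaining task is then a cardinality computation.

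More precisely, I would first replace $A$ by its image in $W$ (which is a subring of the domain $W$ of cardinality at most $\mathrm{card}(A)$), and view this image inside $K$. Next, I would let $F \subset K$ denote the subfield generated by the image of $A$; since taking the field generated by a set does not increase cardinality beyond $\kappa = \max(\mathrm{card}(A),\aleph_0)$, we have $\mathrm{card}(F) \leq \kappa$. Because $W$ is absolutely integrally closed, $K$ is an algebraically closed field, so we may take $K_0 \subset K$ to be the algebraic closure of $F$ inside $K$. Standard cardinal arithmetic gives $\mathrm{card}(K_0) \leq \kappa$ (an algebraic closure of an infinite field has the same cardinality as the field).

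Now set $W' := K_0 \cap W$. By construction, the image of $A$ in $W$ lies in $F \subset K_0$ and in $W$, hence in $W'$, giving the required factorization $A \to W' \to W$. By Lemma~\ref{aicflatlemma}, $W'$ is a valuation ring, the inclusion $W' \hookrightarrow W$ is faithfully flat, and $W'$ is absolutely integrally closed (as $K_0$ is algebraically closed). Finally, $\mathrm{card}(W') \leq \mathrm{card}(K_0) \leq \kappa$, completing the proof.

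There is no serious obstacle here; the whole argument is essentially a bookkeeping exercise on top of Lemma~\ref{aicflatlemma}. The only minor point worth double-checking is the cardinality of the algebraic closure, but this is a standard fact: for an infinite field $F$, one has $\mathrm{card}(\overline{F}) = \mathrm{card}(F)$, while for a finite $F$ one still gets a countable algebraic closure, so in both cases the bound by $\kappa$ holds.
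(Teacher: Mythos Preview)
Your proof is correct and follows essentially the same route as the paper: take the algebraic closure inside $K = \mathrm{Frac}(W)$ of the fraction field of the image of $A$, intersect with $W$, and invoke Lemma~\ref{aicflatlemma} for both absolute integral closedness and faithful flatness, with the cardinality bound coming from standard facts about algebraic closures. The only difference is cosmetic (naming of the intermediate fields) and your slightly more explicit treatment of the cardinality estimate.
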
 
\begin{proof} 
Fix a map $f\colon  A \to W$ and consider the image $\mathrm{im}(f) \subset W$. 
Let $ K_0$ be the fraction field of $\mathrm{im}(f)$ inside $K :=
\mathrm{Frac}(W)$, and let $ K_1 \subset K$ be the algebraic closure of $K_0$. 
Then $\mathrm{card}(K_1) \leq \kappa$, so $W' := K_1  \cap W$ has cardinality at
most $\kappa$. The rest of the result follows from
Lemma~\ref{aicflatlemma}. 
\end{proof}

We can now prove the promised result.

\begin{proposition} 
\label{vcoverbyvaluation}
Let $\mathcal{C}$ be an $\infty$-category that is compactly generated by cotruncated objects. 
Let $\mathcal{F}\colon  \mathrm{Sch}_{qcqs,R}^{op} \to \mathcal{C}$ be a finitary $v$-sheaf. Let $A$ be an $R$-algebra. Then there exists a set of maps $\{A \to V_i\}_{i \in S}$
such that: 
\begin{enumerate}
\item Each $V_i$ is an absolutely integrally closed valuation ring.  
\item The maps $\left\{A \to V_i\right\}_{i \in S}$ detect universal $\mathcal{F}$-descent. 
\item The map $A \to \prod_{i \in S} V_i$ is a $v$-cover. 
\end{enumerate}
\end{proposition}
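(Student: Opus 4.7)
The plan is to let $\kappa := \max(\mathrm{card}(A), \aleph_0)$ and take $S$ to be a set of representatives of isomorphism classes of pairs $(V, A \to V)$ with $V$ an absolutely integrally closed valuation ring of cardinality at most $\kappa$; this is a genuine set by cardinality considerations, and condition (1) is automatic. Condition (3) should follow quickly from the existing lemmas: given a map $A \to W$ with $W$ a valuation ring, I would first apply Lemma~\ref{aicvcover} to build a faithfully flat extension $W \to W'$ with $W'$ absolutely integrally closed, and then use Lemma~\ref{boundcardaic} to produce an intermediate absolutely integrally closed valuation ring $V' \subset W'$ of cardinality $\leq \kappa$ containing the image of $A$ with $V' \to W'$ faithfully flat. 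Then $V' \cong V_i$ for some $i \in S$, which supplies the required lift witnessing that $A \to \prod_{i \in S} V_i$ is a $v$-cover.

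The heart of the argument is condition (2). Since $A \to \prod_i V_i$ is a $v$-cover and $\mathcal{F}$ is a $v$-sheaf, this map is of universal $\mathcal{F}$-descent, so the singleton family $\{A \to \prod_i V_i\}$ detects universal $\mathcal{F}$-descent by Example~\ref{F1ex}. I would then combine this with Corollary~\ref{ultraproductdetects}, applied to the collection $\{V_i\}_{i \in S}$ of $R$-algebras, to conclude that the family $\{\prod_i V_i \to \prod_{\mathfrak{U}} V_i\}_{\mathfrak{U} \in \beta S}$ of ultralocalizations detects universal $\mathcal{F}$-descent. Composing via the transitivity principle of Example~\ref{transitivityofdetect} yields that the family $\{A \to \prod_{\mathfrak{U}} V_i\}_{\mathfrak{U} \in \beta S}$ detects universal $\mathcal{F}$-descent.

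The final, and most delicate, step is to pass from detection at the ultraproducts to detection at the original $V_i$'s. For each ultrafilter $\mathfrak{U} \in \beta S$, the ultraproduct $\prod_{\mathfrak{U}} V_i$ is itself an absolutely integrally closed valuation ring by Lemma~\ref{ultraaic}, so Lemma~\ref{boundcardaic} supplies an absolutely integrally closed valuation subring $V_{j(\mathfrak{U})} \subset \prod_{\mathfrak{U}} V_i$ of cardinality $\leq \kappa$ containing the image of $A$ and faithfully flat inside $\prod_{\mathfrak{U}} V_i$; in particular, $V_{j(\mathfrak{U})}$ is (isomorphic to) one of the $V_j$'s. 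Given $Y \to \mathrm{Spec}(A)$ such that each base change $Y \times_A V_i \to \mathrm{Spec}(V_i)$ is of universal $\mathcal{F}$-descent, I would base change the case $i = j(\mathfrak{U})$ along the faithfully flat map $V_{j(\mathfrak{U})} \to \prod_{\mathfrak{U}} V_i$ to conclude (using Lemma~\ref{Fsorite}(4)) that $Y \times_A \prod_{\mathfrak{U}} V_i \to \mathrm{Spec}(\prod_{\mathfrak{U}} V_i)$ is of universal $\mathcal{F}$-descent, whence the detection property of the ultraproduct family established in the previous paragraph forces $Y \to \mathrm{Spec}(A)$ to be of universal $\mathcal{F}$-descent. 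The main obstacle is precisely this set-theoretic bookkeeping: although the ultraproducts $\prod_{\mathfrak{U}} V_i$ can have cardinality far exceeding $\kappa$, the bound in Lemma~\ref{boundcardaic} crucially depends only on $\mathrm{card}(A)$, and this is what allows us to reduce detection at the large ultraproducts back to detection at the bounded-size $V_j$'s already in $S$.
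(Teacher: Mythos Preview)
Your proof is correct, but it takes a different route from the paper. The paper does \emph{not} take the family $\{V_i\}$ to be the bounded-cardinality absolutely integrally closed valuation rings themselves; instead, after constructing the set $\{W_t\}_{t \in T}$ of such rings (exactly as you do) and verifying that $A \to \prod_t W_t$ is a $v$-cover, it simply \emph{declares} the family $\{V_i\}$ to be the collection of ultraproducts $\{\prod_{\mathfrak{U}} W_t\}_{\mathfrak{U} \in \beta T}$. Condition (1) then follows from Lemma~\ref{ultraaic}, condition (2) follows directly from Example~\ref{F1ex}, Corollary~\ref{ultraproductdetects}, and Example~\ref{transitivityofdetect} (no further work needed), and condition (3) holds because the principal ultrafilters recover the $W_t$'s, so $\prod_t W_t$ is a factor of $\prod_i V_i$.

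Your approach instead keeps $\{V_i\}$ as the bounded-cardinality family and adds an extra ``reflection'' step: each ultraproduct $\prod_{\mathfrak{U}} V_i$, though possibly enormous, contains via Lemma~\ref{boundcardaic} a small absolutely integrally closed valuation subring $V_{j(\mathfrak{U})}$ already in $S$ and faithfully flat inside the ultraproduct, so detection at the ultraproduct family can be pulled back to detection at $\{V_i\}$ via base change (Lemma~\ref{Fsorite}(4)). This is a genuinely clever manoeuvre, and it buys you a sharper conclusion --- every $V_i$ has cardinality at most $\max(\mathrm{card}(A),\aleph_0)$ --- at the cost of one additional paragraph of argument. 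The paper's approach is shorter precisely because it is willing to let the $V_i$'s be large.
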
 
\begin{proof} 
Let $\kappa
= \max( \mathrm{card}(A), \aleph_0)$. 
Let $\{W_t\}_{t \in T}$ be  a set of representatives of isomorphism classes of absolutely integrally closed
valuation rings of cardinality $\leq \kappa$ receiving maps $A \to W_t$. 
We first claim that the map $A \to A' :=\prod_{t \in T} W_t$ is a $v$-cover. 
This follows from Lemma \ref{aicvcover} and Lemma \ref{boundcardaic}. 
Explicitly, if $A \to V$ is a map to any valuation ring $V$, we can enlarge $V$
and assume $V$ absolutely integrally closed (by Lemma~\ref{aicvcover}). Then the map $A \to V$
actually factors through $A \to A'$ (by Lemma~\ref{boundcardaic});
together, these imply that $A\to A'$ is a $v$-cover. 

This produces a collection of absolutely integrally closed valuation rings satisfying (3), but we have not yet shown
detection of universal $\sF$-descent; for this we enlarge the family further. 
We construct the family $\left\{V_i\right\}$ of $A$-algebras as the
collection of ultraproducts of the $W_t$. 
For each ultrafilter $\mathfrak{U}$ on $T$, we consider the $A'$-algebra
$A'_{\mathfrak{U}} := \prod_{\mathfrak{U}} W_t$. Then each $A'_{\mathfrak{U}}$
is an absolutely integrally closed valuation ring (Lemma~\ref{ultraaic}). Moreover the family of
maps $\left\{A' \to A'_{\mathfrak{U}}\right\}$ (as $\mathfrak{U}$ ranges
over all ultrafilters on $T$) detects universal $\mathcal{F}$-descent
thanks to \Cref{ultraproductdetects}. 
The assertion (3) now follows because $A \to A'$ is a $v$-cover,
as verified in the previous paragraph. 
\end{proof}

\begin{remark}
Proposition~\ref{boundcardaic} and Lemma~\ref{ultraaic} imply that for every
commutative ring $A$, there is a $v$-cover $A \to B$ such that each connected
component of $B$ is a valuation ring. By contrast, already for $A=k[x,y]$ over a field $k$, there does not exist an $\arc$-cover $A \to B$ such that that every connected component of $B$ is a rank $\leq 1$ valuation ring. Indeed, if such an $\arc$-cover $A \to B$ existed, then this map would be a $v$-cover by Proposition~\ref{Noetherianv}, so every valuation on $A$ would extend to $B$, which is impossible: $A$ admits a rank $2$ valuation, while every valuation on $B$ has rank $\leq 1$ by the assumption on connected components.
\end{remark}

Using the preceding constructions, we can control finitary $v$-sheaves in terms of their behaviour on absolutely integrally closed valuation rings.

\begin{proposition} 
\label{aicvaldetect}
Let $\mathcal{C}$ be an $\infty$-category that is compactly generated by cotruncated objects. 
Let $\mathcal{F}, \mathcal{G}\colon  \mathrm{Sch}_{qcqs,R}^{op} \to \mathcal{C}$ be  finitary
$v$-sheaves and fix a map $f\colon  \mathcal{F} \to \mathcal{G}$. 
Then $f$ is an equivalence of $v$-sheaves if and only if 
for every absolutely integrally closed valuation ring $V \in
\mathrm{Ring}_R$, $\mathcal{F}(\mathrm{Spec}(V)) \to \mathcal{G}( \spec(V)) $
is an equivalence.  
\end{proposition}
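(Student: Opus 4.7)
The ``only if'' direction is trivial; I focus on the converse. Assuming $f_{\mathrm{Spec}(V)}$ is an equivalence for every absolutely integrally closed valuation ring $V \in \mathrm{Ring}_R$, I must show $f_X$ is an equivalence for every $X \in \mathrm{Sch}_{qcqs,R}$. Two routine reductions come first: since $v$-sheaves are Zariski sheaves, it suffices to treat $X = \mathrm{Spec}(A)$ affine; and since $\mathcal{C}$ is compactly generated by cotruncated objects, checking an equivalence in $\mathcal{C}$ can be done after applying $\mathrm{Map}_{\mathcal{C}}(c,-)$ for each compact $c \in \mathcal{C}$, which is $n$-cotruncated for some $n = n(c)$. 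Thus I may further assume throughout that $\mathcal{C} = \mathcal{S}_{\leq n}$ for some fixed $n$.

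The key intermediate step is a \emph{products case}: for any set $T$ and any family $\{V_t\}_{t \in T}$ of AIC valuation rings in $\mathrm{Ring}_R$, the map $f_{\mathrm{Spec}(\prod_t V_t)}$ is an equivalence. To prove this, I apply Corollary~\ref{ultradetectsequivalence} to the functors $A' \mapsto \mathcal{F}(\mathrm{Spec}(A'))$ and $A' \mapsto \mathcal{G}(\mathrm{Spec}(A'))$ on $\mathrm{Ring}_R$, both of which commute with filtered colimits by the finitary hypothesis and with finite products by the disjoint-union axiom. That corollary reduces the claim to checking $f$ on each ultraproduct $\prod_{\mathfrak{U}} V_t$; by Lemma~\ref{ultraaic}, each such ultraproduct is an AIC valuation ring, on which the hypothesis applies directly.

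For a general $A \in \mathrm{Ring}_R$, I will construct a $v$-hypercover $Y_\bullet \to \mathrm{Spec}(A)$ such that every $Y_k$ is the spectrum of a product of AIC valuation rings. Take $Y_0 = \mathrm{Spec}(B^{(0)})$ supplied by Proposition~\ref{vcoverbyvaluation} applied to $A$; at each successive simplicial level, the relevant matching object of the partial hypercover is itself a qcqs scheme, so I reapply Proposition~\ref{vcoverbyvaluation} to obtain $Y_k$ as the spectrum of a product of AIC valuation rings covering it. By the products case, $f$ is a level-wise equivalence on $Y_\bullet$. Since sheaves of $n$-truncated spaces are automatically hypercomplete objects of the $v$-topos, both $\mathcal{F}$ and $\mathcal{G}$ satisfy $v$-hyperdescent along $Y_\bullet \to \mathrm{Spec}(A)$, so the induced map $\mathrm{Tot}(\mathcal{F}(Y_\bullet)) \to \mathrm{Tot}(\mathcal{G}(Y_\bullet))$ recovers $f_{\mathrm{Spec}(A)}$ and is an equivalence, as desired.

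The main obstacle I foresee is the passage from Čech descent (which is part of the $v$-sheaf hypothesis) to hyperdescent along the elaborate $Y_\bullet$ above. Working with the naive Čech nerve of the single $v$-cover $A \to B^{(0)}$ is not enough, since its terms $(B^{(0)})^{\otimes_A (k+1)}$ are typically not themselves products of AIC valuation rings, and the products case thus fails to apply level-wise to the Čech nerve. The truncation reduction in the first paragraph, combined with the general fact that truncated objects in any $\infty$-topos are automatically hypercomplete, is precisely what rescues the hypercover argument.
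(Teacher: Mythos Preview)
Your proof is correct and follows essentially the same route as the paper: reduce to products of AIC valuation rings via Corollary~\ref{ultradetectsequivalence} and Lemma~\ref{ultraaic}, build a $v$-hypercover by iterating Proposition~\ref{vcoverbyvaluation}, and invoke hypercompleteness of sheaves of $n$-truncated spaces after reducing along compact cotruncated generators. The only cosmetic difference is that you front-load the reduction to $\mathcal{C} = \mathcal{S}_{\leq n}$, whereas the paper defers it to the final hyperdescent step; the content is the same.
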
 
\begin{proof} 
Suppose that for every 
absolutely integrally closed valuation ring $V \in
\mathrm{Ring}_R$, the map $\mathcal{F}(\mathrm{Spec}(V)) \to \mathcal{G}( \spec(V)) $ is
an equivalence. 
By Corollary~\ref{ultradetectsequivalence} and \Cref{ultraaic}, we have
$\mathcal{F}(\mathrm{Spec}(S)) \xrightarrow{\sim} \mathcal{F}( \spec(S))$
whenever $S$ is a product of absolutely integrally closed valuation rings with
the structure of $R$-algebra. 
Now for any $R$-algebra $A$, we have a $v$-cover $\spec(S) \to \spec(A)$ for $S$ an appropriate
product of absolutely integrally closed valuation rings, by Proposition~\ref{vcoverbyvaluation}. 
Continuing, we can construct a 
$v$-hypercover of $\spec(A)$ where all of the terms are spectra of products of absolutely integrally closed valuation
rings. But we have $v$-hyperdescent of $\mathcal{F}, \mathcal{G}$: this can be checked after
applying $\hom_{\mathcal{C}}(x, \cdot)$ for $x \in \mathcal{C}$ compact, and
then the result  follows since sheaves of $n$-truncated spaces are automatically
hypercomplete (for any $n$), cf.~\cite[Sec.~6.5.3]{HTT} for a treatment. \end{proof}

\newpage
\section{The main result for $\arc$-descent}

In this section, we prove our main result, explaining the relationship of
$\arc$-descent to excision. Fix a base ring $R$.
\begin{theorem}[Equivalence of $\arc$-descent and excision]
\label{mainthm}
Let $\mathcal{C}$ be an $\infty$-category that is compactly generated by cotruncated objects. 
Let $\mathcal{F}\colon  \mathrm{Sch}_{qcqs,R}^{op} \to \mathcal{C}$ be a finitary functor (see Definition~\ref{DefFinitary}) which satisfies $v$-descent. Then the following are equivalent: 
\begin{enumerate}
\item $\mathcal{F}$ satisfies $\arc$-descent. 
\item $\mathcal{F}$ satisfies excision. 
\item $\mathcal{F}$ satisfies {\em aic-$v$-excision}, i.e., for every
absolutely integrally closed valuation ring $V$ with the structure of an $R$-algebra  and prime ideal
$\mathfrak{p} \subset V$, the square
in $\mathcal{C}$
\begin{equation} \label{faic}  \begin{aligned}\xymatrix{
\mathcal{F}(V) \ar[d]  \ar[r] &  \mathcal{F}( V/\mathfrak{p}) \ar[d]  \\
\mathcal{F}(V_{\mathfrak{p}}) \ar[r] &  \mathcal{F}(\kappa(\mathfrak{p}))
} \end{aligned} \end{equation}
is  cartesian.  (Recall that the corresponding square of rings is a Milnor square, cf. Proposition~\ref{AICValRingPrime}.)
\end{enumerate}
\end{theorem}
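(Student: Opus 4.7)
We prove the implications in the cycle $(1) \Rightarrow (2) \Rightarrow (3) \Rightarrow (1)$.

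For $(1) \Rightarrow (2)$: given an excision datum $(A, I) \to (B, J)$, the map $p\colon \spec(B) \sqcup \spec(A/I) \to \spec(A)$ is an $\arc$-cover by Lemma~\ref{excisionimpliesvleq1}. Because $\spec(A/I) \hookrightarrow \spec(A)$ is a closed immersion (hence a monomorphism) and $\spec(B) \times_{\spec(A)} \spec(A/I) = \spec(B/J)$ (as $IB = J$ by the excision hypothesis), a standard Mayer--Vietoris collapse of the Cech nerve of $p$ identifies its totalization under $F$ with the Milnor fiber product $F(A/I) \times_{F(B/J)} F(B)$, so $\arc$-descent of $F$ yields excision. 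The implication $(2) \Rightarrow (3)$ is immediate: the aic-$v$-excision square \eqref{faic} is exactly the Milnor square of the excision datum $(V, \mathfrak{p}) \to (V_{\mathfrak{p}}, \mathfrak{p}V_{\mathfrak{p}})$ supplied by Proposition~\ref{AICValRingPrime}.

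The main content is $(3) \Rightarrow (1)$. Fix an $\arc$-cover $f\colon Y \to X$ in $\mathrm{Sch}_{qcqs,R}$; we show $f$ is of $F$-descent. First, Proposition~\ref{vcoverbyvaluation} furnishes a family $\{X \to \spec(V_i)\}$ of maps to absolutely integrally closed valuation rings that detects universal $F$-descent, reducing us to $X = \spec(V)$ with $V$ absolutely integrally closed. Writing $V = \varinjlim V_\alpha$ as a filtered colimit of finite-rank absolutely integrally closed valuation subrings (Lemma~\ref{filtcolimitaic}), the finitary property of $F$, the closure of $\arc$-covers under filtered inverse limits with affine transition maps (Proposition~\ref{UnivSubArc} together with Lemma~\ref{UnivSpecSubmLimit}), and Lemma~\ref{detectfiltcolimit} further reduce us to the case where $V$ has finite rank $n$. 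We then induct on $n$: the base case $n \leq 1$ is immediate since every $\arc$-cover is already a $v$-cover by Corollary~\ref{v1detectionr1}, and $F$ is a $v$-sheaf by hypothesis.

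For $n \geq 2$, pick a nontrivial prime $\mathfrak{p} \subsetneq V$, so $V_{\mathfrak{p}}$, $V/\mathfrak{p}$, and $\kappa(\mathfrak{p})$ are absolutely integrally closed valuation rings of rank strictly less than $n$. By induction, $F$-descent holds for the base change of $f$ to each of these rings. Aic-$v$-excision at $(V, \mathfrak{p})$ gives the cartesian square $F(V) \simeq F(V/\mathfrak{p}) \times_{F(\kappa(\mathfrak{p}))} F(V_{\mathfrak{p}})$; to combine this with the Cech nerve of $f$ we require the following \emph{relative aic-$v$-excision} sublemma: for every qcqs $V$-scheme $Z$, the square
\[
\xymatrix{
F(Z) \ar[r] \ar[d] & F(Z \times_V \spec(V/\mathfrak{p})) \ar[d] \\
F(Z \times_V \spec(V_{\mathfrak{p}})) \ar[r] & F(Z \times_V \spec(\kappa(\mathfrak{p})))
}
\]
is cartesian. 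Granting the sublemma, applying it termwise to $Y^{\times_V \bullet}$ and using that totalization commutes with finite limits yields $F(V) \simeq \mathrm{Tot}(F(Y^{\times_V \bullet}))$, as desired.

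The sublemma is the principal obstacle. To prove it, both sides are finitary $v$-sheaves in $Z$, so by Proposition~\ref{aicvaldetect} it suffices to verify the equivalence after restriction to $Z = \spec(W)$ for $W$ an absolutely integrally closed valuation ring equipped with a map from $V$. If $\mathfrak{p}W = 0$ the base change $Z \times_V \spec(V/\mathfrak{p}) \to Z$ has a section (since $W \otimes_V V/\mathfrak{p} \simeq W$), and the square is trivially cartesian; otherwise $W \otimes_V V/\mathfrak{p} \simeq W/\mathfrak{p}W$ and $W \otimes_V V_{\mathfrak{p}}$ identifies with the localization $W_{\mathfrak{p}^*}$ at the largest prime $\mathfrak{p}^* \subset W$ whose contraction to $V$ is contained in $\mathfrak{p}$, and the square is then assembled by combining aic-$v$-excision (hypothesis~(3)) for $W$ at the primes $\mathfrak{p}^*$ and $\sqrt{\mathfrak{p}W}$ with $v$-descent.
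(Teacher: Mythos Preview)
Your implication $(1)\Rightarrow(2)$ has a genuine gap. The ``standard Mayer--Vietoris collapse'' you invoke for the \v{C}ech nerve of $p\colon \spec(B)\sqcup\spec(A/I)\to\spec(A)$ does not hold in general: while $\spec(A/I)\hookrightarrow\spec(A)$ is a monomorphism, the map $\spec(B)\to\spec(A)$ is typically \emph{not}, so the iterated self-fiber-products involve $B\otimes_A B$, $B\otimes_A B\otimes_A B$, etc., and these do not simplify. (Contrast this with the special case in Proposition~\ref{1implies3}, where both $V\to V_{\mathfrak p}$ and $V\to V/\mathfrak p$ are ring epimorphisms, so the nerve genuinely collapses.) The paper therefore does \emph{not} argue $(1)\Rightarrow(2)$ directly: it proves $(1)\Rightarrow(3)$ via the collapsing nerve of $V\to V_{\mathfrak p}\times V/\mathfrak p$, and then proves $(3)\Rightarrow(2)$ separately (Proposition~\ref{arcsheafisexcisive}) by localizing the Milnor square on $A$ down to a rank $\leq 1$ valuation ring, where one of $A\to A/I$ or $A\to B$ admits a section. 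Alternatively, one can prove $(1)\Rightarrow(2)$ by showing the Milnor square becomes a pushout after $\arc$-sheafification (Proposition~\ref{ExcSquarePushout}); this is again a pointwise check on rank $\leq 1$ valuation rings, not a formal \v{C}ech-nerve manipulation.

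Your argument for $(3)\Rightarrow(1)$ is correct in outline and gives a genuinely different proof from the paper's. The paper keeps $V$ of arbitrary rank and runs a Zorn-type ``interval lemma'' (Lemma~\ref{intervallemma}) over $\spec(V)$, after first reducing to $Y$ finitely presented over $V$; you instead reduce to finite rank via Lemma~\ref{filtcolimitaic} (the correct closure lemma to cite is Lemma~\ref{univFclosedfiltlimit}, not Lemma~\ref{detectfiltcolimit}) and induct on the rank. Your ``relative aic-$v$-excision sublemma'' is exactly Lemma~\ref{excifaicexciv}, and your proof sketch is close to the paper's, but your case split is off: the dichotomy should be $\mathfrak p W=W$ versus $\mathfrak p W\neq W$ (as in Lemma~\ref{MilnorAICValRing}), not $\mathfrak p W=0$ versus $\mathfrak p W\neq 0$. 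When $\mathfrak p W=W$ the square is trivially cartesian (both right-hand vertices vanish and $W\otimes_V V_{\mathfrak p}\simeq W$), but $\sqrt{\mathfrak p W}$ is not a prime in that case, so your ``otherwise'' branch breaks there. Once corrected, your finite-rank induction is a clean alternative to the interval-lemma argument.
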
 

Clearly (2) implies (3). We will structure the proof of the theorem as follows. In subsection~\ref{aicexcsec}, we show that (1) and (3) are equivalent. In subsection~\ref{excgeneral}, we show that (3) implies (2). Together these will complete the proof. 
Note that the condition of $v$-excision has been considered by Huber--Kelly
\cite{HK18}.

\subsection{Aic-$v$-excision and $\arc$-descent}
\label{aicexcsec}
In this subsection, we will prove that (1) and (3) in Theorem~\ref{mainthm} are equivalent. First we show that (1) implies (3). 
Let $\mathcal{F}$ be as in Theorem~\ref{mainthm}. 
\begin{proposition} 
\label{1implies3}
Suppose that $\mathcal{F}$ satisfies $\arc$-descent. Then $\mathcal{F}$
satisfies aic-$v$-excision. 
\end{proposition}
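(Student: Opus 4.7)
The plan is to deduce the aic-$v$-excision square~\eqref{faic} as the descent square associated to a suitable $\arc$-cover of $\spec(V)$. By Corollary~\ref{v1val}, the map
\[ Y := \spec(V_{\mathfrak{p}}) \sqcup \spec(V/\mathfrak{p}) \to X := \spec(V) \]
is an $\arc$-cover (notably, neither the aic hypothesis nor any finiteness assumption on $V$ is needed for this step). Since $\mathcal{F}$ is an $\arc$-sheaf, this furnishes an equivalence
\[ \mathcal{F}(V) \simeq \varprojlim_{[n] \in \Delta} \mathcal{F}(Y^{\times_X (n+1)}). \]

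Next, I would unpack the \v{C}ech cosimplicial object. Writing $R_0 := V_{\mathfrak{p}}$ and $R_1 := V/\mathfrak{p}$, the fact that $\mathcal{F}$ takes finite disjoint unions to products yields
\[ \mathcal{F}(Y^{\times_X (n+1)}) \simeq \prod_{(i_0, \ldots, i_n) \in \{0,1\}^{n+1}} \mathcal{F}(R_{i_0} \otimes_V \cdots \otimes_V R_{i_n}). \]
The key computation is that these iterated tensor products take only three values. Idempotence of localizations gives $V_{\mathfrak{p}} \otimes_V V_{\mathfrak{p}} \cong V_{\mathfrak{p}}$, idempotence of quotients gives $V/\mathfrak{p} \otimes_V V/\mathfrak{p} \cong V/\mathfrak{p}$, while the excision datum statement of Proposition~\ref{AICValRingPrime} (together with associativity of $\otimes_V$) gives $V_{\mathfrak{p}} \otimes_V V/\mathfrak{p} \cong V_{\mathfrak{p}}/\mathfrak{p}V_{\mathfrak{p}} = \kappa(\mathfrak{p})$. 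Hence the factor indexed by $(i_0, \ldots, i_n)$ is $\mathcal{F}(V_{\mathfrak{p}})$ if the tuple is constantly $0$, is $\mathcal{F}(V/\mathfrak{p})$ if constantly $1$, and is $\mathcal{F}(\kappa(\mathfrak{p}))$ otherwise.

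The resulting cosimplicial object in $\mathcal{C}$ is exactly the shape of the \v{C}ech nerve of a two-element cover whose pairwise ``intersection'' is represented by $\spec(\kappa(\mathfrak{p}))$. A standard fact, namely that the totalization of the \v{C}ech nerve of a two-element cover in an $\infty$-category with small limits is the fiber product of the two restrictions over their intersection, then identifies the totalization above with $\mathcal{F}(V_{\mathfrak{p}}) \times_{\mathcal{F}(\kappa(\mathfrak{p}))} \mathcal{F}(V/\mathfrak{p})$, yielding the desired cartesian square~\eqref{faic}. The main technical point that deserves care is this last collapse in the $\infty$-categorical setting; it can be verified by exhibiting a cofinal functor from the cospan diagram $\bullet \to \bullet \leftarrow \bullet$ into $\Delta^{\mathrm{op}}$ restricted to the relevant subdiagram, or equivalently by observing that the two-element \v{C}ech cosimplicial object is right Kan extended from its $0$- and $1$-simplices in a limit-preserving way.
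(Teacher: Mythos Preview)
Your proof is correct and follows essentially the same approach as the paper's: both use the $\arc$-cover $V \to V_{\mathfrak{p}} \times V/\mathfrak{p}$ from Corollary~\ref{v1val}, then identify the totalization of the \v{C}ech nerve with the fiber product $\mathcal{F}(V_{\mathfrak{p}}) \times_{\mathcal{F}(\kappa(\mathfrak{p}))} \mathcal{F}(V/\mathfrak{p})$. You are simply more explicit about the tensor product computations and the cosimplicial collapse, whereas the paper dispatches these with the phrase ``unwinding the definitions''; your observation that the aic hypothesis is not needed for this direction is also correct.
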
 
\begin{proof} 
Let $V$ be an absolutely integrally closed valuation ring that is an
$R$-algebra, and let
$\mathfrak{p} \in \spec(V)$. 
We consider the map 
$V \to \widetilde{V} := V_{\mathfrak{p}} \times V/\mathfrak{p}$. 
By our assumptions and Corollary~\ref{v1val}, $\mathcal{F}$
satisfies descent for this morphism. 
Unwinding the definitions, and using the assumption that $\mathcal{F}$ preserves finite
products, we see that 
the cosimplicial object $\mathcal{F}( \widetilde{V}^{\otimes \bullet + 1})$ computes
precisely $\mathcal{F}( V_{\mathfrak{p}}) \times_{\mathcal{F}( \kappa( \mathfrak{p}))} \mathcal{F}(
V/\mathfrak{p})$. In particular, the statement that $\mathcal{F}$ satisfies descent
for $V \to \widetilde{V}$ is equivalent 
to the statement that $\mathcal{F}$ satisfies excision for the excision datum 
$(V, \mathfrak{p}) \to (V_{\mathfrak{p}}, \mathfrak{p} V_{\mathfrak{p}})$. 
\end{proof}

We will next show that for a $v$-sheaf which satisfies aic-$v$-excision, there are
enough maps to rank $\leq 1$ valuation rings which detect universal
$\mathcal{F}$-descent. 
To proceed, we will use the following lemma about maps between absolutely integrally closed valuation
rings. 

\begin{lemma}
\label{MilnorAICValRing}
Let $V \to W$ be a map between absolutely integrally closed valuation rings.
Fix a prime $\mathfrak{p} \subset V$. Suppose that $\mathfrak{p} W \neq W$.
Then:
\begin{enumerate}
\item The ideal $\mathfrak{p}W \subset W$ is prime and pulls back to
$\mathfrak{p}$ in $V$. 
\item There exists a largest prime $\mathfrak{q} \subset W$ whose pullback to $V$ equals $\mathfrak{p}$. For such $\mathfrak{q}$, we get $W \otimes_V V_{\mathfrak{p}} \simeq W_{\mathfrak{q}}$ and $W \otimes_V \kappa(\mathfrak{p}) \simeq W_{\mathfrak{q}}/\mathfrak{p}W_{\mathfrak{q}}$. 
\end{enumerate}
\end{lemma}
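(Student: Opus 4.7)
The plan uses two structural features of valuation rings: (a) in a valuation ring, the prime ideals form a chain, so the radical of any proper ideal is itself prime; and (b) the absolute integral closure hypothesis forces the value groups $\Gamma_V$ and $\Gamma_W$ to be divisible, so that every element admits $n$-th roots for all $n \geq 1$. Throughout I work under the (tacit) assumption that $V \to W$ is injective, so that $V$ sits inside $W$ with $\mathrm{Frac}(V) \cap W = V$ inside $\mathrm{Frac}(W)$; this identity follows because $V$ is a valuation ring in its fraction field, so any $y \in \mathrm{Frac}(V) \cap W$ with $y \notin V$ would satisfy $y^{-1} \in V$ a non-unit, forcing $y \notin W$, a contradiction.

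For part (1), I would first show $\mathfrak{p}W$ is a radical ideal. Suppose $w \in W$ with $w^n \in \mathfrak{p}W$, so $w^n = pu$ for some $p \in \mathfrak{p}$ and $u \in W$. By AIC of $V$, extract $p' \in V$ with $(p')^n = p$; by primality of $\mathfrak{p}$, $p' \in \mathfrak{p}$. Then $(w/p')^n = u \in W$, and since $W$ is normal (being AIC), $w/p' \in W$. Hence $w = p' \cdot (w/p') \in \mathfrak{p}W$. Combined with the hypothesis $\mathfrak{p}W \neq W$ and fact (a), this shows $\mathfrak{p}W$ is prime. For the pullback identity $\mathfrak{p}W \cap V = \mathfrak{p}$, the inclusion $\supset$ is automatic; for $\subset$, any $x \in V \cap \mathfrak{p}W$ can be written as $x = pu$ for some $p \in \mathfrak{p}, u \in W$ (since $\mathfrak{p}W = \bigcup_{p \in \mathfrak{p}} pW$ in a valuation ring), yielding $u = x/p \in \mathrm{Frac}(V) \cap W = V$, whence $x \in \mathfrak{p}V \subset \mathfrak{p}$.

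For part (2), part (1) exhibits $\mathfrak{p}W$ as a prime with pullback $\mathfrak{p}$, so the set of such primes is nonempty and, being a subset of the chain $\spec(W)$, it has a maximum $\mathfrak{q}$. I would argue $\mathfrak{q} = \mathfrak{p}W$ by showing that any prime $\mathfrak{q}' \supsetneq \mathfrak{p}W$ meets $S := V \setminus \mathfrak{p}$. This can be done using the convex-subgroup description of primes of a valuation ring: $\mathfrak{p}W$ corresponds to the convex hull $H'$ in $\Gamma_W$ of the convex subgroup $H \subset \Gamma_V$ cutting out $\mathfrak{p}$, and any strictly larger prime $\mathfrak{q}'$ corresponds to a strictly smaller convex subgroup $H'' \subsetneq H'$; since $H'$ is the minimal convex subgroup of $\Gamma_W$ containing $H$, there exists $h \in H$ with $h \notin H''$, and any $v \in V$ with $v_V(v) = h$ then lies in $(V \setminus \mathfrak{p}) \cap \mathfrak{q}'$. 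Once $\mathfrak{q} = \mathfrak{p}W$ is identified, the tensor-product identifications are formal: $S^{-1}W = W_{\mathfrak{q}}$ because overrings of a valuation ring $W$ are of the form $W_{\mathfrak{q}_0}$ for $\mathfrak{q}_0$ the maximal prime disjoint from the inverted set, giving $W \otimes_V V_{\mathfrak{p}} = S^{-1}W = W_{\mathfrak{q}}$; tensoring further over $V_{\mathfrak{p}}$ with $\kappa(\mathfrak{p})$ yields $W \otimes_V \kappa(\mathfrak{p}) \simeq W_{\mathfrak{q}}/\mathfrak{p}W_{\mathfrak{q}}$.

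The main obstacle, in my judgment, is the primality of $\mathfrak{p}W$: it requires the AIC hypothesis on both sides of $V \to W$, since one uses it on $V$ to extract the root $p' \in \mathfrak{p}$ and on $W$ to get $w/p' \in W$ via integrality. Once this is in hand, the pullback identity and the identification of $\mathfrak{q}$ with $\mathfrak{p}W$ follow by reasonably standard arguments, and the tensor formulas are essentially automatic.
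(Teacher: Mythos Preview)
Your argument that $\mathfrak{p}W$ is radical (hence prime) is correct and in fact a bit cleaner than the paper's, which works only with squares and the $2$-divisibility of the value groups. The serious problems lie elsewhere.

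\textbf{The identity $\mathrm{Frac}(V)\cap W=V$ is false.} Your justification (``$y^{-1}\in V$ a non-unit, forcing $y\notin W$'') implicitly assumes that $V\to W$ is a \emph{local} map, which is not part of the hypotheses. For a counterexample, take any absolutely integrally closed valuation ring $V$ of rank $\geq 2$, a nonzero nonmaximal prime $\mathfrak{p}'\subset V$, and $W=V_{\mathfrak{p}'}$; then $\mathrm{Frac}(V)=\mathrm{Frac}(W)$ and $\mathrm{Frac}(V)\cap W=W\supsetneq V$. This breaks your pullback argument: from $x=pu$ with $p\in\mathfrak{p}$, $u\in W$ you cannot conclude $u\in V$. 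The paper avoids this by arguing the contrapositive: if $x\in V\setminus\mathfrak{p}$ then $x\mid p$ in $V$ (as $V$ is a valuation ring and $p\in\mathfrak{p}$), say $p=xy'$ with $y'\in\mathfrak{p}$; then $x=xy'u$ forces $y'u=1$ in $W$, so $y'\in\mathfrak{p}$ becomes a unit in $W$, contradicting $\mathfrak{p}W\neq W$.

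\textbf{The identification $\mathfrak{q}=\mathfrak{p}W$ is also false.} The prime $\mathfrak{p}W$ is the \emph{smallest} prime of $W$ lying over $\mathfrak{p}$, not the largest. In terms of your convex-subgroup picture: the prime corresponding to the convex hull $H'$ of $\iota(H)$ is indeed the largest prime $\mathfrak{q}$ lying over $\mathfrak{p}$ (your argument for this is fine), but this prime need not be $\mathfrak{p}W$. Concretely, if $\Gamma_V=\mathbb{Q}$, $\Gamma_W=\mathbb{Q}\times\mathbb{Q}$ (lex), and $\iota(q)=(0,q)$, then for $\mathfrak{p}=(0)$ one has $\mathfrak{p}W=(0)$ while the convex hull $H'=\{0\}\times\mathbb{Q}$ corresponds to the height-one prime $\mathfrak{q}_1\supsetneq(0)$. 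Fortunately this error is not fatal to your conclusion: your final step, that $S^{-1}W=W_{\mathfrak{q}_0}$ with $\mathfrak{q}_0$ the maximal prime disjoint from $S=V\setminus\mathfrak{p}$, already gives $W\otimes_V V_{\mathfrak{p}}\simeq W_{\mathfrak{q}}$ once one notes $\mathfrak{q}_0=\mathfrak{q}$ (which follows because $\mathfrak{p}W\subset\mathfrak{q}_0$ forces $\mathfrak{q}_0\cap V=\mathfrak{p}$, using part (1)). The paper argues this last isomorphism slightly differently, by observing that both sides are localizations of $W$ and comparing their spectra directly.
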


As the proof below shows, we do not need the full strength of absolute integral closedness above: it suffices to assume that the value groups of $V$ and $W$ are $n$-divisible for some integer $n > 1$. 

\begin{proof}
By replacing $V$ by its image, we may assume that $V$ is a subring of $W$. 

For the first assertion of (1), since radical ideals in a valuation ring are prime, it is enough to
show that $\mathfrak{p}W$ is radical. Say $y \in W$ with $y^2 \in
\mathfrak{p}W$. Then we can write $y^2 = \sum_{i=1}^n a_i x_i$ with $a_i \in
\mathfrak{p}$ and $x_i \in W$. As $V$ is a valuation ring, we may assume after
rearrangement that $a_1 \mid a_i$ for all $i$, so we can rewrite $y^2 = a_1
\cdot z$ for some $z \in W$. As both $V$ and $W$ have $2$-divisible value
groups, we can choose a $c_1 \in \mathfrak{p}$ such that $c_1^2V = a_1V$ and $w
\in W$ such that $w^2W = zW$. It follows that $y^2 W = c_1^2 w^2W$, and hence
$yW = c_1w W$ as $W$ is a valuation ring. But this implies that $y \in c_1W \subset \mathfrak{p}W$, as wanted.

Next we verify the second assertion of (1), i.e., that $\mathfrak{p} W \subset W$ pulls back to $\mathfrak{p}
\subset V$. 
Suppose that  there is an element $x \in V\setminus \mathfrak{p}$ such that $x \in \mathfrak{p}
W$, so $x = \sum y_i z_i$ for some $y_i \in \mathfrak{p}, z_i \in W$. Using
divisibility to collect terms, we may assume that there is only one term in the
sum, i.e., $x = yz$ for $y \in \mathfrak{p}, z \in W$. 
Since $V$ is a valuation ring,  we can write $y = x y'$ for some
$y'  \in \mathfrak{p}$. 
We get $x = xy' z$ in $W$, and canceling we find that $y' z = 1$ in $W$. This
means that $\mathfrak{p} W = W$, a contradiction.

For the existence of $\mathfrak{q}$  in (2): note that the collection of primes
$\mathfrak{q}' \subset W$ whose pullback to $V$ equals $\mathfrak{p}$ is
directed: as $W$ is a valuation ring, any set of prime ideals of $W$ is even
totally ordered. The existence of $\mathfrak{q}$ follows as a filtered colimit
of prime ideals is prime (and because forming filtered colimits of primes in $W$
commutes with pulling back to $V$). Having constructed $\mathfrak{q}$, we
immediately get a map $W \otimes_V V_{\mathfrak{p}} \to W_{\mathfrak{q}}$. This
is an injective map of two valuation rings (as both sides are localizations of
$W$). To prove bijectivity, it is thus enough to show that their spectra match
up. But $\mathrm{Spec}(W \otimes_V V_{\mathfrak{p}})$ is the preimage of
$\mathrm{Spec}(V_{\mathfrak{p}}) \subset \mathrm{Spec}(V)$, and hence is
identified with the collection of primes of $W$ whose pullback to $V$
lies in $\mathfrak{p}$. As $\mathfrak{q}$ was the maximal element in this collection, it follows that $W \otimes_V V_{\mathfrak{p}} \to W_{\mathfrak{q}}$ is a bijection on $\mathrm{Spec}(-)$ and thus an isomorphism. The final assertion follows by comparing residue fields at the closed point in the preceding isomorphism.
\end{proof}

We can now prove a crucial stability property of aic-$v$-excision: it passes up to algebras. 

\begin{lemma} 
\label{excifaicexciv}
Let $V$ be an absolutely integrally closed valuation ring, and fix a
$V$-algebra $A$. 
Let $\mathcal{F}\colon  \mathrm{Sch}_{qcqs,V}^{op} \to \mathcal{C}$ be a finitary
$v$-sheaf  which satisfies aic-$v$-excision, where $\mathcal{C}$ is compactly generated by cotruncated objects. Then for every 
prime ideal $\mathfrak{p} \subset V$, the map
\[ \mathcal{F}( A) \to \mathcal{F}( A \otimes_V V_{\mathfrak{p}})
\times_{\mathcal{F}( A \otimes_V \kappa(\mathfrak{p}))}\mathcal{F}( A/\mathfrak{p})\]
 is an equivalence. 
\end{lemma}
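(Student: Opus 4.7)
The plan is to reduce to the case where $A$ is itself an absolutely integrally closed valuation ring $W$, and then to paste two instances of aic-$v$-excision. Consider the two $\mathcal{C}$-valued functors on $\mathrm{Ring}_V$ given by $\mathcal{F}_1(A) := \mathcal{F}(\spec(A))$ and
\[ \mathcal{F}_2(A) := \mathcal{F}(A \otimes_V V_{\mathfrak{p}}) \times_{\mathcal{F}(A \otimes_V \kappa(\mathfrak{p}))} \mathcal{F}(A/\mathfrak{p}), \]
connected by the natural map $\mathcal{F}_1 \to \mathcal{F}_2$ induced by the obvious commutative square of rings. Since base change $A \mapsto A \otimes_V B$ preserves $v$-covers and commutes with filtered colimits, each of $A \mapsto \mathcal{F}(A \otimes_V B)$ (for $B \in \{V_{\mathfrak{p}}, \kappa(\mathfrak{p}), V/\mathfrak{p}\}$) is a finitary $v$-sheaf on $V$-algebras; hence so is $\mathcal{F}_2$, being a finite limit of finitary $v$-sheaves. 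By \Cref{aicvaldetect}, it then suffices to check that $\mathcal{F}_1(W) \to \mathcal{F}_2(W)$ is an equivalence for every absolutely integrally closed valuation ring $W$ with $V$-algebra structure.

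In the degenerate case $\mathfrak{p}W = W$, fix some $a \in \mathfrak{p}$ that becomes a unit in $W$; for any $s \in V \setminus \mathfrak{p}$, the valuation-ring structure of $V$ forces $s \mid a$ in $V$ (else $a \mid s$ and so $s \in \mathfrak{p}$), so $s$ is also a unit in $W$. Consequently $W \otimes_V V_{\mathfrak{p}} = W$, while $W \otimes_V \kappa(\mathfrak{p}) = W/\mathfrak{p}W = 0$. Since $\mathcal{F}$ preserves finite products, $\mathcal{F}(0)$ is terminal in $\mathcal{C}$, and $\mathcal{F}_2(W)$ collapses to $\mathcal{F}(W)$, giving the equivalence trivially.

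In the main case $\mathfrak{p}W \neq W$, \Cref{MilnorAICValRing} provides a maximal prime $\mathfrak{q} \subset W$ pulling back to $\mathfrak{p}$, together with identifications $W \otimes_V V_{\mathfrak{p}} \simeq W_{\mathfrak{q}}$ and $W \otimes_V \kappa(\mathfrak{p}) \simeq W_{\mathfrak{q}}/\mathfrak{p}W_{\mathfrak{q}}$. Moreover, $W/\mathfrak{p}W$ is an absolutely integrally closed valuation ring with $\mathfrak{q}/\mathfrak{p}W$ a prime, and $(W/\mathfrak{p}W)_{\mathfrak{q}/\mathfrak{p}W} \simeq W_{\mathfrak{q}}/\mathfrak{p}W_{\mathfrak{q}}$. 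Applying aic-$v$-excision first to $W$ at $\mathfrak{q}$ and then to $W/\mathfrak{p}W$ at $\mathfrak{q}/\mathfrak{p}W$ produces two cartesian squares sharing the row $\mathcal{F}(W/\mathfrak{q}) \to \mathcal{F}(\kappa(\mathfrak{q}))$; pasting them (using that the right square is cartesian, the outer rectangle is cartesian, and hence the left square is cartesian) identifies $\mathcal{F}(W)$ with $\mathcal{F}(W_{\mathfrak{q}}) \times_{\mathcal{F}(W_{\mathfrak{q}}/\mathfrak{p}W_{\mathfrak{q}})} \mathcal{F}(W/\mathfrak{p}W)$, which is precisely $\mathcal{F}_2(W)$.

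The main obstacle is organizing the pasting of cartesian squares so that the correct identifications from \Cref{MilnorAICValRing} line up on the nose; the verification that $\mathcal{F}_2$ is a finitary $v$-sheaf and the treatment of the degenerate case $\mathfrak{p}W = W$ are essential but routine.
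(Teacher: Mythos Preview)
Your argument is correct and follows essentially the same strategy as the paper: reduce to $A=W$ an absolutely integrally closed valuation ring via \Cref{aicvaldetect}, dispose of the case $\mathfrak{p}W=W$, and in the main case paste two instances of aic-$v$-excision using the identifications from \Cref{MilnorAICValRing}. The only difference is cosmetic: the paper pastes using the primes $\mathfrak{p}W \subset W$ and $\mathfrak{p}W_{\mathfrak{q}} \subset W_{\mathfrak{q}}$ (so the auxiliary ring is the localization $W_{\mathfrak{q}}$), whereas you paste using $\mathfrak{q} \subset W$ and $\mathfrak{q}/\mathfrak{p}W \subset W/\mathfrak{p}W$ (so the auxiliary ring is the quotient $W/\mathfrak{p}W$); these are dual choices and both work. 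One small expository slip: the two squares you paste share the edge $\mathcal{F}(W/\mathfrak{p}W) \to \mathcal{F}(W_{\mathfrak{q}}/\mathfrak{p}W_{\mathfrak{q}})$, not the row $\mathcal{F}(W/\mathfrak{q}) \to \mathcal{F}(\kappa(\mathfrak{q}))$ (the latter is the common far edge of the outer rectangle and the right square).
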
 
\begin{proof} 
Since both sides are finitary $v$-sheaves in $A$, it suffices (by Proposition~\ref{aicvaldetect}) to show that  the map is an equivalence for $A = W $ an absolutely integrally closed valuation ring. 

If $\mathfrak{p} W  = W$, then $W \simeq W \otimes_V V_{\mathfrak{p}}$ and
$W/\mathfrak{p}W = 0$, so the result is clear.  If $\mathfrak{p} W \neq W$,
then \Cref{MilnorAICValRing} shows that $\mathfrak{p} W$ is a prime ideal  in $W$.  Since $\mathfrak{p} W$ pulls back to $\mathfrak{p}$, we obtain a natural map $V_\mathfrak{p} \to W_{\mathfrak{p}W}$.  We consider the diagram
\[ \xymatrix{
W \ar[d]  \ar[r] &  W/\mathfrak{p}W \ar[d] \\
W \otimes_V V_{\mathfrak{p}} \ar[r] \ar[d]  &  W \otimes_V \kappa( \mathfrak{p}) \ar[d]  \\
W_{\mathfrak{p}W} \ar[r] &   \kappa( \mathfrak{p} W),
}\]
where all rings are absolutely integrally closed valuation rings. 
There are three squares we can extract naturally from this diagram. 
The outer one is a Milnor square as in \eqref{faic} (i.e., belongs to the
setting of aic-$v$-excision), and so is the
bottom square (Proposition~\ref{AICValRingPrime}). 
Therefore, $\mathcal{F}$ carries the outer and bottom squares to  pullback
squares. It follows that $\mathcal{F}$ carries the top square to a  pullback,
i.e., $\mathcal{F}(W) \simeq \mathcal{F}( W/\mathfrak{p}W) \times_{\mathcal{F}( W \otimes_V
\kappa( \mathfrak{p}))} \mathcal{F}( W \otimes_V V_{\mathfrak{p}})$. 
\end{proof}

\begin{lemma} 
\label{Descinvlimv}
Let $\{A_i \to B_i\}_{i \in I}$ be a diagram of finitely presented ring maps indexed by a filtered category $I$. Assume that for each map $i \to j$ in $I$, the map $A_j \otimes_{A_i} B_i \to B_j$ is an isomorphism.  If $\varinjlim_I A_i \to \varinjlim_I B_i$ is a   $v$-cover, then so is $A_j \to B_j$ for some $j \in I$ (and thus for all $j$  large enough). 
\end{lemma}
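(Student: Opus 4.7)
Set $A := \varinjlim_i A_i$ and $B := \varinjlim_i B_i$, and fix any base index $i_0 \in I$. The cartesian hypothesis gives $B_j \simeq A_j \otimes_{A_{i_0}} B_{i_0}$ for all $j \geq i_0$, and passing to the colimit yields $B \simeq A \otimes_{A_{i_0}} B_{i_0}$. Thus $\spec(B) \to \spec(A)$ and each $\spec(B_j) \to \spec(A_j)$ (for $j \geq i_0$) arise as base changes of the single finitely presented morphism $f_0 \cl \spec(B_{i_0}) \to \spec(A_{i_0})$; the question is whether the $v$-cover property of the limiting map descends to some finite stage.

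The plan is to invoke Rydh's structural theorem on finitely presented $v$-covers \cite[Theorem~3.12]{Rydh}, already used in the proof of Proposition~\ref{finitevtopology}: the $v$-cover $\spec(B) \to \spec(A)$ admits a refinement $Z \to \spec(B) \to \spec(A)$ such that $Z \to \spec(A)$ is the composite of a finite quasi-compact Zariski open cover $Z \to W$ followed by a proper finitely presented surjection $W \to \spec(A)$, with $Z$ and $W$ both finitely presented over $\spec(A)$. By the standard limit formalism for finitely presented schemes \cite[Tag 01YT]{stacks-project}, since $\spec(A) = \varprojlim_j \spec(A_j)$ with affine transition maps, this entire structure descends to some $j_1 \geq i_0$: we obtain finitely presented schemes $Z_{j_1}, W_{j_1}$ over $\spec(A_{j_1})$ with $Z_{j_1}$ factoring through $\spec(B_{j_1})$, $Z_{j_1} \to W_{j_1}$ expressed as a finite union of quasi-compact open immersions, and $W_{j_1} \to \spec(A_{j_1})$ proper. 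Each of these properties is stable under subsequent base change along $A_{j_1} \to A_j$ for $j \geq j_1$.

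The only remaining step is to promote two surjectivity conditions to a finite stage: that $Z_j \to W_j$ and $W_j \to \spec(A_j)$ are surjective for $j$ sufficiently large. In each case, the non-surjectivity locus in the target is a constructible subset (closed for the proper map $W_j \to \spec(A_j)$; the complement of a finite union of quasi-compact opens for $Z_j \to W_j$), and the cofiltered system of these loci has empty inverse limit over $\spec(A)$ by the hypothesized surjectivity at the limit. Hence, by compactness of cofiltered inverse limits of clopen subsets in the constructible topology, each locus is empty at some finite stage. For such $j$, the composite $Z_j \to \spec(A_j)$ is a Zariski cover followed by a proper surjection, hence a $v$-cover; since it factors through $\spec(B_j)$, the map $A_j \to B_j$ is itself a $v$-cover by the evident sorite (any valuation lifting along $Z_j \to \spec(A_j)$ gives a valuation lifting along $\spec(B_j) \to \spec(A_j)$). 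The main conceptual input is Rydh's refinement theorem; the rest is routine finitely presented approximation combined with constructible-topology compactness.
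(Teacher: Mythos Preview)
Your proof is correct and follows essentially the same route as the paper's: both invoke Rydh's refinement theorem \cite[Theorem~3.12]{Rydh} to replace the $v$-cover at the limit by a composite of a quasi-compact open covering and a proper finitely presented surjection, then descend this refinement to a finite stage via standard noetherian approximation (the paper cites \cite[Tag 09MV]{stacks-project}, which packages together the descent of morphisms and of surjectivity that you unwind by hand). One small slip: for the proper map $W_j \to \spec(A_j)$ the image is closed, so the non-surjectivity locus is \emph{open} (not closed as you wrote), but this is harmless since constructibility is all that is needed for the compactness argument.
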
 

\begin{proof} 
Let $A_\infty = \varinjlim_I A_i, B_\infty = \varinjlim_I B_i$. The map
$\spec(B_\infty) \to \spec(A_\infty)$ is finitely presented and
a $v$-cover and therefore \cite[Theorem 3.12]{Rydh} admits a refinement which factors as a composite of a quasi-compact open
covering and a proper finitely presented surjection. By general results of
noetherian approximation \cite[Tag 09MV]{stacks-project}, some map $\spec(B_i)
\to \spec(A_i)$ admits a refinement which factors as a composite of a quasi-compact open covering and a
proper finitely presented surjection, and is therefore a $v$-cover.  
\end{proof} 

To proceed further, we will need a general lemma which will aid us in reducing 
valuation rings to rank $\leq 1$ valuation rings. 
Let $V$ be a valuation ring. 
Consider the totally ordered set $\spec(V)$ of prime ideals of $V$. 

\begin{definition}[Intervals] 
An \emph{interval} $I = [\mathfrak{p}, \mathfrak{q}]$ (for $\mathfrak{p}
\subset \mathfrak{q}$ prime ideals in $V$) will denote the set of
prime ideals of $V$ which 
are contained between $\mathfrak{p}$ and $\mathfrak{q}$; note that this is also
$\spec( (V/\mathfrak{p})_{\mathfrak{q}})$, so to each interval $I$ we have an
associated absolutely integrally closed valuation ring $V_I  :=
(V/\mathfrak{p})_{\mathfrak{q}}$.  

The collection of intervals of $\spec(V)$ is partially ordered under inclusion:
we have $[\mathfrak{p}, \mathfrak{q}] \subset [ \mathfrak{p}', \mathfrak{q}']$
if and only if $\mathfrak{p} \supset \mathfrak{p}'$ and $\mathfrak{q} \subset
\mathfrak{q}'$ as prime ideals.
Given an inclusion of intervals $I \subset J$, we have a 
map $V_J \to V_I$, so we have a contravariant functor from the poset of intervals
to absolutely integrally closed valuation rings. 
Finally, any chain $\mathcal{C}$ (i.e., totally ordered subset) in the
poset of intervals admits a supremum and an infimum. 
The infimum is 
given by the intersection, and the functor $I \mapsto V_I$ sends an
intersection of  intervals along a chain to the associated filtered colimit
of the $V_I, I \in \mathcal{C}$. 
\end{definition}

\begin{lemma} 
\label{intervallemma}
Let $V$ be a valuation ring. 
Let $\mathcal{P}$ be a collection of intervals in $\spec(V)$ such that: 
\begin{enumerate}
\item If $I$ is an interval of \emph{length $\leq 1$} (so $I$ either consists
of one point or $I = [\mathfrak{p}, \mathfrak{q}]$ where $\mathfrak{p}
\subsetneq \mathfrak{q}$ is an inclusion that cannot be refined further), then
$I \in \mathcal{P}$.\footnote{Strictly speaking, (1) is a consequence of (3) and (4),
but it is convenient to state it separately.} 
\item
If $I \in \mathcal{P}$ and $J \subset I$ is a subinterval, then $ J \in
\mathcal{P}$ as well. 
\item Suppose $\mathfrak{p} \in \spec(V)$ is not maximal. Then there exists 
$\mathfrak{q} \supsetneq \mathfrak{p}$ with $[\mathfrak{p}, \mathfrak{q}] \in
\mathcal{P}$. 
\item Suppose $\mathfrak{p} \in \spec(V)$ is nonzero. Then there exists 
$\mathfrak{r} \subsetneq \mathfrak{p}$ with $[\mathfrak{r}, \mathfrak{p}] \in
\mathcal{P}$. 

\item 
Suppose that $I, J$ are overlapping intervals in $\mathcal{P}$, so that $I \cup J$ is an
interval. Then $I \cup J \in \mathcal{P}$.  
\end{enumerate}
Then $\mathcal{P}$ consists of all intervals in $\spec(V)$. 
\end{lemma}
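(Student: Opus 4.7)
The plan is to verify that every interval $[\mathfrak{p},\mathfrak{q}]$ lies in $\mathcal{P}$.  Fix such an interval with $\mathfrak{p} \subsetneq \mathfrak{q}$ (the case $\mathfrak{p} = \mathfrak{q}$ is immediate from (1)), and set
\[ T := \{\, \mathfrak{q}' \in [\mathfrak{p},\mathfrak{q}] \ :\ [\mathfrak{p},\mathfrak{q}'] \in \mathcal{P} \,\}. \]
By (1), $\mathfrak{p} \in T$.  Since $\spec(V)$ is totally ordered by inclusion and the union of any chain of proper primes of $V$ is again a proper prime, every nonempty subset of $\spec(V)$ admits a supremum in $\spec(V)$; hence $\mathfrak{q}^* := \sup T$ exists and lies in $[\mathfrak{p},\mathfrak{q}]$.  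The goal reduces to showing $\mathfrak{q}^* \in T$ and $\mathfrak{q}^* = \mathfrak{q}$.

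To see $\mathfrak{q}^* \in T$, suppose otherwise.  Then $T$ has no maximum, so $\mathfrak{q}^* \supsetneq \mathfrak{p}$ and in particular $\mathfrak{q}^* \neq (0)$.  By (4), some $\mathfrak{r} \subsetneq \mathfrak{q}^*$ satisfies $[\mathfrak{r},\mathfrak{q}^*] \in \mathcal{P}$.  If $\mathfrak{r} \subseteq \mathfrak{p}$, then $[\mathfrak{p},\mathfrak{q}^*]$ is a subinterval of $[\mathfrak{r},\mathfrak{q}^*]$ and (2) would force $\mathfrak{q}^* \in T$; so we may assume $\mathfrak{r} \supsetneq \mathfrak{p}$.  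Because $\mathfrak{r} \subsetneq \mathfrak{q}^* = \sup T$ and $T$ has no maximum, there is some $\mathfrak{s} \in T$ with $\mathfrak{s} \supsetneq \mathfrak{r}$; then the intervals $[\mathfrak{p},\mathfrak{s}]$ and $[\mathfrak{r},\mathfrak{q}^*]$ both contain $[\mathfrak{r},\mathfrak{s}]$ and their union is $[\mathfrak{p},\mathfrak{q}^*]$, so (5) forces $[\mathfrak{p},\mathfrak{q}^*] \in \mathcal{P}$, contradicting $\mathfrak{q}^* \notin T$.

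To see $\mathfrak{q}^* = \mathfrak{q}$, suppose $\mathfrak{q}^* \subsetneq \mathfrak{q}$, so $\mathfrak{q}^*$ is not the maximal ideal of $V$.  By (3), some $\mathfrak{t} \supsetneq \mathfrak{q}^*$ satisfies $[\mathfrak{q}^*,\mathfrak{t}] \in \mathcal{P}$; if $\mathfrak{t} \supsetneq \mathfrak{q}$, replace $\mathfrak{t}$ by $\mathfrak{q}$ via (2), so in either case $\mathfrak{q}^* \subsetneq \mathfrak{t} \subseteq \mathfrak{q}$.  Applying (5) to the overlapping intervals $[\mathfrak{p},\mathfrak{q}^*]$ and $[\mathfrak{q}^*,\mathfrak{t}]$ yields $[\mathfrak{p},\mathfrak{t}] \in \mathcal{P}$, so $\mathfrak{t} \in T$ with $\mathfrak{t} \supsetneq \mathfrak{q}^*$, contradicting $\mathfrak{q}^* = \sup T$.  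Hence $\mathfrak{q}^* = \mathfrak{q}$, and therefore $[\mathfrak{p},\mathfrak{q}] \in \mathcal{P}$.

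The main obstacle is proving that the supremum is attained: here (4) produces an interval reaching $\mathfrak{q}^*$ from below, and (5) is used to merge it with an element of $T$ arbitrarily close to $\mathfrak{q}^*$ (chosen using the fact that $\mathfrak{q}^*$ is a strict sup).  The rest is bookkeeping: (3), (2), and (5) together extend past $\mathfrak{q}^*$ to force $\mathfrak{q}^* = \mathfrak{q}$.
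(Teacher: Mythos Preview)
Your proof is correct and takes a slightly different route from the paper's. The paper applies Zorn's lemma to the poset of all intervals lying in $\mathcal{P}$: it first shows that any chain of such intervals has an upper bound in $\mathcal{P}$ (using (2)--(5) to argue that the ``limit interval'' $[\bigcap \mathfrak{p}_i,\ \bigcup \mathfrak{q}_i]$ belongs to $\mathcal{P}$), extracts a maximal interval, and then uses (3)--(5) once more to show that this maximal interval is all of $\spec(V)$. You instead fix the left endpoint $\mathfrak{p}$ and work in the totally ordered set of candidate right endpoints, where suprema exist concretely as unions of prime ideals; this bypasses Zorn's lemma entirely and folds the paper's two ``extend and glue'' passes into one. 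The payoff is a shorter and more elementary argument; the paper's version is more symmetric in the two endpoints but pays for that with an appeal to Zorn and a longer verification that chains are bounded above.
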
 
\begin{proof} 
By (2), it suffices to show that the interval $\spec(V)$ belongs to $\mathcal{P}$. 
The proof is a straightforward Zorn's lemma argument. 
Suppose $\mathcal{D}$ is a chain of intervals $[\mathfrak{p}_i,
\mathfrak{q}_i]$ in $\mathcal{P}$. 
We let $\mathfrak{p} = \bigcap \mathfrak{p}_i$ and $\mathfrak{q} = \bigcup
\mathfrak{q}_i$. We want to show that $[\mathfrak{p}, \mathfrak{q}]$ is also
in $\mathcal{P}$, so $\mathcal{D}$ has an upper bound. 
By assumption and (2) any interval $[\mathfrak{p}',\mathfrak{q}']$ that is \emph{properly} contained in $[\mathfrak{p},
\mathfrak{q}]$ belongs to $\mathcal{P}$: if $\mathfrak{q}' \subsetneq \cup_i \mathfrak{q}_i$, then $\mathfrak{q}' \subset \mathfrak{q}_i$ for $i \gg 0$ (for otherwise we would have $\mathfrak{q}_i \subset \mathfrak{q}'$ for all $i$, whence $\cup_i \mathfrak{q}_i \subset \mathfrak{q}'$, contradicting our assumption), and similarly at the other end.  Now  there exists an interval $I_0 \in \mathcal{P}$ containing $\mathfrak{p}$ and
a strictly larger prime ideal $\mathfrak{p}'$, and an interval $I_1 \in \mathcal{P}$ containing $\mathfrak{q}$
and a strictly smaller prime ideal $\mathfrak{q}'$ by (3, 4). 
But then $I_2 := [\mathfrak{p}',\mathfrak{q}']$ is properly contained in $[\mathfrak{p},\mathfrak{q}]$, so $I_2 \in \mathcal{P}$. Since $I_2 \cap I_j \neq \emptyset$ for $j={0,1}$, we conclude using (5) that $I_0 \cup I_1 \cup I_2 = [\mathfrak{p}, \mathfrak{q}] \in \mathcal{P}$, so $[\mathfrak{p}, \mathfrak{q}] \in
\mathcal{P}$. 

Thus the collection of intervals in $\mathcal{P}$ contains a maximal element by Zorn's
lemma. 
Let $[\mathfrak{p}_\infty, \mathfrak{q}_\infty]$ be such a maximal element; we
claim that $\mathfrak{p}_\infty = 0$ and $\mathfrak{q}_\infty $ is the maximal
ideal. 
Suppose the former fails. 
Then, by (4), we can find $\mathfrak{p}' \subsetneq \mathfrak{p}$ with 
$[\mathfrak{p}' , \mathfrak{p}] \in \mathcal{P}$. 
Using (5), we find that $[\mathfrak{p}', \mathfrak{q}_\infty] \in \mathcal{P}$,
contradicting maximality. 
Thus, we find that $\mathfrak{p}_\infty = 0$. Similarly, $ \mathfrak{q}_\infty 
$ is the maximal ideal, whence the result. 
\end{proof} 

\begin{proposition} 
\label{aicvexcimpliesarc}
Let $\mathcal{C}$ be an $\infty$-category that is compactly generated by cotruncated objects.
Let $\mathcal{F}\colon  \mathrm{Sch}_{qcqs,R}^{op}\to \mathcal{C}$ be a finitary
$v$-sheaf which satisfies aic-$v$-excision. 
Then $\sF$ is an $\arc$-sheaf. 
\end{proposition}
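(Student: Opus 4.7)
The plan is to reduce the claim to universal $\sF$-descent over absolutely integrally closed (aic) valuation rings, and then run an interval induction via Lemma~\ref{intervallemma}.

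First, using Proposition~\ref{vcoverbyvaluation} together with the Zariski sheaf property of $\sF$, I would reduce to the following statement: for every aic valuation ring $V$ over $R$ and every $\arc$-cover $Y \to \spec V$, the map $Y \to \spec V$ is of universal $\sF$-descent. Fix such $V$ and $Y$, and let $\mathcal{P}$ denote the collection of intervals $I \subset \spec V$ such that the pullback $Y_I := Y \times_V \spec V_I \to \spec V_I$ is of universal $\sF$-descent. I would apply Lemma~\ref{intervallemma} to $\mathcal{P}$ to conclude $\spec V \in \mathcal{P}$, which yields the desired descent.

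Conditions (1) and (2) of that lemma are routine: if $I$ has length $\leq 1$, then $V_I$ has rank $\leq 1$ and Lemma~\ref{arccovercrit} identifies $Y_I \to \spec V_I$ with a $v$-cover, so the $v$-sheaf hypothesis on $\sF$ gives universal $\sF$-descent; condition (2) is immediate from the base-change stability of universal $\sF$-descent. The central use of aic-$v$-excision occurs in verifying condition (5): for overlapping intervals $I, J \in \mathcal{P}$ with endpoints $\mathfrak{p}_1 \leq \mathfrak{p}_2 \leq \mathfrak{q}_1 \leq \mathfrak{q}_2$ and $W := V_{I \cup J}$ (which is aic), pasting two applications of Lemma~\ref{excifaicexciv} --- one for $W$ at the prime $\tilde{\mathfrak{p}}_2$ corresponding to $\mathfrak{p}_2$ and one for $W_{\tilde{\mathfrak{q}}_1}$ at its pullback --- yields a cartesian square relating $\sF$ of the four $V$-algebras $V_{I \cup J}, V_I, V_J, V_{I \cap J}$, and the same holds after arbitrary base change along a $W$-algebra. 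Applied term-wise to the cosimplicial scheme $Y_{I \cup J}^{\times (\bullet+1)/W}$ and then totalized, using that totalizations commute with finite limits in $\mathcal{C}$, this upgrades universal $\sF$-descent on $I$, $J$, $I \cap J$ to universal $\sF$-descent on $I \cup J$.

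The main obstacle is verifying (3) and (4). For a non-maximal prime $\mathfrak{p}$ one must produce some interval $[\mathfrak{p}, \mathfrak{q}] \in \mathcal{P}$ with $\mathfrak{q} \supsetneq \mathfrak{p}$ (and dually for (4)). In a valuation ring of infinite rank a non-maximal prime need not admit any immediate specialization (cf.\ Remark~\ref{LargeExample}), so condition (1) alone does not suffice. The strategy is to combine the remark following Proposition~\ref{visv1} with the finitary hypothesis: iteratively applying the pseudouniformizer construction of that remark produces a decreasing sequence of primes $\mathfrak{q}_1 \supsetneq \mathfrak{q}_2 \supsetneq \cdots$ with $\bigcap_i \mathfrak{q}_i = \mathfrak{p}$ whose consecutive gaps can be traversed by finitely many immediate specializations, so that $[\mathfrak{q}_i, \mathfrak{q}_1] \in \mathcal{P}$ by iterated (1)+(5); then Lemma~\ref{univFclosedfiltlimit}, which gives closure of $\mathcal{P}$ under filtered colimits of intervals, yields $[\mathfrak{p}, \mathfrak{q}_1] = \bigcup_i [\mathfrak{q}_i, \mathfrak{q}_1] \in \mathcal{P}$, establishing (3). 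Condition (4) is handled symmetrically. Carrying out this transfinite construction carefully in an arbitrary aic valuation ring is the subtle technical point where the bulk of the work lies.
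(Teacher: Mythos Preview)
Your reduction to absolutely integrally closed valuation rings and your verification of conditions (1), (2), (5) of Lemma~\ref{intervallemma} are essentially the same as in the paper. The genuine gap is in your treatment of (3) and (4).

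Your plan is to build a chain of intervals $[\mathfrak{q}_i,\mathfrak{q}_1]\in\mathcal{P}$ with $\mathfrak{q}_i\searrow\mathfrak{p}$ and then invoke Lemma~\ref{univFclosedfiltlimit} to conclude $[\mathfrak{p},\mathfrak{q}_1]\in\mathcal{P}$. But recall from the paper's discussion of intervals that an \emph{inclusion} $I\subset J$ induces a ring map $V_J\to V_I$, and an \emph{intersection} of intervals along a chain corresponds to the filtered \emph{colimit} of the $V_I$. Your intervals $[\mathfrak{q}_i,\mathfrak{q}_1]$ are \emph{increasing}, so the rings $V_{[\mathfrak{q}_i,\mathfrak{q}_1]}$ form an \emph{inverse} system, and $V_{[\mathfrak{p},\mathfrak{q}_1]}$ is neither their direct nor their inverse limit (the generic point of $\spec V_{[\mathfrak{p},\mathfrak{q}_1]}$ is not hit by any $\spec V_{[\mathfrak{q}_i,\mathfrak{q}_1]}$). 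Lemma~\ref{univFclosedfiltlimit} only gives closure of universal $\sF$-descent under filtered limits of schemes, i.e.\ filtered colimits of rings; it says nothing about your situation. So the transfinite climb stalls at every limit ordinal.

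The paper resolves this by first writing the given $\arc$-cover $\spec(A)\to\spec(V)$ as a filtered limit of \emph{finitely presented} $V$-algebras $A$ (using Lemma~\ref{univFclosedfiltlimit} to justify that reduction). With $A$ finitely presented, condition (3) is handled by \emph{shrinking} intervals: one considers the decreasing family $[\mathfrak{p},\mathfrak{q}']$ as $\mathfrak{q}'\searrow\mathfrak{p}$, so that $\kappa(\mathfrak{p})=\varinjlim_{\mathfrak{q}'\supsetneq\mathfrak{p}} V_{[\mathfrak{p},\mathfrak{q}']}$ is a genuine filtered colimit of rings. Over the field $\kappa(\mathfrak{p})$ the base change of $A$ is a $v$-cover, and because $A$ is finitely presented one can apply Lemma~\ref{Descinvlimv} to spread this to some $V_{[\mathfrak{p},\mathfrak{q}]}\to A\otimes_V V_{[\mathfrak{p},\mathfrak{q}]}$ being a $v$-cover for $\mathfrak{q}$ close enough to $\mathfrak{p}$. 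This is the step your proposal is missing: the reduction to the finitely presented case is not cosmetic but is exactly what makes (3) and (4) go through.
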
 

\begin{proof} 
Let $f\colon  Y \to X$ be an $\arc$-cover of $R$-schemes. We want to show that it is
of universal $\sF$-descent. Without loss of generality, we can assume $X, Y$ are
affine. 
By Proposition~\ref{vcoverbyvaluation}, we 
can further reduce to the case where $X = \spec(V)$ is the spectrum of an
absolutely integrally closed valuation ring $V$. Let $Y = \spec(A)$, for a
$V$-algebra $A$. 

Now $A$ is a filtered colimit of finitely presented $V$-algebras, each of
which is also an $\arc$-cover of $V$ (\Cref{compositemaparccover}). Since morphisms of universal
$\sF$-descent are closed under filtered colimits of rings
(\Cref{univFclosedfiltlimit}), it suffices to
assume that $A$ is finitely presented over $V$.  

We consider now those intervals $I$ such that the map $V_I \to A \otimes_{V}
V_I$ (a finitely presented map of $R$-algebras) is of universal 
$\sF$-descent. We let $\mathcal{P}$ be the collection of such intervals; our goal is to show that
the interval $\spec(V)$ belongs to $\mathcal{P}$. 
To check this, we verify the hypotheses of \Cref{intervallemma} in order. 

\begin{enumerate}
\item If $I$ is an interval of \emph{length $\leq 1$},  then
$V_I$ is a rank $\leq 1$ valuation ring. Therefore,  
for such $I$, the map $V_I \to A \otimes_V V_I$ is actually a $v$-cover 
(cf.~\Cref{v1detectionr1})
and
therefore of universal $\sF$-descent. 
Thus, $I \in \mathcal{P}$. 
\item  If $I \in \mathcal{P}$ and $J \subset I$, then $J \in \mathcal{P}$. This is evident since morphisms of universal $\sF$-descent are stable under
base change.

\item
Suppose that $\mathfrak{p} \in \spec(V)$ is not maximal. Then we claim that there exists 
$\mathfrak{q} \supsetneq \mathfrak{p}$ such that $[\mathfrak{p}, \mathfrak{q}]$
belongs to $\mathcal{P}$. 
Indeed, if $\mathfrak{p}$ has an immediate successor, then we can take that
successor as $\mathfrak{q}$, in view of (1) above. 
Suppose $\mathfrak{p}$ has no immediate successor. 
Then the interval $\left\{\mathfrak{p}\right\}$ is the intersection of the
intervals $[\mathfrak{p}, \mathfrak{q}']$ for $\mathfrak{q}' \supsetneq
\mathfrak{p}$. 
As $I \mapsto V_I$ carries such intersection to filtered colimits, we learn that 
$$\kappa( \mathfrak{p}) = \varinjlim_{I = [\mathfrak{p}, \mathfrak{q}'],
\mathfrak{q}' \supsetneq \mathfrak{p}} V_I.$$ 

Now $\kappa( \mathfrak{p}) \to A \otimes_V \kappa( \mathfrak{p}) $ is  a 
$v$-cover and is finitely presented. 
Therefore, by Lemma~\ref{Descinvlimv}, there exists $I = [\mathfrak{p}, \mathfrak{q}]$ for
$\mathfrak{q} \supsetneq \mathfrak{p}$ such that $V_I \to A \otimes_V V_I$ is a
$v$-cover and hence of universal $\sF$-descent. Thus $I \in \mathcal{P}$. 

\item 
Suppose that $\mathfrak{q} \in \spec(V)$ is nonzero. Then there exists 
$\mathfrak{p} \subsetneq \mathfrak{q}$ such that $[\mathfrak{p}, \mathfrak{q}]$
belongs to $\mathcal{P}$. This is proved similarly.

\item 
Suppose that $I, J \in \mathcal{P}$ are overlapping intervals. Then the interval $I
\cup J$ belongs to $\mathcal{P}$. 
To see this, we may first assume without loss of generality that $I \cup J =
\spec(V)$, by base change. Suppose $I = [0, \mathfrak{p}], J = [\mathfrak{q},
\mathfrak{m}]$ for $\mathfrak{p} \supset \mathfrak{q}$ and $\mathfrak{m}$ the
maximal ideal. We claim that for any $V$-algebra $B$, the diagram
\begin{equation} \label{VIdiag1} \begin{aligned}\xymatrix{
\sF( B ) \ar[d] \ar[r] &  \sF( B \otimes_V V_I) \ar[d]  \\
\sF( B \otimes_V V_J) \ar[r] &  \sF( B \otimes_V V_{I \cap J})
} \end{aligned} \end{equation}
is cartesian. 

If $I \cap J$ is a single point, 
then this assertion follows from Lemma~\ref{excifaicexciv}. 
To reduce to this case, 
let $J' = [\mathfrak{p}, \mathfrak{m}]$ so that $J' \subset J$ and $I \cap J'
= \left\{\mathfrak{p}\right\}$. 
We 
can extend the diagram to 
\begin{equation} \label{VIdiag2} \begin{aligned} \xymatrix{
\sF( B \otimes_V V_J) \ar[r] \ar[d]  &  \sF( B \otimes_V V_{I \cap J}) \ar[d]  \\
\sF( B \otimes_V V_{J'}) \ar[r] &  \sF( B \otimes_V V_{I \cap J'}). 
}\end{aligned} \end{equation}
This square is cartesian, since we can consider the intervals $J', I \cap J$
whose union is $J$ and which intersect at a single point. 
Moreover, if we paste together the diagrams \eqref{VIdiag1}, \eqref{VIdiag2}
the outer square is cartesian (via the intervals $J', I$). Combining, we get
that \eqref{VIdiag1} is cartesian. 

Since \eqref{VIdiag1} is cartesian, it follows easily that the collection of maps $V \to V_I,
V \to V_J, V \to  V_{I \cap J}$ detect universal $\sF$-descent. This proves the claim. 
\end{enumerate}

Now observations (1) through (5) and \Cref{intervallemma} imply
that $\spec(V) \in \mathcal{P}$, and hence $A \to V$ is of universal
$\sF$-descent as desired. 
\end{proof}

\begin{proof}[{Proof that (1) is equivalent to (3) in Theorem~\ref{mainthm}}] 
We already saw that (1) implies (3) in 
Proposition~\ref{1implies3}.
In Proposition~\ref{aicvexcimpliesarc}, we saw (3) implies (1). 
\end{proof} 

For future reference we record the following corollary
(\Cref{detectequivfinitaryarc}): one can test
equivalences of finitary $\arc$-sheaves on absolutely integrally closed valuation rings of rank $\leq
1$. 
This is a slight strengthening of Proposition~\ref{aicvaldetect} in this
case. To begin with, we need two lemmas. 

\begin{lemma} 
Let $V$ be an absolutely integrally closed valuation ring. 
Let $\sF \colon \mathrm{Ring}_V \to \mathcal{S}$ be a finitary $\arc$-sheaf. 
For each interval $I \subset \spec(V)$, suppose that: 
\begin{enumerate}
\item  The space $\sF(V_I)$ is discrete. 
\item If furthermore $I$ has length $\leq 1$, then 
$\sF(V_I)$ is contractible. 
\end{enumerate}
Then $\sF(V)$ is contractible. 
\label{xequalsy}
\end{lemma}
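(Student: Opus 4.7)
The plan is to apply Lemma~\ref{intervallemma} to the collection $\mathcal{P}$ of intervals $I \subset \spec(V)$ such that $\mathcal{F}(V_I)$ is contractible; the conclusion $\spec(V) \in \mathcal{P}$ will give $\mathcal{F}(V)$ contractible, as required. Hypothesis (1) of Lemma~\ref{intervallemma} is immediate from assumption (2) of the present lemma.

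To verify (2) and (5), I will use aic-$v$-excision (available since $\mathcal{F}$ is an arc-sheaf, by Theorem~\ref{mainthm}). For any interior prime $\mathfrak{r}$ of an interval $I = [\mathfrak{a},\mathfrak{b}]$, aic-$v$-excision applied to $V_I$ at $\mathfrak{r}$, combined with the contractibility of $\mathcal{F}(\kappa(\mathfrak{r}))$ (which is the $\mathcal{F}$-value on the length-zero subinterval $\{\mathfrak{r}\}$), yields the product decomposition
\[
\mathcal{F}(V_I) \;\simeq\; \mathcal{F}(V_{[\mathfrak{a},\mathfrak{r}]}) \times \mathcal{F}(V_{[\mathfrak{r},\mathfrak{b}]}).
\]
Since a product of discrete spaces is contractible iff each factor is, subinterval stability (2) follows by splitting $I$ at the endpoints of the subinterval $J$, so that $\mathcal{F}(V_J)$ appears as a factor of $\mathcal{F}(V_I)$; and overlapping-union stability (5) follows by splitting $I_1 \cup I_2$ at the two boundary points of the common subinterval $I_1 \cap I_2$.

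The main obstacle will be verifying (3) and (4), which are symmetric; I focus on (3). Given a non-maximal $\mathfrak{p}$, I must find $\mathfrak{q} \supsetneq \mathfrak{p}$ with $\mathcal{F}(V_{[\mathfrak{p},\mathfrak{q}]})$ contractible. The identification $\kappa(\mathfrak{p}) \simeq \varinjlim_{\mathfrak{q} \supsetneq \mathfrak{p}} V_{[\mathfrak{p},\mathfrak{q}]}$ together with finitariness gives $\mathcal{F}(\kappa(\mathfrak{p})) \simeq \varinjlim \mathcal{F}(V_{[\mathfrak{p},\mathfrak{q}]})$, a singleton by hypothesis. The transitions in this colimit are projections from the product decomposition (hence surjective), so the cardinalities $|\pi_0 \mathcal{F}(V_{[\mathfrak{p},\mathfrak{q}]})|$ are non-increasing along the colimit direction and eventually attain a minimum $v_{\min}$ on a cofinal subset. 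Exploiting the product equation $v_{\min} = v_{\min} \cdot |\mathcal{F}(V_{[\mathfrak{q}',\mathfrak{q}]})|$ on this cofinal subset forces the gap factors to be trivial (when $v_{\min}$ is finite), making transitions bijective and identifying the colimit with a set of cardinality $v_{\min}$; since the colimit is a singleton, $v_{\min} = 1$, yielding the desired $\mathfrak{q}$. The infinite-cardinality case is the principal technical difficulty and requires a more delicate argument---for instance, reducing to countable cofinal subsystems and applying the stabilization argument there. Granted (1)--(5), Lemma~\ref{intervallemma} yields $\spec(V) \in \mathcal{P}$, completing the proof.
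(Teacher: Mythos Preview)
Your verification of hypotheses (1), (2), and (5) of Lemma~\ref{intervallemma} is fine, and the product decomposition
\[
\sF(V_I)\;\simeq\;\sF(V_{[\mathfrak{a},\mathfrak{r}]})\times\sF(V_{[\mathfrak{r},\mathfrak{b}]})
\]
(coming from aic-$v$-excision and the contractibility of $\sF(\kappa(\mathfrak{r}))$) is exactly the right tool. The problem is your treatment of (3) and (4): the cardinality--stabilization argument does not work in general, and the proposed fix via countable cofinal subsystems does not rescue it.

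Here is a concrete obstruction at the level of abstract set systems. Take the index set $(0,1]$ (with $r\to 0$ the colimit direction), and for each $r$ let $A_r$ be the set of functions $f\colon (0,r]\cap\mathbb{Q}\to\{0,1\}$ that vanish on some interval $(0,\epsilon)$. The transition $A_r\to A_{r'}$ (for $r'<r$) is restriction; its fibers are all in bijection with the set of functions $(r',r]\cap\mathbb{Q}\to\{0,1\}$, so $A_r\simeq A_{r'}\times B_{r',r}$ as required by your product picture. Each $A_r$ has cardinality $2^{\aleph_0}$, yet the colimit is a single point (any two such functions agree on $(0,\epsilon)$ for small $\epsilon$). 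Thus $v_{\min}=2^{\aleph_0}$ is attained everywhere, the equation $v_{\min}=v_{\min}\cdot|B|$ gives no information, and no term of the system is a singleton. Passing to the countable cofinal subsystem $r\in\{1/n\}$ does not help. So you cannot, from the data you have isolated, conclude that some $\sF(V_{[\mathfrak{p},\mathfrak{q}]})$ is contractible.

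The paper avoids this by not attacking contractibility directly. Instead it runs Lemma~\ref{intervallemma} \emph{twice}, for two different properties that are easier to propagate. First, fix $x,y\in\pi_0\sF(V)$ and let $\mathcal{P}$ be the set of intervals $I$ with $x|_{V_I}=y|_{V_I}$; hypothesis (3) is now immediate, because the images of $x,y$ agree in the singleton $\sF(\kappa(\mathfrak{p}))=\varinjlim_{\mathfrak{q}\supsetneq\mathfrak{p}}\sF(V_{[\mathfrak{p},\mathfrak{q}]})$, and two elements equal in a filtered colimit of sets are already equal at some finite stage. This gives $|\pi_0\sF(V_I)|\leq 1$ for \emph{all} $I$. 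Second, let $\mathcal{P}$ be the set of $I$ with $\sF(V_I)\neq\emptyset$; hypothesis (3) is again immediate, since a filtered colimit of sets being nonempty forces some term to be nonempty. Together these give contractibility. The point is that ``two given elements agree'' and ``nonempty'' are finitary conditions detectable at a finite stage of a filtered colimit of sets, whereas ``is a singleton'' is not.
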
 
\begin{proof} 
By assumption, $\sF(V)$ is discrete. 
First, we show that $\pi_0(\sF(V))$ consists of at most one element. 
Let $x, y \in \pi_0(\sF(V))$. 
Consider the collection $\mathcal{P}$ of intervals $I \subset \spec(V)$ such that the images of $x,y $ in
$\pi_0( \sF(V_I))$ agree. 
Since $\sF$ is a finitary $\arc$-sheaf, we find that if $I, J \subset \spec(V)$
are \emph{overlapping} intervals, then the natural square
\[ \xymatrix{
\sF(V_{I \cup J}) \ar[d]  \ar[r] &  \sF(V_J) \ar[d]  \\
\sF(V_I) \ar[r] &  \sF(V_{I \cap J})
}\]
is cartesian (e.g., because $V_{I \cup J} \to V_I \times V_J$ is an
$\arc$-cover). 
By our discreteness hypotheses, this is simply a pullback diagram of sets. 
Consequently, our assumptions imply that the collection $\mathcal{P}$
satisfies the conditions of
\Cref{intervallemma} and hence consists of all intervals
$I$. In particular, we can take $I = \spec(V)$ as desired. 

Applying the above argument to each $V_I$ replacing $V$, we find that 
$\pi_0(\sF(V_I))$
has cardinality $\leq 1$ for all intervals $I \subset \spec(V)$. 
Consider the collection of intervals $I \subset \spec(V)$ such that
$\sF(V_I)$ is nonempty (and hence contractible). 
By what we have shown, 
this collection of intervals also satisfies the conditions of
\Cref{intervallemma} and therefore consists of all intervals, as desired. 
\end{proof} 

\begin{lemma} 
Let $R$ be a ring. 
Let $\sF \colon \mathrm{Ring}_R \to \mathcal{S}_{\leq n}$ be a finitary $\arc$-sheaf. 
Suppose that 
for every absolutely integrally closed rank $\leq 1$ valuation ring $W$ with the
structure of $R$-algebra, the space $\sF(W)$ is $k$-truncated for some $-1 \leq k \leq n$. 
Then $\sF(R')$ is $k$-truncated for all $R$-algebras $R'$. 
\label{ntruncatedsheaf2}
\end{lemma}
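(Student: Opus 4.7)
The plan is to first establish that $\sF(V)$ is $k$-truncated for every absolutely integrally closed (aic) valuation ring $V$ that is an $R$-algebra, regardless of rank, and then bootstrap to all $R$-algebras via a $v$-hypercover argument modeled on the proof of Proposition~\ref{aicvaldetect}. The substantive content lies in the first step, which leverages the aic-$v$-excision property that is \emph{not} available for arbitrary $v$-sheaves.

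For the first step, since $\sF$ is a finitary $\arc$-sheaf, it satisfies aic-$v$-excision by the (already-established) implication (1)$\Rightarrow$(3) of Theorem~\ref{mainthm} from Proposition~\ref{1implies3}. I proceed by induction on rank for $V$ of finite rank: the base case (rank $\leq 1$) is the hypothesis, and for $V$ of finite rank $n \geq 2$, I choose a height-$1$ prime $\mathfrak{p} \subset V$; then $V_\mathfrak{p}$ is rank $1$ aic, $V/\mathfrak{p}$ is aic of rank $n-1$, and $\kappa(\mathfrak{p})$ is an aic field. Applying aic-$v$-excision to the Milnor square of Proposition~\ref{AICValRingPrime} yields
\[ \sF(V) \simeq \sF(V_\mathfrak{p}) \times_{\sF(\kappa(\mathfrak{p}))} \sF(V/\mathfrak{p}), \]
which is $k$-truncated because each factor is (by induction) and $k$-truncated spaces are closed under finite limits. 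To pass to aic valuation rings of infinite rank, I invoke Lemma~\ref{filtcolimitaic} to write $V = \varinjlim_i V_i$ as a filtered colimit of finite-rank aic subrings; the finitary hypothesis then gives $\sF(V) = \varinjlim_i \sF(V_i)$, and filtered colimits of $k$-truncated spaces are $k$-truncated.

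For the bootstrap, fix an $R$-algebra $R'$. By Proposition~\ref{vcoverbyvaluation}, there is a $v$-cover $R' \to \prod_{i \in S} V_i$ with each $V_i$ an aic valuation ring. Iterating this construction as in the proof of Proposition~\ref{aicvaldetect} produces a $v$-hypercover of $\spec(R')$ whose terms are all spectra of products of aic valuation rings. Since $\sF$ is valued in $\mathcal{S}_{\leq n}$, the $v$-sheaf condition automatically upgrades to $v$-hyperdescent (sheaves of $n$-truncated spaces are hypercomplete, cf.~\cite[\S 6.5.3]{HTT}), so $\sF(R')$ is the totalization of $\sF$ evaluated on this hypercover. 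Each term has the form $\sF(\prod_j W_j)$ for $\{W_j\}$ a set of aic valuation rings; Corollary~\ref{truncatedultra} combined with Lemma~\ref{ultraaic} reduces its $k$-truncatedness to that of $\sF(\prod_{\mathfrak{U}} W_j)$ for ultrafilters $\mathfrak{U}$, and each such ultraproduct is again an aic valuation ring to which the first step applies. As limits preserve $k$-truncatedness, $\sF(R')$ is $k$-truncated, completing the proof.
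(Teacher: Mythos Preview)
Your proof is correct and takes a cleaner route than the paper's. The paper argues by descending induction on the truncation level: assuming $\sF$ is $r$-truncated for some $r > k$, it fixes a basepoint in $\sF(W)$ for an aic valuation ring $W$, forms the looped $\arc$-sheaf $\Omega^r \sF$ on $\mathrm{Ring}_W$, observes that this takes discrete values on every $W_I$ (by $r$-truncatedness) and contractible values when $I$ has length $\leq 1$ (by hypothesis), and then invokes the interval/Zorn machinery of Lemma~\ref{xequalsy} (resting on Lemma~\ref{intervallemma}) to conclude $\Omega^r \sF(W)$ is contractible, i.e.\ $\pi_r(\sF(W))=0$. The hypercover/ultraproduct bootstrap to general $R'$ is the same as yours.

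Your direct induction on the finite rank of $V$ via a single aic-$v$-excision square (Proposition~\ref{AICValRingPrime} plus Proposition~\ref{1implies3}) is more elementary: it establishes the stronger intermediate statement that $\sF(V)$ is already $k$-truncated for every aic valuation ring $V$, bypassing the ``discrete but not yet known to be contractible'' situation that forces the paper into the interval argument. The only thing the paper's route buys is that it uses the $\arc$-sheaf property in the raw form of descent for the cover $V_{I\cup J}\to V_I\times V_J$ rather than citing the implication $(1)\Rightarrow(3)$ of Theorem~\ref{mainthm}; but since Proposition~\ref{1implies3} is proved independently of this lemma, there is no circularity, and your shortcut is entirely legitimate.
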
 
\begin{proof} 
We prove the result by descending induction, as follows. 
Given $k + 1 \leq r \leq n$, we suppose that 
$\sF$ is $r$-truncated (as a presheaf of spaces), and we will conclude that
$\sF$ is actually $(r-1)$-truncated.
Resolving any $R$ by a hypercover of products of absolutely integrally
closed valuation rings and using \Cref{truncatedultra}, we see that it
suffices to show that $\sF$ takes $(r-1)$-truncated values
on arbitrary absolutely integrally closed valuation rings with the structure of
$R$-algebra.

Let $W$ be an absolutely integrally closed valuation ring under $R$. 
Suppose that $\ast \in \sF(W)$ is an arbitrary basepoint; then 
we need to show that $\pi_r( \sF(W), \ast) =0 $. 
The construction 
$\Omega^r \sF $ (where the loop space is taken with respect to the chosen
basepoint $\ast$) forms a finitary $\arc$-sheaf on $\mathrm{Ring}_W$, whose
value on $W_I$ is discrete for any interval $I \subset \spec(W)$ and is contractible
when $I$ has length $\leq 1$. 
By \Cref{xequalsy}, it follows that $\pi_0(\Omega^r \sF(W)) = \ast$ vanishes, as
desired. 
Thus, $\sF(W)$ is $(r-1)$-truncated as desired, whence the result. 
\end{proof} 

\begin{corollary} 
\label{detectequivfinitaryarc}
Let $\mathcal{C}$ be compactly generated by cotruncated objects. 
Let $\sF_1, \sF_2\colon  \mathrm{Sch}_{qcqs,R}^{op} \to \mathcal{C}$ be finitary
$\arc$-sheaves. 
Suppose given a map $\sF_1 \to \sF_2$ such that for every absolutely integrally closed valuation ring
$W$ of
rank $\leq 1$ that is an $R$-algebra, the map $\sF_1(W) \to \sF_2( W)$ is an
equivalence. Then $\sF_1\to \sF_2$ is an equivalence. 
\end{corollary}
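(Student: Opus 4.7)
The plan is to first reduce to $\mathcal{C} = \mathcal{S}_{\leq n}$ via the compact generation hypothesis, and then to deduce the equivalence of $\eta$ by applying both Lemma~\ref{ntruncatedsheaf2} and Lemma~\ref{xequalsy} to the pointwise homotopy fibers of $\eta$. For the first reduction, I fix a set $\{x_\alpha\}$ of compact generators of $\mathcal{C}$, each $n_\alpha$-cotruncated. By Yoneda in the homotopy category, $\eta$ is an equivalence if and only if each $\hom_{\mathcal{C}}(x_\alpha, \sF_1(-)) \to \hom_{\mathcal{C}}(x_\alpha, \sF_2(-))$ is an equivalence of presheaves of $n_\alpha$-truncated spaces. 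Because $\hom_{\mathcal{C}}(x_\alpha,-)$ preserves all limits and, by compactness of $x_\alpha$, also filtered colimits, these composite presheaves remain finitary $\arc$-sheaves, now valued in $\mathcal{S}_{\leq n_\alpha}$. So I may assume $\mathcal{C} = \mathcal{S}_{\leq n}$.

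By Proposition~\ref{aicvaldetect} applied to the underlying finitary $v$-sheaves, it suffices to verify that $\eta(V)$ is an equivalence for every absolutely integrally closed valuation ring $V$ with the structure of an $R$-algebra. Fix such a $V$, and for each point $b \in \sF_2(V)$ define a presheaf $\sG_b$ on $\mathrm{Ring}_V$ by $\sG_b(B) := \sF_1(B) \times_{\sF_2(B)} \{b|_B\}$, the homotopy fiber of $\eta(B)$ over the image of $b$. Being a finite limit of finitary $\arc$-sheaves valued in $\mathcal{S}_{\leq n}$, the functor $\sG_b$ is itself a finitary $\arc$-sheaf valued in $\mathcal{S}_{\leq n}$. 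On any absolutely integrally closed rank $\leq 1$ valuation ring $W$ under $V$ (which is also one under $R$), the hypothesis gives that $\eta(W)$ is an equivalence, whence $\sG_b(W) \simeq *$. Applying Lemma~\ref{ntruncatedsheaf2} to $\sG_b$ with $k=-1$ shows that $\sG_b(B)$ is $(-1)$-truncated for every $V$-algebra $B$; in particular each $\sG_b(V_I)$ is discrete for every interval $I \subset \spec(V)$, and it is contractible whenever $I$ has length $\leq 1$ (since then $V_I$ is a rank $\leq 1$ absolutely integrally closed valuation ring). Lemma~\ref{xequalsy} then applies to $\sG_b$ and yields $\sG_b(V) \simeq *$. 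Since this holds for every $b \in \sF_2(V)$, every homotopy fiber of $\eta(V)$ is contractible, forcing $\eta(V)$ to be an equivalence of $n$-truncated spaces.

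The key subtlety, and where I expect the main obstacle to lie, is the $\pi_0$-surjectivity of $\eta(V)$: Lemma~\ref{ntruncatedsheaf2} on its own only forces $\sG_b(V)$ to be empty or contractible, and a general $(-1)$-truncated $\arc$-sheaf need not have nonempty values descend along all $\arc$-covers of $V$. The decisive point is that the conclusion of Lemma~\ref{ntruncatedsheaf2} is strong enough to say $\sG_b(V_I)$ is $(-1)$-truncated, hence discrete, for \emph{every} interval $I \subset \spec(V)$, and this is exactly the ``discrete on all intervals'' hypothesis that Lemma~\ref{xequalsy} requires. That lemma's Zorn-style traversal through the interval poset of $\spec(V)$ then rules out the empty case and delivers contractibility at $V$ itself, closing the argument.
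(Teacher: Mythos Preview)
Your proof is correct and follows essentially the same route as the paper: reduce to $\mathcal{S}_{\leq n}$ via Yoneda, invoke Proposition~\ref{aicvaldetect} to reduce to absolutely integrally closed valuation rings, pass to homotopy fibers, and then apply Lemma~\ref{ntruncatedsheaf2} followed by Lemma~\ref{xequalsy}. Your version is in fact slightly more explicit than the paper's at the fiber step---where the paper simply writes ``taking homotopy fibers over a basepoint, we may assume $\sF_2$ is contractible,'' you spell out the sheaf $\sG_b$ on $\mathrm{Ring}_V$ for each $b \in \sF_2(V)$, which makes it transparent that the hypotheses of Lemmas~\ref{ntruncatedsheaf2} and~\ref{xequalsy} are met over the base $V$.
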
 
\begin{proof} 
By the Yoneda lemma, it suffices to assume that $\mathcal{C} = \mathcal{S}_{\leq
n}$ for some $n$. 
By Proposition~\ref{aicvaldetect}, 
it suffices to show that for any absolutely integrally closed valuation ring $V$
with the structure of $R$-algebra,
we have $\sF_1(V) \xrightarrow{\sim} \sF_2(V)$. 
Taking homotopy fibers over a basepoint, we may assume $\sF_2$ is contractible. 
Then \Cref{ntruncatedsheaf2}, we see that $\sF_1$ 
is $(-1)$-truncated, so the values of $\sF_1$ are either empty or contractible
and, by assumption, contractible on absolutely integrally closed rank $\leq 1$ valuation rings under $R$. 
By \Cref{xequalsy}, we find that $\sF_1$ takes contractible values on
absolutely integrally closed rank $\leq 1$ valuation rings under $R$, hence on
products of them, and hence in general on all $R$-algebras by hypercover
resolutions. 
\end{proof} 

For future reference, we record the following 
variant of \Cref{detectequivfinitaryarc}. 
\begin{corollary} 
Let $\mathcal{C}$ be compactly generated by cotruncated objects. 
Let $\sF_1, \sF_2\colon  \mathrm{Sch}_{qcqs,R}^{op} \to \mathcal{C}$ be finitary
$\arc$-sheaves. 
Suppose given a map $\sF_1 \to \sF_2$ such that for every absolutely integrally closed valuation ring
$W$ of
rank $\leq 1$ that is an $R$-algebra, 
there exists an extension of rank $\leq 1$ valuation rings $W \to W'$ such that 
the map $\sF_1(W') \to \sF_2( W')$ is an
equivalence. Then $\sF_1\to \sF_2$ is an equivalence. 
\label{completevariantrnk1}
\end{corollary}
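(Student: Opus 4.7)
The plan is to reduce this corollary to \Cref{detectequivfinitaryarc}: it suffices to show that $\sF_1(W) \to \sF_2(W)$ is an equivalence for every AIC rank $\leq 1$ VR $W$ over $R$. Fix such $W$. By the hypothesis, choose a rank $\leq 1$ extension $W \to W'$ with $\sF_1(W') \xrightarrow{\sim} \sF_2(W')$. Using \Cref{aicvcover} (and noting that the absolute integral closure of a rank $\leq 1$ valuation ring remains rank $\leq 1$, since its value group embeds into the divisible hull of a subgroup of $\mathbb{R}$), we may enlarge $W'$ so that it is AIC.

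Since $W \to W'$ is faithfully flat, it is an $\arc$-cover. By $\arc$-descent for $\sF_1$ and $\sF_2$,
\[ \sF_i(W) \simeq \mathrm{Tot}\bigl(\sF_i(W'^{\otimes_W\bullet+1})\bigr), \qquad i=1,2, \]
so it suffices to show that the induced map of Čech cosimplicial diagrams is a degreewise equivalence. The $n=0$ degree is our hypothesis on $W'$.

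For $n \geq 1$, the tensor power $B_n := W'^{\otimes_W n+1}$ is an $R$-algebra that we $\arc$-cover by a product of AIC rank $\leq 1$ VRs via \Cref{vcoverbyvaluation} (refined using \Cref{v1val} to reduce the rank). Applying the hypothesis factorwise produces a new $\arc$-cover of $B_n$ on which $\sF_1 \to \sF_2$ is an equivalence in the degree $0$ term. Iterating this construction at each cosimplicial layer yields an $\arc$-hypercover of $B_n$ on which $\sF_1 \to \sF_2$ is degreewise an equivalence; by $\arc$-hyperdescent (available after Yoneda reduction to $\mathcal{S}_{\leq N}$ since truncated sheaves are automatically hypercomplete, as exploited in the proofs of \Cref{aicvaldetect} and \Cref{detectequivfinitaryarc}), we conclude the equivalence on $B_n$.

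The main obstacle is the coherent construction of this iterated $\arc$-hypercover together with the handling of the infinite products of AIC rank $\leq 1$ VRs arising in each layer: equivalences on such products are detected on their ultraproducts by \Cref{ultradetectsequivalence} and \Cref{truncatedultra}, and while \Cref{ultraaic} ensures that these ultraproducts remain AIC, they may acquire higher rank, so must in turn be $\arc$-covered by rank $\leq 1$ pieces (using iterated \Cref{v1val}) before the hypothesis can be invoked. Organizing this bookkeeping into a genuine $\arc$-hyperdescent diagram, rather than a merely formal iteration, is the central technicality.
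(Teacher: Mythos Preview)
Your approach has a genuine circularity that the bookkeeping you sketch cannot break. To feed the \v{C}ech descent for $B_n$, you need $\sF_1 \to \sF_2$ to be an equivalence on a \emph{product} $\prod_t V'_t$ of rank $\leq 1$ valuation rings. By \Cref{ultradetectsequivalence} this forces the equivalence on every ultraproduct $\prod_{\mathfrak{U}} V'_t$; as you correctly note, these ultraproducts are absolutely integrally closed valuation rings of possibly unbounded rank. Covering them again by rank $\leq 1$ pieces and reapplying the hypothesis only reproduces the same situation one layer deeper: at no stage do you obtain a ring on which $\sF_1 \to \sF_2$ is known outright, only rings on which it becomes an equivalence after yet another extension. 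There is no induction variable that decreases, so the ``iterated hypercover'' never stabilizes to one on which the map is degreewise an equivalence.

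The paper breaks this loop by a descending induction on truncation level rather than by chasing covers. After reducing via Yoneda to $\mathcal{C} = \mathcal{S}_{\leq n}$ and taking $\sF_2 = \ast$, one proves for $k = n, n-1, \ldots, -1$ that $\sF_1$ is $k$-truncated. The inductive step uses exactly the \v{C}ech nerve of $V \to V'$ you wrote down, but exploits only the following: in the totalization
\[
\sF_1(V) \simeq \varprojlim\bigl(\sF_1(V') \rightrightarrows \sF_1(V' \otimes_V V') \triplearrows \cdots \bigr),
\]
the $0$th term is contractible (hence $(k-1)$-truncated) while all higher terms are merely $k$-truncated by the inductive hypothesis (via \Cref{ntruncatedsheaf2}). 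This already forces the totalization to be $(k-1)$-truncated. Crucially, no equivalence on the higher tensor powers $B_n$ is ever required---only the ambient $k$-truncation bound---and this is precisely what lets the induction close. Once $\sF_1$ is $(-1)$-truncated, one more pass through the same \v{C}ech nerve shows $\sF_1(V)$ is nonempty (hence contractible), and \Cref{detectequivfinitaryarc} finishes. The missing idea in your proposal is this use of truncation as the decreasing invariant.
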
 
\begin{proof} 
As before, we can assume $\mathcal{C} = \mathcal{S}_{\leq n}$ and that $\sF_2$
is contractible. 
We claim that $\sF_1$ is $(-1)$-truncated. 
To see this, we show by descending induction on $k \geq -1$ that $\sF_1$ is 
$k$-truncated; the induction starts with $k = n$. 

Suppose we know that $\sF_1$ is $k$-truncated for some $k \geq 0$; we need to
show that $\sF_1$ is actually $(k-1)$-truncated. 
By \Cref{ntruncatedsheaf2}, it suffices to show that 
$\sF_1(V)$ is $(k-1)$-truncated whenever $V$ is an absolutely integrally closed
valuation ring of rank $\leq 1$ that is an $R$-algebra.  
For such $V$, we have a $V \to V'$ which is faithfully flat 
such that 
$\sF_1(V')$ is contractible. 
We can
form the \v{C}ech nerve, yielding 
$\sF_1(V) \simeq \varprojlim \left( \sF_1( V') \rightrightarrows
\sF_1(V' \otimes_V V') \dots \right)$.
This is a $\Delta$-indexed limit 
where the $0$th term is $(k-1)$-truncated and where every other term is
$k$-truncated; it follows that the limit $\sF_1(V)$ is also $(k-1)$-truncated as
desired. 
We thus conclude that $\sF_1$ is $(k-1)$-truncated and, by descending induction on $k$,
$(-1)$-truncated. 
In particular, the values of $\sF_1$ are either empty or contractible. 

To see that 
$\sF_1$ is contractible, 
it suffices (by Corollary~\ref{detectequivfinitaryarc}) to show that 
$\sF_1(V)$ is contractible for any absolutely integrally closed, rank $\leq 1$
valuation ring $V$ with the structure of $R$-algebra. 
Choose an extension $V \to V'$ such that $\sF_1(V')$ is contractible. 
But $\sF_1(V) 
\simeq \varprojlim \left( \sF_1( V') \rightrightarrows
\sF_1(V' \otimes_V V') \dots \right)
$, and the zeroth term in the totalization is contractible while all the other
terms are $(-1)$-truncated, by what we have shown above. It follows easily that $\sF_1(V)$ is contractible as
desired, whence the result by \Cref{detectequivfinitaryarc}. 
\end{proof} 

\subsection{Aic-$v$-excision and excision}
\label{excgeneral}
Finally, we show that a functor which satisfies arc-descent also satisfies excision.  That is, we show that (1) implies (2) in Theorem~\ref{mainthm}.
We follow a general argument going back to Voevodsky, though we find it more
convenient to argue directly instead of quoting an axiomatization (such as
\cite[Th.~3.2.5]{AHW}).

\begin{lemma} 
\label{milnorbase}
Consider a Milnor square 
\begin{equation} 
\xymatrix{
A \ar[d]  \ar[r] & A/I \ar[d]  \\
B \ar[r] &  B/J
}. \label{milnorsquare2}
\end{equation} 
Then for any map $A \to
V$ where $V$ is an integral domain, the base change of the square
\eqref{milnorsquare2} along
$A \to V$ is also a Milnor square. 
\end{lemma} 
\begin{proof} 
In fact, the square
\[ \xymatrix{
V \ar[d]  \ar[r] &  V/IV \ar[d]  \\
B \otimes_A V \ar[r] &  B/J \otimes_A V
}\]
has at least the property that the map 
$f\colon  V \to V/IV \times_{(B /J) \otimes_A V} B \otimes_A V $ is surjective, by
right-exactness of the tensor product. 
To see that the map is injective, it suffices to show that 
after base change to the fraction field $\mathrm{Frac}(V)$, the target is
nonzero. This holds because the map $A  \to \mathrm{Frac}(V)$ factors through
$A/I \times B$
(Lemma~\ref{excisionimpliesvleq1}), so $(A/I \times B) \otimes_A \mathrm{Frac}(V) \neq 0$. 
\end{proof}

\begin{lemma} 
\label{surjmilnorsquare}
Let $\mathcal{C}$ be compactly generated by cotruncated objects. 
Let $\mathcal{F}\colon  \mathrm{Sch}_{qcqs,R}^{op} \to \mathcal{C}$ be a finitary  $v$-sheaf.
Given a Milnor square as in \eqref{milnorsquare2} such that 
$A \to B$ is surjective, $\mathcal{F}$ carries every base change of it to a  pullback square. 
\end{lemma}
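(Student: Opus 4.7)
The plan is to observe that the surjectivity of $A \to B$ upgrades the $\arc$-cover provided by Lemma~\ref{excisionimpliesvleq1} to a genuine $v$-cover, after which the $v$-sheaf property of $\mathcal{F}$ directly yields the pullback square. Setting $K := \ker(A \to B)$, so that $B = A/K$ and $J = (I+K)/K$, the excision hypothesis that $I \to J$ is an isomorphism translates into $I \cap K = 0$; this immediately gives the key relation $IK \subseteq I \cap K = 0$.

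Next, I would verify that $g\colon \spec(B) \sqcup \spec(A/I) \to \spec(A)$ is a $v$-cover. Given any map $A \to V$ with $V$ a valuation ring, the images $\bar I, \bar K \subset V$ satisfy $\bar I \cdot \bar K = 0$; since $V$ is an integral domain, at least one of these ideals vanishes, so $A \to V$ factors through either $B = A/K$ or $A/I$. This supplies the required lift in Definition~\ref{vcover} with $W = V$ (note that the argument works for arbitrary valuation rings $V$, not just rank $\leq 1$ ones, which is precisely where surjectivity of $A \to B$ is used).

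Finally, applying $v$-descent for $\mathcal{F}$ along $g$ yields an equivalence between $\mathcal{F}(A)$ and the totalization of the cosimplicial object $\mathcal{F}\bigl((\spec(B) \sqcup \spec(A/I))^{\bullet+1}_{/\spec(A)}\bigr)$. Because $A \to B$ and $A \to A/I$ are both surjective, the relevant tensor products collapse: $B \otimes_A B = B$, $(A/I) \otimes_A (A/I) = A/I$, and $B \otimes_A (A/I) = B/J$. Combined with the fact that $\mathcal{F}$ carries finite disjoint unions to products, the cosimplicial diagram becomes the standard \v{C}ech nerve of a cover by two closed subschemes meeting in $\spec(B/J)$, whose totalization is the pullback $\mathcal{F}(B) \times_{\mathcal{F}(B/J)} \mathcal{F}(A/I)$. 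This is the desired cartesian square.

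I do not anticipate a substantive obstacle: the argument rests on the elementary observation $IK = 0$, with everything else following formally from the $v$-sheaf axiom. In particular, neither the finitary hypothesis on $\mathcal{F}$ nor the compact-generation assumption on $\mathcal{C}$ is used here; presumably they appear in the statement for uniformity with subsequent results that drop the surjectivity assumption.
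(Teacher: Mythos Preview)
Your argument is correct and genuinely different from the paper's. The paper does \emph{not} directly analyze the \v{C}ech nerve of $\spec(A/I)\sqcup\spec(B)\to\spec(A)$. Instead, it invokes Proposition~\ref{aicvaldetect} (together with Lemma~\ref{milnorbase}, which says Milnor squares survive base change to integral domains) to reduce to the case where $A$ is an absolutely integrally closed valuation ring. There the ideals of $A$ are totally ordered, so $I\cap K=0$ forces $I=0$ or $K=0$; one leg of the square is then an isomorphism and the pullback property is trivial.

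Your route is more elementary: the observation $IK=0$ immediately makes $A\to (A/I)\times B$ a $v$-cover (any map to a domain factors through one of the quotients), and the \v{C}ech nerve collapses exactly as in Proposition~\ref{1implies3} because all three ring maps are surjective. You are also right that your proof uses neither the finitary hypothesis on $\mathcal{F}$ nor the cotruncated compact generation of $\mathcal{C}$, whereas the paper's appeal to Proposition~\ref{aicvaldetect} needs both. The paper's approach has the virtue of reusing machinery already in place for the broader argument, but yours establishes the lemma under strictly weaker hypotheses and with no detour through valuation-ring reductions.
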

\begin{proof} 
Since everything is local on $A$ (and $\mathcal{F}$ is a $v$-sheaf),
it suffices to check that $\mathcal{F}$ carries every base change of
\eqref{milnorsquare2} to an absolutely integrally closed valuation ring to a
pullback square. By Lemma~\ref{milnorbase},
base change to an absolutely integrally closed valuation ring preserves Milnor squares. 
Thus, it suffices to show that $\mathcal{F}$ carries any Milnor square
\eqref{milnorsquare2}
with $A$ an absolutely integrally closed valuation ring and $A
\twoheadrightarrow B$ surjective to a 
pullback square. 
In this case, 
we must have that one of the maps $A \to A/I$ or $A \to B$ is an isomorphism;
otherwise, we could not have a Milnor square as the ideals of $A$ are totally
ordered. In either case, it is clear that $\mathcal{F}$ carries the diagram to a 
pullback. 
 \end{proof} 

The following result shows that in the statement of \Cref{mainthm}, (1) implies (2),  
and therefore completes the proof of Theorem~\ref{mainthm}. 
\begin{proposition} 
\label{arcsheafisexcisive}
Let $\mathcal{C}$ be compactly generated by cotruncated objects. 
Let $\mathcal{F}\colon  \mathrm{Sch}_{qcqs,R}^{op} \to \mathcal{C}$ be a finitary  $\arc$-sheaf. Then $\mathcal{F}$ is excisive. 
\end{proposition}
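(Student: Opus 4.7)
The goal is to show that for an excision datum $(A,I) \to (B,J)$, the comparison map $\eta_A\colon \mathcal{F}(A) \to \mathcal{F}(A/I) \times_{\mathcal{F}(B/J)} \mathcal{F}(B)$ is an equivalence. My approach is to first reduce to the case where $A$ is an absolutely integrally closed valuation ring, and then exploit the arc-cover $A \to A/I \times B$ from Lemma~\ref{excisionimpliesvleq1} to produce the pullback via arc-descent.

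\textbf{Step 1: Reduction to AIC valuation rings.} Both the source and target of $\eta$, viewed as functors of $A$ (keeping the excision datum data $f, I, J$), are finitary $v$-sheaves on $\mathrm{Ring}_R$ in their dependence on $A$: the source is $\mathcal{F}(A)$ and the target is a finite limit built from values of $\mathcal{F}$ on base changes of $A$. By Proposition~\ref{aicvaldetect}, to verify that $\eta$ is an equivalence it suffices to verify this after base change along every map $A \to V$ with $V$ an AIC valuation ring. By Lemma~\ref{milnorbase}, such a base change preserves the Milnor square structure, so we may assume $A = V$ is an AIC valuation ring. The key gain is that the ideals of $V$ are totally ordered.

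\textbf{Step 2: Descent along $V \to V/I \times B$.} Set $C := V/I \times B$. By Lemma~\ref{excisionimpliesvleq1} this map is an arc-cover, so arc-descent for $\mathcal{F}$ yields
\[ \mathcal{F}(V) \simeq \mathrm{Tot}\bigl(n \mapsto \mathcal{F}(C^{\otimes n+1/V})\bigr). \]
The Čech terms simplify dramatically because of the excision datum: since $V/I \otimes_V V/I = V/I$ and $V/I \otimes_V B = B/IB = B/J \cong V/I$, each tensor factor of $C^{\otimes n+1/V}$ indexed by a sequence in $\{V/I, B\}^{n+1}$ with at least one $V/I$ entry collapses to $V/I$. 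Thus $C^{\otimes n+1/V} \cong (V/I)^{2^{n+1}-1} \times B^{\otimes n+1/V}$, and applying $\mathcal{F}$ gives a cosimplicial product $\mathcal{F}(V/I)^{2^{n+1}-1} \times \mathcal{F}(B^{\otimes n+1/V})$.

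\textbf{Step 3: Identification of the totalization.} The cosimplicial structure is governed by how the unit $1 \in C$ decomposes as the sum of the orthogonal idempotents $e_{V/I} = (1,0)$ and $e_B = (0,1)$. The face maps therefore split each insertion ``$\otimes 1$'' into an $e_{V/I}$-piece and an $e_B$-piece, mixing the two halves of the product decomposition in a controlled way. The ``pure $V/I$'' part forms a sub-cosimplicial object whose totalization is $\mathcal{F}(V/I)$, and the face map from $\mathcal{F}(B^{\otimes n+1/V})$ into the $V/I$ components factors through the multiplication $B^{\otimes n+1/V} \to B \to B/J \simeq V/I$. Tracking these pieces, the totalization reorganizes as the equalizer $\mathcal{F}(V/I) \times \mathcal{F}(B) \rightrightarrows \mathcal{F}(B/J)$ of the two natural maps, which is exactly $\mathcal{F}(V/I) \times_{\mathcal{F}(B/J)} \mathcal{F}(B)$, as desired.

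\textbf{Main obstacle.} The serious work lies in Step 3: unwinding the cosimplicial combinatorics to recognize the totalization as the Mayer--Vietoris pullback. One can avoid a direct combinatorial computation by instead refining the arc-cover to a hypercover built from $V$, $V/I$, $B$, and $B/J$, observing that at each level a sub-Milnor-square arises whose vertical map is a quotient; then Lemma~\ref{surjmilnorsquare} collapses these sub-squares to pullbacks and an inductive truncation argument (using Lemma~\ref{univFclosedfiltlimit}(1) to interchange totalizations with the filtered colimits implicit in the finitary assumption) produces the desired equivalence.
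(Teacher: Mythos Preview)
Your Step 2 contains a genuine error that breaks the argument. You claim that $V/I \otimes_V B = B/J \cong V/I$, but this is false: in an excision datum $(A,I) \to (B,J)$ one has $I \cong J$, \emph{not} $A/I \cong B/J$. Even after your Step 1 reduction to $A = V$ an absolutely integrally closed valuation ring, nothing forces $B/J \cong V/I$. The basic example $(V,\mathfrak p) \to (V_{\mathfrak p}, \mathfrak p V_{\mathfrak p})$ for a rank $2$ valuation ring already shows the failure: here $V/\mathfrak p$ is a rank $1$ valuation ring while $B/J = \kappa(\mathfrak p)$ is a field. Consequently your \v{C}ech-nerve computation is wrong: the mixed tensor factors in $C^{\otimes_V n+1}$ are iterated tensor powers $(B/J)^{\otimes_{V/I}\, k}$, not copies of $V/I$, and your identification of the totalization in Step 3 collapses. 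The ``Main obstacle'' paragraph gestures at a repair via hypercovers and Lemma~\ref{surjmilnorsquare}, but it is too vague to constitute an argument: you never say which Milnor squares with surjective vertical map arise, nor how they assemble.

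The paper's proof avoids this entirely by reducing further. Since $\mathcal{F}$ is an $\arc$-sheaf (not merely a $v$-sheaf), both the source and target of your comparison map $\eta$ are finitary $\arc$-sheaves in the $A$-variable, so one may invoke Corollary~\ref{detectequivfinitaryarc} rather than Proposition~\ref{aicvaldetect} in Step 1, reducing to $A = V$ a rank~$\leq 1$ valuation ring. At that point the stronger conclusion of Lemma~\ref{excisionimpliesvleq1} kicks in: applying it to the identity $V \to V$ shows that one of $V \to V/I$ or $V \to B$ admits a section. If $I = 0$ the square is trivially a pullback; if $V \to B$ has a section $s\colon B \twoheadrightarrow V$, then $(B,J) \xrightarrow{s} (V,I)$ is an excision datum with surjective structure map, so Lemma~\ref{surjmilnorsquare} applies to it, and a two-out-of-three argument on fibers (using $s \circ f = \mathrm{id}_V$) finishes. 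Your Step 1 set-up was sound; the missing idea is that the $\arc$-sheaf hypothesis allows the sharper reduction to rank~$\leq 1$, where the combinatorics disappear.
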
 
\begin{proof} 
Consider a Milnor square \eqref{milnorsquare2}; we show that $\mathcal{F}$
carries every base change of it to 
a pullback square. 
Using \Cref{detectequivfinitaryarc}, 
the observation that everything is local on $A$, and that $\mathcal{F}$ is an
$\arc$-sheaf, we reduce to the case 
of a base-change to a rank $\leq 1$ valuation ring. 
Using \Cref{milnorbase}, we see that it suffices to show that 
$\mathcal{F}$ carries any Milnor square \eqref{milnorsquare2}
where $A$ is a rank $\leq 1$ valuation ring to a pullback. 

In this case, either the map $A \to A/I$ or the map $A \to B$ admits a section
thanks to Lemma~\ref{excisionimpliesvleq1}. 
If the first map admits a section, then $I = 0$ and  
it is clear that $\mathcal{F}$ carries \eqref{milnorsquare2}
to a  pullback square. 
Suppose that $A \to B$ admits a section. Then we can form a new Milnor square
\begin{equation}  \label{surjsquare} \xymatrix{
B \ar[d]  \ar[r] &  B/J \ar[d]  \\
A \ar[r] &  A/I
}, \end{equation}
where the section $B \to A$ is surjective. 
Now it suffices to show that $\mathcal{F}$ carries \eqref{surjsquare} to a fiber square,
by a two-out-of-three argument. However, $\mathcal{F}$ carries \eqref{surjsquare} to a
fiber square thanks to  Lemma~\ref{surjmilnorsquare}. 
\end{proof} 

\subsection{Excision via $\arc$-sheafification}
\label{ss:ExcSheaf}

In this subsection, we give a slightly different formulation of the relation
between excision and $\arc$-descent. Namely, we shall prove that the square of
schemes attached to an excision datum gives a pushout square of $\arc$-sheaves
of spaces on $\arc$-sheafification; this implies
Proposition~\ref{arcsheafisexcisive} (without the hypothesis that
$\mathcal{C}$ should be compactly generated by cotruncated objects, or that
$\sF$ should be finitary) by the universal property of pushouts. To obtain a topos, we first bound the size of all relevant schemes by a cutoff cardinal. 

\begin{definition}[The $\arc$-topos on schemes of size $<\kappa$]
\label{ArcCovKappa}
We fix an uncountable strong limit cardinal $\kappa$ (i.e., $\kappa$ is
uncountable and $\lambda < \kappa$ implies $2^\lambda < \kappa$) and a base
ring $R$ of cardinality $< \kappa$. Let $\mathrm{Sch}_{qcqs, R, < \kappa}$
denote the category of qcqs $R$-schemes
which can be written as a finite union of affine schemes whose coordinate rings
have cardinality $< \kappa$. The {\em $\arc$-topology} on $\mathrm{Sch}_{qcqs,
R,  < \kappa}$ is defined as the
Grothendieck topology where covering families $\{U_i \to U, i \in I\}$ are 
families of maps such that there exists a 
finite $I' \subset I$ such that
$\sqcup_{i \in I'} U_i \to U$ has the $\arc$-lifting property with respect to
valuation rings of size $< \kappa$, i.e., for any rank $\leq 1$ valuation ring
$V$ of size $< \kappa$ and a map $\mathrm{Spec}(V) \to U$, there exists an
extension $V \to W$ of rank $\leq 1$ valuation rings of size $< \kappa$ and a
lift $\mathrm{Spec}(W) \to \sqcup_{i \in I'} U_i$. Write
$\mathrm{Shv}_{\arc}(\mathrm{Sch}_{qcqs,R, < \kappa})$ for the category of
$\arc$-sheaves on $\mathrm{Sch}_{qcqs,R, < \kappa}$; this is a topos.
\end{definition}

\begin{remark}[Independence of $\kappa$ in the notion of coverings]
\label{IndKappa}
Since $\kappa$ is a strong limit cardinal, essentially all the constructions
appearing previously in this paper to construct valuation rings out of schemes
preserve the property of having size $< \kappa$. For example, each $X$ in
$\mathrm{Sch}_{qcqs, R, < \kappa}$ admits an $v$-hypercover $Y_\bullet \to X$ such
that each $Y_i$ is in $\mathrm{Sch}_{qcqs,R, <\kappa}$ and is the spectrum of a
product of absolutely integrally closed valuation rings (necessarily of size $<
\kappa$): this follows from bounding the cardinality of the construction in
Proposition~\ref{vcoverbyvaluation}, noting that if $\lambda < \kappa$, then a
product of $\leq \lambda$  sets of size $\leq \lambda$ each has size $\leq
\lambda^\lambda \leq 2^{\lambda \times \lambda} \leq 2^{2^{\lambda}} < \kappa$.
In particular, the equivalence class of each rank $\leq 1$ valuation on any $X
\in \mathrm{Sch}_{qcqs, R, < \kappa}$ admits a representative $\mathrm{Spec}(V) \to
X$ with $V$ being an absolutely integrally closed rank $\leq 1$ valuation ring
of size $< \kappa$. Using this observation as well as the closure of
$\mathrm{Sch}_{qcqs, R, < \kappa}$ under fiber products in all schemes, it is
easy to see that a map $f\colon X \to Y$ in $\mathrm{Sch}_{qcqs, R, < \kappa}$ is an $\arc$-cover in the sense of Definition~\ref{ArcCovKappa} if and only if it is an $\arc$-cover in the sense of Definition~\ref{DefArcTop}.
 \end{remark}

\begin{remark}
The category $\mathrm{Sch}_{qcqs,R, < \kappa}$ and the topos
$\mathrm{Shv}_{\arc}(\mathrm{Sch}_{qcqs,R, < \kappa})$ evidently depend on
$\kappa$. Nevertheless, for $\kappa' > \kappa$ a larger uncountable strong limit
cardinal, the inclusion $\mathrm{Sch}_{qcqs,R, < \kappa} \subset
\mathrm{Sch}_{qcqs,R,< \kappa'}$ commutes with limits and preserves the notion of
coverings by Remark~\ref{IndKappa}, so there is an induced map
$f\colon\mathrm{Shv}_{\arc}(\mathrm{Sch}_{qcqs,R,< \kappa'}) \to
\mathrm{Shv}_{\arc}(\mathrm{Sch}_{qcqs,R, < \kappa})$ of topoi. One can show most
``geometrically meaningful constructions'' do not depend on $\kappa$ in the
sense that they are compatible with $f$. For example,  sheafification and $f_*$
are compatible for presheaves of bounded size (such as $h_X$ for $X \in
\mathrm{Sch}_{qcqs,R, < \kappa}$); see \cite{Waterhouse} for similar statements in the fpqc topology. 
\end{remark}

We shall use the language of coherent objects in a topos and that of coherent
topoi in the rest of this section; we refer the reader to \cite[Expose VI]{SGA4}
as well as \cite[Sec.~A]{SAG} for the relevant background (including definitions of quasi-compact, quasi-separated, and coherent objects). The next lemma records some generalities on coherent topoi, specialized to our setting.

\begin{lemma}
\label{ArcCohTop}
\begin{enumerate}
\item The topos $\mathrm{Shv}_{\arc}(\mathrm{Sch}_{qcqs,R, < \kappa})$ is coherent. 
\item Write  $h_X^\sharp \in \mathrm{Shv}_{\arc}(\mathrm{Sch}_{qcqs,R, <
\kappa})$  for the $\arc$-sheaf associated to $X \in \mathrm{Sch}_{qcqs,R, <
\kappa}$ under the Yoneda map. Then $h_X^\sharp$ is a coherent object of
$\mathrm{Shv}_{\arc}(\mathrm{Sch}_{qcqs,R, < \kappa})$. 
\item  An object $F \in \mathrm{Shv}_{\arc}(\mathrm{Sch}_{qcqs,R, < \kappa})$ is
coherent if and only if there exists a surjection $h_X^\sharp \to F$ for some $X
\in \mathrm{Sch}_{qcqs,R, < \kappa}$ such that $h_X^\sharp \times_F h_X^\sharp$ is quasi-compact.
\item The topos $\mathrm{Shv}_{\arc}(\mathrm{Sch}_{qcqs,R, < \kappa})$ has enough points.
\end{enumerate}
\end{lemma}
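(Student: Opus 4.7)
My plan is to reduce each of (1)--(4) to standard facts about coherent topoi from \cite[Expos\'e VI]{SGA4}. For (1), I would verify that the site $\mathrm{Sch}_{qcqs,R,<\kappa}$ admits finite limits and carries a finitary topology, and then invoke the standard criterion (SGA4~VI.2.1) that such sites yield coherent topoi. Finite limits exist: the final object is $\spec(R)$, fiber products of qcqs schemes are qcqs, and the cardinality bound is preserved because $\kappa$ is a strong limit cardinal (if $A,B,C$ have cardinality $<\kappa$, then $B\otimes_A C$ has cardinality at most $|B|\cdot|C|<\kappa$). Finitariness of the $\arc$-topology is built into Definition~\ref{ArcCovKappa}, which explicitly requires a finite subcover.

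For (2), any representable sheaf in a topos presented by a site with finite limits and a finitary topology is automatically coherent: quasi-compactness of $h_X^\sharp$ follows directly from the finiteness of covers, and quasi-separatedness uses that $h_X^\sharp \times h_X^\sharp \simeq h_{X\times_{\spec(R)}X}^\sharp$ is again representable, hence quasi-compact. For (3), in the forward direction: a coherent $F$ is quasi-compact, so admits a surjection from a finite disjoint union of representables---itself representable by some qcqs $X$ after taking the disjoint union in $\mathrm{Sch}_{qcqs,R,<\kappa}$---and quasi-separatedness of $F$ then forces $h_X^\sharp \times_F h_X^\sharp$ to be quasi-compact. Conversely, given a surjection $h_X^\sharp \to F$ with $h_X^\sharp \times_F h_X^\sharp$ quasi-compact, $F$ is quasi-compact as the image of a quasi-compact object, and quasi-separated because the pullback of the diagonal $F \to F\times F$ along the surjection $h_X^\sharp\times h_X^\sharp \to F\times F$ is precisely $h_X^\sharp\times_F h_X^\sharp$, and quasi-compactness of a morphism can be tested after surjective pullback from a coherent object.

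Finally, (4) is Deligne's theorem that every coherent topos has enough points (SGA4~VI.9.0), applied to the topos constructed in (1). The main obstacle I anticipate lies in the cardinality bookkeeping for (1): one must confirm that $\mathrm{Sch}_{qcqs,R,<\kappa}$ is closed under the finite limits and finite disjoint unions required by the SGA4 machinery (within the size bound $<\kappa$), and that our notion of covering matches the standard one. Once (1) is set up correctly, (2)--(4) follow essentially formally from the general theory of coherent topoi.
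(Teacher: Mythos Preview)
Your proposal is correct and takes essentially the same approach as the paper: both reduce everything to the standard SGA4 package on coherent topoi (finitary site with finite limits $\Rightarrow$ coherent topos with coherent representables; the characterization of coherent objects; Deligne's completeness theorem). The paper is even terser---it simply cites \cite[Prop.~A.3.1.3]{SAG} for (1) and (2), \cite[Exp.~VI, Cor.~1.17.1]{SGA4} for (3), and Deligne's theorem for (4)---so your write-up is effectively an unpacking of those references, with the same cardinality checks you anticipated.
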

\begin{proof}
Part (1) and (2) are \cite[Prop.~A.3.1.3]{SAG}. Part (3) is a consequence of
\cite[Exp.~VI, Cor.~1.17.1]{SGA4}. Part (4) is Deligne's completeness theorem \cite[Expose VI, Appendix]{SGA4}.
\end{proof}

\begin{warning}
The $\arc$-topology is {\em not} subcanonical, so the sheafification is necessary in Lemma~\ref{ArcCohTop} (2) and (3).
\end{warning}

The following criterion for detecting surjections between coherent objects will be useful. 

\begin{lemma}
\label{CohArcSheafIsomCrit}
Let $F \to G$ be a map of $\arc$-sheaves of sets on $\mathrm{Sch}_{qcqs, R, < \kappa}$. 
\begin{enumerate}
\item Assume that $G$ is coherent and $F$ is quasi-compact. Then $F \to G$ is
surjective if and only if it has the $\arc$-lifting property, i.e., for every
rank $\leq 1$ valuation ring $V$ of size $< \kappa$ with the structure of
$R$-algebra and every section $g \in G(V)$, there exists an extension $V \to W$ of rank $\leq 1$ valuation rings of size $< \kappa$ and a section $f \in F(W)$ lifting the image of $g$ in $G(W)$.
\item Assume that both $F$ and $G$ are coherent. Then $F
\xrightarrow{\sim} G$ if and only if $F(V) \xrightarrow{\sim} G(V)$ for a cofinal collection of rank $\leq 1$ valuation rings $V$ of size $< \kappa$.
\end{enumerate}
\end{lemma}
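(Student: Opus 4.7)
The ``only if'' direction of (1) is a direct unwinding of definitions: if $F \to G$ is an epimorphism of $\arc$-sheaves, any $g \in G(V)$ is locally in the image of $F$ after some $\arc$-cover $Y \to \mathrm{Spec}(V)$, which we can take to lie in $\mathrm{Sch}_{qcqs, R, <\kappa}$ by a cardinality bound; then the definition of an $\arc$-cover together with \Cref{IndKappa} produces an extension $V \to W$ of rank $\leq 1$ valuation rings of size $<\kappa$ and a map $\mathrm{Spec}(W) \to Y$, and pulling back yields the desired lift $f \in F(W)$ of $g|_W$. The ``only if'' direction of (2) is immediate.

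For the ``if'' direction of (1), I would set $F' := \mathrm{im}(F \to G) \subseteq G$; this is coherent, being quasi-compact as a quotient of the quasi-compact $F$ and quasi-separated as a subobject of the coherent $G$. The key preliminary step is to show $F'(W) = G(W)$ for every rank $\leq 1$ valuation ring $W$ of size $<\kappa$ over $R$: given $g \in G(W)$, the $\arc$-lifting hypothesis yields an extension $W \to W'$ of rank $\leq 1$ valuation rings of size $<\kappa$ and a lift $f \in F(W')$ of $g|_{W'}$; since $W \to W'$ is a faithfully flat (injective local) extension of valuation rings, it is a $v$-cover and hence an $\arc$-cover, so the sheaf property of $F' \subseteq G$ forces $g \in F'(W)$. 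The final step is to deduce $F' = G$ from this pointwise agreement. For this, I would invoke that the coherent topos $\mathrm{Shv}_{\arc}(\mathrm{Sch}_{qcqs, R, <\kappa})$ has enough points (\Cref{ArcCohTop}(4)) and identify a conservative family of points via rank $\leq 1$ absolutely integrally closed valuation rings of size $<\kappa$, so that equality of the coherent subsheaves $F' \subseteq G$ reduces to equality on such valuation rings.

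The ``if'' direction of (2) reduces to (1) via two applications. First, the cofinality hypothesis immediately implies the $\arc$-lifting condition of (1): for any rank $\leq 1$ valuation ring $V$ of size $<\kappa$ and any $g \in G(V)$, cofinality yields $V \to V'$ with $V'$ in the given collection, and then $g|_{V'}$ lifts via the bijection $F(V') \xrightarrow{\sim} G(V')$. Thus $F \to G$ is surjective by (1). For injectivity, I would apply (1) to the diagonal $\Delta \colon F \to F \times_G F$, which is a map from the coherent $F$ to the coherent $F \times_G F$: given $(f_1, f_2) \in (F \times_G F)(V)$ for $V$ of rank $\leq 1$ and size $<\kappa$, choose $V \to V'$ in the cofinal collection; the injectivity of $F(V') \hookrightarrow G(V')$ forces $f_1|_{V'} = f_2|_{V'}$, and so $(f_1,f_2)|_{V'}$ lies in the image of $\Delta(V')$. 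Hence $\Delta$ has the $\arc$-lifting property and is surjective by (1); as $\Delta$ is always a monomorphism, it is an isomorphism, which means $F \to G$ is a monomorphism and therefore (combined with the previous paragraph) an isomorphism.

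The main technical obstacle is the topos-theoretic step invoked in (1): that equality of coherent subsheaves of a coherent sheaf in the $\arc$-topos on the bounded site can be tested on rank $\leq 1$ absolutely integrally closed valuation rings. This is a packaging (via \Cref{ArcCohTop} and Deligne's theorem) of the detection principles developed in \S\ref{vSheafAICVR}--\S\ref{aicexcsec}, particularly \Cref{detectequivfinitaryarc}, now in the coherent (rather than finitary) setting on the bounded site $\mathrm{Sch}_{qcqs, R, <\kappa}$.
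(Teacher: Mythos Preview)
Your treatment of (2) matches the paper's: reduce to (1) applied to $F\to G$ and to the diagonal $F\to F\times_G F$. So I focus on (1).

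Your preliminary step for (1) is sound: the image $F'\subseteq G$ is coherent, and the local nature of membership in a subsheaf together with the fact that an extension of rank $\leq 1$ valuation rings is an $\arc$-cover gives $F'(W)=G(W)$ for all such $W$. The gap is in the final step, where you pass from this pointwise equality to $F'=G$ by invoking that rank $\leq 1$ absolutely integrally closed valuation rings form a conservative family of points of $\mathrm{Shv}_{\arc}(\mathrm{Sch}_{qcqs,R,<\kappa})$. Deligne's theorem (\Cref{ArcCohTop}(4)) only supplies \emph{some} conservative family of points, without identifying them, and \Cref{detectequivfinitaryarc} is a statement about \emph{finitary} $\arc$-sheaves on all of $\mathrm{Sch}_{qcqs,R}$ with values in an $\infty$-category compactly generated by cotruncated objects; neither hypothesis is available for an arbitrary coherent sheaf of sets on the bounded site, so the transport you describe is not justified. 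If you try to close the gap directly (say by choosing surjections $h_Z^\sharp\twoheadrightarrow F'$ and $h_U^\sharp\twoheadrightarrow G$ and reducing to whether $h_Z^\sharp\to h_U^\sharp$ is surjective), you are led precisely to the paper's argument.

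The paper avoids the detour through $F'$ entirely: it chooses a surjection $h_U^\sharp\twoheadrightarrow G$ (using quasi-compactness of $G$) and then a surjection $h_V^\sharp\twoheadrightarrow h_U^\sharp\times_G F$ (using quasi-separatedness of $G$ and quasi-compactness of $F$); the $\arc$-lifting property is stable under base change and under composition with surjections, so $h_V^\sharp\to h_U^\sharp$ inherits it; in the representable case this unwinds to the underlying map of schemes being an $\arc$-cover, whence $h_V^\sharp\to h_U^\sharp$ is surjective and hence so is $F\to G$. This is both shorter and self-contained.
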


Note that $(1)$ above is trivially false without the quasi-compactness
hypothesis: the canonical map $h_{\spec( \mathbb{Q})}^{\sharp} \sqcup \bigsqcup_p h_{\mathrm{Spec}(\mathbb{Z}_{(p)})}^\sharp \to h_{\mathrm{Spec}(\mathbb{Z})}^\sharp \simeq \ast$ (where the coproduct is indexed by the prime numbers) has the $\arc$-lifting property but is not a surjection of $\arc$-sheaves. 

\begin{proof} In both cases, the ``only if'' direction is clear, so it suffices to prove the ``if'' direction.

We first prove (1). When $F$ and $G$ are representable, i.e., have the form
$h_X^\sharp$ and $h_Y^\sharp$, the desired claim is essentially a reformulation
of the definition of an $\arc$-covering combined with the observation that the
map from a presheaf to the associated sheaf has the $\arc$-lifting property by
definition of the topology. In general, one first chooses a surjection
$h_U^\sharp \to G$ (which is possible as $G$ is quasi-compact); one may then then
choose a surjection $h_V^\sharp \to h_U^\sharp \times_G F$ (which is possible as
$G$ is quasi-separated and $F$ is quasi-compact). By the stability of the
$\arc$-lifting property under fiber products (and the fact that it holds 
for surjections), then the map $h_V^\sharp \to h_U^\sharp$ is surjective by the representable case. But then $h_V^\sharp \to h_U^\sharp \to G$ is also surjective, whence $F \to G$ is surjective as it factors a surjection.

For (2), note that $F \to G$ is an isomorphism if and only if both $F \to G$ and
its diagonal $F \to F \times_G F$ are surjective. The claim now follows from (1)
and the stability of coherent objects under fiber products since the hypothesis $F(V) \simeq G(V)$ implies that both $F \to G$ and its diagonal $F \to F \times_G F$ have the $\arc$-lifting property. \qedhere
\end{proof}

\begin{lemma} 
Let $F_0 \to G_0$ be a map of presheaves of sets on
$\mathrm{Sch}_{qcqs,R, <\kappa}$. Suppose that the $\arc$-sheafifications $F_0^{\sharp},
G_0^{\sharp}$ are coherent in the $\arc$-topos of $\mathrm{Sch}_{qcqs,R, <
\kappa}$. 
Suppose that
for every rank $\leq 1$ valuation 
ring $V$ of size $< \kappa$ with the structure of $R$-algebra, there exists an
extension $V \to W$ of rank $\leq 1$ valuation rings of size $<\kappa$ with
$F_0(W)
\to G_0(W)$ surjective (resp.~bijective). Then $F_0^{\sharp} \to G_0^{\sharp}$
is surjective (resp.~bijective). 
\label{ArcPresheafSheafCrit}
\end{lemma}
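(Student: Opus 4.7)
The plan is to invoke Lemma~\ref{CohArcSheafIsomCrit}(1) twice: once on the map $F_0^\sharp \to G_0^\sharp$ to prove surjectivity, and once on its diagonal $\Delta\colon F_0^\sharp \to F_0^\sharp \times_{G_0^\sharp} F_0^\sharp$ to prove injectivity in the bijective case. Both applications require checking the arc-lifting property; the target of $\Delta$ is coherent because coherent objects in the topos $\mathrm{Shv}_{\arc}(\mathrm{Sch}_{qcqs,R,<\kappa})$ are stable under finite limits (cf.~Lemma~\ref{ArcCohTop}), and $F_0^\sharp$ is quasi-compact by coherence.

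The essential ingredient, used throughout, is the following sheafification principle for the $\arc$-topology on $\mathrm{Sch}_{qcqs,R,<\kappa}$: for any presheaf of sets $H$, any rank $\leq 1$ valuation ring $V$ of size $<\kappa$ with $R$-algebra structure, and any section $s \in H^\sharp(V)$, there exists an extension $V \to W$ of rank $\leq 1$ valuation rings of size $<\kappa$ such that $s|_W$ is the image of an element of $H(W)$; moreover, if $a,b \in H(V)$ have the same image in $H^\sharp(V)$, then there is such an extension $V \to W$ with $a|_W = b|_W$ in $H(W)$. Both assertions follow from the standard plus-construction description of sheafification: for the first, choose an $\arc$-cover $\{U_i \to \spec(V)\}$ and sections $s_i \in H(U_i)$ whose images in $H^\sharp(U_i)$ equal $s|_{U_i}$, and invoke the $\arc$-lifting property applied to $\mathrm{id}\colon \spec(V) \to \spec(V)$ to produce an extension $V \to W$ and a factorization $\spec(W) \to U_{i_0}$ for some $i_0$; then $s_{i_0}|_W \in H(W)$ lifts $s|_W$. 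The same argument applied to the (local) equalizer of $a$ and $b$ yields the second assertion.

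Granting this, surjectivity follows easily: given $g \in G_0^\sharp(V)$, first apply the sheafification principle to lift $g|_{W_1}$ to some $\tilde{g} \in G_0(W_1)$ over an extension $V \to W_1$, then apply the hypothesis to obtain a further extension $W_1 \to W_2$ and an element $\tilde{f} \in F_0(W_2)$ mapping to $\tilde{g}|_{W_2}$; its image in $F_0^\sharp(W_2)$ is the desired lift of $g|_{W_2}$. For injectivity in the bijective case, start with $(f_1,f_2) \in (F_0^\sharp \times_{G_0^\sharp} F_0^\sharp)(V)$, and use the sheafification principle to find extensions $V \to V_1 \to V_2$ over which both $f_j|_{V_2}$ lift to $\tilde{f}_j \in F_0(V_2)$; since their images in $G_0^\sharp(V_2)$ agree, a further application of the principle yields $V_2 \to V_3$ over which they already agree in $G_0(V_3)$. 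The bijectivity hypothesis then provides $V_3 \to W$ with $F_0(W) \to G_0(W)$ a bijection; injectivity there forces $\tilde{f}_1|_W = \tilde{f}_2|_W$, hence $f_1|_W = f_2|_W$ in $F_0^\sharp(W)$, as required. The main obstacle is really just carefully justifying the sheafification principle; once that is in hand, the rest is bookkeeping with successive $\arc$-extensions.
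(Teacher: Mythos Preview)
Your proof is correct and follows essentially the same approach as the paper: both verify the arc-lifting property of Lemma~\ref{CohArcSheafIsomCrit} by first passing from $H^\sharp$-sections to $H$-sections over an extension (your ``sheafification principle''), then invoking the hypothesis. The only stylistic difference is that for the bijective case the paper reduces directly to the surjective case by applying it to the presheaf diagonal $F_0 \to F_0 \times_{G_0} F_0$ and using that sheafification commutes with finite limits, whereas you unwind the same argument by hand for the sheaf-level diagonal; the content is identical.
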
 
\begin{proof} 
We apply 
Lemma~\ref{CohArcSheafIsomCrit}. 
Consider the commutative diagram of presheaves on 
$\mathrm{Sch}_{qcqs, R, < \kappa}$, 
\[ \xymatrix{
F_0 \ar[d]  \ar[r] &  F_0^{\sharp} \ar[d]  \\
G_0 \ar[r] &  G_0^{\sharp}
}.\]
For (1), given a rank $\leq 1$ valuation ring  $V$ of cardinality
$< \kappa$ with the structure
of $R$-algebra, 
and
any class in $x\in G_0^{\sharp}(V)$, there exists an extension $V \to W$ of rank
$\leq 1$ valuation 
rings of size $<\kappa$
such that the image of $x$ in $G_0^{\sharp}(W)$ comes from
$\widetilde{x} \in G_0(W)$. 
By assumption, there exists a further extension $W \to W'$ of rank $\leq 1$
valuation rings of size $<\kappa$ such that 
the image of $\widetilde{x}$ in $G_0(W')$ comes from $F_0(W')$. 
Therefore, the image of $x$ in $G_0^{\sharp}(W')$ comes from $F_0^{\sharp}(W')$,
whence the surjectivity by 
Lemma~\ref{CohArcSheafIsomCrit}. The second claim 
follows from the first applied to the diagonal $F_0\to F_0 \times_{G_0} F_0$,
since sheafification commutes with finite limits. 
\end{proof} 

The following lemma shall be used to deduce coherence of certain pushouts:

\begin{lemma}
\label{PushoutCritCoh}
Consider a pushout square diagram
\[ \xymatrix{ A \ar[r] \ar[d] & B \ar[d] \\ C \ar[r] & D }\]
in $\mathrm{Shv}_{\arc}(\mathrm{Sch}_{qcqs,R,<\kappa})$. Assume $A \to B$ is injective, and that $A$, $B$, and $C$ are coherent. Then $D$ is coherent.
\end{lemma}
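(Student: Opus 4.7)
The plan is to apply the coherence criterion from Lemma~\ref{ArcCohTop}(3) to the canonical map $u\colon B \sqcup C \to D$, whose source is coherent as a finite coproduct of coherent objects and which is surjective since $D$ is a pushout. This reduces the task to showing that the kernel pair $R := (B \sqcup C) \times_D (B \sqcup C)$ is quasi-compact. I would decompose $R$ into four summands $(B \times_D B) \sqcup (B \times_D C) \sqcup (C \times_D B) \sqcup (C \times_D C)$. The hypothesis that $A \to B$ is a monomorphism implies (by a standard topos-theoretic fact on pushouts of monos) that the given pushout square is also Cartesian and that $C \to D$ is a monomorphism, yielding $B \times_D C = A = C \times_D B$ and $C \times_D C = C$. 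Three of the four summands are thus coherent, and it remains to handle $B \times_D B$.

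The key identification I would establish is
\[ B \times_D B \simeq B \sqcup_A (A \times_C A), \]
where the right-hand pushout is taken along the given monomorphism $A \to B$ and the diagonal monomorphism $\Delta\colon A \to A \times_C A$. The canonical comparison map is produced via the universal property of the pushout from the diagonal $B \to B \times_D B$ together with the map $A \times_C A \to B \times_D B$ induced by $A \to B$ on each factor. To show this comparison is an isomorphism, I would invoke the existence of enough points for the $\arc$-topos (Lemma~\ref{ArcCohTop}(4)): inverse image functors to $\mathrm{Set}$ are left exact left adjoints, hence preserve finite limits (and so monomorphisms) as well as all colimits. Checking at a stalk $p$ thus reduces the claim to a concrete statement about pushouts of sets along a monomorphism, where it is elementary: two elements of $B_p$ become equal in the set-theoretic pushout $D_p = B_p \sqcup_{A_p} C_p$ precisely when they are equal or both lie in $A_p$ with the same image in $C_p$.

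Granted this identification, the remaining task is to show that $B \sqcup_A (A \times_C A)$ is coherent. Here both legs of the pushout are monomorphisms, which is a simpler symmetric case of the lemma: the pushout square is bicartesian and both canonical maps into the pushout are monomorphisms, so the analogous fibre product $(B \sqcup (A \times_C A)) \times_{B \sqcup_A (A \times_C A)} (B \sqcup (A \times_C A))$ decomposes as the finite coproduct $B \sqcup A \sqcup A \sqcup (A \times_C A)$ of coherent objects (noting that $A \times_C A$ is coherent as a finite limit of coherent objects). Applying Lemma~\ref{ArcCohTop}(3) a second time concludes the proof. The main obstacle is the identification in the second paragraph: giving a purely topos-theoretic description of the kernel pair of $B \to D$ is awkward, so invoking enough points to reduce the question to a set-theoretic pushout along a monomorphism is the crucial technical move.
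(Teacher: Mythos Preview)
Your proof is correct and follows essentially the same strategy as the paper: apply Lemma~\ref{ArcCohTop}(3) to $B \sqcup C \to D$, identify the cross-terms of the kernel pair via the bicartesianness of the square (checked on stalks via Deligne's theorem), and then analyze $B \times_D B$. The only difference is that the paper takes a shortcut for this last step: rather than proving the isomorphism $B \times_D B \simeq B \sqcup_A (A \times_C A)$ and then running the coherence criterion a second time, it merely observes (again on stalks) that the natural map $B \sqcup (A \times_C A) \to B \times_D B$ is \emph{surjective}, so $B \times_D B$ is quasi-compact as the image of a quasi-compact object, which is all that Lemma~\ref{ArcCohTop}(3) demands.
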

\begin{proof}
We apply the criterion recorded in Lemma~\ref{ArcCohTop} (3) to the cover $B \sqcup C \to D$. Since $B \sqcup C$ is trivially coherent, it remains to show that $B \times_D B$, $B \times_D C$, and $C \times_D C$ are all quasi-compact. 

Since $A \to B$ is injective, the same holds for its (co)base change $C \to D$, so $C \times_D C \simeq C$ (via the diagonal) is quasi-compact by assumption on $C$.

Next, the natural map $A \to B \times_D C$ is an isomorphism: this can be checked after taking stalks (using Deligne's theorem in Lemma~\ref{ArcCohTop} (4) to ensure there are enough points), where it reduces to a standard calculation with pushout squares of sets with one arrow injective. Thus, $B \times_D C$ is quasi-compact by assumption on $A$. 

Finally, we claim that the  map $B \sqcup (A \times_C A) \to B \times_D B$ given by the diagonal on the first summand $B$ of the source, and the natural map $A \times_C A \to B \times_D B$ is surjective; this will prove the lemma as $B$ is quasi-compact (by assumption), $A \times_C A$ is quasi-compact (as $A$ is quasi-compact and $C$ is coherent) and the image of a quasi-compact object is quasi-compact. As in the previous paragraph, this surjectivity can again be checked after taking stalks. Write $(-)'$ for a fixed stalk functor. The desired statement is now the following elementary assertion (that we leave to the reader to check): if $(x,y) \in B' \times_{D'} B'$, then $x=y$ is an element of $B' - A' \subset B'$ (whence $(x,y)$ lies in the image of the diagonal $B' \to B' \times_{D'} B'$), or both $x$ and $y$ live in $A'$ and are mapped to the same element of $C'$ (whence $(x,y)$ lies in $A' \times_{C'} A' \subset B' \times_{D'} B'$).
\end{proof}

We can now prove the promised statement,

\begin{proposition}
\label{ExcSquarePushout}
Let $(A,I) \to (B,J)$ be an excision datum with $A$ and $B$ having size
$<\kappa$, so one obtains a Milnor square as in \eqref{milnorsquare2}. Consider the associated square
\[ \xymatrix{ \spec(B/J) \ar[r] \ar[d] & \spec(B) \ar[d] \\
		  \spec(A/I) \ar[r] & \spec(A)}\]
in $\mathrm{Sch}_{qcqs,R, <\kappa}$. The square of $\arc$-sheaves of sets
obtained by applying $\arc$-sheafification to the above square is a pushout
square in both  the ordinary topos $\mathrm{Shv}_{\arc}(\mathrm{Sch}_{qcqs,R, <
\kappa})$ of $\arc$-sheaves of sets as well as the $\infty$-topos
$\mathrm{Shv}_{\arc,\mathcal{S}}(\mathrm{Sch}_{qcqs,R, < \kappa})$ of $\arc$-sheaves of spaces.
\end{proposition}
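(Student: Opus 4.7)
The plan is to prove the statement in the $1$-topos first, then upgrade to the $\infty$-topos by showing the $\infty$-pushout is automatically $0$-truncated. Write $P := h_{\spec(B)}^\sharp \sqcup_{h_{\spec(B/J)}^\sharp} h_{\spec(A/I)}^\sharp$ for the pushout in $\mathrm{Shv}_{\arc}(\mathrm{Sch}_{qcqs,R,<\kappa})$ and $P_\infty$ for the corresponding pushout in $\mathrm{Shv}_{\arc,\mathcal{S}}(\mathrm{Sch}_{qcqs,R,<\kappa})$. First I would verify that $P$ is coherent: the three inputs are coherent by Lemma~\ref{ArcCohTop}(2), and the surjection $B \twoheadrightarrow B/J$ makes $h_{\spec(B/J)}^\sharp \to h_{\spec(B)}^\sharp$ a monomorphism of $\arc$-sheaves (since sheafification preserves finite limits), so Lemma~\ref{PushoutCritCoh} applies.

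To identify $P$ with $h_{\spec(A)}^\sharp$, I would apply Lemma~\ref{ArcPresheafSheafCrit} to the natural map $Q \to h_{\spec(A)}$ of presheaves, where $Q$ denotes the levelwise presheaf pushout (so $Q^\sharp = P$). For each rank $\leq 1$ valuation ring $W$ of size $<\kappa$ with the structure of an $R$-algebra, the identification $A/I \xrightarrow{\sim} B/J$ forces $\mathrm{Hom}(B/J, W) \xrightarrow{\sim} \mathrm{Hom}(A/I, W)$, so the set-theoretic pushout collapses to $Q(W) \simeq \mathrm{Hom}(B, W)$. The precomposition map $\mathrm{Hom}(B, W) \to \mathrm{Hom}(A, W)$ is then bijective: surjectivity follows from Lemma~\ref{excisionimpliesvleq1}, since any map $A \to W$ factors through $A \to A/I \times B$ and, by connectedness of $\spec(W)$, through one of the two factors, both of which further factor through $B$ (the $\spec(A/I)$ case via $A/I = B/J \hookleftarrow B$); injectivity is the uniqueness of this factorization, extracted from the proof of \emph{loc.~cit.}\ by using $A[1/x] \simeq B[1/x]$ for $x \in I$ with $g(x) \neq 0$, or the identification $A/I = B/J$ when $g(I) = 0$.

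For the $\infty$-topos statement, since $\tau_{\leq 0}$ preserves colimits and the three inputs are $0$-truncated, $\tau_{\leq 0} P_\infty \simeq P \simeq h_{\spec(A)}^\sharp$; it therefore suffices to show that $P_\infty$ itself is $0$-truncated. This follows from the general fact that in any $\infty$-topos, the pushout of $0$-truncated objects along a monomorphism is $0$-truncated. Concretely, van Kampen in the $\infty$-topos preserves the monomorphism $h_{\spec(B/J)}^\sharp \hookrightarrow h_{\spec(B)}^\sharp$, yielding the pullback square $h_{\spec(B/J)}^\sharp \simeq h_{\spec(B)}^\sharp \times_{P_\infty} h_{\spec(A/I)}^\sharp$; combined with the effective epimorphism $h_{\spec(B)}^\sharp \sqcup h_{\spec(A/I)}^\sharp \twoheadrightarrow P_\infty$ and descent (or alternatively a stalkwise argument using Deligne's theorem, Lemma~\ref{ArcCohTop}(4), transported to the $\infty$-topos in the coherent setting), one concludes that $P_\infty$ is $0$-truncated and hence agrees with $h_{\spec(A)}^\sharp$ in both topoi. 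I expect the main obstacle to be this $\infty$-topos step, as the preceding arguments are fairly mechanical given the tools already developed.
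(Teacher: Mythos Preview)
Your overall strategy matches the paper's---define the presheaf pushout $Q$, verify coherence via Lemma~\ref{PushoutCritCoh}, and apply Lemma~\ref{ArcPresheafSheafCrit} by checking $Q(W) \to \hom(A,W)$ is bijective on rank~$\leq 1$ valuation rings---but there is a genuine error in the bijectivity check.

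The claim ``the identification $A/I \xrightarrow{\sim} B/J$'' is false: an excision datum only requires that $f$ carry $I$ isomorphically onto $J$, and this does \emph{not} force $A/I \to B/J$ to be an isomorphism. For a trivial example, take $A = \mathbb{Z}$, $B = \mathbb{Z}[x]$, $I = J = 0$; then $(A,I) \to (B,J)$ is an excision datum but $A/I = \mathbb{Z} \to B/J = \mathbb{Z}[x]$ is far from an isomorphism. Consequently your collapse $Q(W) \simeq \hom(B,W)$ fails, and in this example $\hom(B,W) \to \hom(A,W)$ is visibly not injective. Your injectivity argument in the case $g(I)=0$ therefore breaks down. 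The paper handles this case differently: it observes that since $V(J) \subset \spec(B)$ is the preimage of $V(I) \subset \spec(A)$, the map $Q \to h_{\spec(A)}$ becomes an isomorphism after pulling back to $\spec(A/I)$ (using that pullback commutes with colimits in a topos). Injectivity over $\spec(A/I)$ is then automatic, and one treats the complementary case $y^*(I) \neq 0$ exactly via the $A[1/t] \simeq B[1/t]$ argument you already gave.

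On the $\infty$-topos step: your conclusion is correct but the paper's route is cleaner and avoids invoking van Kampen or descent for sheaves. The paper simply takes $Q$ to be the pushout in \emph{presheaves of spaces} from the start; since one leg is a monomorphism of sets and a pushout of sets along an injection is already a pushout in $\mathcal{S}$, this $Q$ is discrete. Sheafification in the $\infty$-topos is left exact, hence preserves $0$-truncatedness, so the $\infty$-sheafification of $Q$ is $0$-truncated and automatically agrees with the $1$-sheafification. This yields both statements at once.
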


\begin{proof}
Let $Q$ denote the pushout of $\spec(B) \gets \spec(B/J) \to \spec(A/I)$ in the
$\infty$-category of presheaves of spaces. As $\spec(B/J) \to \spec(B)$ is a
closed immersion, it gives a monomorphism of presheaves of sets, so $Q$ is
discrete\footnote{Given a pushout diagram of sets where one of the maps being pushed out is injective, it is also a pushout in $\mathcal{S}$.} and hence we may regard it as a presheaf of sets. There is a natural map $\eta\colon Q \to \spec(A)$ of presheaves. The sheafification of $Q$ is coherent by Lemma~\ref{PushoutCritCoh}.  We must show that $\eta$ gives an isomorphism on $\arc$-sheafification. 

By Lemma~\ref{ArcPresheafSheafCrit}, it is  enough to show that $\eta(V)\colon
Q(V) \to \spec(A)(V)$ is bijective for any rank $\leq 1$ valuation ring $V$ of
size $< \kappa$. The surjectivity is immediate from
Lemma~\ref{excisionimpliesvleq1}. For injectivity, fix $x_1,x_2 \in Q(V)$ with
common image  $y \in \spec(A)(V)$. Since $\spec(B/J) \subset \spec(B)$ is the
preimage of $\spec(A/I) \subset \spec(A)$, we find that $Q \to
\spec(A)$ is an isomorphism after pullback to $\spec(A/I)$. We may thus assume
that $y \in \spec(A)(V)$ corresponds to a map $y^*\colon A \to V$ with $y^*(I)
\neq 0$. Then both $x_1$ and $x_2$ must lie in the image of $\spec(B)(V) \to
Q(V)$: if not, then one of them would give a factorization of $y^*$ through $A
\to A/I$, which we just ruled out. Thus, $x_1$ and $x_2$ come from two ring maps
$b_1,b_2\colon B \to V$ that factor $y^*$. Choose some $t \in I$ such that $y^*(t) \neq 0$. As $t  \in I$, we have $A[\frac{1}{t}] = B[\frac{1}{t}]$, and so $b_1[\frac{1}{t}] = b_2[\frac{1}{t}]$ as maps $B[\frac{1}{t}] \to V[\frac{1}{y^*(t)}]$. As $y^*(t) \neq 0$, the map $V \to V[\frac{1}{y^*(t)}]$ is injective, so we must have $b_1 = b_2$, whence $x_1 = x_2$ as wanted.
\end{proof}

\begin{corollary}
\label{ExcArcSheafPull}
Let $\mathcal{F}$ be a sheaf on $\mathrm{Sch}_{qcqs,R, < \kappa}$ valued in an
$\infty$-category $\mathcal{C}$ that has all small limits. Then $\mathcal{F}$ satisfies excision. In other words, $\mathcal{F}$ carries excision squares (i.e., the squares of schemes appearing in Proposition~\ref{ExcSquarePushout}) to pullback squares.
\end{corollary}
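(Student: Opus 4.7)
The plan is to deduce excision as a formal consequence of Proposition~\ref{ExcSquarePushout}, together with the universal property of the $\infty$-topos of $\arc$-sheaves of spaces. Fix an excision datum $(A,I) \to (B,J)$; after enlarging $\kappa$ if necessary (the statement is about a fixed $\mathcal{F}$ on a fixed site, so we may assume $A$ and $B$ have size $<\kappa$ and that our excision square lies in $\mathrm{Sch}_{qcqs, R, <\kappa}$). Let $\mathcal{T} := \mathrm{Shv}_{\arc,\mathcal{S}}(\mathrm{Sch}_{qcqs,R,<\kappa})$ denote the $\infty$-topos of $\arc$-sheaves of spaces.

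The first step is to recall the universal property of the $\infty$-topos in presence of a target with limits. Since $\mathcal{C}$ admits all small limits, $\mathcal{C}^{op}$ is cocomplete, and the $\infty$-category of $\mathcal{C}$-valued $\arc$-sheaves on $\mathrm{Sch}_{qcqs,R, <\kappa}$ is equivalent to the $\infty$-category $\mathrm{Fun}^{R}(\mathcal{T}^{op}, \mathcal{C})$ of limit-preserving functors $\mathcal{T}^{op} \to \mathcal{C}$ (equivalently, colimit-preserving functors $\mathcal{T} \to \mathcal{C}^{op}$). Under this equivalence, our given $\arc$-sheaf $\mathcal{F}$ corresponds to a functor $\widetilde{\mathcal{F}}\colon \mathcal{T}^{op} \to \mathcal{C}$ that sends colimits in $\mathcal{T}$ to limits in $\mathcal{C}$ and satisfies $\widetilde{\mathcal{F}}(h_X^{\sharp}) \simeq \mathcal{F}(X)$ for any $X \in \mathrm{Sch}_{qcqs, R, <\kappa}$.

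Next, by Proposition~\ref{ExcSquarePushout}, after applying $\arc$-sheafification, the excision square of schemes
\[ \xymatrix{ \spec(B/J) \ar[r] \ar[d] & \spec(B) \ar[d] \\ \spec(A/I) \ar[r] & \spec(A) } \]
becomes a pushout square in $\mathcal{T}$. Applying $\widetilde{\mathcal{F}}$ and using that it carries pushout squares in $\mathcal{T}$ to pullback squares in $\mathcal{C}$ (this is a special case of preserving colimits), we obtain a pullback square
\[ \xymatrix{ \mathcal{F}(A) \ar[r] \ar[d] & \mathcal{F}(B) \ar[d] \\ \mathcal{F}(A/I) \ar[r] & \mathcal{F}(B/J) } \]
in $\mathcal{C}$, which is precisely the excision property for $\mathcal{F}$ on the datum $(A,I) \to (B,J)$.

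The only nontrivial input beyond the formal manipulation is Proposition~\ref{ExcSquarePushout} itself, which has already been established. Thus there is no hard step remaining; the main thing to be careful about is that the universal property of $\mathcal{T}$ is invoked in its $\mathcal{C}$-valued form (where $\mathcal{C}$ only needs to admit small limits, not be presentable), and that $\mathcal{F}$, being a $\mathcal{C}$-valued $\arc$-sheaf in the sense of Definition~\ref{DefDescent}, genuinely extends to such a limit-preserving functor from $\mathcal{T}^{op}$.
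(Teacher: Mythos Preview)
Your argument is correct and rests on the same key input as the paper, namely Proposition~\ref{ExcSquarePushout}. The only difference is in how you pass from the space-valued case to a general $\mathcal{C}$: you invoke the universal property of the $\infty$-topos $\mathcal{T}$ to extend $\mathcal{F}$ to a limit-preserving functor $\widetilde{\mathcal{F}}\colon \mathcal{T}^{op}\to\mathcal{C}$, whereas the paper argues more directly via Yoneda. Concretely, for each object $x\in\mathcal{C}$ the assignment $U\mapsto\mathrm{Hom}_{\mathcal{C}}(x,\mathcal{F}(U))$ is an $\arc$-sheaf of spaces, hence carries the excision square to a pullback by the pushout statement in Proposition~\ref{ExcSquarePushout}; since pullbacks in $\mathcal{C}$ are detected by $\mathrm{Hom}_{\mathcal{C}}(x,-)$ for all $x$, the conclusion follows. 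Your route is a touch more conceptual but requires knowing (or checking) the $\mathcal{C}$-valued universal property of the sheaf $\infty$-topos for $\mathcal{C}$ merely complete; the paper's Yoneda reduction sidesteps this and is shorter. One small quibble: you should not need to ``enlarge $\kappa$'' since the statement is already posed on $\mathrm{Sch}_{qcqs,R,<\kappa}$ and the excision datum lives there by hypothesis.
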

\begin{proof}
For $\arc$-sheaves of spaces, this follows from Proposition~\ref{ExcSquarePushout}. In general, for any object $x \in \mathcal{C}$, the assignment $U \mapsto \mathrm{Hom}_{\mathcal{C}}(x, \mathcal{F}(U))$ gives an $\arc$-sheaf of spaces and thus satisfies excision. The claim now follows by the Yoneda lemma in $\mathcal{C}$.
\end{proof}

\newpage
\section{Examples of $\arc$-sheaves}

In this section, we record several examples of functors which satisfy
$\arc$-descent. There are two classes of examples we consider: those arising
from \'etale cohomology, and those arising from perfect $\mathbb{F}_p$-schemes. 
\label{Etalesec}

\subsection{\'Etale cohomology}
\label{ss:EtaleCohArcSheaf}
Let $R$ be a fixed base ring, and let $\mathcal{G}$ be a torsion sheaf on 
the small \'etale site of $\mathrm{Spec}(R)$. 
In this section, we will consider the functor
\[ \mathrm{Sch}_{qcqs,R}^{op} \to \D( \mathbb{Z})  \]
which sends a qcqs scheme $X$
with structure map $f\colon  X \to \mathrm{Spec}(R)$ to the \'etale cohomology
object $R \Gamma(X_{\et}, f^*\mathcal{G})$. 
Recall that this functor is finitary,  cf.~\cite[Tag 03Q4]{stacks-project}. 
Our main result is that it satisfies $\arc$-descent
(Theorem~\ref{EtaleCohExcisive}). To begin with, we review the (classical)
result that it satisfies $v$-descent.  This comes from the theory of
cohomological descent \cite[Exp.~V-bis]{SGA4}, and also appears explicitly in
\cite[Prop. 5.3.3]{CD}.

\begin{lemma} 
\label{hensdetect}
Let $\Lambda$ be a ring, and let $\mathcal{F}\colon  \mathrm{Sch}_{qcqs,R}^{op}\to \D( \Lambda)^{\geq 0}$ be a finitary functor. Suppose
that $\mathcal{F}$ satisfies \'etale descent. 
If $f\colon  Y \to X$ is a map in $\mathrm{Sch}_{qcqs,R}$ such that the base changes of $f$ to the
strict henselizations of $X$ are of $\mathcal{F}$-descent, then $f$ is of $\mathcal{F}$-descent. 
\end{lemma}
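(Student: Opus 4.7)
The plan is to repackage the desired $\mathcal{F}$-descent as an equivalence between $\mathcal{F}$ and an auxiliary \'etale sheaf built out of the \v{C}ech nerve of $f$, and then verify this equivalence on the small \'etale site $X_{\et}$, whose stalks at geometric points are exactly the strict henselizations appearing in the hypothesis.

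Concretely, I would introduce the presheaf $\mathcal{G}\colon \mathrm{Sch}_{qcqs,X}^{op} \to \D(\Lambda)^{\geq 0}$ defined by
\[ \mathcal{G}(X') = \varprojlim\bigl(\mathcal{F}(Y_{X'}) \rightrightarrows \mathcal{F}(Y_{X'} \times_{X'} Y_{X'}) \triplearrows \cdots\bigr), \qquad Y_{X'} := Y \times_X X', \]
together with the natural comparison map $\eta \colon \mathcal{F}|_{\mathrm{Sch}_{qcqs,X}} \to \mathcal{G}$. By construction, $\eta(X)$ is an equivalence if and only if $f$ is of $\mathcal{F}$-descent, so it suffices to show $\eta(X)$ is an equivalence.

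Next I would verify two preliminary properties of $\mathcal{G}$. First, $\mathcal{G}$ is an \'etale sheaf: for an \'etale cover $U' \to U$ in $\mathrm{Sch}_{qcqs,X}$, swapping the two totalizations (one over the \v{C}ech nerve of $Y_U \to U$ and the other over that of $U' \to U$) and using that \v{C}ech nerves are compatible with base change, the claim reduces to \'etale descent for $\mathcal{F}$ applied to the covers obtained by base changing $U' \to U$ along each level of the \v{C}ech nerve of $Y_U \to U$. Second, $\mathcal{G}$ is finitary because $\mathcal{F}$ is finitary and, since $\D(\Lambda)^{\geq 0}$ is compactly generated by cotruncated objects, totalizations therein commute with filtered colimits by Lemma~\ref{univFclosedfiltlimit}.

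Finally I would check that $\eta$ restricts to an equivalence of \'etale sheaves on the small \'etale site $X_{\et}$, which in particular forces $\eta(X)$ to be an equivalence. For each compact object $x \in \D(\Lambda)^{\geq 0}$, the presheaves $\mathrm{Hom}(x,\mathcal{F}|_{X_\et})$ and $\mathrm{Hom}(x,\mathcal{G}|_{X_\et})$ are sheaves of $n$-truncated spaces on $X_\et$ for some $n$ (depending on $x$), since $x$ is $n$-cotruncated; such truncated sheaves on $X_\et$ are automatically hypercomplete (cf.\ the argument in Proposition~\ref{aicvaldetect}), so equivalences between them can be tested at stalks. At a geometric point $\bar{x} \to X$, the stalk identifies (using finitariness of $\mathcal{F}$ and $\mathcal{G}$ and compactness of $x$) with $\mathrm{Hom}(x,\mathcal{F}(\mathcal{O}_{X,\bar{x}}^{sh}))$ and $\mathrm{Hom}(x,\mathcal{G}(\mathcal{O}_{X,\bar{x}}^{sh}))$, and these agree by the hypothesis that the base change $Y \times_X \spec(\mathcal{O}_{X,\bar{x}}^{sh}) \to \spec(\mathcal{O}_{X,\bar{x}}^{sh})$ is of $\mathcal{F}$-descent. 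Varying $x$ over compact generators of $\D(\Lambda)^{\geq 0}$ concludes the proof. The main subtle point is the hypercompleteness step, where the compact-generation-by-cotruncated-objects hypothesis on $\D(\Lambda)^{\geq 0}$ is essential: it is what allows us to reduce to truncated sheaves of spaces on $X_\et$, for which stalks at geometric points genuinely detect equivalences.
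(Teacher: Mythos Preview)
Your proposal is correct and follows essentially the same approach as the paper: view both $\mathcal{F}$ and the totalization functor $\mathcal{G}$ as finitary \'etale sheaves on the small \'etale site of $X$, and then check that the comparison map is an equivalence on stalks at geometric points (i.e., on strict henselizations). The paper's proof is more terse, simply asserting that stalks detect equivalences, whereas you spell out the justification via $\mathrm{Hom}(x,-)$ for compact cotruncated $x$ and hypercompleteness of truncated sheaves; this elaboration is correct and consistent with how the paper argues elsewhere (e.g., in Proposition~\ref{aicvaldetect}).
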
 
\begin{proof} 
This follows from the fact that the \'etale topology has enough points,
given by the spectra of strictly henselian local rings. 
Suppose that $f\colon  Y \to X $ is such that after base change to any strict
henselization of $X$, $f$ is of $\mathcal{F}$-descent. 
We need to see that the natural map 
\begin{equation}  \mathcal{F}(X) \to \varprojlim( \mathcal{F}( Y) \rightrightarrows \mathcal{F}(
 Y \times_X Y)  \triplearrows \dots ) \label{Fmapetale} \end{equation}
is an equivalence. 
To do this, we consider both sides as sheaves on the small \'etale site of $X$. For
instance, for 
any \'etale $X' \to X$, we consider $X' \mapsto \mathcal{F}( X' )$, and
similarly for the right-hand-side (i.e., via $X' \mapsto \varprojlim(
\mathcal{F}(X' \times_X Y) \rightrightarrows \mathcal{F}(X' \times_X Y \times_X
Y)  \triplearrows \dots )$). 
Both sides are finitary functors, since totalizations in $\D( \Lambda)^{\geq 0}$
commute with filtered colimits. 
Our assumption is that this map of
\'etale sheaves
becomes an equivalence on stalks for each  strict henselization of $X$;
therefore the map of sheaves is an equivalence, and so is \eqref{Fmapetale}.
\end{proof}

\begin{proposition} 
\label{vDescentEtaleCoh}
Let $\mathcal{G}$ be a torsion sheaf on $\mathrm{Spec}(R)_{\et}$. Let
$\mathcal{F}\colon  \mathrm{Sch}_{qcqs,R}^{op} \to \D( \Lambda)^{\geq 0}$ be the functor $(f\colon X \to \mathrm{Spec}(R)) \mapsto R \Gamma(X_{\et}, f^* \mathcal{G})$. Then $\mathcal{F}$ satisfies $v$-descent. 
\end{proposition}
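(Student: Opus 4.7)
The plan is to reduce, via finitariness, to verifying $\mathcal{F}$-descent for $v$-covers between finitely presented $R$-schemes, and then to apply Rydh's structural refinement theorem to reduce to two familiar cases: quasi-compact open covers and proper finitely presented surjections.

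First, $\mathcal{F}$ is finitary by \cite[Tag 03Q4]{stacks-project}, visibly preserves finite disjoint unions of schemes, and takes values in $\D(\Lambda)^{\geq 0}$, which is compactly generated by cotruncated objects. Therefore Proposition~\ref{finitevtopology} applies, and it suffices to show that every $v$-cover $Y \to X$ of finitely presented $R$-schemes is of $\mathcal{F}$-descent (in fact of universal $\mathcal{F}$-descent).

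Next, by \cite[Theorem 3.12]{Rydh} the cover $Y \to X$ admits a refinement $Y' \to X$ factoring as a quasi-compact open cover followed by a finitely presented proper surjection. By Lemma~\ref{Fsorite}(3), to prove that $Y \to X$ is of universal $\mathcal{F}$-descent, it is enough to handle each of these two classes of morphisms. The open-cover case is immediate: $\mathcal{F}$ is an \'etale sheaf (by construction of \'etale cohomology), hence in particular a Zariski sheaf, and any quasi-compact open cover is then automatically of universal $\mathcal{F}$-descent via \Cref{Fsorite}(1) applied to each index.

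For the remaining case of a finitely presented proper surjection $p\colon Z \to W$ between finitely presented $R$-schemes, this is the classical theorem of proper cohomological descent due to Deligne \cite[Expose Vbis]{SGA4}. In the noetherian setting this says exactly that such a $p$ is of universal cohomological descent for torsion \'etale sheaves. To pass to the finitely presented (but possibly non-noetherian) case, we use noetherian approximation: given a base change $W' \to W$, the map $Z \times_W W' \to W'$, as well as any torsion \'etale sheaf on it arising from $\mathcal{G}$, is obtained as a filtered limit (with affine transition maps) of finitely presented proper surjections between noetherian schemes equipped with constructible coefficients. Since $\mathcal{F}$ is finitary, universal $\mathcal{F}$-descent for the limit follows from universal $\mathcal{F}$-descent in the noetherian approximands via Lemma~\ref{univFclosedfiltlimit}(2).

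The only real subtlety is packaging the noetherian approximation step cleanly, since the coefficient sheaf $\mathcal{G}$ lives on the possibly non-noetherian base $\mathrm{Spec}(R)$; this is the step that must be handled with some care, but it is routine once one invokes the finitary property of $\mathcal{F}$ and the standard approximation of torsion sheaves by constructible ones.
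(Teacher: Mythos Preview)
Your overall architecture matches the paper's proof exactly: use finitariness and Proposition~\ref{finitevtopology} to reduce to finitely presented $v$-covers, invoke \cite[Theorem~3.12]{Rydh} to refine to a composite of a quasi-compact open cover and a proper finitely presented surjection, and treat the two cases separately. The open-cover case is handled identically.

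The difference lies in the proper case. You cite Deligne's cohomological descent from \cite[Exp.~Vbis]{SGA4} in the noetherian setting and then propose a noetherian approximation to pass to the general finitely presented case. The paper instead gives the argument directly, without any noetherian reduction: it uses Lemma~\ref{hensdetect} to localize to the case where the base is strictly henselian, then applies proper base change to identify $\mathcal{F}(Y)$ with $\mathcal{F}(Y_x)$ for $Y_x$ the closed fiber, and observes that $Y_x \to x$ acquires a section after the universal homeomorphism $\kappa(x) \to \overline{\kappa(x)}$. This is essentially re-deriving proper cohomological descent from proper base change, but it works verbatim over an arbitrary qcqs base with arbitrary torsion coefficients.

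Your route is correct in principle, but the step you flag as ``routine'' deserves a caution. Invoking Lemma~\ref{univFclosedfiltlimit}(2) as stated does not directly apply: the noetherian approximands $W_i$ of a finitely presented $R$-scheme $W$ are not $R$-schemes when $R$ is non-noetherian, so $\mathcal{F}$ is not even defined on them. What one actually does is approximate both the map and the pulled-back torsion sheaf by constructible sheaves on the $W_i$, apply Deligne there, and reassemble via filtered colimits in cohomology; this works but is a genuine extra layer of bookkeeping. The paper's direct argument buys you exactly the avoidance of this step.
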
 
\begin{proof} 
Let $Y \to X$ be a $v$-cover of schemes, which we want to show is of universal
$\sF$-descent.
Since $Y, X$ are qcqs, we can write $Y$ as a filtered limit of a tower of
finitely presented
$X$-schemes $\left\{Y_\alpha\right\}$ with affine 
transition maps. Since \'etale cohomology turns such filtered limits to filtered
colimits, and since each $Y_\alpha \to X$ is a $v$-cover too, we may assume that $Y \to X$
is finitely presented (cf.~Proposition~\ref{finitevtopology}). 
The map $Y \to X$ admits a refinement which factors as a composite of a quasi-compact open covering
and a proper finitely presented surjection \cite[Th.~3.12]{Rydh}. 

Quasi-compact open coverings are of universal $\sF$-descent. We claim that proper surjections are too, thanks to proper base change. 
In fact, if $Y \to X$ is proper and surjective, 
with $X$ the spectrum of a strictly henselian ring, let $x \in X$ be the closed
point with residue field $\kappa(x)$. 
Let $Y_x$ be the fiber of $Y$ at $x$. 
Then $\mathcal{F}(Y) \simeq \mathcal{F}( Y_x)$ thanks to proper base change
\cite[Exp.~XII, Th.~5.1]{SGA4}, whereas the map $Y_x \to x$
admits a section after base change along the universal homeomorphism $\kappa(x) \to
\overline{\kappa(x)}$ (which does not change the value of $\mathcal{F}$). 
The analogous result holds for fiber products of $Y$ over $X$. 
It follows that 
$Y \to X$ is of $\mathcal{F}$-descent by comparison with $Y_x \to x$, which is
of universal $\sF$-descent by \Cref{Fsorite}. 
Thus any proper surjection $Y \to X$ with $X$ strictly henselian is of
$\mathcal{F}$-descent; by Lemma~\ref{hensdetect}, it follows that proper surjections are
of $\mathcal{F}$-descent. 
Since proper surjections are stable under base change, they are of universal
$\mathcal{F}$-descent. 
\end{proof}

Next, we show that $\mathcal{F}$ satisfies $\arc$-descent.

\begin{lemma} 
\label{aicstricthens}
An absolutely integrally closed valuation ring $V$ is strictly henselian. 
\end{lemma}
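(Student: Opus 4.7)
The plan is to verify the two conditions defining strict henselianness: that $V$ is henselian and that its residue field $\kappa(V)$ is separably closed.

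For the residue field, the strategy is straightforward. Any monic polynomial $\bar{f} \in \kappa(V)[x]$ lifts to a monic polynomial $f \in V[x]$ of the same degree. Since $V$ is absolutely integrally closed, $f$ has a root $\alpha \in V$, and the image $\bar{\alpha} \in \kappa(V)$ is then a root of $\bar{f}$. It follows that $\kappa(V)$ is algebraically closed, hence in particular separably closed.

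For henselianness, I would use the standard characterization (see, e.g., \cite[Tag 0ASK]{stacks-project}) that a valuation ring $V$ with fraction field $K$ is henselian if and only if its valuation extends uniquely to every algebraic extension of $K$. But here $K$ is algebraically closed: since $V$ is absolutely integrally closed and a domain, every monic polynomial in $V[x] \subset K[x]$ has a root in $V \subset K$, so $K$ admits no nontrivial algebraic extensions. Thus the uniqueness condition is vacuous and $V$ is henselian. (Alternatively, one may argue directly from Hensel's lemma: given a factorization of $\bar{f} \in \kappa(V)[x]$ into coprime monic factors, use that $V$ has algebraically closed residue field and algebraically closed fraction field to split both $\bar{f}$ and $f$ into linear factors over $\kappa(V)$ and $V$ respectively, and match up roots.)

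There is no real obstacle here; the lemma is essentially a direct unpacking of definitions, and the only subtlety to note is that absolute integral closedness of $V$ implies algebraic closedness of both $\mathrm{Frac}(V)$ (giving henselianness via the valuation extension criterion) and $\kappa(V)$ (giving the separable closedness of the residue field).
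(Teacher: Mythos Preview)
Your proposal is correct. The paper's proof is a one-line observation that every monic $p(x)\in V[x]$ splits into linear factors over $V$, which directly forces both the henselian property and algebraic closedness of the residue field; this is exactly your parenthetical alternative. Your primary argument via the valuation-extension criterion is a valid variant that exploits the same underlying fact (that $\mathrm{Frac}(V)$ is algebraically closed), but the paper's route is more self-contained since it does not invoke an external characterization of henselian valuation rings.
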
 
\begin{proof} 
Let $\mathfrak{m} \subset V$ be the maximal ideal. 
Given a monic polynomial $p(x) \in V[x]$, we know that $p$ splits 
into linear factors; this forces $V$ to be strictly henselian, and 
the residue field to be algebraically closed. 
\end{proof}

\begin{theorem}[$\arc$-descent for \'etale cohomology] 
\label{EtaleCohExcisive}
Let $\mathcal{G}$ be a torsion sheaf on $\mathrm{Spec}(R)_{\et}$. Let
$\mathcal{F}\colon  \mathrm{Sch}_{qcqs,R}^{op} \to \D( \Lambda)^{\geq 0}$ be the functor $(f\colon X \to \mathrm{Spec}(R)) \mapsto R \Gamma(X_{\et}, f^* \mathcal{G})$.
Then $\sF$ satisfies $\arc$-descent. In particular, it satisfies excision. 
\end{theorem}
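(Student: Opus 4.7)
The plan is to deduce this directly from the main equivalence \Cref{mainthm}: since $\mathcal{F}$ is finitary and, by \Cref{vDescentEtaleCoh}, is already a $v$-sheaf, it suffices to verify the aic-$v$-excision condition (3). That is, for every absolutely integrally closed valuation ring $V$ that is an $R$-algebra and every prime $\mathfrak{p} \subset V$, I need to show that the square
\[ \xymatrix{ \mathcal{F}(V) \ar[d] \ar[r] & \mathcal{F}(V/\mathfrak{p}) \ar[d] \\ \mathcal{F}(V_{\mathfrak{p}}) \ar[r] & \mathcal{F}(\kappa(\mathfrak{p})) } \]
is cartesian in $\D(\Lambda)^{\geq 0}$. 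The excision consequence will then follow from the implication (1) $\Rightarrow$ (2) of \Cref{mainthm}.

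The key observation is that each of the four rings in the square is an absolutely integrally closed valuation ring: the class of such rings is closed under localization and quotient by primes, and in each case the fraction field is the algebraically closed field $\mathrm{Frac}(V)$ or $\kappa(\mathfrak{p})$. By \Cref{aicstricthens}, every such ring is strictly henselian. For a strictly henselian local ring $A$ with closed point $s$ and any torsion \'etale sheaf $\mathcal{H}$ on $\spec(A)$, global sections are exact, so $R\Gamma(\spec(A)_{\et}, \mathcal{H}) \simeq \mathcal{H}_s$; applying this to $\mathcal{H} = f^*\mathcal{G}$ and using that pullback preserves stalks, I identify $\mathcal{F}(A)$ with the stalk $\mathcal{G}_{s_A}$ of $\mathcal{G}$ at the image in $\spec(R)$ of the closed point of $\spec(A)$.

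Carrying this out for all four rings, I note that the closed point of $V/\mathfrak{p}$ equals the closed point of $V$ (namely the maximal ideal $\mathfrak{m}$), while the closed point of $V_{\mathfrak{p}}$ equals the unique point of $\spec(\kappa(\mathfrak{p}))$ (namely $\mathfrak{p}$). Writing $s_\mathfrak{m}, s_{\mathfrak{p}} \in \spec(R)$ for the images, the square above becomes
\[ \xymatrix{ \mathcal{G}_{s_\mathfrak{m}} \ar[d] \ar[r]^-{\mathrm{id}} & \mathcal{G}_{s_\mathfrak{m}} \ar[d] \\ \mathcal{G}_{s_\mathfrak{p}} \ar[r]^-{\mathrm{id}} & \mathcal{G}_{s_\mathfrak{p}} } \]
where both vertical maps are the specialization map $\mathcal{G}_{s_\mathfrak{m}} \to \mathcal{G}_{s_\mathfrak{p}}$ induced by the specialization $\mathfrak{p} \rightsquigarrow \mathfrak{m}$ in $\spec(V)$. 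Such a square is trivially cartesian, which verifies aic-$v$-excision and hence, by \Cref{mainthm}, establishes $\arc$-descent and excision for $\mathcal{F}$.

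The main thing to be careful about is the correct identification of the horizontal maps as identities: one must check that the pullback map $R\Gamma(\spec(V)_\et,f^*\mathcal{G}) \to R\Gamma(\spec(V/\mathfrak{p})_\et,f^*\mathcal{G})$ corresponds, under the stalk identification, to the identity on $\mathcal{G}_{s_\mathfrak{m}}$, and similarly for the bottom row. This is essentially a functoriality statement for the natural transformation $R\Gamma(\spec(A)_\et,-) \simeq (-)_{s_A}$ with respect to maps $\spec(B) \to \spec(A)$ that preserve the (geometric) closed point, and should be routine once the identifications above are set up precisely.
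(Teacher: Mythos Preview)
Your proposal is correct and takes essentially the same approach as the paper: both reduce to verifying aic-$v$-excision via \Cref{mainthm}, observe that all four rings in the square are absolutely integrally closed valuation rings and hence strictly henselian by \Cref{aicstricthens}, and conclude that the horizontal maps $\mathcal{F}(V) \to \mathcal{F}(V/\mathfrak{p})$ and $\mathcal{F}(V_{\mathfrak{p}}) \to \mathcal{F}(\kappa(\mathfrak{p}))$ are equivalences because each pair shares the same residue field. Your write-up is a bit more explicit about the stalk identifications, but the argument is the same.
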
 
\begin{proof} 
Proposition~\ref{vDescentEtaleCoh} already shows that $\mathcal{F}$ satisfies
$v$-descent. Also, $\mathcal{F}$ is finitary because \'etale cohomology commutes
with filtered colimits of rings. It is therefore enough to check the condition
of aic-$v$-excision from Theorem~\ref{mainthm}. Fix  an absolutely integrally
closed valuation ring $V$ that is an $R$-algebra and a prime ideal $\mathfrak{p} \subset V$. Note that any reduced quotient of a localization of $V$ is also an absolutely integrally closed valuation ring, and hence a strictly henselian local ring. In particular, as $V$ and $V/\mathfrak{p}$ are both strictly henselian with identical residue fields, we have $\mathcal{F}(V) \simeq \mathcal{F}(V/\mathfrak{p})$ by standard facts in \'etale cohomology. Similarly, we also have $\mathcal{F}(V_{\mathfrak{p}}) \simeq \mathcal{F}(\kappa(\mathfrak{p}))$, so the cartesianness of the square from Theorem~\ref{mainthm} (3) is clear.
\end{proof}

We now prove part (1) of Corollary~\ref{GabberLocal} from the introduction,
recovering many special cases of the Gabber--Huber rigidity theorem for the \'etale cohomology of henselian pairs \cite{Gabber, Huber}.
\begin{proof}[Proof of part (1) of Corollary~\ref{GabberLocal}: rigidity from excision]
Let $(A,I)$ be a henselian pair, where $A$ is an algebra over a henselian local ring
$k$. Let $\mathcal{F}$ be a torsion abelian sheaf on $\spec(A)_{\text{\'et}}$. We must show that the map $\eta_{\mathcal{F}}\colon R\Gamma(\spec(A),\mathcal{F}) \to R\Gamma(\spec(A/I), \mathcal{F})$ is an isomorphism. 

First, note that the collection of all sheaves $\mathcal{F}$ for which
$\eta_{\mathcal{F}}$ is an isomorphism satisfies the ``$2$-out-of-$3$'' property
and contains the sheaves pushed forward from $\spec(A/I)$.  We may therefore assume $\mathcal{F}$ has the form $j_! \mathcal{G}$ for some torsion \'etale sheaf $\mathcal{G}$ on $\spec(A) \setminus V(I)$, where $j\colon \spec(A) \setminus V(I) \hookrightarrow \spec(A)$ is the displayed open immersion.

Next, let us prove the claim when $A/I$ is a henselian local ring with
residue field $E$, which implies that $A$ is also a henselian local ring with
residue field $E$. By pro-(finite \'etale) descent, this reduces to the case
where $A$ is a strictly henselian local ring. But the global sections functor is
exact on such a ring, and $\Gamma(\spec(A), \mathcal{H}) \simeq \Gamma(\spec(E),
\mathcal{H}|_{\spec(E)})$ for any \'etale sheaf $\mathcal{H}$ on $\spec(A)$,
whence the claim in this case.

We will now use excision to reduce to the special case treated above. Let $k \to
A$ be the structure map. Set $B := k \times_{A/I} A$. Then, we can also view $I$
as an ideal $J$ of $B$, and we get an excision datum $(B,J) \to (A,I)$ with $B/J
\simeq k$ being a henselian local ring. 
By an observation of Gabber 
\cite{Gabberpair}, $(B, J)$ is a henselian pair. 
As $B \to A$ is an isomorphism after
inverting any element of $J$, the sheaf $\mathcal{G}$ can be viewed as a torsion
\'etale sheaf on $\spec(B) \setminus V(J)$. Write $\mathcal{F}' := j'_!
\mathcal{G}$, where $j'\colon \spec(B) \setminus V(J) \hookrightarrow \spec(B)$
is the displayed open immersion. Consider the functor $R\Gamma(-, \mathcal{F}')$
on the category of qcqs schemes over $\spec(B)$; we need to show that this functor
carries $A \to A/I$ to an equivalence. Theorem~\ref{EtaleCohExcisive} implies that this functor carries the Milnor square associated to $(B,J) \to (A,I)$ to a  pullback square.
We are thus reduced to showing that the functor carries $B \to B/J$ to an
equivalence. But then $B/J \simeq k$ is a henselian local ring, so we are done by the special case shown earlier.
\end{proof}

Next, we observe that excision for \'etale cohomology is also a quick
consequence of Gabber--Huber's affine analog of proper base change \cite{Gabber, Huber}. 

\begin{remark}[Excision from rigidity]
\label{r}
Suppose that we have an excision datum $f\colon  (A, I) \to (B, J)$. 
To see that the induced square on $R \Gamma_{}( \cdot, \Lambda)$ is
 cartesian, 
 note first that the datum of
an excision datum is preserved under flat base change in $A$. 
 Thus, we may localize on $A$ and assume that $A $ is a henselian
local ring with maximal ideal $\mathfrak{m} \subset A$ (cf.~the proof of
\Cref{hensdetect}). We need to see that the square 
\begin{equation} \label{lastetaleexc} \xymatrix{
R \Gamma( \mathrm{Spec}(A)_{\et} , \Lambda) \ar[d]  \ar[r] &  R \Gamma( \mathrm{Spec}(A/I)_{\et}, \Lambda)
\ar[d]  \\
R \Gamma( \mathrm{Spec}(B)_{\et}, \Lambda) \ar[r] &  R \Gamma( \mathrm{Spec}(B/J)_{\et}, \Lambda)
} \end{equation}
is a pullback. 
Then,  there are two cases: 
\begin{enumerate}
\item $I = A$, so $1 \in I$. In this case,  $J = B$ since $J$ is an ideal and
therefore $A \simeq B$. Therefore, the excision  assertion
 is evident. 
\item $I \subset \mathfrak{m}$. Then $(A, I)$ is a henselian pair. It
then follows by an observation of Gabber \cite{Gabberpair} that $(B, J)$ is a henselian pair. 
By the affine analog of proper base change, we have
$R \Gamma( \mathrm{Spec}(A)_{\et}, \Lambda) \simeq R \Gamma( \mathrm{Spec}(A/I)_{\et}, \Lambda)$ and $R
\Gamma( \mathrm{Spec}(B)_{\et}, \Lambda) \simeq R \Gamma( \mathrm{Spec}(B/J)_{\et}, \Lambda)$. Therefore,
excision holds in this case as well: the horizontal maps in
\eqref{lastetaleexc} are equivalences. 
\end{enumerate}
Of course, in our approach above, we do not need to invoke the affine analog of
proper base change at any point, but we need to assume that one is working over
a henselian local ring. 
\end{remark} 

\subsection{Constructible \'etale sheaves}
In this subsection we prove ``categorified'' versions of
Theorem~\ref{EtaleCohExcisive}.

We demonstrate the following result, which is due to
Rydh in the forthcoming work \cite{RydhII}; we thank him for
originally indicating the
result to us.  We give our own quick proof using 
Theorem~\ref{mainthm}. 

\newcommand{\cat}{\mathrm{Cat}}

\begin{theorem}[Rydh \cite{RydhII}] 
\label{rydhthmsheaves}
Consider the following three functors: 
\begin{enumerate}
\item  
The functor 
$\sF_0\colon  \mathrm{Sch}_{qcqs}^{op} \to \cat_1$ that sends a qcqs scheme $X$ to the category of
\'etale
qcqs $X$-schemes which are separated over $X$. 
\item 
The functor 
$\sF_1\colon  \mathrm{Sch}_{qcqs}^{op} \to \cat_1$ that sends a qcqs scheme $X$ to the category of
finite \'etale
$X$-schemes. 
\item The functor $\sF_2\colon  \mathrm{Sch}_{qcqs}^{op} \to \cat_1$ that sends a 
qcqs scheme $X$ to the category of constructible \'etale sheaves of sets on
$X$, or equivalently  the category of \'etale qcqs algebraic spaces over
$X$ (see \cite[Ch.~V, Th.~1.5]{Mil80} and \cite[Exp.~IX,
Cor.~2.7.1]{SGA4} for this equivalence).
\end{enumerate}
Then $\sF_0, \sF_1, \sF_2$  are  finitary $\arc$-sheaves. 
\end{theorem}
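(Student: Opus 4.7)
The plan is to apply Theorem~\ref{mainthm}, verifying the three hypotheses for each of $\sF_0,\sF_1,\sF_2$: (i) finitariness, (ii) $v$-descent, and (iii) aic-$v$-excision, valued in the target $\cat_1$ which is compactly generated by cotruncated objects. (Note that $\sF_1$ is a full subfunctor of $\sF_0$, which in turn is a full subfunctor of $\sF_2$, so it is natural to prove $\arc$-descent for $\sF_2$ first and then carefully cut out $\sF_0$ and $\sF_1$ by checking that the conditions ``separated'' and ``finite'' are $\arc$-local on the base.)

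For finitariness, I would use standard noetherian approximation (cf.~\cite[Tag 07RP]{stacks-project} for the relevant spreading-out results for \'etale, separated, finite, finitely presented morphisms), which implies that along a cofiltered affine limit $X = \varprojlim X_\alpha$, each object of $\sF_i(X)$ descends to some $X_\alpha$ and all morphisms spread out; so $\sF_i(X) \simeq \varinjlim \sF_i(X_\alpha)$. For $v$-descent, I would invoke \cite[Th.~3.12]{Rydh} to reduce each $v$-cover to a composite of a Zariski cover and a proper finitely presented surjection. \'Etale and Zariski descent for each $\sF_i$ is classical. Descent along proper surjections for $\sF_1$ is the classical statement of proper descent for finite \'etale morphisms; descent for $\sF_2$ follows from proper descent for qcqs \'etale algebraic spaces together with the proper base change theorem (which identifies the descent datum with a constructible \'etale sheaf on the base by passage to stalks at geometric points); descent for $\sF_0$ is then obtained from that of $\sF_2$ by noting that separatedness of an \'etale algebraic space can be checked after a proper surjective base change.

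The heart of the argument is (iii). Let $V$ be an absolutely integrally closed valuation ring with prime $\mathfrak{p}\subset V$; we must show that the square
\[
\xymatrix{\sF_i(V)\ar[r]\ar[d] & \sF_i(V/\mathfrak{p})\ar[d]\\ \sF_i(V_\mathfrak{p})\ar[r] & \sF_i(\kappa(\mathfrak{p}))}
\]
is cartesian in $\cat_1$. For $\sF_1$, note that each of $V, V/\mathfrak{p}, V_\mathfrak{p}, \kappa(\mathfrak{p})$ is strictly henselian with separably closed residue field (Lemma~\ref{aicstricthens}), so the global-sections functor identifies $\sF_1$ with the constant functor $\mathrm{FinSets}$; the square becomes the diagonal square for $\mathrm{FinSets}$ and is trivially cartesian. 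For $\sF_2$ (and hence $\sF_0$), the key observation is that a constructible \'etale sheaf of sets on $\spec(V)$ is determined by the following \emph{combinatorial} data: a finite constructible stratification of $\spec(V)$, a finite set on each stratum (locally constant sheaves on strata of $\spec(V)$ are constant, as every residue field is separably closed), and generization-compatibility maps between these sets. Identifying $\spec(V)$ as the pushout of $\spec(V_\mathfrak{p})$ and $\spec(V/\mathfrak{p})$ along $\spec(\kappa(\mathfrak{p}))$ as stratified spaces, one sees immediately that such data is equivalent to a pair of compatible constructible sheaves on $\spec(V_\mathfrak{p})$ and $\spec(V/\mathfrak{p})$ with identified restrictions to $\spec(\kappa(\mathfrak{p}))$; this is exactly the cartesianness of the square. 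For $\sF_0$, one further checks that \'etale-separatedness can be detected on the constructible pieces, and for $\sF_1$ that finiteness amounts to quasi-compactness of stalks and triviality of the stratification (both visibly preserved under the pullback description).

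The main obstacle I anticipate is the $v$-descent step for $\sF_0$ and $\sF_2$ along proper surjections: here one must combine proper base change (to identify constructible sheaves via their stalks) with \'etale descent to get a faithful bookkeeping of the descent datum. Once this is dispatched, (iii) is essentially a combinatorial unwinding of the specialization poset of $\spec(V)$, since absolute integral closedness eliminates all nontrivial Galois-theoretic contributions.
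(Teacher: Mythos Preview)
Your overall strategy matches the paper's: apply Theorem~\ref{mainthm} by verifying finitariness, $v$-descent, and aic-$v$-excision. The finitariness argument and the treatment of $\sF_1$ under aic-$v$-excision are essentially identical.

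There are two differences in execution worth noting. First, for $v$-descent the paper does not reprove anything: it observes that by finitariness one only needs $v$-descent on finite-type $\mathbb{Z}$-schemes, where $v$-covers coincide with universal submersions, and then cites Rydh's descent result \cite[Cor.~5.18]{Rydh} directly for all three functors. Your sketch for $\sF_2$ via ``proper base change identifying the descent datum by passage to stalks'' is vague and does not really address effectivity of the descent datum; this is exactly the nontrivial content of Rydh's theorem, so you should simply cite it rather than attempt an ad hoc argument. Second, for aic-$v$-excision on $\sF_0$ and $\sF_2$, the paper reduces via Lemma~\ref{filtcolimitaic} to the case where $V$ has finite rank $n$, and then uses Proposition~\ref{etaleaic} to give a completely explicit description: $\sF_0(\spec V)$ is $\mathrm{Fun}((1\to\cdots\to n)^{op},\mathrm{FinSet}_{\mathrm{inj}})$ and $\sF_2(\spec V)$ is $\mathrm{Fun}((1\to\cdots\to n)^{op},\mathrm{FinSet})$, with analogous descriptions over $V_{\mathfrak p}$, $V/\mathfrak p$, $\kappa(\mathfrak p)$ corresponding to the subintervals $\{1,\dots,i\}$, $\{i,\dots,n\}$, $\{i\}$. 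The cartesian square is then the obvious gluing of functors along an overlapping interval decomposition. Your ``combinatorial data plus pushout of stratified spaces'' description is morally the same thing, but the finite-rank reduction and the injective-maps characterization of separatedness make the paper's argument cleaner and avoid having to justify the stratified-pushout picture for arbitrary-rank $V$.
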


\begin{proposition} 
\label{etaleaic}
Let $V$ be an absolutely integrally closed valuation ring. Let $f\colon X \to \spec(V)$ be a quasi-compact separated \'etale
morphism. Then $X$ is a disjoint union of $V$-schemes of the form $\spec(
V[1/t])$, $t \in V$. That is, the category of separated \'etale $V$-schemes is equivalent
to the category of disjoint unions of quasi-compact opens of $\spec(V)$. 
\end{proposition}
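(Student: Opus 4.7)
My plan is to reduce to the case of finite rank via approximation, and then argue topologically using the strict henselianness of $V$ together with separatedness of $f$.

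For the reduction, I write $V = \varinjlim_\alpha V_\alpha$ as a filtered colimit of absolutely integrally closed valuation subrings of $V$ of finite rank, with faithfully flat transitions (Lemma \ref{filtcolimitaic}). Since $f\colon X \to \spec(V)$ is étale, quasi-compact, and separated, it is of finite presentation, so by standard limit methods \cite[Tag 01ZM]{stacks-project} it descends to a quasi-compact separated étale morphism $f_\alpha\colon X_\alpha \to \spec(V_\alpha)$ with $f = f_\alpha \times_{\spec(V_\alpha)} \spec(V)$. As base change preserves decompositions of the form $\bigsqcup \spec(V_\alpha[1/t_i])$, we may assume $V$ has finite rank $n$, with $\spec(V) = \{\mathfrak{p}_0 \subsetneq \mathfrak{p}_1 \subsetneq \cdots \subsetneq \mathfrak{p}_n\}$.

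In the finite rank case, let $K = \mathrm{Frac}(V)$; as $K$ is algebraically closed and $X_K \to \spec(K)$ is quasi-compact and étale, the generic fiber $X_K$ is a finite disjoint union $\bigsqcup_{j=1}^m \spec(K)$ with generic points $\eta_1,\ldots,\eta_m$. For each $j$, I define $U_{\eta_j} \subset \spec(V)$ as the union of all opens over which $\eta_j$ extends to a section of $f$, and glue the local sections (agreement over overlaps follows from separatedness: the equalizer of two extensions is clopen and contains the generic point) to a section $s_{\eta_j}\colon U_{\eta_j} \to X$. For any $\xi$ in the closure of $\eta_j$ lying over $\mathfrak{p}_i$, the local ring $\mathcal{O}_{X,\xi}$ is an étale $V_{\mathfrak{p}_i}$-algebra with algebraically closed residue field, and $V_{\mathfrak{p}_i}$ is strictly henselian (Lemma \ref{aicstricthens}), so $\mathcal{O}_{X,\xi} \cong V_{\mathfrak{p}_i}$; spreading this isomorphism out by finite presentation gives a section $\spec(V[1/s]) \to X$ through $\xi$ for some $s \notin \mathfrak{p}_i$, which agrees with $s_{\eta_j}$ by uniqueness, so $\mathfrak{p}_i \in U_{\eta_j}$. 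Thus $U_{\eta_j}$ is an open of the finite chain $\spec(V)$, hence of the form $\{\mathfrak{p}_0,\ldots,\mathfrak{p}_{k_j}\} = \spec(V[1/t_j])$ for any $t_j \in \mathfrak{p}_{k_j+1} \setminus \mathfrak{p}_{k_j}$ (or $t_j = 1$ if $k_j = n$).

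The images $s_{\eta_j}(U_{\eta_j})$ partition $X$: they are pairwise disjoint because a common point would have two distinct generic generalizations in $X_K$, contradicting the uniqueness encoded in $\mathcal{O}_{X,\xi} \cong V_{\mathfrak{p}_i}$; and they cover $X$ because each $\xi \in X$ has a unique generic generalization $\eta_j$ and lies on the section through $\xi$, which by the previous step coincides with $s_{\eta_j}$. Since each $s_{\eta_j}$ is an open immersion (a section of an étale map is an étale monomorphism, hence an open immersion), these pieces are clopen, giving the scheme-level decomposition $X \cong \bigsqcup_{j=1}^m \spec(V[1/t_j])$. The main obstacle is identifying the maximal section $s_{\eta_j}$ with a principal open of $\spec(V)$; the finite rank reduction is essential here, as this step crucially uses that every open of the finite chain $\spec(V)$ has the form $\spec(V[1/t])$.
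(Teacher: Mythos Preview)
Your proof is correct, and shares with the paper the initial reduction to finite rank via Lemma~\ref{filtcolimitaic} and the essential use of strict henselianness (Lemma~\ref{aicstricthens}). Where you diverge is in handling the finite rank case: the paper argues by induction on the rank, invoking Zariski's main theorem at each step to split $X = X' \sqcup U$ into a finite \'etale piece $X'$ (which decomposes into sections since $V$ is absolutely integrally closed) and a piece $U$ missing the closed point (whose image is then $\spec(V[1/t])$, and one recurses). You instead work globally: for each generic point $\eta_j$ you build the maximal section directly from the local isomorphisms $\mathcal{O}_{X,\xi} \cong V_{\mathfrak{p}_i}$, glue via separatedness, and observe that in finite rank every open of $\spec(V)$ is principal. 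Your route avoids Zariski's main theorem and the inductive bookkeeping at the cost of a slightly more delicate pointwise argument (checking that the closures of the $\eta_j$ partition $X$); the paper's route is more structural but invokes a heavier tool. Both are clean, and the paper also notes that the statement is a special case of a general fact about integral normal schemes with separably closed function field \cite[Tag 0EZN]{stacks-project}.
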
 
\begin{proof} 
This is a general fact about integral normal schemes with separably closed
function field, cf.~\cite[Tag 0EZN]{stacks-project}. For the convenience of the reader, we also sketch a direct proof.

Using Lemma~\ref{filtcolimitaic} and a standard noetherian approximation argument, we may assume $V$ has finite rank. We prove the claim by induction on the rank. When the rank is $0$, then $V$ is an algebraically closed field, so the claim is clear. In general, since $V$ is strictly henselian, we can use Zariski's main theorem to write $X := X' \sqcup U$ where $X' \to \mathrm{Spec}(V)$ is finite \'etale, and $U \to \mathrm{Spec}(V)$ does not hit the closed point. As $V$ is absolutely integrally closed, the map $X' \to \mathrm{Spec}(V)$ is a disjoint union of sections: the ring of functions on each component of $X'$ is a normal domain finite over $V$, but there are no such domains other than $V$ as the fraction field of $V$ is algebraically closed.  Also, $f(U) \subset \mathrm{Spec}(V)$ is a quasi-compact open subset that misses the closed point. As $V$ is an 
absolutely integrally closed 
valuation ring, each finitely generated ideal is principal, so $f(U) :=
\mathrm{Spec}(V[1/t])$ for some nonunit $t \in V$. In particular, $f(U)$ is the
spectrum of an
absolutely integrally closed 
valuation ring of rank strictly smaller than that of $V$. Applying the inductive hypothesis to the map $U \to f(U)$ now gives the claim.
\end{proof}

\begin{proof}[Proof of \Cref{rydhthmsheaves}] 
First, the functors $\sF_0, \sF_1$ are finitary by the general theory about limits
of qcqs schemes along affine transition maps (cf.~\cite[Tag
081C]{stacks-project}).  Similarly $\sF_2$ is finitary, as a presentation of
the algebraic space can be descended, cf.~\cite[Tags 07SJ, 084V]{stacks-project}.

The functors $\sF_0, \sF_1, \sF_2$ satisfy $v$-descent. 
Recall that since the functors are finitary, it suffices to check $v$-descent on
schemes of finite type over $\spec( \mathbb{Z})$, where the $v$-topology
reduces to the topology of universal
submersions. Rydh's result \cite[Cor.~5.18]{Rydh} implies that $\sF_0,
\sF_1, \sF_2$ satisfy
$v$-descent. 
To complete the argument, thanks to \Cref{mainthm},  it remains to show that $\sF_0, \sF_1, \sF_2$ satisfy
aic-$v$-excision. 
Let $V$ be an absolutely integrally closed valuation ring and let $\mathfrak{p} \subset V$ be a prime
ideal. 
Then $V/\mathfrak{p}, V_{\mathfrak{p}}, \kappa( \mathfrak{p})$ are absolutely integrally closed valuation rings as well.
\begin{enumerate}
\item  
For $\sF_1$, 
 the
categories of finite \'etale algebras over $V, V/\mathfrak{p},
V_{\mathfrak{p}}, \kappa( \mathfrak{p})$ are all equivalent to the category of
finite sets thanks to Lemma~\ref{aicstricthens}. Thus clearly $\sF_1$ satisfies
aic-$v$-excision. 

\item
For $\sF_0$, we see by Proposition~\ref{etaleaic}
that the category of quasi-compact, separated, and  \'etale $V$-schemes is equivalent
to the category of disjoint unions of quasi-compact open sets of $V$.  Similarly
for $V/\mathfrak{p}, V_{\mathfrak{p}}, \kappa( \mathfrak{p})$.

We need to see 
that $\sF_0( \spec(V)) \simeq \sF_0( \spec(V_{\mathfrak{p}}))
\times_{ \sF_0( \spec( \kappa( \mathfrak{p})))} \sF_0( \spec( V/\mathfrak{p}))$. 
We can reduce to the case where $V$ has finite rank by writing
$V$ as a filtered colimit of absolutely integrally closed valuation subrings
(Lemma~\ref{filtcolimitaic}).  Suppose $V$ has rank $n$, so 
$\spec(V)$ is the totally ordered set $\left\{1,2, \dots, n\right\}$ under
specialization, and
$\mathfrak{p}$ corresponds to the element $i \in [1, n]$. 
Then we observe that: 
\begin{enumerate}
\item The category of 
quasi-compact separated \'etale $V$-schemes is the category of functors $(1 \to 2 \to \dots
\to n)^{op} \to \mathrm{FinSet}_{\mathrm{inj}}$, the category of finite sets and
injections. 
\item The category of 
quasi-compact
separated \'etale $V_{\mathfrak{p}}$-schemes is the
category of functors $(1\to 2 \to \dots \to i)^{op} \to
\mathrm{FinSet}_{\mathrm{inj}}$. 
\item 
The category of 
quasi-compact
separated \'etale $(V/\mathfrak{p})$-schemes is the
category of functors $(i\to i+1 \to \dots \to n)^{op} \to
\mathrm{FinSet}_{\mathrm{inj}}$. 
\item The category of 
quasi-compact
separated \'etale $\kappa( \mathfrak{p})$-schemes 
is the category of functors $i^{op} \to \mathrm{FinSet}_{\mathrm{inj}}$. 
\end{enumerate}

In view of the above identifications,  
$\sF_0( \spec(V)) \simeq \sF_0( \spec(V_{\mathfrak{p}}))
\times_{ \sF_0( \spec( \kappa( \mathfrak{p})))} \sF_0( \spec( V/\mathfrak{p}))$
as desired. 
\item For $\sF_2$, the category of constructible sheaves of sets on $\spec(V)$
is just the category of functors $\left\{1, 2, \dots, n\right\}^{op} \to \mathrm{FinSet}$, again
via Proposition~\ref{etaleaic} which describes the \'etale site. 
One has a similar description for $V_{\mathfrak{p}}, V/\mathfrak{p}, \kappa(\mathfrak{p})$ and the
result follows. 
\end{enumerate}
Therefore, $\sF_0, \sF_1, \sF_2$ satisfy aic-$v$-excision and are finitary $v$-sheaves. 
In light of \Cref{mainthm} we can conclude. 
\end{proof}

Let $\Lambda$ be a \emph{finite} ring. 
Next, we prove that the functor which sends a qcqs scheme $X$ to the
$\infty$-category $\D^b_{\mathrm{cons}}(X_{\et}, \Lambda)$ of constructible sheaves 
satisfies $\arc$-descent. This is a refinement of
Theorem~\ref{EtaleCohExcisive} (which we recover by taking derived endomorphisms of the
constant sheaf).

\begin{notation}[Constructible sheaves]
Consider the functor $X \mapsto \D^b_{\mathrm{cons}}(X_{\et}, \Lambda),    \ \mathrm{Sch}_{qcqs}^{op} \to \cat_\infty$
which assigns to a qcqs scheme
$X$ the subcategory of the full derived $\infty$-category $\D(X_{\et}, \Lambda)$ of \'etale
sheaves of $\Lambda$-modules spanned by those objects which are bounded with constructible
cohomology. 

For $n \geq 0$, 
we let $\D^b_{\mathrm{cons}}(X_{\et}, \Lambda)^{[-n, n]} \subset \D^b_{\mathrm{cons}}(X_{\et}, \Lambda)$ be the subcategory
consisting of objects with amplitude in $[-n, n]$. 
For example, when $n = 0$ we recover the ordinary abelian category of
constructible sheaves of $\Lambda$-modules on $X$.  
\end{notation}

\begin{remark}
When $\Lambda$ is a field or a product of fields, the preceding definition
coincides with the standard one appearing in the definition of the $\ell$-adic
derived category, cf.~\cite[Tag 09C0]{stacks-project}. When $\Lambda$ has a
nontrivial Jacobson radical, 
one can also enforce finite $\Lambda$-Tor-dimension (which is necessary to
obtain something functorial in $\Lambda$). 
The results below also hold for the variant where we force finite
$\mathrm{Tor}$-dimension. 
\end{remark}

\newcommand{\sG}{\mathcal{G}}
\begin{proposition} 
\label{dboundedvsheaf}
The functor $X \mapsto \D^b_{\mathrm{cons}}(X_{\et}, \Lambda)$ is finitary. 
More generally, for each $n$, the functor $X \mapsto \D^b_{\mathrm{cons}}(X_{\et}, \Lambda)^{[-n, n]}$
is finitary.  
\end{proposition}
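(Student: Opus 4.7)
Since $\D^b_{\mathrm{cons}}(X_{\et},\Lambda) = \varinjlim_n \D^b_{\mathrm{cons}}(X_{\et},\Lambda)^{[-n,n]}$ in $\mathrm{Cat}_\infty$, the first assertion reduces to the second. Fix $n \geq 0$, a cofiltered system $\{X_\alpha\}$ of qcqs schemes with affine transition maps, write $X = \varprojlim_\alpha X_\alpha$ with $f_\alpha\colon X \to X_\alpha$ the projections, and set $\mathcal{D}_\alpha := \D^b_{\mathrm{cons}}(X_{\alpha,\et},\Lambda)^{[-n,n]}$. I plan to show that the pullback functor
\[ \Phi\colon \varinjlim_\alpha \mathcal{D}_\alpha \longrightarrow \D^b_{\mathrm{cons}}(X_{\et},\Lambda)^{[-n,n]} \]
is an equivalence, checking fully faithfulness and essential surjectivity separately.

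For fully faithfulness, given $F, G \in \mathcal{D}_\alpha$ the mapping space is computed as the connective truncation of $R\Gamma(X_{\alpha,\et}, R\mathcal{H}om(F,G))$, and analogously after pullback along each $f_{\alpha\beta}$ and $f_\alpha$. Since $F$ and $G$ are bounded constructible and $\Lambda$ is finite, a standard induction on amplitude (reducing to the case where $F$ is of the form $j_!\Lambda$ for some constructible locally closed immersion $j$) shows that in any given cohomological degree, the sheaf $\mathcal{H}^i R\mathcal{H}om(F,G)$ is constructible and its formation commutes with these pullbacks. Combined with the finitariness of \'etale cohomology of constructible torsion sheaves \cite[Tag 03Q4]{stacks-project} (already invoked in Proposition~\ref{vDescentEtaleCoh}), this yields the equivalence of mapping spaces in the colimit.

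For essential surjectivity, I proceed by induction on the amplitude. Any $K \in \D^b_{\mathrm{cons}}(X_{\et},\Lambda)^{[-n,n]}$ fits into a fiber sequence expressing it as an extension of $\tau^{\leq n-1}K$ by a shift of the constructible sheaf $\mathcal{H}^n(K)$; the Ext group classifying this extension is, by fully faithfulness already established (in lower amplitude), obtained as a filtered colimit of the corresponding Ext groups on the $X_\alpha$'s. It therefore suffices to descend constructible sheaves of $\Lambda$-modules concentrated in a single degree. But any such sheaf $\mathcal{F}$ on $X$ is locally constant with finite stalks on the strata of some finite constructible stratification of $X$; both the stratification (stratifications by constructible subsets descend along affine cofiltered limits, cf.~\cite[Tag 01ZC]{stacks-project}) and the locally constant $\Lambda$-modules of finite rank on each stratum (cf.~\cite[Tag 09YU]{stacks-project} or \cite[Exp.~IX]{SGA4}) descend to a finite level.

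The main obstacle is verifying the compatibility of $R\mathcal{H}om$ with the pullbacks $f_{\alpha\beta}^*$ and $f_\alpha^*$ for bounded constructible complexes; once this is established, the remainder of the argument is a routine devissage plus noetherian approximation.
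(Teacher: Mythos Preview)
Your proposal is correct and follows essentially the same strategy as the paper: reduce to the amplitude-bounded case, prove full faithfulness by d\'evissage on $F$ together with the finitariness of \'etale cohomology with torsion coefficients, and prove essential surjectivity by d\'evissage to the heart and then invoke \cite[Tag 09YU]{stacks-project}.

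The one notable difference concerns exactly the step you flag as the ``main obstacle.'' You plan to compute mapping spaces via $R\Gamma$ of the internal $R\mathcal{H}om$, and hence must check that $R\mathcal{H}om$ between bounded constructible complexes commutes with the pullbacks $f_{\alpha\beta}^*$ and $f_\alpha^*$; your reduction to $F = j_!\Lambda$ for $j$ a locally closed immersion still leaves you with expressions like $Rj_* Rj^! G$ whose base change behavior is not entirely formal. The paper sidesteps this by using a slightly different d\'evissage: via the Postnikov tower and \cite[Tag 095M]{stacks-project}, one reduces to $\sF = j_!(M)$ where $j\colon U_0 \to X_0$ is separated \emph{\'etale} and $M$ is a constant finite $\Lambda$-module. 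For \'etale $j$ one has $j^! = j^*$, so the adjunction gives directly
\[
\hom_{\D^b_{\mathrm{cons}}((X_i)_{\et},\Lambda)}(p_{i0}^* j_! M,\, p_{i0}^* \sG) \;\simeq\; \hom_{\D(\Lambda)}\bigl(M,\, R\Gamma(U_0 \times_{X_0} X_i,\, \sG)\bigr),
\]
and the right-hand side commutes with the filtered colimit in $i$ by the standard continuity of \'etale cohomology. This avoids any appeal to base change for $R\mathcal{H}om$. Your route works (the relevant $R\mathcal{H}om$ base change is indeed true for constructible complexes over a finite $\Lambda$), but the paper's choice of d\'evissage via \'etale $j$ rather than locally closed $j$ makes the argument shorter.
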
 
\begin{proof} 
It suffices to prove that each 
functor $X \mapsto \D^b_{\mathrm{cons}}(X_{\et}, \Lambda)^{[-n, n]}$ is
finitary, since then we can take the union over $n$. 

Suppose that we can write $X = \varprojlim_{i \in I} X_i$ 
as a filtered inverse limit of a tower of qcqs schemes $X_i$ over a totally
ordered set $I$, such that the transition maps $X_j \to X_i$ are affine. 
Let $p_i\colon  X \to X_i$ and $p_{ji}\colon  X_j \to X_i$ (for $j \geq i$) denote the
transition maps. 

Let $0 \in I$. 
First, we show that if $\sF, \sG \in \D^b_{\mathrm{cons}}((X_0)_{\et}, \Lambda)$, then 
$\hom_{\D^b_{\mathrm{cons}}( X_{\et}, \Lambda)}( p_0^* \sF, p_0^* \sG)  
\simeq \varinjlim_{i \geq 0} \left( \hom_{\D^b_{\mathrm{cons}}( (X_i)_{\et},
\Lambda)}( p_{i0}^* \sF, p_{i0}^* \sG) \right)
$. This easily implies that the map 
$\varinjlim  \left( \D^b_{\mathrm{cons}}( (X_i)_{\et}, \Lambda) \right) \to \D^b_{\mathrm{cons}}( X_{\et}, \Lambda)$ is fully
faithful (and similarly with truncations attached). 
In fact, by a d\'evissage (first via the Postnikov tower, then using \cite[Tag
095M]{stacks-project}),
we reduce to the case where $\sF$ is obtained as the (discrete) sheaf
$j_!(M)$ for some finite $\Lambda$-module $M$ and $j\colon  U_0 \to X_0$ a separated
\'etale map and where $\sG$ is also (up to shift) discrete. 
We have then 
for each $i$,
$ \hom_{\D^b_{\mathrm{cons}}( (X_i)_{\et},
\Lambda)}( p_{i0}^* \sF, p_{i0}^* \sG) 
= 
\hom_{\D(\Lambda)}(M,  R \Gamma(U_0 \times_{X_0} X_i, q_i^* \mathcal{G}) )$
for $q_i \colon
U_0 \times_{X_0} X_i \to X_0$, and similarly for $X$. 
In this case, however, the result follows from the commutation of
\'etale cohomology with filtered colimits over the bases $\{U_0 \times_{X_0}
X_i\}$.

Next, we need to show that 
the functor
$\varinjlim  \left( \D^b_{\mathrm{cons}}( (X_i)_{\et}, \Lambda)^{[-n, n]}
\right) \to \D^b_{\mathrm{cons}}( X_{\et},
\Lambda)^{[-n, n]}$
is essentially surjective. Since the functor is already known to be fully faithful (indeed on the
whole bounded derived $\infty$-categories), it suffices by
d\'evissage to show the result in the case $n = 0$, where it follows from 
\cite[Tag 09YU]{stacks-project}. 
\end{proof}

\begin{proposition} 
\label{dbconsvsheaf}
The functor $X \mapsto \D^b_{\mathrm{cons}}(X, \Lambda)$ on $\mathrm{Sch}_{qcqs}$ is a
hypercomplete $v$-sheaf. 
More generally, for each $n$, the functor $X \mapsto \D^b_{\mathrm{cons}}(X, \Lambda)^{[-n, n]}$
is a hypercomplete $v$-sheaf. 
\end{proposition}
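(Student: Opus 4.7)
The plan is to reduce the proposition to its truncated variants $\sF_n := X \mapsto \D^b_{\mathrm{cons}}(X_{\et}, \Lambda)^{[-n,n]}$ and prove each $\sF_n$ is a hypercomplete $v$-sheaf; the unbounded case then follows as the filtered colimit $\D^b_{\mathrm{cons}}(-,\Lambda) = \bigcup_n \D^b_{\mathrm{cons}}(-,\Lambda)^{[-n,n]}$ using the finitariness established in Proposition~\ref{dboundedvsheaf} to commute this colimit with the totalizations in the sheaf condition, together with the observation that any pair of objects in $\D^b_{\mathrm{cons}}(X,\Lambda)$ lies in some common $[-n,n]$-amplitude piece.

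For fixed $n$, the mapping spaces of $\D^b_{\mathrm{cons}}(X, \Lambda)^{[-n,n]}$ are $2n$-truncated, so the target of $\sF_n$ is $\mathrm{Cat}_{2n+1}$, which is compactly generated by cotruncated objects. Granting the $v$-sheaf property, hypercompleteness follows automatically: testing against compact generators reduces the claim to hypercompleteness of sheaves of $k$-truncated spaces for various $k$, which holds on any site by \cite[Cor.~7.2.1.12]{HTT} (the same device is used in the proof of Proposition~\ref{aicvaldetect}).

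To prove the $v$-sheaf property of $\sF_n$, combine finitariness (Proposition~\ref{dboundedvsheaf}) with Proposition~\ref{finitevtopology} to reduce to checking $v$-descent along $v$-covers of finitely presented $\mathbb{Z}$-schemes. By Rydh's refinement theorem \cite[Th.~3.12]{Rydh}, any such cover admits a refinement factoring as a quasi-compact Zariski cover followed by a proper finitely presented surjection, and it suffices (by \Cref{Fsorite}) to treat each factor. Zariski descent for $\D^b_{\mathrm{cons}}$ is classical. For a proper finitely presented surjection $f\colon Y \to X$, we apply Barr--Beck--Lurie monadic descent to the adjunction $f^* \dashv f_*$ on $\D^b_{\mathrm{cons}}$: the pullback $f^*$ is conservative by proper base change combined with surjectivity of $f$ (one checks vanishing on geometric points), the Beck--Chevalley condition is precisely the statement of proper base change, and $f_*$ preserves bounded constructibility by Deligne's finiteness theorem \cite[Tag 095S]{stacks-project}.

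The main obstacle will be the bookkeeping needed to keep the monadic descent argument compatible with the amplitude constraint $[-n,n]$: pullback $f^*$ is $t$-exact and so preserves each truncation level, but $f_*$ a priori inflates amplitude, and one must verify that the comparison functor $\D^b_{\mathrm{cons}}(X,\Lambda)^{[-n,n]} \to \mathrm{Tot}\bigl(\D^b_{\mathrm{cons}}(Y^{\times_X(\bullet+1)},\Lambda)^{[-n,n]}\bigr)$ remains an equivalence. One can handle this either by first running the monadic descent argument on the full unbounded $\D^b_{\mathrm{cons}}$ (without amplitude restrictions) and then cutting down using that amplitude is stalk-local and pullback-stable, or by noting that in the limit the $t$-structure is assembled from those on the cover and so recovers the amplitude bound.
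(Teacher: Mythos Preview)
Your approach is essentially identical to the paper's: reduce to the $[-n,n]$ truncations (which land in $\mathrm{Cat}_{2n+1}$, giving automatic hypercompleteness), invoke finitariness plus Proposition~\ref{finitevtopology} and Rydh's refinement to reduce to proper surjections of finite type $\mathbb{Z}$-schemes, and then apply Barr--Beck--Lurie (specifically \cite[Cor.~4.7.5.3]{HA}) via proper base change.

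The one place where the paper is sharper is exactly the ``amplitude bookkeeping'' you flag as the main obstacle. Your first proposed workaround (run monadic descent on the full $\D^b_{\mathrm{cons}}$ and cut down afterwards) is awkward because $\D^b_{\mathrm{cons}}$ need not admit the relevant totalizations, and your second (assemble the $t$-structure from the cover) is left vague. The paper instead works directly in $\D^b_{\mathrm{cons}}(-,\Lambda)^{[-n,n]}$ throughout and makes two clean observations: first, the right adjoint to $f^*$ on the truncated categories is the \emph{truncation} of derived pushforward, and right adjointability still follows from proper base change because $f^*$ is $t$-exact and hence commutes with truncation; second, since $\D^b_{\mathrm{cons}}(-,\Lambda)^{[-n,n]}$ is a $(2n+1)$-category, totalizations are finite limits, and the exact functor $f^*$ preserves those. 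This dispatches the obstacle without any detour through the unbounded category.
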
 

Related results can be found in \cite{LaszloOlsson1}.

\begin{proof} 
Note that the assertions for 
$\D^b_{\mathrm{cons}}(X, \Lambda)$ and $\D^b_{\mathrm{cons}}(X, \Lambda)^{[-n, n]}$ (for each $n$) are equivalent,
since given a surjection of qcqs schemes $Y \to X$, amplitude in the derived
category on $X$ can be tested by pullback to $Y$; moreover, 
$\D^b_{\mathrm{cons}}(X, \Lambda) = \bigcup_{n \geq 0}
\D^b_{\mathrm{cons}}(X, \Lambda)^{[-n, n]}$ and this filtered union
commutes with the relevant homotopy limits. For this, compare the proof of
\cite[Th.~11.2]{BhattScholzeWitt}. 
Hence it suffices to show that 
$X \mapsto \D^b_{\mathrm{cons}}(X, \Lambda)^{[-n, n]}$ is a hypercomplete $v$-sheaf. 
Note that this functor
takes values in $\cat_{2n+1}$, which is compactly generated by cotruncated objects, so
hypercompleteness is automatic once we prove that it is a $v$-sheaf; this
follows from \cite[Lemma 6.5.2.9]{HTT} for sheaves of $n$-truncated spaces, and
by the Yoneda lemma (applied using the cotruncated compact generators) in general. The functor is an \'etale sheaf by construction, and we have seen in
\Cref{dboundedvsheaf} that it is finitary. 
Using the refinement result \cite[Th.~3.12]{Rydh}, together with 
\Cref{finitevtopology},
it now suffices to prove that for a proper surjection of
schemes $Y \to X$ finite type over $\spec(\mathbb{Z})$, we have
for each $n$,
\begin{equation}  \D^b_{\mathrm{cons}}(X_{\et}, \Lambda)^{[-n, n]}
\xrightarrow{\sim} \varprojlim \left( \D^b_{\mathrm{cons}}( Y_{\et},
\Lambda)^{[-n, n]}
\rightrightarrows \D^b_{\mathrm{cons}}( (Y \times_X Y)_{\et}, \Lambda)^{[-n, n]} \triplearrows \dots
 \right).  \label{derivedcatlimit} \end{equation}
Indeed, the map $Y \to X$ determines via the \v{C}ech nerve an augmented cosimplicial
scheme (over $X$) and the diagram \eqref{derivedcatlimit} is obtained 
by applying $\D^b_{\mathrm{cons}}( (\cdot)_{\et}, \Lambda)^{[-n, n]}$ to it. 

We verify the equivalence using the abstract result \cite[Cor.~4.7.5.3]{HA}
(applied to opposite $\infty$-categories); this is an $\infty$-categorical
version of the connection between the Barr--Beck theorem and descent,
cf.~\cite[Sec.~4.2]{Deligne90}. 
Note that the pullback 
$\D^b_{\mathrm{cons}}( X_{\et}, \Lambda) \to \D^b_{\mathrm{cons}}( Y_{\et}, \Lambda)$ is clearly conservative
since isomorphisms are checked on stalks. 
Since pullback is exact, 
$\D^b_{\mathrm{cons}}( X_{\et}, \Lambda)^{[-n, n]} \to \D^b_{\mathrm{cons}}( Y_{\et}, \Lambda)^{[-n, n]}$
preserves finite limits and hence totalizations (which behave as finite limits
since we are working with $(2n+1)$-categories). 
Moreover, for any cartesian diagram of qcqs schemes
\[ \xymatrix{
E' \ar[d] \ar[r] &  E \ar[d]  \\
F' \ar[r] &  F
}\]
along \emph{proper} maps,
the square
of $\infty$-categories
\[ \xymatrix{
\D^{b}_{\mathrm{cons}}( F_{\et}, \Lambda)^{[-n, n]} \ar[d]  \ar[r] &  \D^b_{\mathrm{cons}}( F'_{\et},
\Lambda)^{[-n, n]} \ar[d]  \\
\D^b_{\mathrm{cons}}( E_{\et}, \Lambda)^{[-n, n]} \ar[r] &  \D^b_{\mathrm{cons}}( E'_{\et}, \Lambda)^{[-n, n]}
}\]
is right adjointable 
in the sense of 
\cite[Def.~4.7.4.13]{HA}. The right adjoints are given by truncations of
derived pushforward and the adjointability follows from proper base change
\cite[Exp.~XII, Th.~5.1]{SGA4}. 
Therefore, 
we can apply 
\cite[Cor.~4.7.5.3]{HA} to conclude the desired equivalence
\eqref{derivedcatlimit}. 
\end{proof} 

\begin{proposition} 
Let $V$ be an absolutely integrally closed valuation ring of finite rank $m$, so $X = \spec(V)$ (under
specialization) is the
partially ordered set $0 \to 1 \to 2 \to \dots \to m$. 
Then $\D^b_{\mathrm{cons}}( X_{\et}, \Lambda)$ is the $\infty$-category of functors
$(0 \to 1 \to 2 \to \dots \to m)^{op} \to \D^b_{fg}( \Lambda)$ for $\D^b_{fg}( \Lambda)$
the bounded derived category of finitely generated $\Lambda$-modules.
Similarly 
$\D^b_{\mathrm{cons}}( X_{\et}, \Lambda)^{[-n, n]}$ is the $\infty$-category of functors
$( 0 \to 1 \to 2 \to \dots \to m)^{op} \to \D^b_{fg}( \Lambda)^{[-n, n]}$. 
\label{etalesheafonspec}
\end{proposition}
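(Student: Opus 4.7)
The plan is to identify the small étale topos of $X := \spec(V)$ with the presheaf topos on the specialization poset $P := (0 \to 1 \to \cdots \to m)$, then pass to $\Lambda$-linear derived $\infty$-categories, and finally restrict to the bounded constructible part.

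First, by Proposition~\ref{etaleaic}, every quasi-compact separated étale $X$-scheme is a disjoint union of quasi-compact open immersions, so the small étale topos of $X$ coincides with the small Zariski topos. The underlying space $|X|$, with its specialization order, is canonically identified with $P$ equipped with the Alexandrov topology, whose opens form the totally ordered chain $\emptyset \subsetneq U_1 \subsetneq \cdots \subsetneq U_{m+1} = X$ with $U_k := \{\mathfrak{p}_0, \dots, \mathfrak{p}_{k-1}\}$. Since any open cover of any $U_k$ must itself contain $U_k$, every presheaf on $|X|$ is automatically a sheaf, giving an equivalence of $\infty$-topoi
\[
\mathrm{Shv}(X_{\et}, \mathcal{S}) \simeq \mathrm{Fun}(P^{op}, \mathcal{S})
\]
under which the stalk $\mathcal{F}_{\mathfrak{p}_k}$ corresponds to evaluation at $k \in P$, and the arrow $k+1 \to k$ in $P^{op}$ encodes the generization map $\mathcal{F}_{\mathfrak{p}_{k+1}} \to \mathcal{F}_{\mathfrak{p}_k}$ (i.e., the restriction $\mathcal{F}(U_{k+2}) \to \mathcal{F}(U_{k+1})$). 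This reproduces the abelian case used in the proof of Theorem~\ref{rydhthmsheaves} for $\sF_2$, with $\mathrm{FinSet}$ replaced by $\mathrm{Mod}^{fg}_\Lambda$.

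Next, passing to $\Lambda$-linear derived $\infty$-categories on both sides yields $\D(X_{\et}, \Lambda) \simeq \mathrm{Fun}(P^{op}, \D(\Lambda))$; for a finite poset $P$ this is standard, for instance by identifying the functor category $\mathrm{Fun}(P^{op}, \Lambda\text{-Mod})$ with modules over the incidence $\Lambda$-algebra $\Lambda[P^{op}]$ and tracking the pointwise $\Lambda$-module structure through the derived $\infty$-category. Finally, since the constructible strata of $X$ are the individual points $\mathfrak{p}_k$, a bounded complex on $X$ is constructible iff each of its stalks lies in $\D^b_{fg}(\Lambda)$, and the amplitude bound $[-n,n]$ transfers pointwise (as amplitude can be tested stalkwise). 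This produces the desired equivalences $\D^b_{\mathrm{cons}}(X_{\et}, \Lambda) \simeq \mathrm{Fun}(P^{op}, \D^b_{fg}(\Lambda))$ and the $[-n,n]$-truncated variant.

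The main technical step is the passage to derived $\infty$-categories, namely the equivalence $\D(\mathrm{Fun}(P^{op}, \mathcal{S}), \Lambda) \simeq \mathrm{Fun}(P^{op}, \D(\Lambda))$. An alternative approach that avoids packaging this as a single topos-level statement is to induct on $m$ via the open-closed recollement along the closed point $Z := \{\mathfrak{p}_m\}$ with open complement $U := \spec(V_{\mathfrak{p}_{m-1}})$, itself a rank $m-1$ absolutely integrally closed valuation ring spectrum: the gluing functor $i^* j_*\colon \D^b_{\mathrm{cons}}(U, \Lambda) \to \D^b_{\mathrm{cons}}(Z, \Lambda)$ corresponds to ``evaluate at $m-1$'' on the functor-category side, which follows because $V$ is strictly henselian at $\mathfrak{p}_m$ (Lemma~\ref{aicstricthens}) so that the nearby-cycles-type computation reduces to $\mathcal{F}(U) = \mathcal{F}_{\mathfrak{p}_{m-1}}$.
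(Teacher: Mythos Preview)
Your proof is correct and follows essentially the same approach as the paper: invoke Proposition~\ref{etaleaic} to identify the small \'etale site with the (trivially-topologized) poset of quasi-compact opens, so that sheaves are presheaves on $P^{op}$, and then read off the constructible/amplitude conditions stalkwise. The paper's own argument is a one-line ``whence the result'' after this identification; you have simply made explicit the passage to derived $\infty$-categories and the stalkwise characterization of constructibility that the paper leaves implicit, and your optional recollement alternative is also sound but unnecessary given the direct description.
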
 
\begin{proof} 
Recall (Proposition~\ref{etaleaic}) that the category of separated \'etale $V$-schemes is equivalent to the
category of finite disjoint unions of quasi-compact open subsets of $\spec(V)$. 
Every quasi-compact open subset is of the form
$\left\{0, 1, 2, \dots, i\right\}$ for some $i \leq m$, 
and 
the Grothendieck topology in this case is trivial. 
Thus, a sheaf 
on $\spec(V)_{\et}$
is simply a presheaf on the category
of open subsets, whence the result. 
\end{proof}

\begin{theorem}[$\arc$-descent for constructible sheaves] 
\label{arcConsDer}
The functor $X \mapsto \D^b_{\mathrm{cons}}(X_{\et}, \Lambda)$ is a hypercomplete $\arc$-sheaf. 
More generally, for each $n$, the functor $X \mapsto \D^b_{\mathrm{cons}}(X_{\et}, \Lambda)^{[-n, n]}$
is a hypercomplete $\arc$-sheaf. 
\end{theorem}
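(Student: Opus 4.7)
The plan is to reduce to aic-$v$-excision and then use the explicit description of constructible sheaves on the spectrum of an absolutely integrally closed valuation ring of finite rank. Since $\D^b_{\mathrm{cons}}(-, \Lambda) = \bigcup_n \D^b_{\mathrm{cons}}(-, \Lambda)^{[-n,n]}$ and this filtered union commutes with the relevant limits (as already observed in the proof of Proposition~\ref{dbconsvsheaf}), it suffices to prove the assertion for the truncated variant $\D^b_{\mathrm{cons}}(-, \Lambda)^{[-n,n]}$. This functor takes values in $\cat_{2n+1}$, which is compactly generated by cotruncated objects, so hypercompleteness will be automatic once the $\arc$-sheaf property is established (by the same argument as in Proposition~\ref{dbconsvsheaf}).

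Having fixed $n$, we apply the criterion of Theorem~\ref{mainthm}: since the functor is a finitary $v$-sheaf (by Propositions~\ref{dboundedvsheaf} and \ref{dbconsvsheaf}), we need only verify aic-$v$-excision. So fix an absolutely integrally closed valuation ring $V$ and a prime ideal $\mathfrak{p} \subset V$, and we must show that the square
\[ \xymatrix{
\D^b_{\mathrm{cons}}(\spec V, \Lambda)^{[-n,n]} \ar[r] \ar[d] & \D^b_{\mathrm{cons}}(\spec V/\mathfrak{p}, \Lambda)^{[-n,n]} \ar[d] \\
\D^b_{\mathrm{cons}}(\spec V_{\mathfrak{p}}, \Lambda)^{[-n,n]} \ar[r] & \D^b_{\mathrm{cons}}(\spec \kappa(\mathfrak{p}), \Lambda)^{[-n,n]}
}\]
is cartesian. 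Using that the functor is finitary and writing $V$ as a filtered colimit of absolutely integrally closed valuation subrings of finite rank (Lemma~\ref{filtcolimitaic}), we may assume $V$ has finite rank $m$; the prime $\mathfrak{p}$ then corresponds to an element $i$ in the totally ordered set $\{0 \to 1 \to \cdots \to m\}$ of primes of $V$.

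Now Proposition~\ref{etalesheafonspec} identifies each corner of the square with the $\infty$-category of contravariant functors from the appropriate sub-chain to $\D^b_{fg}(\Lambda)^{[-n,n]}$, namely with the chains $\{0,\ldots,m\}$, $\{0,\ldots,i\}$, $\{i,\ldots,m\}$, and $\{i\}$ respectively. The restriction functors between these correspond to the evident inclusions of chains, and the square of chains
\[ \xymatrix{ \{i\} \ar[r] \ar[d] & \{i,\ldots,m\} \ar[d] \\ \{0,\ldots,i\} \ar[r] & \{0,\ldots,m\} } \]
is a pushout of posets (equivalently of $\infty$-categories): any functor out of $\{0,\ldots,m\}$ is uniquely determined by its restrictions to the upper and lower half-chains together with the compatibility at the middle object $i$. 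Applying $\mathrm{Fun}(-,\D^b_{fg}(\Lambda)^{[-n,n]})^{op}$ to this pushout therefore produces a pullback square of $\infty$-categories, which is exactly the square required.

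The only potentially subtle point is the verification that totalizations in $\D^b_{fg}(\Lambda)^{[-n,n]}$ and in $\cat_{2n+1}$ are well-behaved enough for this pushout-to-pullback argument to work, but this is built into the fact that these are $(2n+1)$-categories and that we bounded the amplitude at the outset; no new analytic input beyond what the excerpt already developed is required. The main conceptual content is therefore purely the combinatorics of pushouts of totally ordered sets, together with the dictionary provided by Proposition~\ref{etalesheafonspec}.
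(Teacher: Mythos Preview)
Your proof is correct and follows exactly the same approach as the paper: reduce to the truncated version, apply Theorem~\ref{mainthm} to verify aic-$v$-excision, pass to finite rank via Lemma~\ref{filtcolimitaic}, and then use Proposition~\ref{etalesheafonspec} to translate the excision square into a pushout of totally ordered sets. One small slip: in your identification of the corners with chains you have swapped $V/\mathfrak{p}$ and $V_{\mathfrak{p}}$ (since $\spec(V/\mathfrak{p}) = \{i,\ldots,m\}$ and $\spec(V_{\mathfrak{p}}) = \{0,\ldots,i\}$ under the specialization ordering), but this is harmless as the pushout square is symmetric in these two corners.
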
 
\begin{proof} 
As in the proof of \Cref{dbconsvsheaf}, 
it suffices to prove 
the second assertion.
Moreover, the construction $X \mapsto \D^b_{\mathrm{cons}}(X_{\et}, \Lambda)^{[-n, n]}$ is
automatically hypercomplete if it is an $\arc$-sheaf because it takes values in
$\cat_{2n+1}$.

Since 
$X \mapsto \D^b_{\mathrm{cons}}(X_{\et}, \Lambda)^{[-n, n]}$
is already checked to be a $v$-sheaf (Proposition~\ref{dbconsvsheaf}), 
it suffices to verify aic-$v$-excisiveness by \Cref{mainthm}. Let $X =
\spec(V)$ for $V$ an absolutely integrally closed valuation ring and let $\mathfrak{p} \subset V$ be a
prime ideal. We need to show that 
$\D^b_{\mathrm{cons}}( \cdot_{\et}, \Lambda)^{[-n, n]}$ satisfies descent for the cover $\spec(
V/\mathfrak{p}) \sqcup \spec( V_{\mathfrak{p}}) \to \spec( V)$, i.e., that we
have a homotopy cartesian square
\[ \xymatrix{
\D^b_{\mathrm{cons}}( \spec(V)_{\et}, \Lambda)^{[-n, n]} \ar[d]  \ar[r] &  \D^b_{\mathrm{cons}}( \spec(
V/\mathfrak{p})_{\et}, \Lambda)^{[-n, n]} \ar[d]  \\
\D^b_{\mathrm{cons}}( \spec(V_{\mathfrak{p}})_{\et}, \Lambda )^{[-n, n]} \ar[r] &  \D^b_{\mathrm{cons}}( \spec(
\kappa(\mathfrak{p}))_{\et}, \Lambda)^{[-n, n]}
}.\]
Without loss of generality (in light of \Cref{dboundedvsheaf} and
\Cref{filtcolimitaic}), 
we can assume that $V$ has finite rank, so that $\spec(V) = \left\{1, 2,
\dots, m\right\}$ as a poset under specialization. Suppose $\mathfrak{p}$
corresponds to the element $i$; then 
$\spec(V_{\mathfrak{p}}) = \left\{1, 2, \dots, i\right\}$, $\spec(
V/\mathfrak{p}) = \left\{i, i+1, \dots, n\right\}$, and $\spec( \kappa(
\mathfrak{p})) = \left\{i\right\}$. Using the description of \'etale sheaves on
spectra of absolutely integrally closed valuation rings 
in Proposition~\ref{etalesheafonspec}, the result now follows. 
\end{proof} 

\subsection{Differential forms in characteristic zero}
In this subsection, we give an example of an $\arc$-sheaf in characteristic
zero. 
We thank Joseph Ayoub for raising this question. 

Let $k$ be a field of characteristic zero. 
Let $X$ be a finite type $k$-scheme. 
We consider the \emph{$h$-differential forms} of Huber--J\"order \cite{HJ14}, 
$R \Gamma_h(X, \Omega^i_h) \in \D(k)$; by definition, this is the cohomology in
the $h$-topology of the usual presheaf of differential $i$-forms.  
This defines a $\D(k)$-valued functor 
on schemes of finite type over $k$ which, by definition, is a sheaf for the
$h$-topology.
A basic comparison result \cite[Cor.~6.5]{HJ14} yields that 
for $X$ smooth, 
$R \Gamma_h(X, \Omega^i_h) \simeq R \Gamma_{\mathrm{Zar}}(X, \Omega^i)$: that
is, one can equally work in the Zariski topology for smooth schemes. 

Consider the functor
$\sF_i\colon \mathrm{Sch}_{qcqs,k}^{op} \to \D(k)$ given as the left Kan extension of $X \mapsto R
\Gamma_h( X,  \Omega^i_h)$ on finite type $k$-schemes.
Explicitly, given a qcqs $k$-scheme $X$, we write $X$ as a
filtered limit $\varprojlim_{j \in J} X_j$ of finite type $k$-schemes $X_j, j
\in J$ along affine transition maps; then 
$\sF_i(X) = \varinjlim_{j \in J} R \Gamma_h(X_j, \Omega^i_h)$.

\begin{theorem} 
\label{hdifferentialsarearc}
The functor $\sF_i\colon \mathrm{Sch}_{qcqs,k}^{op} \to \D(k)$
is an $\arc$-sheaf. 
\end{theorem}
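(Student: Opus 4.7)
The plan is to invoke Theorem~\ref{mainthm}: it suffices to show that $\sF_i$ is a finitary $v$-sheaf satisfying aic-$v$-excision. (We work in $\D(k)^{\geq 0}$ and apply the criterion degree-by-degree via truncations, so that the cotruncatedness hypothesis of Theorem~\ref{mainthm} is met.)

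The functor $\sF_i$ is finitary by construction as a left Kan extension from finite type $k$-schemes along an inclusion whose relevant pro-objects have affine transition maps. For $v$-descent, recall from Proposition~\ref{Noetherianv} that on noetherian targets the $v$-topology agrees with the $h$-topology, and that $X \mapsto R\Gamma_h(X, \Omega^i_h)$ is tautologically an $h$-sheaf on finite type $k$-schemes. Hence the restriction of $\sF_i$ to finite type $k$-schemes is a $v$-sheaf, and Corollary~\ref{finitaryv} upgrades this to $v$-descent for $\sF_i$ on all qcqs $k$-schemes.

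It remains to verify aic-$v$-excision. Fix an absolutely integrally closed valuation ring $V$ over $k$ and a prime $\mathfrak{p}\subset V$; we must show that
\[
\sF_i(V) \longrightarrow \sF_i(V_{\mathfrak{p}}) \times_{\sF_i(\kappa(\mathfrak{p}))} \sF_i(V/\mathfrak{p})
\]
is an equivalence. Via Lemma~\ref{filtcolimitaic} and the finitariness of $\sF_i$, we reduce to the case where $V$ has finite rank; then all four rings in the square are finite rank aic valuation rings over $k$.

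The main obstacle is this explicit computation. The strategy is to use Huber--J\"order's identification $R\Gamma_h(X, \Omega^i_h) \simeq R\Gamma_{\mathrm{Zar}}(X, \Omega^i)$ for smooth $X$. For each finite rank aic valuation ring $W$ over $k$, one writes $W$ as a filtered colimit of finite type $k$-subalgebras, applies Hironaka's resolution of singularities in characteristic zero to refine each such subalgebra by a smooth $h$-cover, and thereby expresses $\sF_i(W)$ as a filtered colimit of Zariski cohomology groups of K\"ahler differentials on smooth models. The delicate point is to argue that in this limit $\sF_i(W)$ is concentrated in degree $0$ and identifies with a natural module of differentials attached to $W$; granting this, the excision square reduces to an algebraic identity involving differential modules, which can be checked directly using the Milnor square property $V \simeq V_{\mathfrak{p}} \times_{\kappa(\mathfrak{p})} V/\mathfrak{p}$ of Proposition~\ref{AICValRingPrime}.
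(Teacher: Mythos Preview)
Your overall strategy is correct and matches the paper's: invoke Theorem~\ref{mainthm} by checking that $\sF_i$ is a finitary $v$-sheaf satisfying aic-$v$-excision. The finitariness and $v$-descent arguments are fine and essentially identical to the paper's.

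The gap is in the aic-$v$-excision step. You correctly identify that one must compute $\sF_i(W)$ explicitly for an aic valuation ring $W$ over $k$, but you do not carry this out: you label it ``the delicate point'' and then write ``granting this''. Your proposed route---writing $W$ as a colimit of arbitrary finite type subalgebras $A_j$ and resolving each $A_j$ by a smooth $h$-cover---does not obviously converge to anything concrete, since the $A_j$ are not themselves smooth and computing $R\Gamma_h(\spec(A_j), \Omega^i_h)$ via a smooth $h$-hypercover still leaves you with a nontrivial totalization whose colimit over $j$ you would need to control.

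The paper cuts through this with a sharper input: valuation rings over a field of characteristic zero are \emph{ind-smooth} (by Zariski's local uniformization; for aic valuation rings this also follows from de Jong's alterations in any characteristic). Thus one may write $W = \varinjlim B_\alpha$ with each $B_\alpha$ smooth affine over $k$, and the Huber--J\"order comparison gives $\sF_i(W) = \varinjlim \Omega^i_{B_\alpha/k} = \Omega^i_{W/k}$, concentrated in degree $0$. For the excision square itself, two further facts are needed that you did not isolate: first, $\Omega^1_{V/k}$ is a \emph{flat} $V$-module (immediate from ind-smoothness); second, $\mathfrak{p} = \mathfrak{p}^2$ in an aic valuation ring, which via the transitivity triangle forces $\Omega^i_{(V/\mathfrak{p})/k} \simeq \Omega^i_{V/k} \otimes_V (V/\mathfrak{p})$ (and similarly for the other corners). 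The excision square is then literally the Milnor square $V \simeq V_{\mathfrak{p}} \times_{\kappa(\mathfrak{p})} V/\mathfrak{p}$ tensored with the flat $V$-module $\Omega^i_{V/k}$. Your reduction to finite rank becomes unnecessary once ind-smoothness is in hand.
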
 
\begin{proof} 
We first recall that $\sF_i$ is a $v$-sheaf, since we started with an $h$-sheaf
on finite type $k$-schemes (\Cref{finitevtopology}). 
Now we verify the hypotheses of \Cref{mainthm}. 
 If $R$ is an $\mathrm{ind}$-smooth $k$-algebra, then 
$\sF_i(\spec(R))$ is represented by the (usual) 
differential $i$-forms $\Omega^i_{R/k}$, thanks to \cite[Cor.~6.5]{HJ14}. 
In particular, this holds for $R=V$ a valuation $k$-algebra, since valuation rings
are $\mathrm{ind}$-smooth as a consequence of local uniformization
\cite{Zar40} in characteristic zero; indeed, in the needed special case
of absolutely integrally closed
valuation rings, ind-smoothness also follows (in any characteristic) from
alterations \cite{dJ96}.

Let $V$ be an absolutely integrally closed valuation $k$-algebra and let $\mathfrak{p}$ be a prime ideal. 
To verify the homotopy pullback square \eqref{faic}, it suffices to show that
for each $i \geq 0$, one has a homotopy pullback square
\begin{equation}  \label{aicomega}
\begin{aligned}
\xymatrix{
\Omega^i_{V/k} \ar[d]  \ar[r] &  \Omega^i_{(V/\mathfrak{p})/k} \ar[d]  \\
\Omega^i_{V_{\mathfrak{p}}/k} \ar[r] &  \Omega^i_{\kappa(\mathfrak{p})/k}
}. \end{aligned} \end{equation}
For this, compare \cite[Lem.~3.12]{HK18}. 
Since $V/k$ is ind-smooth, $\Omega^1_{V/k}$ is a flat $V$-module. 
Moreover, we have that 
$\Omega^1_{(V/\mathfrak{p})/k} \simeq \Omega^1_{V/k} \otimes_V
(V/\mathfrak{p})$; by the transitivity triangle, this follows since
$\mathfrak{p} = \mathfrak{p}^2$ as $V$ is absolutely integrally closed. 
Therefore, \eqref{aicomega} follows from the pullback square of valuation rings
by tensoring with the flat $V$-module $\Omega^i_{V/k}$. 
\end{proof} 

A variant of this comes from the de Rham complex. 
For a finite type $k$-scheme $X$, let $R\Gamma_h(X, \Omega^\bullet)$ denote the ($h$-sheafified) \emph{algebraic de Rham cohomology} of $X$. 
As before, we left Kan extend this to all qcqs $k$-schemes to obtain a functor 
$\sF\colon \mathrm{Sch}_{qcqs,k}^{op} \to \D(k)$. 

\begin{corollary} 
The construction $X \mapsto \sF(X)$ defines an $\arc$-sheaf. 
\end{corollary}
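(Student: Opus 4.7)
The plan is to apply Theorem~\ref{mainthm} to $\sF$ by bootstrapping from the piece-by-piece result of Theorem~\ref{hdifferentialsarearc}, using the stupid Hodge filtration on the de Rham complex as the main structural tool. Observe first that $h$-hypercohomology of $\Omega^\bullet$ is concentrated in non-negative degrees, so $\sF$ lands in $\D(k)^{\geq 0}$, which is compactly generated by cotruncated objects (so the theorem applies). Moreover, $\sF$ is finitary by construction and is a $v$-sheaf because it is the finitary left Kan extension of an $h$-sheaf on finite type $k$-schemes, with $h$- and $v$-topologies agreeing on finite type schemes.

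For each integer $p \geq 0$, introduce the auxiliary functor $\sG_p\colon \mathrm{Sch}_{qcqs,k}^{op} \to \D(k)^{\geq 0}$ defined as the left Kan extension of $X \mapsto R\Gamma_h(X, \Omega^{\leq p})$ on finite type $k$-schemes, where $\Omega^{\leq p} = (\Omega^0 \to \Omega^1 \to \cdots \to \Omega^p)$ is the stupidly truncated de Rham complex. The short exact sequence of complexes of presheaves
\[ 0 \to \Omega^{\leq p-1} \to \Omega^{\leq p} \to \Omega^p[-p] \to 0 \]
yields, after applying $R\Gamma_h(-)$ and left Kan extending, a fiber sequence $\sG_{p-1} \to \sG_p \to \sF_p[-p]$ of $\D(k)$-valued functors on $\mathrm{Sch}_{qcqs,k}^{op}$. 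Starting from $\sG_0 = \sF_0$ and noting that $\arc$-sheaves valued in a stable $\infty$-category are closed under extensions (i.e., if the outer two terms of a fiber sequence are sheaves, so is the middle), induction on $p$ combined with Theorem~\ref{hdifferentialsarearc} shows each $\sG_p$ is an $\arc$-sheaf. Moreover, $R\Gamma_h(X, \Omega^{\leq p}) \to R\Gamma_h(X, \Omega^\bullet)$ is an equivalence once $p \geq \dim X$, so on finite type schemes (and hence after Kan extending) we have $\sF = \varinjlim_p \sG_p$.

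To conclude, I verify aic-$v$-excision for $\sF$ directly: fix an absolutely integrally closed valuation $k$-algebra $V$ with the structure of $R$-algebra and a prime $\mathfrak{p} \subset V$. Applying each $\sG_p$ to the square \eqref{faic} gives a cartesian square; this follows by induction, using the fiber sequence $\sG_{p-1} \to \sG_p \to \sF_p[-p]$ together with the fact that if in a fiber sequence of functors both outer terms carry a square to a cartesian square, then so does the middle term (a pointwise application of the stability of cartesian squares under fibers in $\D(k)$). Taking the filtered colimit over $p$, the square of $\sF$ evaluations is a filtered colimit of cartesian squares, which is again cartesian as filtered colimits commute with finite limits in the presentable $\infty$-category $\D(k)^{\geq 0}$. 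Hence $\sF$ satisfies aic-$v$-excision, and Theorem~\ref{mainthm} yields $\arc$-descent.

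The main obstacle I expect is the careful tracking of the Hodge filtration compatibility with left Kan extension to qcqs schemes, and justifying that the colimit-over-$p$ step genuinely preserves the cartesian property in $\D(k)^{\geq 0}$; these are technical but not difficult once one fixes the formalism for finitary sheaves with values in a connective derived category. An alternative route would sidestep the colimit issue by directly computing aic-$v$-excision: since every aic valuation $k$-algebra is ind-smooth (by local uniformization \cite{Zar40} in characteristic zero, or by \cite{dJ96} in general), one could extend the proof of Theorem~\ref{hdifferentialsarearc} by replacing $\Omega^i_{V/k}$ with $\Omega^\bullet_{V/k}$ and invoking flatness of each $\Omega^i_{V/k}$ to obtain the pullback square termwise, then assembling.
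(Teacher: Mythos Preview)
Your approach via the Hodge filtration is the same as the paper's one-line argument, and is correct in outline. However, the short exact sequence is written backwards: there is no chain map $\Omega^{\leq p-1} \to \Omega^{\leq p}$ restricting to the identity in degrees $< p$, since commutativity with the de Rham differential $d\colon \Omega^{p-1} \to \Omega^p$ fails. The correct sequence is
\[ 0 \to \Omega^p[-p] \to \Omega^{\leq p} \to \Omega^{\leq p-1} \to 0, \]
giving a tower $\cdots \to \sG_2 \to \sG_1 \to \sG_0$. Likewise, the map $R\Gamma_h(X, \Omega^{\leq p}) \to R\Gamma_h(X, \Omega^\bullet)$ you invoke does not exist; the natural map goes the other way, and one has $\sF \simeq \varprojlim_p \sG_p$ rather than a filtered colimit (either by eventual stabilization on finite type schemes, or by noting that the fiber of $\sF \to \sG_p$ is $(p+1)$-connective). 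Your induction showing each $\sG_p$ is an $\arc$-sheaf survives this correction unchanged, and the conclusion is then simpler than you wrote: inverse limits of $\arc$-sheaves are $\arc$-sheaves (limits commute with the totalizations defining the sheaf condition), so the separate aic-$v$-excision verification and the filtered-colimit argument become unnecessary. The alternative you sketch at the end---rerunning the proof of Theorem~\ref{hdifferentialsarearc} with $\Omega^\bullet_{V/k}$ in place of $\Omega^i_{V/k}$---also works and is perhaps the most direct route.
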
 
\begin{proof} 
This follows from 
the ($h$-sheafified) Hodge filtration and \Cref{hdifferentialsarearc}. 
\end{proof} 

\subsection{Perfect schemes of characteristic $p> 0$}
Using exactly the same strategy used to prove $\arc$-descent in \'etale cohomology, one also obtains $\arc$-hyperdescent for perfect complexes on perfect schemes, extending the analogous $v$-hyperdescent result in \cite[Theorem 11.2 (2)]{BhattScholzeWitt}.

\begin{theorem}[$\arc$-hyperdescent for perfect complexes on perfect schemes]
\label{archyp}
Fix a prime number $p$. Let $\mathcal{F}$ be the functor on
$\mathrm{Sch}_{qcqs,\mathbb{F}_p}$ carrying a qcqs scheme $X$ to the
$\infty$-category $\mathrm{Perf}(X_{\mathrm{perf}})$ of perfect complexes on the perfection $X_{\mathrm{perf}}$ of $X$. Then $\mathcal{F}$ is a hypercomplete $\arc$-sheaf.
\end{theorem}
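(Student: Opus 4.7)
The plan is to adapt the proofs of Theorems~\ref{EtaleCohExcisive} and \ref{arcConsDer}, reducing the $\arc$-descent assertion to aic-$v$-excision via Theorem~\ref{mainthm} and bootstrapping from the $v$-hyperdescent already established in \cite[Theorem~11.2 (2)]{BhattScholzeWitt}. Since $\mathrm{Perf}(-)$ takes values in a large $\infty$-category of small stable $\infty$-categories that is not compactly generated by cotruncated objects, I would first truncate by Tor-amplitude: for each $n \geq 0$, set $\mathcal{F}_n(X) := \mathrm{Perf}(X_{\mathrm{perf}})^{[-n,n]}$, the subcategory of perfect complexes of Tor-amplitude in $[-n,n]$. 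Each $\mathcal{F}_n$ takes values in $\mathrm{Cat}_{2n+1}$ (compactly generated by cotruncated objects), and $\mathcal{F} = \bigcup_n \mathcal{F}_n$ as a filtered colimit that commutes with the relevant totalizations, exactly as in the opening of the proof of Proposition~\ref{dbconsvsheaf}.

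For each $n$, I would verify the three standing hypotheses of Theorem~\ref{mainthm}. Finitariness of $\mathcal{F}_n$ follows because perfection commutes with filtered colimits of $\mathbb{F}_p$-algebras along affine transition maps, perfect complexes behave well under such colimits, and the Tor-amplitude constraint is preserved (cf.\ Proposition~\ref{dboundedvsheaf}). The $v$-sheaf property of $\mathcal{F}_n$ follows from that of $\mathcal{F}$ together with the observation that Tor-amplitude of a perfect complex can be tested after faithfully flat base change. Hypercompleteness of each $\mathcal{F}_n$ is then automatic, since the target is compactly generated by cotruncated objects and sheaves of bounded truncated objects are hypercomplete.

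The core step is aic-$v$-excision for $\mathcal{F}_n$. Fix an absolutely integrally closed valuation ring $V$ over $\mathbb{F}_p$ with a prime $\mathfrak{p} \subset V$. The rings $V$, $V/\mathfrak{p}$, $V_\mathfrak{p}$, $\kappa(\mathfrak{p})$ are again absolutely integrally closed valuation rings, hence perfect, so perfection is the identity on all four. By Proposition~\ref{AICValRingPrime} the corresponding square of rings is a Milnor square, and because $V \to V_\mathfrak{p}$ is flat the relevant pushout agrees with its derived version. Classical Milnor patching for perfect complexes (in its derived form for Milnor squares in which one map is surjective on $\pi_0$, which can for instance be obtained from Milnor patching for module $\infty$-categories by passage to dualizable objects) then yields $\mathrm{Perf}(V) \simeq \mathrm{Perf}(V_\mathfrak{p}) \times_{\mathrm{Perf}(\kappa(\mathfrak{p}))} \mathrm{Perf}(V/\mathfrak{p})$, and the identification clearly restricts to the Tor-amplitude $[-n,n]$ subcategories. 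Applying Theorem~\ref{mainthm} shows each $\mathcal{F}_n$ is a hypercomplete $\arc$-sheaf, and then $\mathcal{F} = \bigcup_n \mathcal{F}_n$ is a hypercomplete $\arc$-sheaf since the totalizations computing descent commute with the filtered colimit in the cotruncated setting. I expect the main obstacle to be the invocation of Milnor patching in the required derived generality; once that is available (or proved by hand from the module-category version), the rest is formal manipulation of the truncation tower, mirroring \S\ref{Etalesec}.
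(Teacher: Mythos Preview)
Your overall strategy matches the paper's exactly: truncate by Tor-amplitude to land in $\mathrm{Cat}_{2n+1}$, invoke \cite[Theorem~11.2~(2)]{BhattScholzeWitt} for $v$-descent of each $\mathcal{F}_n$, and then verify aic-$v$-excision to apply Theorem~\ref{mainthm}.

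The only substantive difference is in how the aic-$v$-excision square is checked. You invoke derived Milnor patching for perfect complexes as a black box (and correctly flag this as the potential obstacle). The paper instead gives a self-contained argument via \emph{descendability}: since the Milnor square is cartesian, $V$ sits as a finite homotopy limit of modules over $B := V_\mathfrak{p} \times V/\mathfrak{p}$, so $V \to B$ is descendable in the sense of \cite[Definition~11.14]{BhattScholzeWitt} or \cite[Definition~3.18]{MathewGalois}. Then \cite[Proposition~3.21]{MathewGalois} gives $\mathrm{Perf}(V) \simeq \varprojlim \mathrm{Perf}(B^\bullet)$ for the $E_\infty$-\v{C}ech nerve $B^\bullet$. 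The crucial remaining point---one you do not make explicit---is that because $V$ and $B$ are \emph{perfect} $\mathbb{F}_p$-algebras, the derived \v{C}ech nerve agrees with the ordinary one by \cite[Lemma~3.16]{BhattScholzeWitt}; unwinding then gives exactly the cartesian square you want. Your flatness observation ($V \to V_\mathfrak{p}$ a localization) handles the same issue for the single tensor product $V_\mathfrak{p} \otimes_V^L V/\mathfrak{p} \simeq \kappa(\mathfrak{p})$, but the descendability route avoids having to locate or reprove a general derived Milnor patching statement. In effect, the paper's argument \emph{is} a proof of the Milnor patching you need, packaged through descendability and the perfectness lemma.
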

\begin{proof}
For each $n$, let 
$\mathcal{F}_n$ denote the functor on $\mathrm{Sch}_{qcqs,\mathbb{F}_p}$ carrying $X$ to 
the subcategory of $\mathrm{Perf}(X_{\mathrm{perf}})$ consisting of objects with
$\mathrm{Tor}$-amplitude in $[-n, n]$. 
Then $\mathcal{F}_n$ takes values in the $\infty$-category 
$\mathrm{Cat}_{2n+1}$
of $(2n+1)$-categories, which is compactly generated by cotruncated objects. We have that
$\mathcal{F} = \bigcup_n \mathcal{F}_n$. 
As in the proof of \Cref{dbconsvsheaf}, it suffices to show that $\mathcal{F}_n$
is an $\arc$-sheaf (whence it is automatically hypercomplete). 

By \cite[Theorem 11.2 (2)]{BhattScholzeWitt} and its proof, we already know that
$\mathcal{F}_n$
is a hypercomplete $v$-sheaf. Also, $\mathcal{F}_n$ is clearly finitary. To show
that $\mathcal{F}_n$ is an $\arc$-sheaf, it suffices to check aic-$v$-excision as in
Theorem~\ref{mainthm} (3). Thus, fix an absolutely integrally closed valuation
ring $V$ of characteristic $p$ with a prime ideal $\mathfrak{p}$. We must check
that applying $\mathcal{F}_n$ to the Milnor square
\begin{equation}  \label{milnorsqperf}
\begin{aligned}
\xymatrix{
V \ar[d] \ar[r] &  V/\mathfrak{p} \ar[d] \\
V_{\mathfrak{p}} \ar[r] &  \kappa(\mathfrak{p})} \end{aligned} \end{equation}
gives a cartesian square of $(2n+1)$-categories; in fact, for this it suffices
to show that 
applying $\mathcal{F}$ to \eqref{milnorsqperf} gives a cartesian square of
$\infty$-categories. Note that all rings appearing
above are perfect. Moreover, the map $V \to B := V_{\mathfrak{p}} \times
V/\mathfrak{p}$ is descendable in the sense of \cite[Definition
11.14]{BhattScholzeWitt} 
or \cite[Definition~3.18]{MathewGalois} 
because the above square is cartesian, expressing $V$ as a
finite homotopy limit of objects in $\D(V)$ which admit the structure of
$B$-modules. By \cite[Theorem 11.15]{BhattScholzeWitt} (which is
\cite[Proposition 3.21]{MathewGalois}), we have $\mathcal{F}(V) \simeq
\varprojlim \mathcal{F}(B^\bullet)$, where $B^\bullet$ is the \v{C}ech nerve of
$V \to B$ in the $\infty$-category of $E_\infty$-rings. In fact, as both $V$ and
$B$ are perfect, the terms of $B^\bullet$ coincides with the \v{C}ech nerve of
$V \to B$ in the category of ordinary commutative rings by \cite[Lemma
3.16]{BhattScholzeWitt}. It is now easy to see, just as in
Proposition~\ref{1implies3}, that the statement $\mathcal{F}(V) \simeq
\varprojlim \mathcal{F}(B^\bullet)$ implies exactly that applying $\mathcal{F}$
(and hence $\mathcal{F}_n$) to the above Milnor square results in a cartesian
square. Therefore, we conclude that $\mathcal{F}_n$ is an
$\arc$-sheaf, whence the result. \end{proof}

Using the previous result, we show that on the category of qcqs perfect $\mathbb{F}_p$-schemes, the $\arc$-covers are precisely the universal effective epimorphisms (i.e., the covers for the canonical topology, see \cite[Tag 00WP]{stacks-project}).

\begin{theorem}[Universally effective epimorphisms of perfect schemes]
\label{PerfRepArcSheaf}
Fix a prime number $p$. Let $Y$ be a qcqs algebraic space over $\mathbb{F}_p$. Consider the functor $\sF$ on  $\mathrm{Sch}_{qcqs,\mathbb{F}_p}$ carrying $X$ to the set of maps $X_{\mathrm{perf}} \to Y$. Then  $\sF$ is an $\arc$-sheaf (of sets). 

Moreover, a map of qcqs perfect $\mathbb{F}_p$-schemes $Y \to X$ is
an $\arc$-cover if and only if it is a universally effective
epimorphism 
in the category of qcqs perfect $\mathbb{F}_p$-schemes. \end{theorem}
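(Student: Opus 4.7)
The proof naturally splits into the two assertions. For the first, the plan is to apply Theorem~\ref{mainthm}. \'Etale descent for qcqs algebraic spaces reduces to the case where $Y$ is affine; writing an arbitrary affine $Y = \spec(A)$ as a cofiltered limit of finitely presented $\spec(A_i)$ gives an identification $\sF \simeq \varprojlim \sF_i$ of presheaves, and since cofiltered limits of $\arc$-sheaves are $\arc$-sheaves, we may further assume $A$ is finitely presented. In this case $\sF$ is finitary and $\sF(X) = \hom_{\mathbb{F}_p}(A, \Gamma(X_{\mathrm{perf}}, \mathcal{O}))$; since $\hom_{\mathbb{F}_p}(A, -)$ preserves limits, it suffices to verify the $v$-sheaf and aic-$v$-excision conditions for the functor $\Phi\colon X \mapsto \Gamma(X_{\mathrm{perf}}, \mathcal{O})$. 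Both are obtained by taking $\pi_0$ of the endomorphism spectrum of the monoidal unit in the $\arc$-sheaf $X \mapsto \mathrm{Perf}(X_{\mathrm{perf}})$ furnished by Theorem~\ref{archyp}, together with the fact that pullback squares of $\infty$-categories induce pullback squares on endomorphism rings of preserved objects.

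For the second assertion, the forward direction follows immediately from part (1): each qcqs perfect scheme $T$ defines an $\arc$-sheaf $\sF_T(X) := \hom(X_{\mathrm{perf}}, T)$, which on perfect $X$ becomes $\hom(X, T)$, so descent for $\sF_T$ along an $\arc$-cover $Y \to X$ of perfect schemes is precisely the effective epimorphism condition for $T$; base-change stability of $\arc$-covers then yields universality.

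For the converse, let $f\colon Y \to X$ be a universal effective epi of perfect qcqs $\mathbb{F}_p$-schemes. Given a rank $\leq 1$ valuation ring $V$ with a map $\spec(V) \to X$, composing with $V \to V_{\mathrm{perf}}$ allows us to assume $V$ is perfect, and base-changing reduces to $X = \spec(V)$; we must produce a rank $\leq 1$ extension $V \to W$ together with a lift $\spec(W) \to Y$. I would first check that $f$ is surjective on points: otherwise, for $x \in X$ missed by $f$, the base change $\emptyset \to \spec(\kappa(x)_{\mathrm{perf}})$ would be an effective epi, which fails against any target with multiple $\kappa(x)_{\mathrm{perf}}$-points. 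When $V$ has rank $1$ with generic point $0$ and closed point $\mathfrak{m}$, the key step is to show that $U := f^{-1}(0)$ is open but \emph{not} closed in $Y$: were $U$ clopen, the decomposition $Y = U \sqcup f^{-1}(\mathfrak{m})$ would yield a morphism $\alpha\colon Y \to \spec(V) \sqcup \spec(V)$ sending the two summands to the two copies of $\spec(V)$ via $f$, which is compatible on $Y \times_X Y$ (two points with the same image in $\spec(V)$ lie in the same clopen summand) but cannot descend along $f$ since connectedness of $\spec(V)$ forces every morphism $\spec(V) \to \spec(V) \sqcup \spec(V)$ to factor through a single copy, contradicting the effective-epi hypothesis against both $U$ and $f^{-1}(\mathfrak{m})$ being nonempty. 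Hence there exists a specialization $\eta \rightsquigarrow y$ in $Y$ with $f(\eta) = 0$ and $f(y) = \mathfrak{m}$; witnessing it by a map $\spec(W') \to Y$ from a valuation ring $W'$ receiving an injective local homomorphism from $V$, and then shrinking $W'$ to a rank $\leq 1$ quotient-localization $W$ exactly as in the proof of Proposition~\ref{visv1}, produces the desired lift $\spec(W) \to Y$. The main obstacle is this clopen-decomposition step: it is the crux of the argument, translating the abstract categorical effective-epi property into the concrete topological existence of a specialization via a cleverly chosen test target.
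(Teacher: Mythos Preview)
Your proof of the converse direction in the second assertion is correct and takes a genuinely different route from the paper. The paper first refines $Y$ by an $\arc$-cover to the spectrum of a product of absolutely integrally closed valuation rings and then argues by direct analysis of the resulting ring maps $V \to W_i$, using $\mathbb{A}^1_{\mathrm{perf}}$ as the test object to derive a contradiction. You instead keep $Y$ arbitrary and use the test object $\spec(V) \sqcup \spec(V)$: if the generic fiber $U = f^{-1}(0)$ were clopen, the map $\alpha$ you construct equalizes on $Y \times_X Y$ (since $U \times_X Z = Z_K = \emptyset$) yet cannot descend by connectedness of $\spec(V)$, violating the effective epimorphism hypothesis. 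This is a clean argument that avoids the refinement step entirely; the remaining passage from a specialization in $Y$ to a rank $\leq 1$ lift via Proposition~\ref{visv1} is exactly right.

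There is, however, a gap in your proof of the first assertion. The sentence ``\'etale descent for qcqs algebraic spaces reduces to the case where $Y$ is affine'' is not justified: \'etale descent concerns the behavior of $\sF$ under covers of the \emph{source} $X$, and provides no mechanism for writing $\sF_Y = \hom((\cdot)_{\mathrm{perf}}, Y)$ for general $Y$ as a limit of the corresponding functors for affine targets. For instance, if $U \to Y$ is an affine \'etale cover with $R = U \times_Y U$, then $\hom(X_{\mathrm{perf}}, Y)$ is certainly not the equalizer of $\hom(X_{\mathrm{perf}}, U) \rightrightarrows \hom(X_{\mathrm{perf}}, R)$, since maps to $Y$ need not lift to $U$. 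The paper handles general $Y$ in one stroke by invoking Tannaka duality, which identifies $\hom(X_{\mathrm{perf}}, Y)$ with the set of symmetric monoidal exact functors $\mathrm{Perf}(Y) \to \mathrm{Perf}(X_{\mathrm{perf}})$; the right-hand side depends on $X$ only through $\mathrm{Perf}(X_{\mathrm{perf}})$, so the $\arc$-sheaf property follows directly from Theorem~\ref{archyp}. Your endomorphism-of-the-unit argument is essentially the special case $Y = \mathbb{A}^1$ of this and correctly handles all affine $Y$ after your subsequent reduction to finitely presented $A$, but it does not extend as stated. One could push your approach through for qcqs \emph{schemes} $Y$ by using Theorem~\ref{rydhthmsheaves} to descend a pulled-back affine open cover of $Y$ along the given $\arc$-cover and then gluing the affine descents---but this is real additional work you have not indicated, and the forward direction of your second assertion depends on it.
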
 

Note that the functor $\sF$ is finitary if $Y$ is in addition of finite type over $\mathbb{F}_p$, and the proof shows that it satisfies excision even if $Y$ is only assumed qcqs (one can reduce to the finite type case via noetherian approximation \cite{RNoeth}). 

\begin{proof} 
\newcommand{\perf}{\mathrm{Perf}}
\newcommand{\fun}{\mathrm{Fun}}
For the sheafiness of $\sF$, we use the Tannaka duality in the form of \cite[Theorem 1.5]{B}. Given $X$, we let $\perf(X_{\mathrm{perf}})$ denote the symmetric monoidal $\infty$-category of perfect complexes on $X_{\mathrm{perf}}$ as before. Then  we have  
\[ \hom( X_{\mathrm{perf}}, Y) \simeq \fun^{\otimes}_{\mathrm{ex}}(\perf(Y), \perf(X_{\mathrm{perf}})),  \]
where the target denotes symmetric monoidal exact functors.  
Recall now that homotopy limits of symmetric monoidal, stable $\infty$-categories
are computed at the level of underlying $\infty$-categories. 
Since we have just
seen that $X \mapsto \perf(X_{\mathrm{perf}})$ satisfies $\arc$-descent
(Theorem~\ref{archyp}),  the result now follows.

The previous paragraph shows that $\arc$-covers of perfect qcqs
$\mathbb{F}_p$-schemes are universally effective epimorphisms 
in the category of perfect qcqs $\mathbb{F}_p$-schemes. For the converse,
it therefore suffices to show that if $X \to Y$ is a map of perfect qcqs schemes
that is a universally effective epimorphism, then it is also a cover for the
$\arc$-topology. As the property of being a universally effective epimorphism is
local, we may assume $Y := \mathrm{Spec}(V)$ is a rank $\leq 1$ valuation ring.
Moreover, as $\arc$-covers are universally effective epimorphisms, we may 
refine $X$ by an $\arc$-cover to assume $X$ has the form $\mathrm{Spec}(R)$
where $R := \prod_{i \in I} W_i$  is a product of  valuation rings
$W_i$ (cf.~\Cref{vcoverbyvaluation}). 
There are two cases to consider. 

Assume first that $V$ has rank $0$, so $V$ is a field. We must show that $X \neq
\emptyset$ or equivalently that $R \neq 0$. The sheaf property of
$\mathrm{Hom}(-, \mathbb{A}^1_{\mathrm{perf}})$ with respect to the map $V \to
R$ (assumed to be a universally effective epimorphism of qcqs perfect
$\mathbb{F}_p$-schemes) shows that $V$ is the equalizer of the two maps $R \to R \otimes_V R$, and so $R \neq 0$ since $V \neq 0$.

Assume now that $V$ has rank $1$. The previous paragraph shows that $I \neq
\emptyset$.  If one of the induced maps $V \to W_i$ is an injective local
homomorphism, then it is also faithfully flat, so we are done. Assume towards
contradiction then that each map $V \to W_i$ is either non-injective or
non-local. Any such map must factor over either the residue field $k$ (if
non-injective) or the fraction field $K$ (if non-local) of $V$. But then each map $V \to W_i$ factors over $V \to R' := K \times k$, and hence the same holds for $V \to R = \prod_i W_i$. In particular, $\spec(R') \to \spec(V)$ is also a canonical cover. But one easily checks that $R' \otimes_V R' \simeq R'$ via the multiplication map in all cases. Applying the sheaf axiom for the sheaf $\mathrm{Hom}(-, \mathbb{A}^1_{\mathrm{perf}})$ then shows that $V \simeq R'$, which is absurd since $R'$ is a product of fields while $V$ is a rank $1$ valuation ring.
\end{proof}

\begin{remark}
The canonical topology on qcqs perfect $\mathbb{F}_p$-schemes is \emph{not} quasi-compact. That is, a covering family in the canonical topology does not need to admit a  finite refinement.  Indeed, it is shown in \cite[Tag 0EUE]{stacks-project} that $\spec( \mathbb{Z})$ is not quasi-compact for the canonical topology on the category of all qcqs schemes. This example can be adapted to the setting of perfect $\mathbb{F}_p$-schemes by replacing $\spec(\mathbb{Z})$ with $\mathbb{A}^1_{\mathrm{perf}}$.
\end{remark}

\newpage
\section{Consequences of $\arc$-descent}

\subsection{Formal glueing squares}
Next, we prove that any functor satisfying $\arc$-descent also satisfies a
``formal glueing square.''
Recall the following assertion: if $A$ is a noetherian ring and  $t \in A$,
then we can form the {square} 
\begin{equation} \label{arithsquare}  \begin{aligned}\xymatrix{
A \ar[d]  \ar[r] & \hat{A}_t \ar[d]  \\
A[1/t] \ar[r] &  \hat{A}_t[1/t]
}, \end{aligned} \end{equation}
which is a pullback square; here $\hat{A}_t$ denotes the $t$-adic completion of
$A$. 
Given a functor on rings, one can ask whether it carries 
\eqref{arithsquare} to 
 a homotopy pullback square. 

\begin{example} 

A basic example is that nonconnective $K$-theory $\mathbb{K}$ does; that is, the
square
\[ \xymatrix{
\mathbb{K}(A) \ar[d]  \ar[r] &  \mathbb{K}( \hat{A}_t) \ar[d]  \\
\mathbb{K}(A[1/t]) \ar[r] &  \mathbb{K}( \hat{A}_t[1/t])
},\]
is homotopy cartesian. 
This follows from the theory of localization sequences: the fibers of the horizontal maps are given by the $\mathbb{K}$-theory of perfect
$t$-power torsion $A$-modules (resp.~$t$-power torsion $\hat{A}_t$)-modules,
cf.~\cite[Theorem 5.1]{TT}, and these categories are clearly equivalent. 
More generally one can formulate a statement 
for finitely generated ideals. 

\end{example} 
Here we prove an analogous result for finitary $\arc$-sheaves. First we need
some preliminaries. 

\begin{proposition} 
\label{completevaluationring}
Let $V$ be a rank $1$ valuation ring, and let $t$ be a pseudouniformizer. Then the
$t$-adic completion $\hat{V}_t$ is a rank $1$ valuation ring, and the map $V \to
\hat{V}_t$ is faithfully flat. 
\end{proposition}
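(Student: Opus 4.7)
The plan is to extend the rank $1$ valuation $v \cl V \to \mathbb{R}_{\geq 0} \cup \{\infty\}$ to $\hat V_t$ explicitly, and use this to deduce both assertions. Since $t$ is a pseudouniformizer, $v(t) > 0$; hence for any nonzero $a \in V$ we have $v(a) < \infty$ while $v(t^n) = n v(t) \to \infty$, so $\bigcap_n t^n V = 0$ and the natural map $V \to \hat V_t$ is injective.

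Every $\hat a \in \hat V_t$ is represented by a sequence $(\tilde a_n) \in V^{\mathbb{N}}$ with $\tilde a_{n+1} - \tilde a_n \in t^n V$. If $\hat a \neq 0$, pick $N$ with $\tilde a_N \notin t^N V$, i.e., $v(\tilde a_N) < N v(t)$; the ultrametric inequality then forces $v(\tilde a_n) = v(\tilde a_N)$ for all $n \geq N$. Setting $\hat v(\hat a) := v(\tilde a_N)$ (and $\hat v(0) = \infty$) defines a valuation on $\hat V_t$ that extends $v$ and has the same value group $\Gamma := v(V \setminus \{0\})$. In particular $\hat V_t$ is a domain, and $\hat v$ extends uniquely to $\mathrm{Frac}(\hat V_t)$; its valuation ring has rank $\leq 1$.

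The main step is to verify that $\hat V_t$ itself already coincides with this valuation ring. Given $\hat a, \hat b \in \hat V_t$ with $\hat b \neq 0$ and $\hat v(\hat a) \geq \hat v(\hat b)$, I would construct $\hat c \in \hat V_t$ with $\hat a = \hat b \hat c$. For $n$ sufficiently large, $v(\tilde a_n) = \hat v(\hat a) \geq \hat v(\hat b) = v(\tilde b_n)$, so $\tilde b_n \mid \tilde a_n$ in the valuation ring $V$; set $\tilde c_n := \tilde a_n / \tilde b_n \in V$. Writing $\tilde a_{n+1} = \tilde a_n + t^n \alpha_n$ and $\tilde b_{n+1} = \tilde b_n + t^n \beta_n$, a direct computation gives $\tilde c_{n+1} - \tilde c_n = t^n(\alpha_n \tilde b_n - \tilde a_n \beta_n)/(\tilde b_n \tilde b_{n+1})$, whose valuation is bounded below by $n v(t) - \hat v(\hat b) \to \infty$. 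Hence $(\tilde c_n)$ is $t$-adically Cauchy in $V$ and converges to the required $\hat c$. This explicit division in the completion is the main obstacle.

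Faithful flatness then follows formally. The ring $\hat V_t$ is a domain containing $V$, hence torsion-free as a $V$-module; since $V$ is a valuation ring, every torsion-free $V$-module is flat. The map $V \to \hat V_t$ is injective, and local: since $\hat v$ restricts to $v$ on $V$, every nonunit of $V$ becomes a nonunit of the valuation (hence local) ring $\hat V_t$. A flat local homomorphism of local rings is faithfully flat, completing the proof.
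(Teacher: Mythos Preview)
Your proof is correct and complete. The approach, however, differs from the paper's. The paper argues top-down: it passes to the fraction field $K$, observes that the rank~$1$ valuation gives a nonarchimedean absolute value $|\cdot|$ on $K$, forms the metric completion $\hat K$, and then identifies $\hat V_t$ with the valuation ring $\{x \in \hat K : |x| \leq 1\}$. Faithful flatness then follows because $V \to \hat V_t$ is an injective local homomorphism of valuation rings. Your argument is bottom-up: you never leave $\hat V_t$ itself, instead extending $v$ to $\hat v$ by hand on Cauchy sequences and then proving the key divisibility statement (if $\hat v(\hat a) \geq \hat v(\hat b)$ then $\hat b \mid \hat a$ in $\hat V_t$) by an explicit computation with representatives. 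The paper's route is shorter because it outsources the work to the classical theory of completions of valued fields, while yours is more self-contained and makes the one nontrivial point---that the $t$-adic completion is already the full valuation ring, not a proper subring---completely explicit. One small remark: you conclude the valuation ring of $\hat v$ has rank $\leq 1$; since $t$ remains a nonzero nonunit in $\hat V_t$, the rank is exactly~$1$, as required.
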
 
\begin{proof} 
In fact, if $K$ is the fraction field of $V$, then the rank $1$ valuation on $V$
defines a nonarchimedean absolute value $|\cdot | \colon  K \to \mathbb{R}_{\geq 0}$. The completion
$\hat{K}$ of $K$ with respect to the absolute value $|\cdot|$ also admits a
canonical
extension of the 
absolute value (denoted by $|\cdot|$ again). One checks that $\hat{V}_t$ is
precisely the rank $1$ valuation ring $\left\{x \in \hat{K}: |x| \leq 1\right\}
$ and that $V \to \hat{V}_t$ is an extension of valuation rings, hence
faithfully flat. 
\end{proof} 

Note in particular that if $V$ is a rank $1$ valuation ring, then the condition
that $V$ should be $t$-adically complete does not depend on the non-unit $t \neq
0$. 

\begin{proposition} 
\label{completeisarccover}
Let $R$ be a ring and let $I \subset R$ be a finitely generated ideal. Then
$\spec ( \hat{R}_I) \sqcup \spec(R) \setminus V(I) \to \spec(R)$ is an
$\arc$-cover. 
\end{proposition}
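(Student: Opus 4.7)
The plan is to verify the $\arc$-cover property directly from \Cref{DefArcTop}. Given a map $\phi \cl R \to V$ with $V$ a rank $\leq 1$ valuation ring, I need to produce an extension $V \to W$ of rank $\leq 1$ valuation rings such that the composite $R \to W$ factors through one of the two pieces $\spec(\hat R_I)$ or $\spec(R)\setminus V(I)$ of the purported cover.

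I would split into two cases according to whether $\phi(I) V = V$ or $\phi(I) \subseteq \mathfrak{m}_V$. In the first case some generator $t \in I$ satisfies $\phi(t) \in V^\times$, and then $\phi$ already factors through the affine open $\spec(R[1/t]) \subset \spec(R) \setminus V(I)$, so I may take $W = V$. In the second case I would take $W := \hat V$ to be the completion of $V$, which by \Cref{completevaluationring} is again a rank $\leq 1$ valuation ring with $V \to W$ an extension of valuation rings; the task is then to show that the composite $R \to V \to W$ factors through $\hat R_I$.

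To handle this factorization I exploit the finite generation of $I$: the ideal $\phi(I) W$ is a finitely generated ideal in the valuation ring $W$, hence principal, say $\phi(I) W = a W$ for some $a \in \mathfrak{m}_W$ (possibly $a = 0$). Consequently the $I$-adic topology on $W$, viewed as an $R$-module via $\phi$, coincides with the $a$-adic topology, with respect to which $W$ is complete, either trivially if $a = 0$ or by the completeness of $\hat V$ from \Cref{completevaluationring}. The universal property of the $I$-adic completion then yields the desired factorization $R \to \hat R_I \to W$. The main obstacle in the argument is precisely this last step; the essential inputs are the finite generation of $I$ (which forces $\phi(I) W$ to be principal in the valuation ring $W$) together with the fact that completions of rank $\leq 1$ valuation rings remain rank $\leq 1$ valuation rings.
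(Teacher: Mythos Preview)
Your proof is correct and follows essentially the same approach as the paper's. The only cosmetic difference is the order of operations: the paper first replaces $V$ by its completion (using the freedom to extend valuation rings in the $\arc$-cover definition) and then observes that every map from the completed $V$ factors through the cover without further extension, whereas you perform the case split first and only pass to the completion $W=\hat V$ in the non-unit case. One small imprecision: \Cref{completevaluationring} is stated for rank $1$ valuation rings, so when $V$ is a field you should note separately that $\phi(I)=0$ forces the factorization through $\hat R_I$ with $W=V$ (your parenthetical ``possibly $a=0$'' already covers this, but the citation of \Cref{completevaluationring} does not).
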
 
\begin{proof} 
Let $V$ be a rank $\leq 1$ valuation ring. By Proposition~\ref{completevaluationring}, we may assume
that $V$ is complete with respect to any element in the maximal ideal
$\mathfrak{m}_V \subset V$. 
We then make a stronger claim: any map $\spec(V) \to \spec(R)$ factors through
the above cover. 

Consider a map $f\colon  R \to V$. If $f$ carries $I$ into the maximal ideal of $V$,
then $f(I) \subset tV$ for some pseudouniformizer $t \in \mathfrak{m}_V$ as $I$ is finitely generated. Clearly 
then $f$ factors over $\hat{R}_I$ since $V$ is $t$-adically complete. 
Conversely, if $f(I) $ generates the unit ideal in $V$, then $\spec(V) \to
\spec(R)$ factors through the open locus $\spec(R) \setminus V(I)$ and we are
done in this case too. 
\end{proof} 

In the following, we use the notion of a formal glueing square
(\Cref{DefFormalGlueing}). 
If $A$ is a ring (not
necessarily noetherian) and $I
\subset A$ is a finitely generated ideal, then the map $A \to \hat{A}_I$ induces
an equivalence modulo $I^n$ for all $n \geq 0$ (\cite[Tag 00M9]{stacks-project}).
In particular, the map $(A \to
\hat{A}_I, I)$ gives a formal glueing square. 

For the next result, we fix
a base ring $R$. 
Compare also 
Corollary~\ref{FormalGlueArcSheafGeneral} for a proof of the result with fewer
assumptions via $\arc$-sheafification. 

\begin{theorem}[Formal glueing squares for $\arc$-sheaves]
\label{arcarithm}
Let $\mathcal{C}$ be an  $\infty$-category that is compactly generated by cotruncated objects.
Let $\sF\colon  \mathrm{Sch}_{qcqs,R}^{op} \to \mathcal{C}$ be a finitary
$\arc$-sheaf. Then $\sF$ satisfies formal glueing, i.e., if  $(A \to B, I)$ is a
formal glueing datum of $R$-algebras in the sense of Theorem~\ref{FormalGlueing},   then  the natural square
\begin{equation}  \label{hcart4} \begin{aligned} \xymatrix{
\sF(\spec (A)) \ar[d]  \ar[r] &  \sF(\spec( B) ) \ar[d]  \\
\sF( \spec (A) \setminus V(I)) \ar[r] &  \sF( \spec( B) \setminus V(IB))
} \end{aligned} \end{equation}
is cartesian.
\end{theorem}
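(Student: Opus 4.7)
The plan is to deduce Theorem~\ref{arcarithm} by showing that the formal glueing square of schemes becomes a pushout square in the $\infty$-topos of $\arc$-sheaves of spaces; the cartesian property for $\sF$ then follows formally from the universal property of pushouts together with the $\arc$-sheaf hypothesis.

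The first step is to verify that $p\colon \spec(B) \sqcup U \to \spec(A)$ is an $\arc$-cover, where $U := \spec(A) \setminus V(I)$. This extends Proposition~\ref{completeisarccover}: given a map $f\colon A \to V$ with $V$ a rank $\leq 1$ valuation ring, we may first replace $V$ by its $t$-adic completion along any pseudouniformizer $t$, which remains a rank $\leq 1$ valuation ring by Proposition~\ref{completevaluationring}. If $f(I)$ generates the unit ideal of $V$, then $f$ factors through $U$; otherwise $f(I) \subset \mathfrak{m}_V$, and the completeness of $V$ yields a unique extension $\hat{A}_I \to V$, which via the formal glueing isomorphism $\hat{A}_I \simeq \hat{B}_{IB}$ produces a factorization through the composition $B \to \hat{B}_{IB} \to V$.

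Next, following the strategy of Proposition~\ref{ExcSquarePushout}, fix an uncountable strong limit cardinal $\kappa$ large enough that $A$, $B$, and all auxiliary rings lie in $\mathrm{Sch}_{qcqs,R,<\kappa}$. Let $Q$ denote the pushout of $\spec(B) \leftarrow \spec(B[1/I]) \to U$ in presheaves of sets; because $\spec(B[1/I]) \to U$ is a monomorphism, this is simultaneously a pushout in presheaves of spaces. The natural map $\eta\colon Q \to \spec(A)$ is the object of study. By Lemma~\ref{ArcPresheafSheafCrit}, to show $\eta$ becomes an isomorphism after $\arc$-sheafification, it suffices to check that $\eta(V)\colon Q(V) \to \spec(A)(V)$ is bijective after a rank $\leq 1$ extension $V \to W$ of size $<\kappa$, for every rank $\leq 1$ valuation ring $V$ of size $<\kappa$. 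Surjectivity is the conclusion of Step~1. For injectivity, after enlarging $V$ to be $I$-adically complete, any two lifts $B \rightrightarrows V$ of a common $f\colon A \to V$ with $f(I) \subset \mathfrak{m}_V$ must coincide, since both factor uniquely through $B \to \hat{B}_{IB} \simeq \hat{A}_I \to V$; the mixed cases (one lift through $B$, the other through $U$) reduce to the uniqueness of extension to localizations once $f(I)$ is shown to generate the unit ideal.

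Given the pushout property, applying any $\arc$-sheaf $\sF$ converts the pushout into a pullback, yielding the desired cartesian square \eqref{hcart4}, as any $\mathcal{C}$-valued $\arc$-sheaf sends colimits in the $\arc$-topos to limits in $\mathcal{C}$ by the Yoneda lemma. The main technical obstacle is the injectivity check in the pushout argument: ruling out spurious distinct lifts $B \rightrightarrows V$ of a common $f\colon A \to V$ is only possible after passing to an $I$-adically complete extension of $V$, at which point the universal property of completion (mediated by the formal glueing isomorphism $\hat{A}_I \simeq \hat{B}_{IB}$) forces the two lifts to agree. This approach moreover yields the more general statement without the finitary hypothesis on $\sF$, as will be recorded in Corollary~\ref{FormalGlueArcSheafGeneral}.
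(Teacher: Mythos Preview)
Your argument is correct, but it is not the paper's proof of this particular theorem. The paper proves Theorem~\ref{arcarithm} by a direct case analysis: first for $A \to B$ surjective (reducing via Corollary~\ref{detectequivfinitaryarc} to $A$ a rank $\leq 1$ valuation ring, where the surjection is forced to be an isomorphism unless $I = A$), then for $A \to B$ admitting a section (using the surjective case on the section and a two-out-of-three argument), and finally in general by using Corollary~\ref{completevariantrnk1} to localize to a \emph{complete} rank $\leq 1$ valuation ring, where a section exists. This proof genuinely uses the finitary hypothesis and the assumption that $\mathcal{C}$ is compactly generated by cotruncated objects.

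Your approach instead follows the sheafification argument that the paper carries out separately as Theorem~\ref{FormalGlueArcSheaf}, proving that the formal glueing square becomes a pushout in the $\arc$-topos and then deducing the result for any $\mathcal{C}$-valued $\arc$-sheaf by Yoneda. As you note, this buys you the stronger conclusion of Corollary~\ref{FormalGlueArcSheafGeneral} with no hypotheses on $\mathcal{C}$ or finitariness of $\sF$; the paper's direct proof of Theorem~\ref{arcarithm}, by contrast, stays entirely within the finitary framework and avoids the machinery of \S\ref{ss:ExcSheaf}.

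Two small points deserve tightening. First, the map $\spec(B) \setminus V(IB) \to U$ is \emph{not} a monomorphism in general (e.g.\ already for $B = \hat{A}_I$); rather, it is the open immersion $\spec(B) \setminus V(IB) \hookrightarrow \spec(B)$ that is monic, and this is what makes the presheaf pushout $Q$ discrete. Second, to invoke Lemma~\ref{ArcPresheafSheafCrit} you must know that $Q^\sharp$ is coherent in the $\arc$-topos; this is exactly Lemma~\ref{PushoutCritCoh}, applied using that same open immersion as the injective leg, and should be cited.
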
 
\begin{proof} 
The argument follows a familiar pattern, cf.~the proof of
Proposition~\ref{arcsheafisexcisive}. 
We first consider the case where $A \to B$ is surjective (this may happen
in a non-noetherian case, e.g., $\mathbb{Z}_p \otimes_{\mathbb{Z}} \mathbb{Z}_p
\to \mathbb{Z}_p$ for $I = (p)$). 
Then assertion is local on $A$,  
so we may assume that $A$ is a rank $\leq 1$ valuation ring
(\Cref{detectequivfinitaryarc}). 
If $I = A$, then the assertion that \eqref{hcart4} is homotopy cartesian is trivial. If $I$ is contained in the maximal
ideal of $A$, then (since $I$ is finitely generated) the only way $A
\twoheadrightarrow B$ can induce an isomorphism $A/I^n A \simeq B/I^n B$ is for
$A = B$. In this case, too, \eqref{hcart4} is evidently homotopy cartesian. 

Next, suppose $A \to B$ admits a section $s\colon  B \to A$. 
In this case, 
$s\colon  B \to A$ also induces an equivalence modulo each power of $IB \subset B$, so
when we contemplate the diagram
$$
\xymatrix{
\sF(\spec (A)) \ar[d]  \ar[r] &  \sF(\spec( B) ) \ar[d]  
\ar[r]^{s^*} & \sF( \spec (A))  \ar[d] 
\\
\sF( \spec (A) \setminus V(I)) \ar[r] &  \sF( \spec( B) \setminus V(IB))
\ar[r]^{s^*} &
\sF( \spec(A) \setminus V(I)),
} $$
we get that the rightmost square is homotopy cartesian by the previous
paragraph. This also implies that the leftmost square is homotopy cartesian by
a 2-out-of-3. 

Now we consider the general case. 
The assertion is local on 
$A$, so we may assume that $A$ is 
a rank $\leq 1$ valuation ring which is complete, cf.~\Cref{completevariantrnk1}.
Then either $A$ is $I$-adically complete or $I$ is the unit ideal, since $I$ is
finitely generated. 
If $I$ is the unit ideal, then the above square \eqref{hcart4} is trivially homotopy cartesian. 
Suppose then that $A$ is $I$-adically complete, so then $A \to B$ admits a
section. In this case we are also done by the previous paragraph. 
\end{proof} 

\begin{example}[Formal glueing in \'etale cohomology] 
\label{arithetalesquare}
If $I$ is generated by a single element, then Theorem~\ref{arcarithm} shows that \'etale cohomology
(or any $\arc$-sheaf) admits ``formal glueing squares,'' i.e., carries diagrams of
shape \eqref{arithsquare} to homotopy pullbacks. 
\end{example}

Let us also sketch an essentially equivalent proof that is formulated slightly
differently and  applies to all $\arc$-sheaves (but uses $\arc$-sheafification);
this is analogous to the alternative proof of
Proposition~\ref{arcsheafisexcisive} given in
Proposition~\ref{ExcSquarePushout}. 
In particular, we can prove the formal glueing 
property of an $\arc$-sheaf without the assumption 
that that $\mathcal{F}$ should be finitary or take values in a $\mathcal{C}$
compactly generated by cotruncated objects. 
For the rest of this section, fix an
uncountable strong limit cardinal $\kappa$ and a base ring $R$ of cardinality
$< \kappa$. Consider the category $\mathrm{Sch}_{qcqs,R, <\kappa}$ equipped with the $\arc$-topology, as in Definition~\ref{ArcCovKappa}.

\begin{theorem}[Formal glueing via $\arc$-sheafification]
\label{FormalGlueArcSheaf}
For any formal glueing datum of $R$-algebras $(A \to B,I)$ as in Theorem~\ref{FormalGlueing} where $A$ and $B$ have size $< \kappa$, consider the square 
\[ \xymatrix{ \mathrm{Spec}(B) \setminus V(IB) \ar[r] \ar[d] & \mathrm{Spec}(B)
\ar[d] \\ \mathrm{Spec}(A) \setminus V(I) \ar[r] & \mathrm{Spec}(A). }\]
The square of $\arc$-sheaves obtained by applying $\arc$-sheafification to this
square is a pushout square in both the ordinary topos
$\mathrm{Shv}_{\arc}(\mathrm{Sch}_{qcqs,R, <\kappa})$ of $\arc$-sheaves of sets
as well as the $\infty$-topos
$\mathrm{Shv}_{\arc,\mathcal{S}}(\mathrm{Sch}_{qcqs,R, <\kappa})$ of $\arc$-sheaves of spaces.
\end{theorem}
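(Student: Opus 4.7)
The plan is to mimic the strategy of Proposition~\ref{ExcSquarePushout}, replacing excision data with formal glueing data and the cover from Lemma~\ref{excisionimpliesvleq1} with the cover from Proposition~\ref{completeisarccover}. Let $Q$ denote the pushout of $\mathrm{Spec}(B) \gets \mathrm{Spec}(B) \setminus V(IB) \to \mathrm{Spec}(A) \setminus V(I)$ in the $\infty$-category of presheaves of spaces on $\mathrm{Sch}_{qcqs,R,<\kappa}$. Because $\mathrm{Spec}(B) \setminus V(IB) \hookrightarrow \mathrm{Spec}(B)$ is a monomorphism of presheaves of sets, the same holds for its (co)base change, and the pushout $Q$ is therefore discrete and computed at the level of presheaves of sets. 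There is a natural map $\eta \colon Q \to \mathrm{Spec}(A)$; by Lemma~\ref{PushoutCritCoh}, the sheafification $Q^\sharp$ is a coherent $\arc$-sheaf, so it suffices to show that $\eta$ becomes an isomorphism after $\arc$-sheafification. Once this is established, the same discrete object also represents the pushout in $\arc$-sheaves of spaces, giving both claims.

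To control the $\arc$-sheafification of $\eta$, I will apply Lemma~\ref{ArcPresheafSheafCrit} (equivalently Lemma~\ref{CohArcSheafIsomCrit}), so the task reduces to showing: for every rank $\leq 1$ valuation ring $V$ of size $< \kappa$ with an $R$-algebra structure, there is an extension $V \to W$ of rank $\leq 1$ valuation rings of size $< \kappa$ such that $\eta(W)\colon Q(W) \to \mathrm{Spec}(A)(W)$ is a bijection. Using Proposition~\ref{completevaluationring}, I may enlarge $V$ to assume it is complete with respect to every pseudouniformizer; note that the completion has cardinality at most $|V|^{\aleph_0} < \kappa$ since $\kappa$ is a strong limit cardinal. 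The hypothesis $A/I^n \simeq B/I^n B$ yields $\hat{A}_I \simeq \hat{B}_{IB}$, which will be the main algebraic input.

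For surjectivity, given a map $y^*\colon A \to V$ with $V$ a complete rank $\leq 1$ valuation ring, Proposition~\ref{completeisarccover} (after possibly extending $V$) yields a factorization either through $\mathrm{Spec}(A) \setminus V(I)$ or through $\mathrm{Spec}(\hat{A}_I)$. In the first case, this directly provides a lift to $Q$. In the second case, I use the identification $\hat{A}_I \simeq \hat{B}_{IB}$ and compose $B \to \hat{B}_{IB} \to V$ to obtain a map $\mathrm{Spec}(V) \to \mathrm{Spec}(B)$ compatible with $y^*$, hence a lift to $Q$. For injectivity, given two elements $x_1, x_2 \in Q(V)$ mapping to the same $y \in \mathrm{Spec}(A)(V)$, I split into cases according to whether $y^*(I) \cdot V = V$ or $y^*(I) \subset \mathfrak{m}_V$. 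If $y^*(I)$ generates the unit ideal, then any representative of $x_i$ coming from $\mathrm{Spec}(B)(V)$ must land in $(\mathrm{Spec}(B) \setminus V(IB))(V)$ and is therefore identified in $Q(V)$ with the element coming from $(\mathrm{Spec}(A) \setminus V(I))(V)$, so $x_1 = x_2 = y$. If instead $y^*(I) \subset \mathfrak{m}_V$, then neither $x_i$ can come from $\mathrm{Spec}(A) \setminus V(I)$, so both are represented by maps $b_i\colon B \to V$ compatible with $y^*$; since $b_i(IB) \subset \mathfrak{m}_V$ and $V$ is complete, each $b_i$ factors uniquely through $B \to \hat{B}_{IB} \simeq \hat{A}_I$, and this factorization is determined by $y^*$, forcing $b_1 = b_2$.

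The main subtlety, and the place where this proof genuinely differs from that of Proposition~\ref{ExcSquarePushout}, is the injectivity argument in the case $y^*(I) \not\subset \mathfrak{m}_V$: unlike the excision situation we do \emph{not} have $A[\tfrac{1}{t}] \simeq B[\tfrac{1}{t}]$ for $t \in I$, so one cannot simply invert an element of $I$ to force two maps from $B$ to agree. The fix is precisely to push the uniqueness into a locus where it can be controlled via the quotient equivalence in the pushout $Q$, as outlined above. Once both the surjectivity and injectivity of $\eta$ on complete rank $\leq 1$ valuation rings of size $<\kappa$ are verified, Lemma~\ref{ArcPresheafSheafCrit} and Lemma~\ref{CohArcSheafIsomCrit}~(2) give the required isomorphism of $\arc$-sheaves of sets, and hence of $\arc$-sheaves of spaces since $Q$ is discrete.
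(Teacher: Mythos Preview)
Your proof is correct and follows essentially the same approach as the paper: you define the presheaf pushout $Q$, observe it is discrete with coherent $\arc$-sheafification, and then verify bijectivity of $\eta(V)$ on complete rank $\leq 1$ valuation rings by splitting into the cases $y^*(I)V = V$ (handled via the identification of $Q$ with $\mathrm{Spec}(A)$ over $\mathrm{Spec}(A)\setminus V(I)$) and $y^*(I) \subset \mathfrak{m}_V$ (handled via $\hat{A}_I \simeq \hat{B}_{IB}$ and completeness of $V$). The paper compresses your first injectivity case into the one-line remark that $\eta$ is an isomorphism after pullback to $\mathrm{Spec}(A)\setminus V(I)$, but the content is the same.
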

\begin{proof}
We adopt the notation and set-theoretic conventions from \S \ref{ss:ExcSheaf}.  Before proceeding further, let us remark that if a ring has size $< \kappa$, then so does its completion with respect to any ideal, so the category of rings under consideration is closed under this operation.
 
Let $Q$ denote the pushout of $\spec(A) \setminus V(I) \gets \spec(B) \setminus
V(IB) \to \spec(B)$ in the $\infty$-category of presheaves of spaces. As in the
proof of Proposition~\ref{ExcSquarePushout}, $Q$ is discrete (i.e., equivalent
to a presheaf of sets) and its $\arc$-sheafification is coherent. There is an
evident map $\eta\colon Q \to \spec(A)$. Arguing as in
Proposition~\ref{ExcSquarePushout}, it is enough to prove that $\eta(V)\colon
Q(V) \to \spec(A)(V)$ is bijective for all complete rank $\leq 1$ valuation
rings $V$ of size $< \kappa$ with the structure of $R$-algebra. The surjectivity
of $\eta(V)$ follows from Proposition~\ref{completeisarccover}. For injectivity,
suppose we have two points $x_1,x_2 \in Q(V)$ giving the same point  $y \in \spec(A)(V)$. As $\spec(B) \setminus V(IB) \subset \spec(B)$ is the preimage of $\spec(A) \setminus V(I) \subset \spec(A)$, it formally follows that $\eta$ is an isomorphism after pullback to $\spec(A) \setminus V(I) \subset \spec(A)$. We may thus assume that the point $y \in \spec(A)(V)$ does not lie in $(\spec(A) \setminus V(I))(V)$, i.e., the corresponding map $y^*\colon A \to V$ carries $I$ into a nonunit ideal. But then both $x_1,x_2 \in Q(V)$ must come from $\spec(B)(V)$: if they came from $(\spec(A) \setminus V(I))(V)$, then $y^*(I)$ would generate the unit ideal, which is not possible. This means that the map $y^*\colon A \to V$ has two factorizations $x_1',x_2'\colon B \to V$ through $B$. But $V$ is $I$-adically complete  while $A$ and $B$ have the same $I$-adic completions, so $x_1' = x_2'$, whence $x_1 = x_2$.
\end{proof}

\begin{corollary}
\label{FormalGlueArcSheafGeneral}
Let $\mathcal{F}$ be an $\arc$-sheaf on $\mathrm{Sch}_{qcqs,< \kappa}$ valued in an
$\infty$-category $\mathcal{C}$ that has all small limits. Then $\mathcal{F}$ satisfies formal glueing as in Theorem~\ref{arcarithm}. 
\end{corollary}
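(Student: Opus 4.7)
The plan is to mirror the deduction of Corollary~\ref{ExcArcSheafPull} from Proposition~\ref{ExcSquarePushout}, now using Theorem~\ref{FormalGlueArcSheaf} as the input. The point is that once we know the $\arc$-sheafification of the formal glueing square is a pushout in the $\infty$-topos $\mathrm{Shv}_{\arc,\mathcal{S}}(\mathrm{Sch}_{qcqs,R,<\kappa})$, the pullback property of any $\mathcal{C}$-valued $\arc$-sheaf on this square is essentially forced by the universal property of a pushout together with the Yoneda lemma in $\mathcal{C}$.

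More concretely, I would first treat the case $\mathcal{C} = \mathcal{S}$. Given an $\arc$-sheaf $\mathcal{F}\colon \mathrm{Sch}_{qcqs,R,<\kappa}^{op} \to \mathcal{S}$, it extends (by right Kan extension along the Yoneda embedding followed by $\arc$-sheafification, or equivalently because representable presheaves are dense in $\arc$-sheaves) to a limit-preserving functor $\mathrm{Shv}_{\arc,\mathcal{S}}(\mathrm{Sch}_{qcqs,R,<\kappa})^{op} \to \mathcal{S}$. Such a functor sends pushouts in the source to pullbacks in $\mathcal{S}$. Applying this to the pushout square provided by Theorem~\ref{FormalGlueArcSheaf} (noting that the values of $\mathcal{F}$ on representable $\arc$-sheaves agree with its values on the corresponding schemes) gives the formal glueing square for $\mathcal{F}$.

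For an arbitrary $\mathcal{C}$ admitting small limits, I would reduce to the case of spaces via Yoneda in $\mathcal{C}$: for every object $x \in \mathcal{C}$, the assignment $U \mapsto \mathrm{Hom}_{\mathcal{C}}(x, \mathcal{F}(U))$ defines an $\arc$-sheaf of spaces (because $\mathrm{Hom}_{\mathcal{C}}(x, -)$ preserves limits), so by the previous paragraph it carries the formal glueing square to a pullback of spaces. Since this holds for every $x \in \mathcal{C}$, the Yoneda lemma in $\mathcal{C}$ implies that $\mathcal{F}$ itself carries the formal glueing square to a pullback in $\mathcal{C}$, as desired.

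There is essentially no real obstacle: the work has been done in Theorem~\ref{FormalGlueArcSheaf}, and what remains is a purely formal Yoneda-type manipulation. The only small point to check is that the cardinality bound $< \kappa$ in the statement of Theorem~\ref{FormalGlueArcSheaf} is harmless here, which is immediate since the formal glueing datum $(A \to B, I)$ and its $\arc$-cover $\mathrm{Spec}(B) \sqcup (\mathrm{Spec}(A)\setminus V(I)) \to \mathrm{Spec}(A)$ all live in $\mathrm{Sch}_{qcqs,R,<\kappa}$ by hypothesis.
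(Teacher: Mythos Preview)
Your proposal is correct and follows essentially the same approach as the paper: the paper's proof is literally a one-line pointer stating that the corollary follows from Theorem~\ref{FormalGlueArcSheaf} exactly as Corollary~\ref{ExcArcSheafPull} followed from Proposition~\ref{ExcSquarePushout}, and your write-up simply spells out that analogous Yoneda reduction in detail.
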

\begin{proof}
This follows from Theorem~\ref{FormalGlueArcSheaf}, just as Corollary~\ref{ExcArcSheafPull} followed from Proposition~\ref{ExcSquarePushout}.
\end{proof}

Let us give an example illustrating why working locally in the $\arc$-topology is essential for Theorem~\ref{arcarithm} and Theorem~\ref{FormalGlueArcSheaf}, and the $v$-topology does not suffice.

\begin{example}
\label{ExFG}
Let $V$ be a rank $2$ valuation ring. Write $\mathfrak{p}$ for the height $1$
prime. Choose a nonunit $f \in V - \mathfrak{p}$, and let $I = (f)$, so
$\sqrt{I}$ is the maximal ideal. Write $\hat{V}_I$ for the $I$-adic completion
of $V$. Consider the formal glueing datum $(V \to \hat{V}_I, I)$. We claim that
the corresponding map  $(\spec(V) \setminus V(I)) \sqcup \spec(\hat{V}_I) \to
\spec(V)$ is not a $v$-cover. It suffices to show that there is no extension $V
\to W$ of valuation rings such that the $\spec(W) \to \spec(V)$ factors over
$(\spec(V) \setminus V(I)) \sqcup \spec(\hat{V}_I) \to \spec(V)$.  In fact, as
$\spec(W) \to \spec(V)$ is surjective with $\spec(W)$ connected, the only
possibility is that $V \to W$ factor as $V \to \hat{V}_I \to W$. But this is
impossible as $V \to W$ is injective while $V \to \hat{V}_I$ contains
$\mathfrak{p}$ in its kernel: we have $\mathfrak{p} \subset I^n$  for all $n
\geq 0$ since $f^n \notin \mathfrak{p}$ for all $n \geq 0$. Note that this is
not a problem if we are allowed to work $\arc$-locally on $V$ as the map $V \to
\widehat{V/\mathfrak{p}} \times V_{\mathfrak{p}}$ is an $\arc$-cover which factors over $(\spec(V) \setminus V(I)) \sqcup \spec(\hat{V}_I) \to \spec(V)$ on spectra.
\end{example}

\subsection{GAGA for rigid \'etale cohomology of affinoids}

In this subsection, we collect various results related to the \'etale
cohomology of ``affinoids,'' though we formulate them algebraically. 
Specifically, these results are related to the \'etale cohomology of rings
obtained by inverting an element $t$ on $t$-adically complete rings. 

Namely, we
reprove the Fujiwara--Gabber theorem (Theorem~\ref{GabberFujiwarathm}). 
Next, we show that \'etale cohomology of rings of the form $\hat{A}_t[1/t]$
satisfies descent with respect to a variant of the $\arc$-topology where the
element $t$ is
taken into account (Corollary~\ref{BerkDescent}). Finally, we record a
K\"unneth theorem (Proposition~\ref{kunnethformula}).

\begin{definition} 
We say that a functor $\sF$ on $R$-algebras is \emph{rigid} if 
for every henselian pair $(A, J)$ with $A$ an $R$-algebra, we have $\sF(A) \simeq
\sF(A/J)$. 
\end{definition}

Rigid $\arc$-sheaves exhibit some additional rigidity: they are insensitive to
completion, even after ``passage to the generic fiber.''

\begin{corollary}
\label{GabberFujiwara}
Let $R$ be a ring which is henselian along a finitely generated ideal $I \subset R$. 
Let $\sF\colon  \mathrm{Sch}_{qcqs,R}^{op} \to \D( \Lambda)^{\geq 0}$ (for some coefficient
ring $\Lambda$) be a finitary
$\arc$-sheaf which is  rigid on $R$-algebras. 
Then the map 
$\sF( \spec(R) \setminus V(I)) \to \sF( \spec( \hat{R}_I) \setminus V(I
\hat{R}_I))$ is an
equivalence. 
\label{critequiv}
\end{corollary}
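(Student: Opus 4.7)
The plan is to combine the formal glueing square from Theorem~\ref{arcarithm} with the rigidity hypothesis on $\sF$. First, I would observe that $(R \to \hat{R}_I, I)$ constitutes a formal glueing datum in the sense of Definition~\ref{DefFormalGlueing}: the ideal $I$ is finitely generated by hypothesis, and the map $R \to \hat{R}_I$ is an isomorphism modulo $I^n$ for all $n \geq 0$ (as recorded in the text preceding Theorem~\ref{arcarithm}). Applying Theorem~\ref{arcarithm} to $\sF$ and this datum yields a cartesian square
\[
\xymatrix{
\sF(\spec(R)) \ar[d] \ar[r] & \sF(\spec(\hat{R}_I)) \ar[d] \\
\sF(\spec(R) \setminus V(I)) \ar[r] & \sF(\spec(\hat{R}_I) \setminus V(I\hat{R}_I))
}
\]
in $\D(\Lambda)^{\geq 0}$.

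Next, I would argue that the top horizontal arrow in this square is an equivalence using rigidity. By hypothesis, $(R,I)$ is a henselian pair, so $\sF(\spec(R)) \xrightarrow{\sim} \sF(\spec(R/I))$. On the other hand, since $I$ is finitely generated, the $I$-adic completion $\hat{R}_I$ is $I\hat{R}_I$-adically complete, and hence $(\hat{R}_I, I\hat{R}_I)$ is also a henselian pair; rigidity then gives $\sF(\spec(\hat{R}_I)) \xrightarrow{\sim} \sF(\spec(\hat{R}_I/I\hat{R}_I))$. Finally, the natural identification $R/I \simeq \hat{R}_I/I\hat{R}_I$ identifies the two target values. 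Chasing through, we conclude that $\sF(\spec(R)) \to \sF(\spec(\hat{R}_I))$ is an equivalence.

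Since the top arrow of a cartesian square is an equivalence, the bottom arrow is too; this is exactly the claimed equivalence $\sF(\spec(R) \setminus V(I)) \xrightarrow{\sim} \sF(\spec(\hat{R}_I) \setminus V(I\hat{R}_I))$. There is no real obstacle here: all the substantive work has been done in the formal glueing theorem for $\arc$-sheaves (Theorem~\ref{arcarithm}), and this corollary is simply the observation that rigidity collapses one edge of the resulting pullback square. To deduce Corollary~\ref{GabberLocal}(2) from the introduction (the Fujiwara--Gabber theorem), one applies this to the finitary $\arc$-sheaf $X \mapsto R\Gamma(X_\et, \mathcal{F})$ from Theorem~\ref{EtaleCohExcisive}, whose rigidity on henselian pairs is precisely the classical rigidity theorem for \'etale cohomology already established in the proof of Corollary~\ref{GabberLocal}(1).
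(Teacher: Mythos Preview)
Your proof is correct and follows the same approach as the paper: apply the formal glueing square of Theorem~\ref{arcarithm} to the datum $(R\to\hat{R}_I,I)$, use rigidity to see that the top arrow is an equivalence, and deduce the same for the bottom arrow. Your exposition is in fact more detailed than the paper's (which compresses the argument to two sentences); the only minor caveat is in your closing remark, where the rigidity input for the full Fujiwara--Gabber theorem is the general Gabber--Huber result rather than the restricted version proved in Corollary~\ref{GabberLocal}(1).
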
 
\begin{proof} 
This follows from the fiber square
\eqref{hcart4} applied with $S = \hat{R}_I$. By assumption, the top arrow is an
equivalence via rigidity; therefore, the bottom arrow is too. 
\end{proof} 

By the affine analog of proper base change \cite{Gabber, Huber}, it follows that \'etale cohomology
with torsion coefficients is rigid. 
We thus recover the following result of Fujiwara--Gabber. 
\begin{theorem}[{Fujiwara--Gabber, cf.~\cite[Cor.~6.6.4]{Fujiwara}}] 
\label{GabberFujiwarathm}
Let $(R, I)$ be a henselian pair with $I \subset R$ finitely generated, and let
$f \colon R \to \hat{R}_I$ denote the $I$-adic completion. Then for any torsion
\'etale sheaf $\sG$ on $\spec(R) \setminus V(I)$, the map 
\[ R \Gamma( \spec(R) \setminus V(I),\sG) \to 
R \Gamma( \spec ( \hat{R}_I) \setminus V( I \hat{R}_I), f^* \sG)
\]
is an equivalence. 
\end{theorem}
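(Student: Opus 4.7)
The plan is to deduce this theorem as a direct application of Corollary~\ref{critequiv} (the general rigid $\arc$-sheaf result). To set it up, I would first extend $\sG$ to a torsion \'etale sheaf $\sG'$ on all of $\spec(R)$, for instance by taking $\sG' := j_! \sG$ where $j \colon \spec(R) \setminus V(I) \hookrightarrow \spec(R)$ is the open immersion. I would then define a functor
\[ \sF \colon \mathrm{Sch}_{qcqs,R}^{op} \to \D(\Lambda)^{\geq 0}, \qquad \sF(X) := R\Gamma(X_{\et}, h^* \sG'), \]
where $h \colon X \to \spec(R)$ is the structure map, and $\Lambda$ is any ring such that $\sG'$ takes values in $\Lambda$-modules (e.g., $\Lambda = \mathbb{Z}/n\mathbb{Z}$ for a suitable $n$).

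The next step is to verify the two hypotheses of Corollary~\ref{critequiv}. That $\sF$ is finitary follows from the fact that \'etale cohomology of a torsion sheaf commutes with cofiltered inverse limits of qcqs schemes along affine transition maps \cite[Tag 03Q4]{stacks-project}. That $\sF$ is an $\arc$-sheaf is precisely Theorem~\ref{EtaleCohExcisive}. Rigidity of $\sF$ on $R$-algebras amounts to the statement that for every henselian pair $(A, J)$ with $A$ an $R$-algebra, the restriction map $R\Gamma(\spec(A)_{\et}, h^* \sG') \to R\Gamma(\spec(A/J)_{\et}, h^*\sG')$ is an equivalence; this is precisely the Gabber--Huber affine analog of proper base change \cite{Gabber, Huber} applied to the torsion sheaf $h^*\sG'$ on $\spec(A)$.

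Having verified the hypotheses, Corollary~\ref{critequiv} yields an equivalence $\sF(\spec(R) \setminus V(I)) \xrightarrow{\sim} \sF(\spec(\hat{R}_I) \setminus V(I \hat{R}_I))$. To conclude, I would identify the two sides with the cohomology groups in the theorem statement. Since $j^* j_! \sG = \sG$, we have $\sF(\spec(R) \setminus V(I)) = R\Gamma(\spec(R) \setminus V(I), \sG)$. Similarly, the structure map of $\spec(\hat{R}_I) \setminus V(I\hat{R}_I)$ to $\spec(R)$ factors through $j$, so pulling back $j_!\sG$ along it recovers $f^*\sG$, giving $\sF(\spec(\hat{R}_I) \setminus V(I\hat{R}_I)) = R\Gamma(\spec(\hat{R}_I) \setminus V(I\hat{R}_I), f^*\sG)$.

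The only genuine input beyond what is proven in the paper is the full Gabber--Huber theorem (needed in the rigidity verification, since our $A$ need not be an algebra over a henselian local ring, so the restricted version established in Corollary~\ref{GabberLocal}(1) does not suffice). Apart from this citation, the argument is entirely formal given the machinery of $\arc$-descent and formal glueing built up earlier in the paper.
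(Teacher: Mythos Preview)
Your proposal is correct and follows essentially the same route as the paper: extend $\sG$ by zero to a sheaf on $\spec(R)$, form the associated \'etale cohomology functor, and feed it into Corollary~\ref{critequiv} using Theorem~\ref{EtaleCohExcisive} for $\arc$-descent and the full Gabber--Huber theorem for rigidity. The only minor quibble is your choice of $\Lambda$: a general torsion sheaf need not be $n$-torsion for any single $n$, so just take $\Lambda = \mathbb{Z}$ as the paper does.
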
 
\begin{proof} 
Note first that $\sG$ extends to a torsion \'etale sheaf $\sG_1$ on $\spec(R)$, for example, by extension by $0$.
We consider the functor 
$\sF\colon  
\mathrm{Sch}_{qcqs,R}^{op} \to \D(\mathbb{Z})^{\geq 0}$ which sends $p\colon  Y \to \spec(R)$ to $R
\Gamma( Y_{\et}, p^* \sG_1)$. 
By Theorem~\ref{EtaleCohExcisive} and the affine analog of proper base change 
\cite{Gabber, Huber} (which we reproved in \S \ref{ss:EtaleCohArcSheaf} in the
special case where $R/I$ is an algebra over a henselian local ring), it follows that 
the hypotheses of Corollary~\ref{GabberFujiwara} 
apply to the functor $\sF$, and hence we can conclude. 
\end{proof}

\begin{remark} 
In \cite{Fujiwara}, the above result is proved under the additional
hypothesis that $R$ is noetherian. The non-noetherian case of the Fujiwara--Gabber theorem 
is due to Gabber, and is outlined in 
\cite[Exp.~XX, Sec.~4.4]{Travaux}. 
\end{remark} 
\begin{remark} 
Corollary~\ref{critequiv} relies on the fact that equivalences in the derived
$\infty$-category $\D( \Lambda)$ can be checked after pullback. In particular, the
analogous result does not obviously apply to functors taking values in sets or
categories. For instance, it does not obviously apply to the functor which
sends a qcqs scheme to its category of finite \'etale schemes (which we have
seen satisfies $\arc$-descent, and which is rigid). Nevertheless, the conclusion
of Corollary~\ref{critequiv} is valid for this functor, by some results in the
literature which we now recall. 

In the noetherian case,  Elkik \cite{Elkik} proved that if $(R, I)$ is a henselian pair with $R$ noetherian,
then finite \'etale schemes of $\spec(R) \setminus V(I)$ and $\spec( \hat{R}_I)
\setminus V(I \hat{R}_I)$ are identified. This statement has been generalized
to the non-noetherian case in the important special case where $I =
(t)$ is principal with $t$ a nonzerodivisor, by Gabber--Ramero \cite[Prop.~5.4.53]{GabberRamero} and 
in the general finitely generated case by Gabber \cite[Exp.~XX, Th\'eor\`eme
2.1.2]{Travaux}. 
More generally, one has a rigidity result for arbitrary
sheaves of sets or sheaves of ind-finite groups on $\spec(R) \setminus V(I)$, that is,
$H^0$ (resp.~$H^0, H^1$) are identified over $\spec(R) \setminus V(I)$ and
$\spec( \hat{R}_I) \setminus V(I \hat{R}_I)$. 
Compare also \cite[Theorem 6.4]{HallRydh} for another proof of the case of sets. 
 \end{remark}

For our next application, we work (implicitly) in the category of $t$-adically complete $\mathbb{Z}[t]$-algebras $R_0$ (endowed with the $t$-adic topology). Recall that each such $R_0$ yields a Banach $\mathbb{Z}((t))$-algebra $R := R_0[\frac{1}{t}]$ with unit ball $R_0$ and thus corresponds to an affinoid rigid space $\mathrm{Spa}(R, R^\circ)$ over $\mathbb{Z}((t))$; conversely, every affinoid rigid space has such a form as one may simply take $R_0$ to be a ring of definition. Our next result is roughly that the assignment $\mathrm{Spa}(R,R^\circ) \mapsto R\Gamma(\spec(R), \Lambda)$ on affinoids satisfies descent with respect to the \'etale topology on affinoid rigid spaces; in fact, we prove descent with respect to a much finer topology. In particular, it follows that the purely algebraically defined \'etale cohomology groups $H^*(\spec(R), \Lambda)$ may serve as meaningful \'etale cohomology groups in rigid geometry. To avoid the language of rigid geometry (and, in particular, delicate sheafiness questions), we formulate our statements purely algebraically. 

\begin{definition} 
\label{DefArct}
We say that a map of $\mathbb{Z}[t]$-algebras $R \to S$ is an {\em $\arc_{t}$-cover}
if for every rank $ 1$ valuation ring $V$ over $\mathbb{Z}[t]$ where $t$ is
a pseudouniformizer and every map $R \to V$, there is an extension 
of rank $\leq 1$ valuation rings  $V \to W$ such that the map $R \to V \to W$
extends over $S$. 
\end{definition} 

We translate the above definition into the theory of adic spaces.

\begin{example}
Let $f\colon (A,A^+) \to (B,B^+)$ be a map of Tate $(\mathbb{Z}((t)), \mathbb{Z}[[t]])$-algebras, so $t$ is a pseudouniformizer. Then $f$ defines a map $\mathrm{Spa}(f)\colon \mathrm{Spa}(B,B^+) \to \mathrm{Spa}(A,A^+)$ on adic spectra. Then the map $A^+ \to B^+$ is an $\arc_t$-cover if and only if the map $\mathrm{Spa}(f)$ is surjective on generic points (or equivalently that the map on associated Berkovich spaces is surjective). Indeed, this follows immediately from the definitions: the generic points of the adic spectrum $\mathrm{Spa}(A,A^+)$ are in bijective correspondence with maps $A^+ \to V$ where $V$ is a rank $1$ valuation ring with $t$ being a pseudouniformizer (up to refinements of $V$). In particular, the $\arc_t$-topology on affinoid rigid spaces (as defined via pullback along $(A,A^+) \mapsto A^+$) is finer than the $v$-topology of \cite[Definition 8.1]{ScholzeDiamonds}.
\end{example}

\begin{cons}[\'Etale cohomology of the rigid generic fiber]
\label{etcohrigidgenericfib}
Fix a torsion abelian group $\Lambda$. 
Consider the functor $F$ on $\mathbb{Z}[t]$-algebras with values in $\D(
\mathbb{Z})^{\geq 0}$
that sends a
$\mathbb{Z}[t]$-algebra $R$ to $R \Gamma(  \spec(\hat{R}_t[1/t])_{\et},
\Lambda)$. 
By Theorem~\ref{GabberFujiwarathm}, one can replace the $t$-adic completion
with the $t$-henselization; therefore, $F$ commutes with filtered colimits. 
\label{rigidcoh}
\end{cons}

\begin{corollary}[$\arc_t$-descent for \'etale cohomology of affinoids]
\label{BerkDescent}
\label{BerkDesc2}
Let $A$ be a ring with a distinguished element $t \in A$. 
Let $\sG$ be a torsion sheaf on $\spec(A)_{\et}$. Consider the functor 
$F$ which sends an $A$-algebra $R$ to $F(R) := R \Gamma( \spec(\hat{R}_t[1/t]),
\sG)$. Then $F$ is a
finitary $\arc_t$-sheaf on $A$-algebras.
\end{corollary}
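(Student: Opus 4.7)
The plan is to verify that $F$ is finitary and that it satisfies $\arc_t$-descent. The finitariness follows from the Fujiwara--Gabber theorem (Theorem~\ref{GabberFujiwarathm}), which replaces the $t$-adic completion $\hat{R}_t$ by the $t$-henselization $R^h_t$ without changing the \'etale cohomology of the rigid generic fiber: $F(R) \simeq R\Gamma(\spec(R^h_t[1/t]),\mathcal{G})$. Each of the three constituent operations ($t$-henselization, localization at $t$, and \'etale cohomology with torsion coefficients) commutes with filtered colimits along affine transition maps, so $F$ does too.

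For the $\arc_t$-descent, my approach is to adapt the strategy of Theorem~\ref{mainthm} to the $\arc_t$-topology: reduce to checking $v$-descent together with an analog of aic-$v$-excision restricted to absolutely integrally closed rank $\leq 1$ valuation rings $V$ in which $t$ is a pseudouniformizer. The $v$-descent part is handled by applying the formal glueing cartesian square of Theorem~\ref{arcarithm} to the $\arc$-sheaf $R\Gamma(\spec(-),\mathcal{G})$ (Theorem~\ref{EtaleCohExcisive}): this expresses $F(R)$ as a total fiber involving $R\Gamma(\spec(R),\mathcal{G})$, $R\Gamma(\spec(R[1/t]),\mathcal{G})$, and $R\Gamma(\spec(\hat{R}_t),\mathcal{G}) \simeq R\Gamma(\spec(R/tR),\mathcal{G})$ (by the affine analog of proper base change, since $(\hat{R}_t,(t))$ is a henselian pair). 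Each of these is a $v$-sheaf because the operations $R \mapsto R$, $R \mapsto R[1/t]$, and $R \mapsto R/tR$ all commute with $v$-covers. The excision-type property reduces, via Fujiwara--Gabber applied to the four corners $V$, $V/\mathfrak{p}$, $V_{\mathfrak{p}}$, $\kappa(\mathfrak{p})$, to the \'etale-cohomological excisiveness of the corresponding Milnor square. When $V$ is absolutely integrally closed with $t$ a pseudouniformizer, the fraction field and residue fields in this picture are all algebraically closed, and Corollary~\ref{etaleexc} gives the cartesianness directly.

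The main obstacle is the analog of Proposition~\ref{aicvexcimpliesarc} in the $\arc_t$-setting: deducing $\arc_t$-descent from $v$-descent plus this excision property. The key inputs needed are (i) an analog of Proposition~\ref{vcoverbyvaluation}, namely that every $A$-algebra admits an $\arc_t$-cover by a product of absolutely integrally closed valuation rings with $t$ a pseudouniformizer (constructible by adapting Lemmas~\ref{aicvcover} and \ref{boundcardaic} with the pseudouniformizer constraint, noting that $F$ vanishes trivially on $A$-algebras where $t$ is nilpotent or a unit so these boundary cases need no attention), and (ii) an analog of Lemma~\ref{intervallemma} restricted to intervals in $\spec(V)$ compatible with the $t$-pseudouniformizer condition: namely, the interval in $\spec(V)$ corresponding to the locus where $t$ is a pseudouniformizer is bounded below (by the maximal prime containing $t$) and above (by the minimal prime avoiding $t$), and intervals within this range yield rank $\leq 1$ quotient/localization valuation rings with $t$ still a pseudouniformizer. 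The Zorn's-lemma-style inductive argument of Proposition~\ref{aicvexcimpliesarc} then adapts to yield $\arc_t$-descent from the excision-type criterion, completing the proof.
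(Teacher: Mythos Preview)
Your finitariness argument is correct and matches the paper. You also correctly identify the key formal glueing square expressing $F(R)$ as a total fiber involving $R\Gamma(\spec(R),\sG)$, $R\Gamma(\spec(R[1/t]),\sG)$, and $R\Gamma(\spec(R/t),\sG)$. However, you then draw only the conclusion that $F$ is a $v$-sheaf and embark on a substantial programme of adapting the interval-lemma machinery of Proposition~\ref{aicvexcimpliesarc} to the $\arc_t$-topology. This is both unnecessary and, as formulated, not quite coherent.

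The point you miss is that the three other corners of your square are not merely $v$-sheaves but $\arc$-sheaves: $R\mapsto R\Gamma(\spec(R),\sG)$ is an $\arc$-sheaf by Theorem~\ref{EtaleCohExcisive}, and $R\mapsto R\Gamma(\spec(R[1/t]),\sG)$ and $R\mapsto R\Gamma(\spec(R/t),\sG)$ are $\arc$-sheaves because $\arc$-covers are stable under the base changes $R\mapsto R[1/t]$ and $R\mapsto R/t$. Hence $F$ itself is an $\arc$-sheaf. Once you know this, $\arc_t$-descent follows by a one-line trick: if $R\to S$ is an $\arc_t$-cover, then $R\to S\times R/t\times R[1/t]$ is an $\arc$-cover (for any rank $\leq 1$ valuation ring $V$ under $R$, the image of $t$ is either a pseudouniformizer, zero, or a unit, corresponding to the three factors), and $F$ does not distinguish $S$ from $S\times R/t\times R[1/t]$ since $t$-adic completion followed by inverting $t$ annihilates both $R/t$-algebras and $R[1/t]$-algebras. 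This is exactly what the paper does.

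Your proposed adaptation also has a formulation problem: the aic-$v$-excision criterion of Theorem~\ref{mainthm} concerns absolutely integrally closed valuation rings of \emph{arbitrary} rank together with a prime $\mathfrak{p}$; restricting to rank $\leq 1$ valuation rings with $t$ a pseudouniformizer makes the excision squares degenerate (the only primes are $(0)$ and the maximal ideal) and gives no leverage. The interval-lemma route would require excision across higher-rank valuation rings and a careful analysis of where $t$ sits in $\spec(V)$, which is considerably more work than the two-line reduction above.
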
 
\begin{proof} 
The statement that $F$ is finitary follows from
the discussion in 
\Cref{etcohrigidgenericfib}.
We consider the fiber square 
(Example~\ref{arithetalesquare}) 
\begin{equation} \label{fibsquare5} \xymatrix{
& R \Gamma( \spec(R)_{\et}, \mathcal{G}) \ar[d]  \ar[r] & R \Gamma( \spec(R[1/t])_{\et},
\mathcal{G}) \ar[d]  \\
R \Gamma( \spec( R/t)_{\et}, \mathcal{G})
& \ar[l]_{\sim} R \Gamma( \spec( \hat{R}_t)_{\et}, \mathcal{G}) \ar[r] & R \Gamma(
\spec(\hat{R}_t[1/t])_{\et} , \mathcal{G})
}, \end{equation}
where the identification in the bottom left  follows from the affine analog of
proper base change.

If $R \to S$ is an $\arc_t$-cover, then $R \to S \times R/t \times R[1/t]$ is an
$\arc$-cover: given a map $R \to V$ with $V$ a rank $\leq 1$ valuation ring, the
image of $t$ in $V$ can either pseudouniformizer or zero or a unit, and these
three possibilities correspond to factoring (up to extensions) through each of
the three terms of the product. Moreover, the desired statement for $S$ is equivalent to the
statement for $S \times R/t \times R[1/t]$ (since $t$-adic completion followed
by inverting $t$ kills both any $R/t$-module and any $R[1/t]$-module). Thus, it
suffices to show that $F$ is an $\arc$-sheaf.
We use the fiber square \eqref{fibsquare5}. 
To see that $F$ is an $\arc$-sheaf, it suffices to see that all the other terms
in \eqref{fibsquare5} are $\arc$-sheaves. This follows because
\'etale cohomology is an $\arc$-sheaf (\Cref{EtaleCohExcisive}) and $\arc$-covers are stable
under base change. \end{proof} 

In the language of adic spaces, Corollary~\ref{BerkDescent} implies that the purely algebraically defined \'etale cohomology groups of affinoid adic spaces satisfy descent for the analytic \'etale topology; in this form, it is equivalent to Huber's affinoid comparison theorem \cite[Corollary 3.2.2]{HuberBook}. Let us sketch why this result, together with proper base change, also give a GAGA result for proper adic spaces. A more general statement can be found in \cite[Theorem 3.2.10]{HuberBook}.

\begin{corollary}[GAGA for rigid \'etale cohomology in the proper case]
Let $A$ be a ring with a distinguished element $t$ such that $(A,(t))$ is a
henselian pair. Assume either $A$ is noetherian or that $\widehat{A}_t[1/t]$ (where the completion is $t$-adic) is a strongly noetherian Tate ring, so the theory in \cite{HuberBook} applies. Let $X$ be a proper $A[1/t]$-scheme, and write $X^{ad}$ for the associated adic space over $\mathrm{Spa}(A[1/t],A)$. Then for any torsion \'etale sheaf $\sF$ on $X$ with pullback $\sF^{ad}$ to $X^{ad}$, the natural map gives an identification
\[ R\Gamma(X,\sF) \simeq R\Gamma(X^{ad},\sF^{ad})\]
between algebraic and analytic \'etale cohomologies.
\end{corollary}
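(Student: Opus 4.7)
The plan is to first reduce to the case where $A$ is $t$-adically complete via Fujiwara--Gabber, and then verify the comparison via a \v{C}ech totalization that matches open affines of $X$ with affinoid subdomains of $X^{ad}$ term-by-term.

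For the first step, let $f \cl X \to \spec(A[1/t])$ denote the structure map. Proper base change presents $Rf_*\sF$ as a bounded complex of torsion \'etale sheaves on $\spec(A[1/t]) = \spec(A) \setminus V(t)$; applying Fujiwara--Gabber (Theorem~\ref{GabberFujiwarathm}) to the henselian pair $(A, (t))$, one cohomology sheaf at a time and then via the hypercohomology spectral sequence, gives
\[
R\Gamma(\spec(A[1/t]), Rf_*\sF) \;\xrightarrow{\sim}\; R\Gamma(\spec(\hat{A}[1/t]), (Rf_*\sF)|_{\spec(\hat{A}[1/t])}).
\]
Another application of proper base change (this time along the flat map $\spec(\hat{A}[1/t]) \to \spec(A[1/t])$) identifies the right-hand side with $R\Gamma(X_{\hat{A}[1/t]}, \sF_{\hat{A}[1/t]})$. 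Since $X^{ad}$ depends only on the $t$-adic completion of $A$, this reduces to the case $A = \hat{A}$.

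Assuming $A = \hat{A}$, I would pick a proper formal model $\mathfrak{X}$ of $X^{ad}$ over $\mathrm{Spf}(A)$ --- which exists by Raynaud's theorem in the strongly noetherian Tate setting, and by spreading out the proper scheme $X$ in the noetherian setting --- together with a finite open affine cover $\{\mathrm{Spf}(C_j^+)\}_{j \in J}$ of $\mathfrak{X}$. Setting $C_j := C_j^+[1/t]$, these simultaneously provide an open affine cover $\{V_j := \spec(C_j)\}$ of $X$ and an affinoid cover $\{V_j^{ad} := \mathrm{Spa}(C_j, C_j^+)\}$ of $X^{ad}$, with analogous compatibility for all $n$-fold intersections $V_{j_0 \cdots j_n}$ and $V_{j_0 \cdots j_n}^{ad}$. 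Zariski descent of \'etale cohomology on the algebraic side and Corollary~\ref{BerkDesc2} applied to the affinoid cover on the analytic side (which is a $v$-cover, hence an $\arc_t$-cover, on base rings) yield \v{C}ech totalizations
\[
R\Gamma(X, \sF) \simeq \mathrm{Tot}\bigl(R\Gamma(V_{j_0 \cdots j_n}, \sF)\bigr), \qquad R\Gamma(X^{ad}, \sF^{ad}) \simeq \mathrm{Tot}\bigl(R\Gamma(V_{j_0 \cdots j_n}^{ad}, \sF^{ad})\bigr).
\]
The termwise comparison $R\Gamma(\spec(C_{j_0 \cdots j_n}), \sF) \simeq R\Gamma(V_{j_0 \cdots j_n}^{ad}, \sF^{ad})$ is then precisely Huber's affinoid comparison theorem, i.e., Corollary~\ref{BerkDesc2} applied with $R = C_{j_0 \cdots j_n}^+$ (so that $\hat{R}_t[1/t] = C_{j_0 \cdots j_n}$).

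The main obstacle lies in the construction of the formal model $\mathfrak{X}$ whose open affine cover induces matching covers on both the scheme $X$ and the adic space $X^{ad}$. In the strongly noetherian Tate case this is Raynaud's theorem, while in the general noetherian case one may instead spread $X$ to a proper $A$-scheme via Nagata and then $t$-adically complete. A cleaner alternative that sidesteps the choice of a formal model is to directly verify that the unit map $\sF \to R\pi_* \sF^{ad}$ is an equivalence, where $\pi \cl X^{ad} \to X$ is the natural morphism of (\'etale) sites: this is a local statement on $X$ that reduces immediately to Huber's affinoid comparison theorem (Corollary~\ref{BerkDesc2}), after which $R\Gamma(X^{ad}, \sF^{ad}) \simeq R\Gamma(X, R\pi_* \sF^{ad}) \simeq R\Gamma(X, \sF)$ follows from the Leray spectral sequence.
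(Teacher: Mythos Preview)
There is a genuine gap in both approaches, stemming from the same confusion: affine opens of a formal model of $X^{ad}$ are \emph{not} Zariski opens of the algebraic scheme $X$.

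In your Approach 1, you take a proper formal model $\mathfrak{X}/\mathrm{Spf}(A)$ with affine opens $\mathrm{Spf}(C_j^+)$ and set $C_j = C_j^+[1/t]$, claiming that $\{\spec(C_j)\}$ covers $X$ as a Zariski open cover. This is false. For instance, with $A = \mathbb{Z}_p$, $t=p$, $X = \mathbb{P}^1_{\mathbb{Q}_p}$ and the standard formal model, you get $C_1 = \mathbb{Q}_p\langle T\rangle$; but $\spec(\mathbb{Q}_p\langle T\rangle)$ is not an open subscheme of $\mathbb{P}^1_{\mathbb{Q}_p}$ --- the corresponding Zariski open is $\spec(\mathbb{Q}_p[T])$. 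Even if you obtain $\mathfrak{X}$ by $t$-adically completing a Nagata compactification $\overline{X}/A$ with affine opens $\spec(B_j)$, you have $C_j^+ = \widehat{B_j}$, so your ``termwise comparison'' becomes $R\Gamma(\spec(B_j[1/t]),\sF) \simeq R\Gamma(\spec(\widehat{B_j}[1/t]),\sF)$. This is \emph{not} the affinoid comparison theorem; it is precisely the formal glueing statement of Theorem~\ref{arcarithm}, and it does not follow from Fujiwara--Gabber since $B_j$ is typically not $t$-henselian (e.g.\ $\mathbb{Z}_p[T]$ is not $p$-henselian), even after your reduction to $A$ complete.

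Your Approach 2 has the mirror-image problem: working Zariski-locally on $X$ at an affine open $\spec(B)$, the preimage $\pi^{-1}(\spec(B)) = (\spec B)^{ad}$ is the analytification of an affine variety and is almost never affinoid (e.g.\ the analytic $\mathbb{A}^1$), so the affinoid comparison theorem does not apply.

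The paper resolves this by keeping both the algebraic model $\mathfrak{X}$ (a proper $A$-\emph{scheme} via Nagata) and its formal completion $\widehat{\mathfrak{X}}$ in play simultaneously, organizing the four objects $X, \mathfrak{X}, \widehat{\mathfrak{X}}, X^{ad}$ into a square; on each affine $U_i = \spec(B_i) \subset \mathfrak{X}$ this square becomes the formal glueing square for $(B_i \to \widehat{B_i}, (t))$, whose cartesianness is Theorem~\ref{arcarithm}. Proper base change then identifies the top edge, giving the desired identification on the bottom edge. The missing ingredient in your argument is exactly this use of formal glueing to bridge the polynomial ring $B_j[1/t]$ and its completion $\widehat{B_j}[1/t]$.
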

\begin{proof}
By Nagata compactification, we can find a proper $A$-scheme $\mathfrak{X}$ extending $X$, i.e.,with an identification $\mathfrak{X} \times_{\spec(A)} \spec(A[1/t]) \simeq X$. Let $\mathfrak{F}$ be a torsion \'etale sheaf on $\mathfrak{X}$ extending $\sF$. Write $\widehat{\mathfrak{X}}$ for the $t$-adic completion of $\mathfrak{X}$. Consider the commutative square 
\begin{equation}
\label{RigidGAGA}
\xymatrix{ X^{ad} \ar[r] \ar[d] & \widehat{\mathfrak{X}} \ar[d] \\
		X \ar[r] & \mathfrak{X} }
\end{equation}
	of morphisms of locally ringed topoi (where each vertex is given the \'etale topology and the usual structure sheaf). The sheaf $\mathfrak{F}$ defines via pullback an \'etale sheaf on each of the topoi above, and we abusively denote this pullback by $\mathfrak{F}$ as well. We shall show that applying $R\Gamma(-,\mathfrak{F})$ to \eqref{RigidGAGA} gives a cartesian square; this implies the corollary as $R\Gamma(\mathfrak{X}, \mathfrak{F}) \simeq R\Gamma(\widehat{\mathfrak{X}}, \mathfrak{F})$ by the proper base change theorem 
	\cite[Exp.~XII, Th.~5.1]{SGA4}. 

To prove that applying $R\Gamma(-,\mathfrak{F})$ to \eqref{RigidGAGA} gives a Cartesian square, fix an affine open cover $\{U_i\}$ of the $A$-scheme $\mathfrak{X}$. Via pullback, this defines compatible open covers of $X$ by affine schemes, of $\widehat{X}$ by affine formal schemes, and of $X^{ad}$ by affinoid adic spaces. Using these covers to compute cohomology, and thanks to the affinoid comparison theorem \cite[Corollary 3.2.2]{HuberBook} (of which Corollary~\ref{BerkDescent} is an algebraic variant), it is therefore enough to prove that applying $R\Gamma(-,\mathfrak{F})$ to the analog of \eqref{RigidGAGA} for each $U_i$  gives a Cartesian square. But this follows  from formal glueing for \'etale cohomology (Proposition~\ref{arcarithm}).
\end{proof}

For future reference we note the following consequences 
of \Cref{BerkDescent}
for the functor $R
\mapsto R \Gamma( \spec(\hat{R}_t[1/t]), \sF)$.

\begin{definition}[$\arc_t$-equivalences] 
Let $A$ be a base ring containing an element $t$. 
A map of $A$-algebras $R \to R'$ is said to be an
\emph{$\arc_t$-equivalence} if for every $t$-adically complete rank $\leq 1$
absolutely integrally closed valuation ring $V$ where $t$ is a pseudouniformizer
with the structure of $A$-algebra, we have that
$\hom_A(R, V) \simeq \hom_A(R', V)$. 
\end{definition}

\begin{example} 
\label{examplesofarctequiv}
Let us give some examples of $\arc_t$-equivalences. Let $R$ be an $A$-algebra. The following maps are $\arc_t$-equivalences:
\begin{enumerate}
\item The map $R \to R/( t^\infty\text{-torsion})$. 
\item The map $R \to \hat{R}_t$. 
\item If $R^+$ denotes the integral closure of $R$ in $R[\frac{1}{t}]$, then $R \to R^+$ is an $\arc_t$-equivalence. Indeed, if $V$ is any valuation ring over $A$ with $t \neq 0$ in $V$, then $V$ is integrally closed in $V[\frac{1}{t}]$. 
\item 
Assume $t \in R$ is a nonzerodivisor. 
Let $R^{tic} := \{x \in R[1/t] \mid t^c x^{\mathbb{N}} \in R \text{ for some } c \geq 0\}$; this is called the \emph{total integral closure} of $R$ in $R[1/t]$. It is easy to see that $R^{tic}$ is a subring of $R[1/t]$ that contains the integral closure of $R$ in $R[1/t]$, and equals it when $R$ is noetherian. Then $R \to R^{tic}$ is an $\arc_t$-equivalence. This follows just as in (3) since rank $\leq 1$ valuation rings are totally integrally closed in their fraction fields. 
\end{enumerate}
\end{example}

\begin{proposition}[Invariance under $\arc_t$-equivalences] 
\label{arctinvariance}
Fix a pair $(A, t \in A)$ and let $\sF$ be a torsion sheaf on $\spec(A)_{\et}$. 
Let $R \to R'$ be an $\arc_t$-equivalence of $A$-algebras. 
Then $R \Gamma( \spec(\hat{R}_t[1/t], \sF)) \simeq 
R \Gamma( \spec(\widehat{R'}_t[1/t], \sF))$. 
\end{proposition}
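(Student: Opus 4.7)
The starting point is Corollary~\ref{BerkDescent}, which shows that the functor $F\colon S \mapsto R\Gamma(\spec(\hat{S}_t[1/t]), \sF)$ is a finitary $\arc_t$-sheaf on $A$-algebras valued in $\D(\mathbb{Z})^{\geq 0}$. The plan is to prove the more general assertion that any finitary $\arc_t$-sheaf on $A$-algebras (valued in an $\infty$-category compactly generated by cotruncated objects) carries $\arc_t$-equivalences to equivalences. The strategy is to develop the $\arc_t$-analog of the detection principles in \S\ref{vSheafAICVR} and \S\ref{aicexcsec}, then conclude via the Yoneda lemma.

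First, I would verify that every $A$-algebra $S$ admits an $\arc_t$-cover $S \to \prod_{i \in I} V_i$ by a product of $t$-adically complete absolutely integrally closed rank-$\leq 1$ valuation rings $V_i$ over $A$ in which $t$ is a pseudouniformizer. Following the pattern of Proposition~\ref{vcoverbyvaluation}, this uses that any map from $S$ to a rank-$\leq 1$ valuation ring in which $t$ is a pseudouniformizer can be enlarged to one of the prescribed type: first pass to the $t$-adic completion (preserving rank $\leq 1$ by Proposition~\ref{completevaluationring}), then take an absolute integral closure as in Lemma~\ref{aicflatlemma}, and finally use Lemma~\ref{boundcardaic} to bound cardinality. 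Next, imitating Corollary~\ref{ultraproductdetects}, I would upgrade this to the statement that the family $\{S \to \prod_{\mathfrak{U}} V_i\}_{\mathfrak{U} \in \beta I}$ detects universal $F$-descent for any finitary $\arc_t$-sheaf $F$, yielding an $\arc_t$-analog of Proposition~\ref{aicvaldetect} and Corollary~\ref{detectequivfinitaryarc}: equivalences between finitary $\arc_t$-sheaves are detected on the stated class of valuation rings.

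With this detection in hand, consider the representable presheaves of sets $h_R, h_{R'}$ on $A$-algebras (with $h_S(B) = \hom_A(S,B)$). The $\arc_t$-equivalence hypothesis says precisely that the natural transformation $h_{R'} \to h_R$ is a bijection when evaluated on any $t$-adically complete absolutely integrally closed rank-$\leq 1$ valuation ring $V$ over $A$ with $t$ a pseudouniformizer. By the detection principle applied to the induced map on $\arc_t$-sheafifications $h_{R'}^{\sharp_t} \to h_R^{\sharp_t}$, we conclude this is an isomorphism of $\arc_t$-sheaves. Applying the Yoneda lemma inside the $\arc_t$-topos to the finitary $\arc_t$-sheaf $F$ then yields $F(R) \simeq \hom(h_R^{\sharp_t}, F) \simeq \hom(h_{R'}^{\sharp_t}, F) \simeq F(R')$, as required.

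The main obstacle is the ultraproduct step in setting up the detection principle: an ultraproduct of $t$-adically complete valuation rings need not remain $t$-adically complete, since completeness is not a first-order property and so \L o\'s's theorem cannot be invoked directly as it was in Lemma~\ref{ultraaic}. However, this is not fatal: by unwinding the filtered colimit defining the ultraproduct, one has $\hom_A(R, \prod_{\mathfrak{U}} V_i) = \varinjlim_{T' \in \mathfrak{U}} \prod_{i \in T'} \hom_A(R, V_i)$, and since each factor $V_i$ lies in the class where the $\arc_t$-equivalence hypothesis produces a bijection $\hom_A(R, V_i) \simeq \hom_A(R', V_i)$, the analogous bijection persists on the ultraproduct. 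This is the precise content that makes the sheafification comparison go through despite the failure of completeness to pass to ultraproducts.
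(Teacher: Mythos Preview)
Your approach is considerably more elaborate than necessary and contains a genuine gap. The paper's proof is a direct \v{C}ech argument. Since an $\arc_t$-equivalence is in particular an $\arc_t$-cover, the sheaf property of $F$ (Corollary~\ref{BerkDescent}) gives
\[
F(R) \simeq \varprojlim\bigl( F(R') \rightrightarrows F(R'\otimes_R R') \triplearrows \cdots\bigr).
\]
But the multiplication map $R' \otimes_R R' \to R'$ is itself an $\arc_t$-cover: for $V$ in the test class one has $\hom_A(R'\otimes_R R', V) = \hom_A(R',V)\times_{\hom_A(R,V)}\hom_A(R',V) \simeq \hom_A(R',V)$ via the diagonal, since $\hom_A(R,V) \simeq \hom_A(R',V)$. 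Applying descent along this surjective cover (whose \v{C}ech nerve is the constant cosimplicial ring $R'$) yields $F(R'\otimes_R R') \simeq F(R')$, and similarly $F(R'^{\otimes_R n}) \simeq F(R')$ for all $n \geq 1$. Substituting back, $F(R) \simeq F(R')$. No detection principle, no sheafified representables, no ultraproducts.

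The gap in your argument lies in the displayed identity $\hom_A(R, \prod_{\mathfrak{U}} V_i) = \varinjlim_{T' \in \mathfrak{U}} \prod_{i \in T'} \hom_A(R, V_i)$: this is precisely the statement that $\hom_A(R,-)$ commutes with the filtered colimit defining the ultraproduct, which holds only when $R$ is finitely presented over $A$. The proposition imposes no such hypothesis. The same finite-presentation issue undercuts your detection principle: the analogs of Proposition~\ref{aicvaldetect} and Corollary~\ref{detectequivfinitaryarc} apply only to \emph{finitary} sheaves, and neither $h_R$ nor its $\arc_t$-sheafification need be finitary for arbitrary $R$. One could rescue the sheafification-and-Yoneda strategy by instead adapting the coherent-topos machinery of \S\ref{ss:ExcSheaf} to the $\arc_t$-setting (where representables are coherent regardless of finite presentation and one has the analog of Lemma~\ref{ArcPresheafSheafCrit}), but that is a substantially different route from what you wrote, and the two-line \v{C}ech argument renders it unnecessary here.
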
 
\begin{proof} 
Let $F$ be the functor on $A$-algebras given by $F(S) = R \Gamma( \spec(
\hat{S}_t[1/t]), \sF)$. 
We have just seen (Corollary~\ref{BerkDesc2}) that $F$ satisfies
$\arc_t$-descent. 
Now $R \to R'$ is an 
$\arc_t$-cover, so \begin{equation} \label{limauxdiag} F(R) \simeq \varprojlim ( F(R') \rightrightarrows F(R'
\otimes_R R') \triplearrows \dots ). \end{equation}Moreover, 
$R' \otimes_R R'  \to R'$ is an $\arc_t$-cover as well, and taking the \v{C}ech
nerve of $R' \otimes_R R' \to R'$, we find that 
$F( R' \otimes_R R') \simeq F(R')$, and similarly $F(R' \otimes_R R' \otimes_R
R') \simeq F(R')$, etc. Substituting this back into 
\eqref{limauxdiag} gives the claim. 
\end{proof}

Finally, as an application of Corollary~\ref{BerkDescent}, we obtain the following K\"unneth formula. 

\begin{proposition}[K\"unneth formula] 
\label{kunnethformula}
Let $V$ be an absolutely integrally closed valuation ring of rank $1$ with
pseudouniformizer $\pi$. Fix a prime number $\ell$ prime to the characteristic of
the residue field of $V$. Consider the functor $\widetilde{F}\colon 
\mathrm{Ring}_V \to \D( \mathbb{F}_\ell)^{\geq 0}$ given by $\widetilde{F}(R) = R
\Gamma( \spec( \hat{R}_\pi[1/\pi]), \mathbb{F}_\ell)$. 
Then $\widetilde{F}$ is a symmetric monoidal functor. That is, for any
$V$-algebras $R, R'$, the map
\[  R
\Gamma( \spec( \hat{R}_\pi[1/\pi]), \mathbb{F}_\ell) 
\otimes_{\mathbb{F}_\ell} R
\Gamma( \spec( \widehat{R'}_\pi[1/\pi]), \mathbb{F}_\ell)
\to 
R \Gamma( \spec( \widehat{(R\otimes_V R')}_\pi[1/\pi]), \mathbb{F}_\ell) 
\]
is an equivalence. 
\end{proposition}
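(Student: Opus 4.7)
The strategy is to reduce the K\"unneth equivalence, via iterated descent in the $\arc_t$-topology, to a concrete computation involving complete absolutely integrally closed rank $1$ valuation $V$-algebras.

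First, I would verify that both sides of the natural map
\[ \widetilde{F}(R) \otimes_{\mathbb{F}_\ell} \widetilde{F}(R') \to \widetilde{F}(R \otimes_V R') \]
are finitary $\arc_t$-sheaves as functors of $R$, with $R'$ fixed. The right side is such a sheaf because $\widetilde{F}$ is one by Corollary~\ref{BerkDescent}, while the base change $R \mapsto R \otimes_V R'$ sends $\arc_t$-covers to $\arc_t$-covers and commutes with formation of \v{C}ech nerves. For the left side, tensoring with the fixed object $\widetilde{F}(R')$ preserves the totalizations involved in the $\arc_t$-descent axiom, using that $\D(\mathbb{F}_\ell)^{\geq 0}$ is compactly generated by cotruncated objects and totalizations commute with filtered colimits in this setting (Lemma~\ref{univFclosedfiltlimit}).

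Since every $\arc_t$-sheaf is automatically an $\arc$-sheaf, I would then invoke the detection principle Corollary~\ref{detectequivfinitaryarc} to reduce the K\"unneth equivalence to checking it on values at absolutely integrally closed rank $\leq 1$ valuation $V$-algebras $W$. When $\pi$ is zero or a unit in $W$, both sides vanish since $\hat{W}_\pi[1/\pi] = 0$. Otherwise $\pi$ is a pseudouniformizer in $W$; by invariance of $\widetilde{F}$ under $\pi$-adic completion (Example~\ref{examplesofarctequiv}, Proposition~\ref{arctinvariance}) we may assume $W$ is $\pi$-adically complete. Then $\hat{W}_\pi[1/\pi] = W[1/\pi]$ is the algebraically closed fraction field of $W$, so $\widetilde{F}(W) \simeq \mathbb{F}_\ell$ concentrated in degree zero, and the left side of the K\"unneth map reduces to $\widetilde{F}(R')$. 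The assertion becomes $\widetilde{F}(R') \xrightarrow{\sim} \widetilde{F}(W \otimes_V R')$ for every $V$-algebra $R'$. Running the same descent argument on the variable $R'$, we are left with the single residual claim: for $W, W'$ any pair of complete absolutely integrally closed rank $1$ valuation $V$-algebras with $\pi$ a pseudouniformizer, $\widetilde{F}(W \otimes_V W') \simeq \mathbb{F}_\ell$.

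This final computation is the main obstacle. My plan is to apply the Fujiwara--Gabber comparison (Theorem~\ref{GabberFujiwarathm}) to the $\pi$-henselization in order to identify $\widetilde{F}(W \otimes_V W')$ with the algebraic \'etale cohomology of $\spec((W \otimes_V W')^h[1/\pi])$, a scheme built from the algebraically closed extensions $W[1/\pi]$ and $W'[1/\pi]$ of the algebraically closed nonarchimedean field $K := V[1/\pi]$. One then concludes by combining the classical invariance of $\mathbb{F}_\ell$-\'etale cohomology under extensions of algebraically closed base fields (applicable since $\ell$ is prime to the residue characteristic and hence to the characteristic of $K$) with the trivial computation $R\Gamma(\spec(K), \mathbb{F}_\ell) = \mathbb{F}_\ell$. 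Alternatively, the last step may be formulated as a K\"unneth formula in rigid analytic \'etale cohomology over the complete algebraically closed nonarchimedean field $K$, for which one can combine Theorem~\ref{rigidartinintro} to bound cohomological dimension with classical acyclicity arguments for affinoids over such bases.
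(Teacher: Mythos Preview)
Your reduction to the case where $R$ and $R'$ are absolutely integrally closed rank $\leq 1$ valuation $V$-algebras is correct and matches the paper's approach. (One minor point: your justification that tensoring with $\widetilde{F}(R')$ preserves totalizations is not quite the right reason. It is not a consequence of totalizations commuting with filtered colimits; rather, over the field $\mathbb{F}_\ell$ the tensor product is exact and hence commutes with finite limits, and in $\D(\mathbb{F}_\ell)^{\geq 0}$ each cohomology group of a totalization depends only on finitely many terms, so totalizations behave as finite limits for this purpose. The paper makes exactly this point in a footnote.)

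The genuine gap is in your final step, the computation $\widetilde{F}(W \otimes_V W') \simeq \mathbb{F}_\ell$. Your first proposal---passing via Fujiwara--Gabber to $(W \otimes_V W')^h[1/\pi]$ and invoking invariance under algebraically closed field extensions---does not work as stated: the ring $(W \otimes_V W')^h[1/\pi]$ is \emph{not} the tensor product $W[1/\pi] \otimes_K W'[1/\pi]$ of algebraically closed fields (that would be $(W \otimes_V W')[1/\pi]$, with no henselization), and there is no evident way to realize it as a base change to which the classical invariance theorem applies. Henselizing along $\pi$ and then inverting $\pi$ is precisely the operation whose effect on cohomology you are trying to understand. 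Your alternative proposal, invoking Theorem~\ref{rigidartinintro}, is circular: the proof of that theorem (via Proposition~\ref{rigidartinconstant} and the tensor-power trick) depends on the K\"unneth formula you are proving here.

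The paper's argument for this final step is different and uses the main tool of the paper. One applies the formal glueing square~\eqref{fibsquare5} to $W \otimes_V W'$, obtaining a cartesian square with corners $R\Gamma(\spec(W \otimes_V W'), \mathbb{F}_\ell)$, $R\Gamma(\spec(W[1/\pi] \otimes_K W'[1/\pi]), \mathbb{F}_\ell)$, $R\Gamma(\spec((W/\pi) \otimes_{V/\pi} (W'/\pi)), \mathbb{F}_\ell)$, and $\widetilde{F}(W \otimes_V W')$. The second and third are $\mathbb{F}_\ell$ by the classical K\"unneth formula for qcqs schemes over the separably closed fields $K$ and $V/\mathfrak{m}_V$. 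The first---the \'etale cohomology of the integral tensor product over the rank~$1$ valuation ring $V$---is $\mathbb{F}_\ell$ by a result of Huber \cite[Cor.~4.2.7]{HuberBook}. This is the nontrivial external input your proposal is missing; once it is supplied, the cartesian square forces $\widetilde{F}(W \otimes_V W') \simeq \mathbb{F}_\ell$.
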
 
\begin{proof} 
Since the functor $\widetilde{F}$ on $V$-algebras is a finitary
$\arc_\pi$-sheaf (\Cref{BerkDescent}), 
we can work locally on $R, R'$ separately to reduce to the case where $R,
R'$ are themselves absolutely integrally closed rank $\leq 1$ valuation rings
(under $V$), thanks to Corollary~\ref{detectequivfinitaryarc}.\footnote{Here we
implicitly use that in $D(\mathbb{F}_l)^{\geq 0}$, the tensor product commutes
with totalizations in each variable. This follows since the tensor product
commutes with finite limits in each variable, and a totalization in any range
of degrees is given as a finite limit.} 
If $\pi = 0$ or $\pi$ is a unit in either of $R, R'$, then both terms vanish, so we may assume that $\pi$ is a pseudouniformizer in
each of $R, R'$. 
In this case, $\widetilde{F}(R), \widetilde{F}(R') = \mathbb{F}_\ell$ since the
\'etale cohomology of $\hat{R}_\pi[1/\pi]$ is equal to that of the
algebraically closed field $R[1/\pi]$, and similarly for $R'$. 
It remains to determine $\widetilde{F}( R \otimes_V R')$, i.e., the \'etale cohomology of 
$\widehat{(R\otimes_V R')}_\pi[1/\pi]$.

For this, we use a formal glueing square. Let $K$ denote the fraction field
of $V$ and let $R_K, R'_K$ denote the associated base changes.  
In view of the square \eqref{fibsquare5}, we obtain a homotopy pullback diagram
\[ \xymatrix{
R \Gamma( \spec( R \otimes_V R'), \mathbb{F}_\ell) \ar[d]  \ar[r] & 
R \Gamma( \spec (R_K \otimes_K R'_K), \mathbb{F}_\ell) \ar[d]  \\
R \Gamma( \spec( (R/\pi) \otimes_{(V/\pi)} (R'/\pi)), \mathbb{F}_\ell) \ar[r] &  
\widetilde{F}(R \otimes_V R')
}. \]
Now the top right and bottom left squares are just $\mathbb{F}_l$, thanks to
the K\"unneth formula in \'etale cohomology for qcqs schemes over a separably
closed field \cite[Cor.~1.11]{finitude} and a limit argument.  
The top left square is $\mathbb{F}_\ell$ by a result of Huber, \cite[Cor.~4.2.7]{HuberBook}. 
Therefore, $\widetilde{F}( R \otimes_V R') \simeq \mathbb{F}_\ell$ and the natural map 
$\widetilde{F}(R) \otimes_{\mathbb{F}_\ell} \widetilde{F}(R')\to \widetilde{F}(R
\otimes_V R')$ is an equivalence. 
This proves that $\widetilde{F}$ is symmetric monoidal. 
\end{proof} 

\begin{remark} 
The assumption that $\ell$ is invertible on the residue field of $V$ in Proposition~\ref{kunnethformula} is necessary. Indeed, even if $K$ is a nonarchimedean algebraically closed field of characteristic zero with residue  characteristic $p$,  the analog of Proposition~\ref{kunnethformula} fails for the functor $R \mapsto R\Gamma(\spec(\widehat{R}[1/p]), \mathbf{F}_p)$, already on the category of smooth $\mathcal{O}_K$-algebras. Nevertheless, one can show that this functor can be realized as the $\varphi$-fixed points on another functor (the global sections of the tilted structure sheaf, as in \cite{ScholzeRigid}, on the rigid space $\mathrm{Spa}(\widehat{R}[1/p])$ attached to $\mathrm{Spf}(\widehat{R}$)) that does satisfy a K\"unneth formula in a certain completed sense. 
\end{remark} 

\subsection{A variant of excision}
We end with a slight  generalization of excision; the statement is formulated in a fashion that could be potentially useful in relating the \'etale cohomology of different formal schemes giving the same Berkovich space.  First, we record the following simple observation. 
\begin{proposition} 
\label{univhomeo}
Let $Y \to X$ be a universal homeomorphism of qcqs $R$-schemes, for $R$ a base
ring. If $\sF$ is a
$v$-sheaf on $\mathrm{Sch}_{qcqs, R}$ with values in any $\infty$-category that
has all small limits, then  $\sF(X) \to \sF(Y)$ is an equivalence. 
\end{proposition}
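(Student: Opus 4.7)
The plan is to combine $v$-descent for $Y \to X$ with the observation that each iterated diagonal appearing in the \v{C}ech nerve is itself a $v$-cover of a particularly simple kind. First, any universal homeomorphism $f\colon Y \to X$ is a $v$-cover: given a map $\spec(V) \to X$ from a valuation ring, the surjectivity of $f$ produces a preimage in $Y$ of the image of the generic point of $\spec(V)$; passing to a valuation ring in the resulting extension of residue fields and then invoking the valuative criterion for the universally closed map $f$ yields the required extension $\spec(W) \to Y$.

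The key observation is that each iterated diagonal $\Delta^{(n)} \colon Y \to Y^{n+1/X} := Y \times_X \cdots \times_X Y$ is itself a universal homeomorphism. It is a monomorphism (being a diagonal) that is surjective on underlying topological spaces (since universal injectivity of $f$ is equivalent to surjectivity of $\Delta^{(1)}$, and this property is preserved by iterating). In particular, $\Delta^{(n)}$ is a $v$-cover by the first step. Moreover, as a monomorphism, the \v{C}ech nerve of $\Delta^{(n)}$ in schemes is constant at $Y$ (every iterated fiber product $Y \times_{Y^{n+1/X}} \cdots \times_{Y^{n+1/X}} Y$ collapses via the diagonal). Applying the $v$-sheaf property of $\sF$ to the cover $\Delta^{(n)}$ therefore yields an equivalence $\sF(\Delta^{(n)}) \colon \sF(Y^{n+1/X}) \xrightarrow{\sim} \sF(Y)$.

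To finish, applying $v$-descent to the cover $Y \to X$ gives $\sF(X) \simeq \lim_{\Delta} \sF(Y^{\bullet+1/X})$. The compatibilities $d^i \circ \Delta^{(n+1)} = \Delta^{(n)}$ (where $d^i$ is a projection omitting a factor) and $s^i \circ \Delta^{(n)} = \Delta^{(n+1)}$ (where $s^i$ is the diagonal duplicating a factor) imply that, under the equivalences $\sF(\Delta^{(n)})$ of the previous step, every coface and codegeneracy of $\sF(Y^{\bullet+1/X})$ corresponds to the identity on $\sF(Y)$. Consequently, $\sF(Y^{\bullet+1/X})$ is equivalent to the constant cosimplicial object at $\sF(Y)$, whose limit is $\sF(Y)$, and we conclude $\sF(X) \simeq \sF(Y)$. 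The main subtlety lies in ensuring that the level-wise equivalences in step 2 assemble into an equivalence of cosimplicial objects rather than a mere termwise agreement; this is precisely what makes the totalization trivial.
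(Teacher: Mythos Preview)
Your approach matches the paper's exactly: show $Y\to X$ is a $v$-cover, then observe that the diagonal $Y\to Y\times_X Y$ is again a universal homeomorphism (hence a $v$-cover), use that its \v{C}ech nerve collapses because it is a monomorphism to get $\sF(Y)\simeq\sF(Y\times_X Y)$ and similarly for higher fiber products, and conclude.

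One small imprecision: you justify ``$\Delta^{(n)}$ is a universal homeomorphism'' by saying it is a surjective monomorphism, but a surjective monomorphism of schemes need not be a universal homeomorphism (e.g., the normalization-style map $(X\setminus\{x\})\sqcup\{x\}\to X$ for a closed point $x$ of an irreducible variety is a bijective monomorphism that is not even a $v$-cover). The clean fix is either to note that $\Delta^{(n)}$ is a section of any projection $Y^{n+1/X}\to Y$, which is a universal homeomorphism by base change, or to use that the diagonal is an \emph{immersion} (not merely a monomorphism), and a surjective immersion is automatically a closed immersion and hence a universal homeomorphism. With this adjustment, your argument is complete and coincides with the paper's.
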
 
\begin{proof} 
This is similar to Proposition~\ref{arctinvariance}. 
Since $Y \to X$ is a universal homeomorphism, it is clearly a $v$-cover (as we
can lift specializations). Thus, we have
\begin{equation} \label{sFXexpr} \sF(X) \simeq \varprojlim ( \sF(Y) \rightrightarrows \sF(Y \times_X Y)
\triplearrows \dots ).   \end{equation}

Moreover, the map $\Delta\colon  Y \to Y \times_X Y$ is a universal homeomorphism too, and
hence a $v$-cover,
so we recover an expression for $\sF(Y \times_X Y)$ in terms of 
$\sF(Y), \sF(Y \times_{Y \times_X Y} Y), \dots$; since the 
map $\Delta$ is an immersion, this simplifies to $\sF(Y) \simeq \sF(Y \times_X
Y)$. This also holds for the iterated fiber products of $Y$ over $X$. 
Returning to \eqref{sFXexpr}, we find also that $\sF(X) \simeq \sF(Y)$ as
desired. 
\end{proof}

\begin{theorem}
\label{TICarc}
Let $A \to B$ be a map of commutative rings, and let $t \in A$ be an element
which is a nonzerodivisor on both $A$ and $B$ and such that $A^{tic} \simeq
B^{tic}$, where $A^{tic}$ denotes the total integral closure of $A$ in $A[1/t]$
(\Cref{examplesofarctequiv}), and similarly for $B^{tic}$. 
Let $\mathcal{C}$ be an $\infty$-category that has all small limits. 
Let $\mathcal{F}$ be a $\mathcal{C}$-valued  $\arc$-sheaf on $\mathrm{Ring}_A$ (such as the functor from Proposition~\ref{vDescentEtaleCoh}). Then the square
\[ \xymatrix{\mathcal{F}(A) \ar[r] \ar[d] & \mathcal{F}(A/tA) \ar[d] \\
		  \mathcal{F}(B) \ar[r] & \mathcal{F}(B/tB) }\]
is cartesian. 
\end{theorem}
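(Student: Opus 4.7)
The plan is to mimic the strategy of Proposition~\ref{ExcSquarePushout} and Theorem~\ref{FormalGlueArcSheaf}: I will show that the square of schemes in Theorem~\ref{TICarc} is a pushout in the $\arc$-topos after sheafification, and then conclude that $\mathcal{F}$ carries it to a pullback in $\mathcal{C}$. Fix an uncountable strong limit cardinal $\kappa$ so that both $A$ and $B$ have cardinality $<\kappa$, and work inside $\mathrm{Shv}_{\arc}(\mathrm{Sch}_{qcqs,A,<\kappa})$.

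The key algebraic input is that $R^{tic}[1/t] = R[1/t]$ whenever $t \in R$ is a nonzerodivisor (since $R \subset R^{tic} \subset R[1/t]$), so the hypothesis $A^{tic} \simeq B^{tic}$ forces $A[1/t] \simeq B[1/t]$ via the map induced by $A \to B$. Using this, the first step is to verify that $\spec(A/t) \sqcup \spec(B) \to \spec(A)$ is an $\arc$-cover: given a rank $\leq 1$ valuation ring $V$ and a map $A \to V$, the image of $t$ in $V$ is either zero (and the map factors through $A/t$), a unit (so $A \to A[1/t] \simeq B[1/t] \to V$ produces a factorization $B \to V$), or a pseudouniformizer (so $V$ is totally integrally closed in its fraction field because it has rank $1$, and therefore $A \to V$ extends to $A^{tic} \to V \simeq B^{tic} \to V$ and factors through $B$).

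Next, following Proposition~\ref{ExcSquarePushout}, let $Q$ be the pushout of $\spec(A/t) \gets \spec(B/t) \to \spec(B)$ in the $\infty$-category of presheaves of spaces. Since $\spec(B/t) \to \spec(B)$ is a monomorphism, $Q$ is discrete and its $\arc$-sheafification is coherent by Lemma~\ref{PushoutCritCoh}. I will show that the natural map $\eta\colon Q \to \spec(A)$ is an isomorphism of $\arc$-sheaves by checking via Lemma~\ref{ArcPresheafSheafCrit} that $\eta(V)$ is bijective for each rank $\leq 1$ valuation ring $V$ of cardinality $<\kappa$. Surjectivity is the content of the previous paragraph. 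For injectivity, fix $x_1, x_2 \in Q(V)$ with the same image $y \in \spec(A)(V)$: if $t = 0$ in $V$, any class represented by $B \to V$ factors through $B \to B/t \to V$ and is thus identified under the pushout relation with the unique factorization $A/t \to V$ of $A \to V$, so $x_1 = x_2$; if $t$ is a unit or pseudouniformizer in $V$, then $t \neq 0$, so neither $x_1$ nor $x_2$ can come from $\spec(A/t)(V)$, and both are represented by lifts $B \to V$, which are unique because the composite $B \to B[1/t] \simeq A[1/t] \to V[1/t]$ is determined by $A \to V$ while $V \to V[1/t]$ is injective.

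Given $Q \simeq \spec(A)$ as $\arc$-sheaves, the conclusion follows exactly as in Corollary~\ref{ExcArcSheafPull} and Corollary~\ref{FormalGlueArcSheafGeneral}: any $\arc$-sheaf $\mathcal{F}$ valued in a limit-admitting $\infty$-category carries the pushout to a pullback in $\mathcal{C}$. The main obstacle is the injectivity verification in the pseudouniformizer case, where the full hypothesis $A^{tic} \simeq B^{tic}$ (rather than just $A[1/t] \simeq B[1/t]$) is indispensable: it is total integral closedness of rank $1$ valuation rings that simultaneously guarantees existence of the lift $B \to V$ and forces it to coincide with the one coming from $B \subset B^{tic} = A^{tic}$.
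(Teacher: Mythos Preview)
Your proof is correct and takes a genuinely different route from the paper's. The paper introduces the auxiliary ring $A' := B \times_{B/tB} A/tA$, so that $(A',tB) \to (B,tB)$ is an excision datum; since $\mathcal{F}$ is already known to satisfy excision (Corollary~\ref{ExcArcSheafPull}), the square with $A'$ in place of $A$ is cartesian, and it remains to show $\mathcal{F}(A) \simeq \mathcal{F}(A')$. For this the paper checks that $\spec(A') \to \spec(A)$ is a universal homeomorphism (integrality uses the bounded-denominators condition coming from $A^{tic} \simeq B^{tic}$) and invokes Proposition~\ref{univhomeo}. Your approach instead bypasses the intermediate ring entirely and proves directly, in the style of Proposition~\ref{ExcSquarePushout} and Theorem~\ref{FormalGlueArcSheaf}, that the square of schemes becomes a pushout upon $\arc$-sheafification; the valuation-ring-by-valuation-ring verification is clean and uses exactly the right ingredients (total integral closedness of rank $1$ valuation rings for surjectivity, and $A[1/t] \simeq B[1/t]$ together with injectivity of $V \hookrightarrow V[1/t]$ for injectivity). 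The paper's route has the advantage of being modular---it reuses excision and universal homeomorphism invariance as black boxes---while your route is more self-contained and arguably more transparent about exactly which valuation rings see the obstruction. One small expositional point: your closing remark locates the essential use of $A^{tic} \simeq B^{tic}$ in the injectivity step, but in fact your injectivity argument only uses the weaker consequence $A[1/t] \simeq B[1/t]$; the full total-integral-closure hypothesis is what makes the surjectivity (existence of the lift $B \to V$) go through in the pseudouniformizer case.
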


The conditions on $A \to B$ appearing above are satisfied, for example, if $A \to B$ is an integral map of $t$-torsionfree rings such that $A[1/t] \simeq B[1/t]$. 

\begin{proof}
Set $A' := B \times_{B/tB} A/tA$, so we have a factorization $(A,tA) \to (A',tB)
\to (B,tB)$ of maps of pairs with the second map being an excision datum. As
$\mathcal{F}$ is excisive by Theorem~\ref{mainthm} and
Corollary~\ref{ExcArcSheafPull}, it is enough to show that $\mathcal{F}(A) \simeq \mathcal{F}(A')$.  By Proposition~\ref{univhomeo}, it suffices to show that $\mathrm{Spec}(A') \to \mathrm{Spec}(A)$ is a universal homeomorphism. 

We first check that $A \to A'$ is integral. As $A'$ is an extension of $A/tA \simeq A'/tB$ by $tB$, we can write any $x \in A'$ as $a + tb$ where $a \in A$ and $tb \in tB \subset A'$. To show integrality of $x$ over $A$, it is enough to show the integrality of $tb$ over $A$. But $A^{tic} \simeq B^{tic}$, so the element $t^n b^n \in B$ actually lies in $A$ (in fact, in $t^{n-c}A$ for some constant $c$) for $n \gg 0$, which proves integrality. 

As integral maps are universally closed, to see that $\mathrm{Spec}(A') \to
\mathrm{Spec}(A)$ is a universal homeomorphism, it suffices to show that
$f\colon A[1/t] \to A'[1/t]$ and $g\colon A/tA \to A'/tA'$ are universal
homeomorphisms on $\mathrm{Spec}(-)$. The claim for $f$ follows from $f$ being an isomorphism. For $g$, note that $A/tA \to A'/tB$ is an isomorphism by construction, so it is enough to check that the surjection $A'/tA' \to A'/tB$ has nilpotent kernel; equivalently, we must show that for any $tb \in tB$, we have $(tb)^n \in tA'$ for $n \gg 0$, which was already shown in the previous paragraph. 
\end{proof}

\newpage

\section{An application: Artin--Grothendieck vanishing in rigid geometry}

The classical Artin--Grothendieck vanishing theorem in algebraic geometry gives a bound on the cohomological dimension of affine varieties.

\begin{theorem}[{Artin--Grothendieck \cite[Cor.~3.2, Exp.~XIV]{SGA4}}] 
Let $k$ be a separably closed field and let $\ell$ be a prime number invertible in $k$.
Let $A $ be a finitely generated $k$-algebra of dimension
$d$. Let $\sF$ be an $\ell$-power torsion \'etale sheaf on $\spec(A)_{\et}$. Then $H^j(\spec(A), \sF) = 0$ for $j > d$.
\end{theorem}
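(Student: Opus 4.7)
The plan is to prove this classical statement by induction on the dimension $d$, reducing to the base case $d=1$ (Artin's theorem on affine curves) via a fibration argument, essentially following the approach of \cite[Exp.~XIV]{SGA4}. The base case itself is established via Riemann--Roch plus the structure theory of $\ell$-power \'etale covers of smooth affine curves over separably closed fields, using crucially that $\ell$ is invertible in $k$.

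First I would perform the standard reductions. Since \'etale cohomology commutes with filtered colimits of sheaves on noetherian schemes, one reduces to the case of constructible sheaves, and then via the long exact sequences associated to the filtration by $\ell^n$-torsion together with d\'evissage along a constructible stratification, one reduces further to the case $\sF = j_! \Lambda_U$ where $j\colon U \hookrightarrow \spec(A)$ is a locally closed immersion with $U$ smooth and $\Lambda = \mathbb{Z}/\ell^n$. By Noether normalization and the exactness of pushforward along a finite map (which also preserves constructibility), one then replaces $\spec(A)$ with $\mathbb{A}^d_k$ and $\sF$ with its pushforward. For the inductive step, consider the projection $p\colon \mathbb{A}^d_k \to \mathbb{A}^{d-1}_k$ whose fibers are copies of $\mathbb{A}^1_k$, and analyze the Leray spectral sequence
\[ E_2^{s,q} = H^s(\mathbb{A}^{d-1}_k, R^q p_* \sF) \Longrightarrow H^{s+q}(\mathbb{A}^d_k, \sF). \]
Over a dense open $V \subset \mathbb{A}^{d-1}_k$ chosen so that the restriction of $p$ to $p^{-1}(V)$ is sufficiently regular, proper base change together with the base case applied to the geometric fibers gives $R^q p_* \sF|_V = 0$ for $q > 1$. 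The inductive hypothesis applied on $V$ to the constructible sheaves $R^q p_* \sF|_V$ for $q \leq 1$, combined with excision for the bad locus $Z = \mathbb{A}^{d-1}_k \setminus V$ (handled by further induction, since both $Z$ and $p^{-1}(Z)$ have dimension strictly less than $d$), then yields the desired vanishing.

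The main obstacle is arranging the fiberwise vanishing $R^q p_* \sF = 0$ for $q > 1$ over a useful open set and then controlling behavior at the boundary. The classical solution is Artin's method of \emph{elementary fibrations} (Artin neighborhoods): smooth varieties admit an \'etale covering by elementary fibrations whose fibers are affine smooth curves over a base of strictly smaller dimension, which makes the fibration setup uniform. Combined with careful d\'evissage for the singular and boundary loci, which are handled inductively because of their lower dimension, this yields the desired bound $H^j(\spec(A), \sF) = 0$ for $j > d$.
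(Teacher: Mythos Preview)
The paper does not actually prove this theorem: it is stated as a classical result and attributed to \cite[Cor.~3.2, Exp.~XIV]{SGA4}, serving only as background and as an input to the proof of the rigid analytic analog (Theorem~\ref{rigidartin}) via Lemma~\ref{specialcaseartin}. So there is no ``paper's own proof'' to compare against beyond the SGA4 reference itself.

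Your sketch is recognisably the SGA4 strategy (induction on dimension via fibrations over a base of lower dimension, with the case $d=1$ as the key input), and in that sense it matches what the paper is pointing to. One technical slip worth flagging: for the projection $p\colon \mathbb{A}^d_k \to \mathbb{A}^{d-1}_k$ you invoke ``proper base change'' to control $R^q p_* \sF$, but $p$ is not proper. The actual SGA4 argument compactifies each fibration to a relative smooth projective curve minus an \'etale divisor (this is precisely the data of an elementary fibration), and then uses proper base change for the compactified morphism together with the excision triangle to get constructibility and the fiberwise description of $R^q p_*$. You allude to this at the end (``Artin's method of elementary fibrations''), but the earlier paragraph reads as if proper base change applies directly to the affine projection, which it does not. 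Once that is corrected, the outline is sound.
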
 

The assumption that $\ell \neq \mathrm{char}(k)$ is not
necessary: if $\ell= \mathrm{char}(k)$, we even have vanishing above degree $1$
(and even above degree $0$ if $\dim(A) = 0$). For this reason, in the sequel we focus on the interesting case discussed above.

In this section, we prove an analog of the classical Artin--Grothendieck
vanishing theorem in rigid analytic geometry, strengthening recent results of
Hansen \cite{HansenArtin}. We use the  basic objects of rigid analyic geometry, 
 cf.~\cite[Ch.~6--7]{BGR} for a general reference. 

\renewcommand{\O}{\mathcal{O}}

\begin{notation}
Throughout this section, we  let $K$ be a  complete, separably closed
nonarchimedean field with (nontrivial) absolute value $|\cdot|\colon  K \to
\mathbb{R}_{\geq 0}$. We let $\O_K \subset K$ be the ring of integers, and $\pi
\in \mathcal{O}_K$ a pseudouniformizer. We fix a prime number $\ell$ which is
different from the characteristic of the residue field of $K$. Write
$\mathcal{O}_K \langle X_1,...,X_n \rangle$ for the $\pi$-adic completion of the
polynomial ring $\mathcal{O}_K[X_1,...,X_n]$ and $T_n := K \langle X_1,...,X_n \rangle := \mathcal{O}_K \langle X_1,...,X_n \rangle[\frac{1}{\pi}]$
for the $n$-variable Tate algebra. 
In this section, all ring theoretic completions are $\pi$-adic ones unless otherwise specified.
We freely use the theory of \emph{topologically finite type} $K$-algebras (i.e.,
quotients of $T_n$ for some $n$) and their
completed tensor products and 
rational localizations, as treated in \emph{loc.~cit}. 
\end{notation}

We will consider the \'etale cohomology of topologically finite type $K$-algebras, considered as
abstract rings. It is known
that these agree with the \'etale cohomology of rigid analytic varieties (e.g., \cite[Theorem
3.2.1]{HuberBook}), but we will try to minimize the use of  this language for simplicity; nevertheless, this comparison provides the proper context for many of the statements that follow.

Our main result about \'etale cohomology of topologically finite type $K$-algebras is the following theorem. 

\begin{theorem} 
\label{rigidartin}
Let $K$ be a complete, algebraically closed nonarchimedean field. 
Suppose that $A$ is a $K$-algebra which is topologically finite type and let $d
=\mathrm{dim}(A)$. Let $\sF$ be a torsion abelian sheaf on $\spec(A)_{\et}$. 
Then:
\begin{enumerate}
\item We have  $H^i( \spec(A), \sF) = 0$ for $i > d+ 1$. 
\item If $\sF$ is an $\ell$-power torsion sheaf ($\ell$ prime to the
characteristic of the residue field of $K$), then we have $H^i( \spec(A), \sF) = 0$ for $i > d$. 
\end{enumerate}
\end{theorem}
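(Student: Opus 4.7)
The plan is to first reduce to the case $A = T_d$ via Noether normalization, then invoke the Künneth formula (Proposition~\ref{kunnethformula}) to reduce to the one-dimensional case, and finally use the affinoid comparison theorem to handle $d=1$. By Noether normalization for topologically finite type $K$-algebras, there exists a finite injective $K$-algebra homomorphism $T_d \hookrightarrow A$; the induced morphism $f\cl \spec(A) \to \spec(T_d)$ is finite, so $H^i(\spec(A), \sF) \simeq H^i(\spec(T_d), f_* \sF)$ and $f_* \sF$ remains torsion (resp.\ $\ell$-power torsion). Thus it suffices to prove both parts for $A = T_d$ with arbitrary torsion (resp.\ $\ell$-power torsion) coefficients.

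For part (2), one observes that $\mathcal{O}_K \langle X_1, \ldots, X_d \rangle$ is the $\pi$-adic completion of $\mathcal{O}_K[X_1, \ldots, X_d] \simeq \mathcal{O}_K[X_1] \otimes_{\mathcal{O}_K} \cdots \otimes_{\mathcal{O}_K} \mathcal{O}_K[X_d]$. Since $\mathcal{O}_K$ is an absolutely integrally closed rank $1$ valuation ring with pseudouniformizer $\pi$ and $\ell$ is prime to the residue characteristic, iterated application of Proposition~\ref{kunnethformula} yields
\[ R\Gamma(\spec(T_d), \mathbb{F}_\ell) \simeq R\Gamma(\spec(T_1), \mathbb{F}_\ell)^{\otimes_{\mathbb{F}_\ell} d}. \]
Concentration of $R\Gamma(\spec(T_1), \mathbb{F}_\ell)$ in degrees $[0,1]$ then forces concentration of $R\Gamma(\spec(T_d), \mathbb{F}_\ell)$ in degrees $[0,d]$. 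To upgrade this from constant $\mathbb{F}_\ell$-coefficients to vanishing of $H^i(\spec(T_d), \sF)$ for $i > d$ and arbitrary $\ell$-power torsion $\sF$, one bounds the cohomological $\ell$-dimension by applying the same strategy after passing to étale covers (reducing, via further Noether normalizations applied fiberwise, back to the Tate algebra setting).

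The base case $d = 1$ demands $H^i(\spec(T_1), \mathbb{F}_\ell) = 0$ for $i \geq 2$. Via the affinoid comparison theorem (Huber's theorem in the strongly noetherian setting, which applies here, or equivalently a consequence of Corollary~\ref{BerkDescent}), this identifies $R\Gamma(\spec(T_1), \mathbb{F}_\ell)$ with the analytic étale cohomology of the rigid closed unit disk, whose vanishing above degree one is a classical fact about one-dimensional smooth affinoids.

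For part (1), the extra $+1$ bound becomes necessary when $\ell$ equals the residue characteristic and $A$ is non-smooth, a setting in which Proposition~\ref{kunnethformula} does not directly apply. Here we anticipate a resolution-style dévissage, reducing to a smooth cover where part (2) applies, and absorbing the non-smoothness discrepancy into a single additional cohomological degree. The principal obstacle throughout is twofold: establishing the base case $d=1$ (which relies on classical rigid geometric input through the affinoid comparison theorem, rather than the pure $\arc$-descent formalism) and justifying the $+1$ bound in the residue-characteristic case, where the Künneth reduction is unavailable and one must work directly with the local structure of the singularities of $A$.
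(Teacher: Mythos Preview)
Your plan has a genuine gap at the step where you pass from constant $\mathbb{F}_\ell$-coefficients on $T_d$ to arbitrary $\ell$-power torsion coefficients. The K\"unneth formula of Proposition~\ref{kunnethformula} indeed gives $R\Gamma(\spec(T_d),\mathbb{F}_\ell)\in\D(\mathbb{F}_\ell)^{\leq d}$ (in fact it is just $\mathbb{F}_\ell[0]$). But your proposed bootstrap ``pass to \'etale covers and Noether normalize back to Tate algebras'' is circular. An \'etale $T_d$-algebra $B$ is not topologically finite type over $K$, so Noether normalization for affinoids does not apply to it; and if instead you push a general sheaf down along a finite map $f\colon\spec(A)\to\spec(T_d)$, the pushforward $f_*\sF$ is not constant, so K\"unneth says nothing about $R\Gamma(\spec(T_d),f_*\sF)$. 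In short: K\"unneth controls constant coefficients on $T_d$ only, while the d\'evissage from constructible to constant coefficients requires the constant-coefficient bound on \emph{all} topologically finite type algebras of dimension $\leq d$ (since one must pass to finite covers and closed subschemes, which are again topologically finite type but not Tate algebras). You have proved the former and need the latter; neither implies the other by the moves you list.

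The paper breaks this circularity by establishing the constant-coefficient bound for \emph{every} topologically finite type $K$-algebra directly, without K\"unneth. The mechanism is an algebraization step (Lemma~\ref{specialcaseartin}): any algebra finite \'etale over a rational localization $T_n\langle f_1,\dots,f_m/g\rangle$ has the same \'etale cohomology as a filtered colimit of finite-type $K$-algebras of dimension $n$ (via Elkik/Gabber--Ramero approximation and Fujiwara--Gabber), so classical Artin--Grothendieck applies. One then runs an induction on $d$: Noether-normalize $R$ over $T_d$, make the map generically finite \'etale, cover $\spec(T_d)$ by the rational subsets $\{|f|\geq|\pi^r|\}$ and $\{|f|\leq|\pi^r|\}$, apply analytic Mayer--Vietoris (Proposition~\ref{analyticdescent}), handle the first piece by Lemma~\ref{specialcaseartin}, and let $r\to\infty$ on the second piece using Proposition~\ref{qcbase} and the inductive hypothesis. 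This yields the $d+1$ bound for all torsion constant sheaves (hence part~(1) after the standard d\'evissage), and \emph{only then} does K\"unneth enter, via the tensor-power trick $R\Gamma(\spec(A),\mathbb{F}_\ell)^{\otimes 2}\simeq R\Gamma(\spec(A\hat\otimes_K A),\mathbb{F}_\ell)\in\D^{\leq 2d+1}$, forcing the improvement to $d$ for part~(2). Your proposal inverts this order and thereby loses the crucial algebraization input; it also leaves part~(1) entirely unaddressed, whereas in the paper the $d+1$ bound is exactly what the inductive argument produces before the K\"unneth improvement.
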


This extends recent results of Hansen \cite{HansenArtin}. In particular, in
Theorem 1.3 of {\em loc.\ cit.} part (2) of  the above result is proved in the case
when $A$ descends to a discretely valued field and $K$ has characteristic
zero.\footnote{Hansen \cite{HansenArtin} works with Zariski-constructible
sheaves on the \'etale site of the associated rigid analytic space.} 
Our result confirms Hansen's Conjecture 1.2 when $K$ has characteristic zero,
in view of the comparison \cite[Theorem 1.7]{HansenArtin}. 

\begin{remark} 
We would expect that part (2) of Theorem~\ref{rigidartin} holds for arbitrary torsion sheaves. By part (1), this amounts to showing the following: if $A$ is a topologically finite type $K$-algebra of dimension $d$ and $p$ is the residue characteristic of $K$, then $H^{d+1}(\spec(A), \sF) = 0$ for all $p$-torsion sheaves $\sF$ on $\spec(A)_{\et}$. Two special cases of this expectation are within reach: 
\begin{enumerate}
\item If $K$ itself has characteristic $p$, then this assertion is straightforward: the $\mathbb{F}_p$-\'etale cohomological dimension of any affine $\mathbb{F}_p$-scheme is $\leq 1$ (thanks essentially to the Artin-Schreier sequence). 
\item The algebraization method used to prove Lemma~\ref{specialcaseartin} below can be adapted to prove this statement when $A$ is smooth (in the sense of rigid spaces) and $\sF$ is constant, thanks to \cite[Theorem 7 and Remark 2 on page 587]{Elkik}.
\end{enumerate}
Nevertheless, if $K$ has characteristic $0$ with its residue field having
characteristic $p$, the general case remains out of reach by our methods. One difficulty is
that that over such $p$-adic fields, $p$-adic \'etale cohomology behaves quite
differently on affinoids than its $\ell$-adic counterpart. For example, it is
almost never finite dimensional (unlike the $\ell$-adic case), even though it
does take finite dimensional values for proper rigid analytic varieties
\cite{ScholzeRigid}, \cite[Theorem 3.17]{Scholzesurvey}. More crucially, the
tensor product trick used in Proposition~\ref{rigidartinconstant} to improve the
bound from $d+1$ to $d$ in the $\ell$-adic case is not available in the $p$-adic
case. However, we have been informed by O.~Gabber that the result should be
provable using the partial algebrization techniques used in \cite{GO}. 
\end{remark}

We review some facts about the \'etale cohomology of affinoid varieties. 
First we need the K\"unneth formula. 

\begin{proposition}[K\"unneth formula]
\label{Kunnethaffinoid}
Given topologically finite type $K$-algebras $A, B$, the natural map
\[ R \Gamma( \spec(A) , \mathbb{F}_\ell) \otimes_{\mathbb{F}_\ell}  R \Gamma(
\spec(B), \mathbb{F}_\ell) \to R \Gamma( \spec(A \hat{\otimes}_K B),
\mathbb{F}_\ell) \]
is an equivalence in $\D(\mathbb{F}_\ell)$. 
\end{proposition}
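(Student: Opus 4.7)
The plan is to deduce this from the general Künneth formula for $\widetilde{F}(R) := R\Gamma(\spec(\widehat{R}_\pi[1/\pi]), \mathbb{F}_\ell)$ proved in Proposition~\ref{kunnethformula}, applied with $V = \mathcal{O}_K$. Note that $\mathcal{O}_K$ is a rank $1$ absolutely integrally closed valuation ring with pseudouniformizer $\pi$ (since $K$ is algebraically closed and nonarchimedean), and $\ell$ is invertible in its residue field, so the hypotheses apply.

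First, I would choose $\pi$-adically complete flat $\mathcal{O}_K$-algebras $A_0, B_0$ with $A_0[1/\pi] \simeq A$ and $B_0[1/\pi] \simeq B$; for instance, writing $A$ as a quotient of a Tate algebra $T_n = K\langle X_1, \ldots, X_n\rangle$, one takes $A_0$ to be the image of $\mathcal{O}_K\langle X_1, \ldots, X_n\rangle$ modulo its $\pi$-power torsion. Since $A_0$ is already $\pi$-adically complete, we have $\widetilde{F}(A_0) = R\Gamma(\spec(A), \mathbb{F}_\ell)$, and similarly $\widetilde{F}(B_0) = R\Gamma(\spec(B),\mathbb{F}_\ell)$. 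Proposition~\ref{kunnethformula} then supplies an equivalence
\[
R\Gamma(\spec(A),\mathbb{F}_\ell) \otimes_{\mathbb{F}_\ell} R\Gamma(\spec(B),\mathbb{F}_\ell) \xrightarrow{\sim} \widetilde{F}(A_0 \otimes_{\mathcal{O}_K} B_0).
\]

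Next, I would identify the target with $R\Gamma(\spec(A \hat{\otimes}_K B), \mathbb{F}_\ell)$. The canonical map $A_0 \otimes_{\mathcal{O}_K} B_0 \to A_0 \hat{\otimes}_{\mathcal{O}_K} B_0$ is the $\pi$-adic completion, hence an $\arc_\pi$-equivalence by Example~\ref{examplesofarctequiv}(2). By Proposition~\ref{arctinvariance}, $\widetilde{F}$ carries this map to an equivalence; since $A_0 \hat{\otimes}_{\mathcal{O}_K} B_0$ is (after killing $\pi$-power torsion, which does not affect $\widetilde{F}$ by Example~\ref{examplesofarctequiv}(1)) a formal model for $A \hat{\otimes}_K B$, this yields
\[
\widetilde{F}(A_0 \otimes_{\mathcal{O}_K} B_0) \xrightarrow{\sim} \widetilde{F}(A_0 \hat{\otimes}_{\mathcal{O}_K} B_0) = R\Gamma(\spec(A \hat{\otimes}_K B), \mathbb{F}_\ell).
\]
Composing with the previous display gives the desired equivalence.

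The main point to verify carefully is that the composite equivalence produced above agrees with the natural pullback map appearing in the statement, i.e., the one induced by the structure maps $A, B \to A \hat{\otimes}_K B$. This is a compatibility of symmetric monoidal structures, and should follow from the explicit construction of the map in Proposition~\ref{kunnethformula} together with naturality of $\widetilde{F}$; I expect this to be the only nontrivial bookkeeping step.
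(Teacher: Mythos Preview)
Your proof is correct and follows the same approach as the paper: choose open bounded subrings $A_0 \subset A$ and $B_0 \subset B$ and apply Proposition~\ref{kunnethformula} with $V = \mathcal{O}_K$. One simplification: your detour through $\arc_\pi$-equivalences in the second step is unnecessary, since by definition $\widetilde{F}(R) = R\Gamma(\spec(\widehat{R}_\pi[1/\pi]),\mathbb{F}_\ell)$ already completes its input, so $\widetilde{F}(A_0 \otimes_{\mathcal{O}_K} B_0) = R\Gamma(\spec((A_0 \widehat{\otimes}_{\mathcal{O}_K} B_0)[1/\pi]),\mathbb{F}_\ell) = R\Gamma(\spec(A \hat{\otimes}_K B),\mathbb{F}_\ell)$ directly.
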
 
\begin{proof} 
Let $A_0 \subset A, B_0 \subset B$ be open bounded subrings. 
Then $A_0, B_0$ are $\pi$-adically complete, and  $A \hat{\otimes}_K B$ is obtained by inverting $\pi$ in $A_0 \widehat{\otimes}_{\mathcal{O}_K} B_0$. The result now follows from the K\"unneth formula of
Proposition~\ref{kunnethformula}. 
\end{proof}

\newcommand{\spm}{\mathrm{Spm}}

Next, we need to observe that (algebraic) \'etale cohomology satisfies descent in the
analytic topology. 
We treat a special case of this. 

\begin{proposition}[Descent in the analytic topology] 
\label{analyticdescent}
Let $A$ be a topologically finite type $K$-algebra and let $f, g \in A$ generate the unit ideal. 
Then for any torsion abelian sheaf $\sF$ on $\spec(A)$, we have a pullback square
\[ \xymatrix{
R \Gamma( \spec(A), \sF) \ar[d] \ar[r] & 
R \Gamma( \spec(A\left \langle \frac{f}{g}\right\rangle), \sF) . \ar[d]  \\
R \Gamma( \spec(A\left \langle \frac{g}{f}\right\rangle), \sF) \ar[r] & 
 R \Gamma( \spec(A\left \langle \frac{f}{g}, \frac{g}{f}\right\rangle), \sF)
}\]
\end{proposition}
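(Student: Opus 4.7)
The plan is to deduce the square from $\arc_t$-descent for rigid \'etale cohomology (Corollary~\ref{BerkDescent}, with $t = \pi$), by lifting the situation to rings of definition and then recognizing the resulting Čech totalization as a Mayer--Vietoris pullback.

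Fix an open bounded $\mathcal{O}_K$-subalgebra $A_0 \subset A$ (so $A_0$ is topologically finitely presented over $\mathcal{O}_K$ and $A = A_0[\tfrac{1}{\pi}]$) containing $f$ and $g$, and write $B_0 := A_0\langle f/g\rangle$, $C_0 := A_0\langle g/f\rangle$, $D_0 := A_0\langle f/g, g/f\rangle$, each of which is $\pi$-adically complete and whose generic fiber recovers the corresponding affinoid. Extending $\sF$ to a torsion \'etale sheaf on $\spec(A_0)$ (say via pushforward along $\spec(A) \hookrightarrow \spec(A_0)$), Corollary~\ref{BerkDescent} tells us that $F(R) := R\Gamma(\spec(\hat{R}_\pi[\tfrac{1}{\pi}]), \sF)$ is a finitary $\arc_\pi$-sheaf on $A_0$-algebras, and by construction $F(A_0), F(B_0), F(C_0), F(D_0)$ are the four terms appearing in the desired square.

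Next I would verify that $A_0 \to B_0 \times C_0$ is an $\arc_\pi$-cover. Given any map $A_0 \to V$ to a rank $\leq 1$ absolutely integrally closed valuation ring $V$ in which $\pi$ is a pseudouniformizer, the images of $f, g$ in $V$ generate an ideal containing some $\pi^n V$, so the valuation forces one of them---say $g$---to divide the other. Replacing $V$ by its $\pi$-adic completion $\hat{V}$, still rank $\leq 1$ by Proposition~\ref{completevaluationring}, the Tate series defining $B_0 = A_0\langle X\rangle/(gX - f)$ converge in $\hat{V}$ under $X \mapsto f/g$, giving the required lift $B_0 \to \hat{V}$. Consequently, $\arc_\pi$-descent writes $F(A_0)$ as the totalization of $F$ applied to the Čech nerve of $A_0 \to B_0 \times C_0$.

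Finally, I would show that this totalization collapses to the stated pullback. The key claim is that each of the natural maps $B_0 \otimes_{A_0} B_0 \to B_0$, $C_0 \otimes_{A_0} C_0 \to C_0$, and $B_0 \otimes_{A_0} C_0 \to D_0$ is an $\arc_\pi$-equivalence: on a rank $\leq 1$ valuation ring $V$ in which $\pi$ is a pseudouniformizer, a map $B_0 \to V$ is uniquely determined by its restriction to $A_0$ (since $g$ is a nonzerodivisor in $V$, forcing $X \mapsto f/g$), and a compatible pair of maps $B_0 \to V, C_0 \to V$ patches after passing to $\hat{V}$ to a unique map from $D_0$. The hard part will be the final combinatorial reduction: showing rigorously (via the $\arc_\pi$-variant of Lemma~\ref{CohArcSheafIsomCrit}) that under these monomorphism-like properties, the $\infty$-categorical totalization of $F$ applied to the Čech nerve collapses to the Mayer--Vietoris pullback $F(B_0) \times_{F(D_0)} F(C_0)$, yielding the proposition.
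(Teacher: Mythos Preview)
Your approach is essentially the same as the paper's: apply $\arc_\pi$-descent (Corollary~\ref{BerkDescent}) to a cover of a ring of definition by two pieces, verify the cover, verify that the self-products are $\arc_\pi$-equivalent to the pieces themselves, and read off the Mayer--Vietoris pullback from the \v{C}ech nerve.

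Two small points. First, the paper works with the uncompleted algebras $S_1 = A_0[T]/(fT - g)$ and $S_2 = A_0[U]/(gU - f)$ rather than your completed $B_0, C_0$; this makes the tensor products $S_i \otimes_{A_0} S_j$ transparently of the same shape, so that e.g.\ $F(S_1 \otimes_{A_0} S_2)$ is visibly the bottom-right corner with no extra argument needed. Since completion is itself an $\arc_\pi$-equivalence (Example~\ref{examplesofarctequiv}(2)), your choice is harmless, but it costs you a line or two. Second, the ``hard part'' you flag is not hard and Lemma~\ref{CohArcSheafIsomCrit} is the wrong citation: the relevant input is Proposition~\ref{arctinvariance}, which says $F$ sends $\arc_\pi$-equivalences to equivalences. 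Once you know $F(S_i \otimes_{A_0} S_i) \simeq F(S_i)$ and $F(S_1 \otimes_{A_0} S_2) \simeq F(D_0)$, the \v{C}ech totalization of $F$ applied to the nerve of $A_0 \to S_1 \times S_2$ is the standard cosimplicial object computing a binary pullback, so the collapse is formal.
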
 

In the language of rigid spaces, this square captures the Mayer--Vietoris sequence for the affinoid space $X := \mathrm{Spa}(A,A^\circ)$ for $A$ attached to the open cover $X(\frac{f}{g}) := \{x \in X : |f(x)| \leq |g(x)|\}$ and $X(\frac{g}{f})$. In particular, the proposition follows immediately from the comparison between analytic and algebraic \'etale cohomology of affinoids. Alternately, we can argue directly as follows using the machinery of this paper.

\begin{proof} 
We consider the functor $F$ on $\mathcal{O}_K$-algebras given by $F(S) = R
\Gamma( \spec(\widehat{S}[1/\pi]), \sF)$, which we have seen satisfies
$\arc_\pi$-descent (\Cref{BerkDescent}). 

Let $A_0 \subset A$ be an open bounded subring; rescaling $f,g$ we can assume
that $f, g \in A_0$, and they generate an open ideal of $A_0$. 
We consider the $A_0$-algebras $S_1 =  A_0[T]/(fT - g) , S_2 = A_0[U]/(gU -
f)$ and $S= S_1 \times S_2$. Then the map $A_0 \to S$ is an $\arc_\pi$-cover and
the maps $S_1
\otimes_{A_0} S_1 \to S_1, \  S_2 \otimes_{A_0} S_2 \to S_2$ are $\arc_\pi$-equivalences. 
Form the \v{C}ech nerve of $A_0 \to S$ and apply the functor $F$ to it,
obtaining a limit diagram; in
light of the above observation and Proposition~\ref{arctinvariance}, this
translates to a pullback square
\[ \xymatrix{
F(A_0) \ar[d]  \ar[r] &  F(S_1) \ar[d]  \\
F(S_2) \ar[r] &  F(S_1 \otimes_{A_0} S_2). 
}\]
This is precisely the claimed pullback square. 
\end{proof}

Next, we review (what amounts to) a special case of Huber's quasi-compact base
change theorem, \cite[Sec.~4.4]{HuberBook}. More precisely, we prove a continuity property for \'etale cohomology of affinoids that roughly says that the \'etale cohomology of a Zariski closed subset of an affinoid can be calculated as the filtered colimit of the \'etale cohomology of rational subsets that contain it.

\begin{proposition} 
\label{qcbase}
Let $R$ be a topologically finite type $K$-algebra. Let $f \in R$. This gives an inductive sequence of topologically finite type $K$-algebras $\{R\left \langle f/\pi^r\right\rangle\}$ with transition maps $R\left \langle f/\pi^r\right\rangle \to R\left \langle f/\pi^{r+1}\right\rangle \to \cdots$. For any torsion sheaf $\sF$ on $\spec(R)_{\et}$, there is an equivalence 
\begin{equation} \label{qceq} \varinjlim_r \left(R \Gamma( \spec( R\left \langle f/\pi^r\right\rangle )
, \sF) \right) \simeq R \Gamma(\spec(R/f), \sF). \end{equation}
\end{proposition}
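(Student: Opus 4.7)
The plan is to realize the colimit in \eqref{qceq} as $F(B_\infty)$ for an explicit $\mathcal{O}_K$-algebra $B_\infty$ whose ``rigid generic fiber'' $\widehat{B_\infty}_\pi[1/\pi]$ equals $R/f$, then conclude via the finitariness of the generic-fiber \'etale cohomology functor established in Corollary~\ref{BerkDescent}.

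For the setup, choose a topologically finite type open bounded $\mathcal{O}_K$-subalgebra $R_0 \subset R$ with $R_0[1/\pi] = R$. After replacing $f$ by $\pi^m f$ for some $m \geq 0$ (which preserves $R/f$ since $\pi$ is a unit in $R$, and merely shifts the indexing of the colimit), we may assume $f \in R_0$. Define $B_r := R_0[T_r]/(\pi^r T_r - f)$, so $\widehat{B_r}_\pi[1/\pi] = R\langle f/\pi^r \rangle$. The given map $R\langle f/\pi^r \rangle \to R\langle f/\pi^{r+1} \rangle$ is induced by $B_r \to B_{r+1}$, $T_r \mapsto \pi T_{r+1}$, while the natural map $R\langle f/\pi^r \rangle \to R/f$ is induced by $B_r \to R_0/f$, $T_r \mapsto 0$. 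Set $B_\infty := \varinjlim_r B_r$, with compatible structure map $B_\infty \to R_0/f$.

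The key computation is that $\widehat{B_\infty}_\pi[1/\pi] = R/f$, compatibly with the structure map. Indeed, the relation $T_r = \pi T_{r+1}$ iterates to $T_r = \pi^n T_{r+n}$ for all $n \geq 0$, so $T_r \in \pi^n B_\infty$ for every $n$, i.e., $T_r \equiv 0 \pmod{\pi^n}$. Modulo $\pi^n$ the relation $\pi^r T_r = f$ then forces $f \equiv 0$, giving $B_\infty/\pi^n = R_0/(f,\pi^n) = (R_0/f)/\pi^n$. Passing to the inverse limit yields $\widehat{B_\infty}_\pi = \widehat{R_0/f}_\pi$. Since $R_0$ is noetherian and $\pi$-adically complete, the ideal $fR_0$ is closed (Artin–Rees), so $R_0/f$ is itself $\pi$-adically complete; hence $\widehat{B_\infty}_\pi[1/\pi] = (R_0/f)[1/\pi] = R/f$.

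To conclude, extend $\sF$ by zero along the open immersion $j \colon \spec(R) \hookrightarrow \spec(R_0)$ to a torsion sheaf $\sG := j_! \sF$ on $\spec(R_0)_{\et}$, and apply Corollary~\ref{BerkDescent} to the functor $F \colon S \mapsto R\Gamma(\spec(\widehat{S}_\pi[1/\pi]), \sG)$ on $R_0$-algebras. Since $\widehat{S}_\pi[1/\pi]$ is always an $R$-algebra and $\sG$ restricts to $\sF$ over any $R$-scheme, we obtain $F(B_r) = R\Gamma(\spec(R\langle f/\pi^r \rangle), \sF)$ and $F(B_\infty) = R\Gamma(\spec(R/f), \sF)$. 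The finitariness of $F$ then yields $\varinjlim_r F(B_r) \simeq F(B_\infty)$, which is precisely \eqref{qceq}. The only delicate step is the completion computation of the previous paragraph: one must check that the auxiliary variables $T_r$ all die in $\widehat{B_\infty}_\pi$, reflecting the geometric intuition that the shrinking rational subsets $\spec(R\langle f/\pi^r \rangle)$ ``converge'' to the Zariski closed subscheme $\spec(R/f)$.
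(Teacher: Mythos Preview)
Your proof is correct and follows essentially the same route as the paper: choose $R_0$, form the algebras $B_r = R_0[T_r]/(\pi^r T_r - f)$, and use the finitariness of $F$ from Corollary~\ref{BerkDescent} to identify the colimit with $F(R_0/f)$. The only difference is in the final identification: you compute the $\pi$-adic completion of $B_\infty$ directly (observing that all $T_r$ and $f$ die modulo each $\pi^n$), whereas the paper instead checks that $B_\infty \to R_0/f$ is an $\arc_\pi$-equivalence by testing against rank $1$ valuation rings and then invokes Proposition~\ref{arctinvariance}; these are two presentations of the same computation.
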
 
\begin{proof} 
Let $R_0 \subset R$ be an open bounded $\mathcal{O}_K$-subalgebra which is topologically finite type over $\mathcal{O}_K$. 
Rescaling $f$, we may assume $f \in R_0$ as well. 

We consider the functor $F\colon  \mathrm{Ring}_{R_0} \to \D(\mathbb{Z})^{\geq 0}$ given by
$F(T) = R \Gamma( \spec( \widehat{T}[1/\pi]), \sF)$. For each $r$, we consider the algebra $S_r := R_0[U_r]/( \pi^r U_r - f)$. 
We have a sequence of maps 
$S_1 \to S_2 \to \dots \to S_r \to S_{r+1} \to \dots$, where the map $S_r \to
S_{r+1}$ sends $U_r \mapsto \pi U_{r+1}$.  We have that 
$\widehat{S_r}[1/\pi] = R\left \langle f/\pi^r\right\rangle $. Also, $R_0/f$ is
a topologically finite type $\mathcal{O}_K$-algebra and hence $\pi$-adically complete.
Therefore, thanks to \Cref{BerkDescent} and \Cref{arctinvariance}, it suffices to show that the natural map 
\[ \varinjlim_r S_r \to R_0/f  \]
 is an $\arc_\pi$-equivalence. In other words, for any rank $1$ valuation ring $V$ (over $\mathcal{O}_K$) with pseudouniformizer $\pi$, we must show that every map $\varinjlim_r S_r \to V$ extends uniquely to maps $R_0/f \to V$. The uniqueness is immediate from the surjectivity of $\varinjlim_r S_r \to R_0/f$. For existence, we must show that $f \in R_0$ and each $U_r \in S_r$ maps to $0$ in $V$ under any map $\varinjlim_r S_r \to V$. The image of $f$ in $\varinjlim_r S_r$ is divisible by all powers of $\pi$, and hence must map to $0$ in $V$ as $V$ is $\pi$-adically separated. But then $U_r = 0$ since $\pi^r U_r = f$ in $S_r$ and $V$ is also $\pi$-torsionfree.
\end{proof}

Next, we prove a special case of Artin--Grothendieck vanishing. 
\begin{lemma} 
\label{specialcaseartin}
Let $f_1, \dots, f_m, g \in  T_n$ generate the unit ideal. 
Let $A$ be an algebra which is finite \'etale over $T_n \left \langle
 \frac{f_1, \dots, f_m}{g} \right\rangle$. Then 
 for any torsion abelian group $\Lambda$, 
 we have 
$R \Gamma( \spec(A), \Lambda) \in \D( \mathbb{Z})^{\leq n}$ (i.e.,
Artin--Grothendieck
vanishing holds for $A$).  
 \end{lemma}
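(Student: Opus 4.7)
The plan is to algebraize the finite \'etale cover $\spec(A) \to \spec(B)$ to a finite \'etale cover of an honest finite-type $K$-scheme of dimension $\leq n$, and then apply the classical Artin--Grothendieck vanishing theorem over the algebraically closed field $K$.

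The first step is to replace $f_1, \ldots, f_m, g \in T_n$ by polynomial representatives in $\mathcal{O}_K[X_1, \ldots, X_n]$. By density of the polynomial ring in the Tate algebra, sufficiently small $\pi^N$-perturbations of the defining parameters preserve the rational subdomain $B = T_n\langle \frac{f_1,\ldots,f_m}{g}\rangle$ up to isomorphism. A further generic perturbation can be arranged so that $V(f_1, \ldots, f_m, g) \subset \mathbb{A}^n_K$ has dimension $\leq n - m - 1$ (equivalently, codimension $\geq m+1$). With this achieved, the algebraic model
\[ R_0 \;:=\; \mathcal{O}_K[X_1, \ldots, X_n, T_1, \ldots, T_m]/(gT_i - f_i) \]
satisfies $\dim R_0[1/\pi] \leq n$: on the open locus $\{g \neq 0\}$ the $T_i$ are determined by $T_i = f_i/g$, giving $\spec(K[X][1/g])$ of dimension $n$, while on $\{g = 0\}$ the equations force $f_i = 0$, yielding a locus contained in $V(f_1,\ldots,f_m,g) \times \mathbb{A}^m_K$ of dimension $\leq n-1$. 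By construction, the $\pi$-adic completion $\hat R_0$ equals $\mathcal{O}_K\langle X,T\rangle/(gT_i - f_i)$, so $\hat R_0[1/\pi] = B$.

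With $R_0$ in hand, the rest of the argument is formal. Let $R_0^h$ denote the $\pi$-henselization of $R_0$. By Gabber--Ramero (cited in the remark following Theorem~\ref{GabberFujiwarathm}), finite \'etale algebras over $B = \hat R_0[1/\pi]$ are equivalent to those over $R_0^h[1/\pi]$; writing $R_0^h = \varinjlim_\alpha R_0^\alpha$ as a filtered colimit of \'etale neighborhoods of $R_0$ along $V(\pi)$, a standard finite-presentation argument descends $A$ to a finite \'etale cover $A^\alpha$ of $R_0^\alpha[1/\pi]$ for some index $\alpha$. Applying Fujiwara--Gabber (Theorem~\ref{GabberFujiwarathm}) to the henselian pair $(R_0^h, \pi)$ with the torsion constructible sheaf arising from this cover, combined with continuity of \'etale cohomology along the filtered system, yields
\[ R\Gamma(\spec(A), \Lambda) \;\simeq\; \varinjlim_\alpha R\Gamma(\spec(A^\alpha), \Lambda). \]
Each $R_0^\alpha$ is \'etale over $R_0$ and hence of finite type over $\mathcal{O}_K$, so $A^\alpha$ is affine of finite type over the algebraically closed $K$ with Krull dimension $\leq n$. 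Classical Artin--Grothendieck then gives $R\Gamma(\spec(A^\alpha), \Lambda) \in \D(\mathbb{Z})^{\leq n}$ whenever the order of $\Lambda$ is invertible in $K$; in the residual case where $K$ has characteristic $p > 0$ and $\Lambda$ is $p$-torsion, Artin--Schreier theory gives the stronger bound $\D(\mathbb{Z})^{\leq 1} \subset \D(\mathbb{Z})^{\leq n}$ (the case $n = 0$ being trivial, as $A$ is then a finite product of copies of $K$). Since filtered colimits preserve the cohomological bound, the conclusion follows.

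The main obstacle is the perturbation step at the outset: one must verify that small $\pi^N$-perturbations of $f_i, g$ genuinely preserve the affinoid $B$ up to isomorphism (or at least up to identifying \'etale cohomology) while simultaneously achieving the codimension condition needed to control $\dim R_0[1/\pi]$. One could alternatively bypass this delicate step by invoking Raynaud's theory of admissible formal models to directly produce a finite-type $\mathcal{O}_K$-algebraic model of $\spec(B)$ of the correct dimension, at the cost of invoking more rigid-analytic machinery than the rest of the argument requires.
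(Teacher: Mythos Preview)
Your strategy is the same as the paper's: produce a finite-type $\mathcal{O}_K$-model $R_0$ with $\widehat{R_0}[1/\pi] \simeq B$, algebraize the finite \'etale cover to $R_0^h[1/\pi]$ via Gabber--Ramero, transfer cohomology via Fujiwara--Gabber, and finish with classical Artin--Grothendieck over $K$. The only real divergence is in how you bound $\dim R_0[1/\pi]$, and the paper's device there is precisely what dissolves the obstacle you flag at the end.

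Rather than perturbing the $f_i,g$ generically to force $V(f_1,\dots,f_m,g) \subset \mathbb{A}^n_K$ to have codimension $\geq m+1$, the paper simply \emph{enlarges} the set $\{f_1,\dots,f_m\}$ by adjoining a suitable power of $\pi$ (citing \cite[Prop.~3.9]{Hubercts}). Since $|g|$ is uniformly bounded below on the rational subset, the extra inequality $|\pi^N| \leq |g|$ is vacuous for $N \gg 0$, so the subset is unchanged; and $\pi^N$ is already a unit in $T_n$, so the unit-ideal condition persists. With, say, $f_1 = \pi^N$, the relation $gU_1 = \pi^N$ in $R_0$ makes $g$ invertible upon inverting $\pi$, whence $R_0[1/\pi] \simeq K[X_1,\dots,X_n,1/g]$ on the nose---visibly of dimension $n$, with no stratification and no Bertini-type genericity needed. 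Your perturbation route can be completed (the bad locus in the parameter space is Zariski closed and proper, and any $\pi$-adic ball over an infinite field meets its Zariski-open complement), but it is more delicate than necessary. One further point you should address: the cited form of Gabber--Ramero requires that the $\pi$-power torsion in $R_0$ be bounded, which the paper verifies separately via finite presentation of $R_0/(\pi^\infty\text{-torsion})$ over the ambient polynomial ring.
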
 
\begin{proof} 
Note that $\mathrm{min}( |f_1|, \dots, |f_m|, |g|)$ is uniformly bounded below on the unit ball
of $K^n$ since $f_1, \dots, f_m, g$ generate the unit ideal. 
Therefore, we can assume that $f_i, g \in K[X_1, \dots, X_n] \subset T_n$ (for
each $i$) without
changing the rational subset $\left\{x: |f_i(x)| \leq |g(x)|, \forall i\right\}$ in the
unit ball. 
Rescaling the $f_i$ and $g$ and possibly enlarging the set, we may assume furthermore that
they belong to $\mathcal{O}_K[X_1,
\dots, X_n]$ and that $f_1$ is a power of $\pi$, cf.~\cite[Prop.~3.9]{Hubercts}. 

Now  $R_0 = \O_K [X_1, \dots, X_n, U_1, \dots, U_m]/( gU_1 - f_1, \dots, g U_m - f_m)$ is a ring of finite type over
$\mathcal{O}_K$ such that $T_n \left \langle \frac{f_1, \dots, f_m}{g}
\right\rangle = \widehat{R_0}[1/\pi]$.  Note that $R_0[1/\pi] \simeq K[X_1,
\dots, X_n, 1/g]$: imposing the equation $gT_1 - f_1$ forces $g$ to become
invertible on inverting $\pi$ (as $f_1$ is a power of $\pi$), thus trivializing
the other equations. In particular, $R_0[1/\pi]$ has Krull dimension $n$.
Furthermore, we claim that the $\pi$-power torsion in $R_0$ is bounded. 
Let $J_0 \subset R_0$ be the ideal of $\pi$-power torsion. 
Then $R_0/J_0$ is a finitely presented $\O_K[X_1, \dots, X_n, U_1, \dots,
U_m]$-module thanks to \cite[Tag 053E]{stacks-project}, which forces $J_0$ to be
a finitely generated module. 

Let $R_0^h$ denote the henselization of $R_0$ along $\pi$. 
By  \cite[Proposition 5.4.54]{GabberRamero} (which is the non-noetherian
version of a result of Elkik \cite{Elkik}; it applies to the quotient of $R_0$
by its bounded $\pi$-power torsion) 
finite \'etale covers of $R_0^h[1/\pi]$ and $\widehat{R_0}[1/\pi]$ are identified. It follows that there exists a 
finite \'etale $R_0^h[1/\pi]$-algebra $A'$ such that
\[ A \simeq A' \otimes_{R_0^h[1/\pi]} \widehat{R_0}[1/\pi] .  \]
By the Fuijwara-Gabber theorem (Theorem~\ref{GabberFujiwarathm}), 
it follows that $A$ and $A'$ have the same \'etale cohomology.  But $A'$ is finite \'etale over $R_0^h[1/\pi]$.  Now $R_0^h$ is a filtered colimit of \'etale $R_0$-algebras, 
so $R_0^h[1/\pi]$ is a filtered colimit of \'etale $R_0[1/\pi] \simeq K[X_1, \dots, X_n, 1/g]$-algebras and therefore so is $A'$. 
In particular, it is a filtered colimit of finite type $K$-algebras of dimension $n$. 
We can apply classical Artin--Grothendieck vanishing (and the fact that \'etale cohomology of
rings commutes with filtered colimits) to conclude that $R \Gamma( \spec(A),
\Lambda) = R \Gamma( \spec(A'), \Lambda) \in \D(\mathbb{Z})^{\leq n}$,
as desired. 
\end{proof} 

Next, we prove 
Theorem~\ref{rigidartin} in the case of a constant sheaf. 
\begin{proposition} 
\label{rigidartinconstant}
Let $R$ be a topologically finite type $K$-algebra of dimension $d$. Then: 
\begin{enumerate}
\item For all torsion abelian groups $\Lambda$, we have $R \Gamma( \spec(R),
\Lambda) \in \D (\mathbb{Z})^{\leq d+1}$. 
\item For all $\ell$-power torsion abelian groups $\Lambda$, we have $R \Gamma( \spec(R),
\Lambda) \in \D (\mathbb{Z})^{\leq d}$. 
\end{enumerate}
\end{proposition}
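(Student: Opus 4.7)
The proof is by induction on $d = \dim R$. The base case $d = 0$ is immediate: since $K$ is algebraically closed, $\spec R$ is a finite disjoint union of spectra of Artinian local $K$-algebras with residue field $K$, so $R\Gamma(\spec R, \Lambda)$ concentrates in degree $0$.

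For the inductive step, let $R$ be topologically finite type of dimension $d \geq 1$. After passing to $R_{\mathrm{red}}$ (which does not affect \'etale cohomology), Noether normalization for affinoid algebras produces a finite injective $K$-algebra map $\phi \cl T_d \hookrightarrow R$, which can be arranged so that the induced extension of total quotient rings is generically separable. The primitive element theorem then yields $g \in T_d$ and $r \in R$ such that $R[\phi(g)^{-1}] \cong T_d[g^{-1}][Y]/(P(Y))$ for a monic polynomial $P$. After rescaling so all data are integral, for each sufficiently large integer $s$ the rational subset $\spec R\langle \pi^s/\phi(g)\rangle$ identifies with a Zariski closed subscheme of a rational subset of $T_{d+1}$ cut out by the single monic equation $P(Y) = 0$. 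Applying \Cref{qcbase} to rewrite cohomology modulo this single equation as a filtered colimit of cohomologies of rational subsets of $T_{d+1}$, and then \Cref{specialcaseartin} to bound each such term, one obtains $R\Gamma(\spec R\langle \pi^s/\phi(g)\rangle, \Lambda) \in \D(\mathbb{Z})^{\leq d+1}$. For part (2), further inverting the discriminant of $P$ makes the extension finite \'etale over a rational subset of $T_d$, so \Cref{specialcaseartin} gives the sharper bound $\D(\mathbb{Z})^{\leq d}$ directly.

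To deduce the bound for $\spec R$ itself, apply the formal glueing square (\Cref{arcarithm}) to the pair $(R, \phi(g))$, identifying $R\Gamma(\spec R, \Lambda)$ with the pullback of $R\Gamma(\spec \widehat R_{\phi(g)}, \Lambda)$ and $R\Gamma(\spec R[\phi(g)^{-1}], \Lambda)$ over $R\Gamma(\spec \widehat R_{\phi(g)}[\phi(g)^{-1}], \Lambda)$. By the affine analog of proper base change (\Cref{GabberLocal}) applied to the henselian pair $(\widehat R_{\phi(g)}, \phi(g))$, the first vertex equals $R\Gamma(\spec R/\phi(g), \Lambda)$, which by the inductive hypothesis lies in $\D(\mathbb{Z})^{\leq d}$ (part 1) or $\D(\mathbb{Z})^{\leq d-1}$ (part 2) since $\dim R/\phi(g) \leq d - 1$. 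The other two vertices are identified with filtered colimits of the bounded objects from the previous paragraph (since \'etale cohomology commutes with filtered colimits of rings), which gives them the bound $d+1$ (resp.~$d$); the long exact sequence associated to the pullback then yields the required overall bound.

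The main technical obstacles are (a) rigorously identifying $\spec R\langle \pi^s/\phi(g)\rangle$ with a rational subset of $T_{d+1}/(P)$ compatibly with the integral structures on both sides, which requires careful rescaling of the primitive element and the coefficients of $P$, and (b) obtaining a sharp enough bound on the ``intersection'' term $R\Gamma(\spec \widehat R_{\phi(g)}[\phi(g)^{-1}], \Lambda)$ to prevent losing a degree in the pullback---here one uses that $\widehat R_{\phi(g)}[\phi(g)^{-1}]$ has Krull dimension $d-1$ together with the Fujiwara--Gabber comparison (\Cref{GabberFujiwarathm}) identifying its cohomology with that of the corresponding rigid tubular neighborhood of $V(\phi(g))$.
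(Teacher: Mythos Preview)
Your inductive setup and use of Noether normalization are on the right track, but there is a genuine gap in the glueing step. The formal glueing square for $(R,\phi(g))$ has corners $\spec R[\phi(g)^{-1}]$ and $\spec \widehat R_{\phi(g)}[\phi(g)^{-1}]$, and neither of these is a filtered colimit of the rings $R\langle \pi^s/\phi(g)\rangle$ you bounded earlier. Indeed the affinoid subdomains $\{|\phi(g)|\ge|\pi^s|\}$ grow as $s\to\infty$, so the rings $R\langle \pi^s/\phi(g)\rangle$ form an \emph{inverse} system; their direct limit has nothing to do with the algebraic localization $R[\phi(g)^{-1}]$, and no cohomological bound on the latter is available. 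The paper avoids this by using the \emph{analytic} Mayer--Vietoris square (\Cref{analyticdescent}) for the rational cover $\{|f|\ge|\pi^r|\}\cup\{|f|\le|\pi^r|\}$, where $f\in T_d$ is chosen so that $T_d[1/f]\to R[1/f]$ is finite \'etale (arranged in characteristic $p$ by a Frobenius twist). All four corners are then topologically finite type $K$-algebras: two of them are finite \'etale over rational subsets of $T_d$ and hence lie in $\D^{\le d}$ by \Cref{specialcaseartin}, while the remaining corner $R\langle f/\pi^r\rangle$ is linked to $R/f$ by the genuine direct limit of \Cref{qcbase}. Note also that your $T_{d+1}/(P)$ detour only gives $\D^{\le d+1}$ for the open pieces, one degree too weak for the induction to close even in part~(1).

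For part~(2), the sharpening from $d+1$ to $d$ cannot be obtained by running the same induction with better bounds on the open pieces: even with $R\langle \pi^r/f\rangle$ and the overlap in $\D^{\le d}$, the connecting map $H^d(\text{overlap})\to H^{d+1}(R)$ in the Mayer--Vietoris sequence is uncontrolled, so one only concludes $R\Gamma(\spec R,\Lambda)\in\D^{\le d+1}$. The paper instead uses a tensor power trick: apply part~(1) to $A\hat\otimes_K A$ (which has dimension $2d$) to get $R\Gamma(\spec(A\hat\otimes_K A),\mathbb F_\ell)\in\D^{\le 2d+1}$, and then invoke the K\"unneth formula (\Cref{Kunnethaffinoid}) to identify this with $R\Gamma(\spec A,\mathbb F_\ell)^{\otimes 2}$, forcing $R\Gamma(\spec A,\mathbb F_\ell)\in\D^{\le d}$. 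Your proposal omits this step entirely.
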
 
\begin{proof}
Consider the following assertion:

\begin{itemize}[leftmargin=0.8in, rightmargin=0.8in]
\item[$(\mathrm{Art}_d):$] For all topologically finite type $K$-algebras $S$ of dimension $d$ and all
torsion abelian groups $\Lambda$, we have $R \Gamma(\spec(S), \Lambda)  \in \D(\mathbb{Z})^{\leq d + 1}$.
\end{itemize}

We shall prove $(\mathrm{Art}_d)$ by induction on $d$; this will imply part (1) of the Proposition.  The case $d  = 0$ is trivial.  Suppose that we know $(\mathrm{Art}_{d-1})$.  Fix a $d$-dimensional topologically finite type $K$-algebra $R$. 
Without loss of generality, we may assume that $R$ is reduced. 
By Noether normalization \cite[Sec.~6.1.2, Cor.~2]{BGR}, there is a finite map 
\[ T_d \to R.  \]
Since $T_d$ is an integral domain, if $\mathrm{char}(K) = 0$, then  there exists
an $f \in T_d$ such
that $R[1/f]$ is finite \'etale over $T_d[1/f]$. 

If $\mathrm{char}(K) = p> 0$, 
we have to work slightly harder since the map $T_d \to R$ given by Noether
normalization may have a nonreduced geometric generic fiber. 
To fix this, for each $t$, 
consider the $t$th iterated Frobenius $\phi^t\colon T_d \to T_d$ and the new
$T_d$-algebra $R_t = (R
\otimes_{T_d, \phi^t} T_d)_{\mathrm{red}}$. 
Each of these $T_d$-algebras $R_t$ comes with a universal homeomorphism $R
\to R_t$, so it suffices to prove the result 
for any $R_t$. But for $t \gg 0$, the map $T_d \to R_t$ has reduced geometric
generic fiber. 
Therefore, up to replacing $R$ by some $R_t$, we find that there exists $f \in
T_d$ such that $T_d[1/f] \to R[1/f]$ is finite \'etale.

For each $r  > 0$, we consider the following rings: 
\begin{enumerate}

\item   $R_r^{(1)} = R \hat{\otimes}_{T_d} \left(T_d\left \langle
\pi^r/f\right\rangle\right) = R\left \langle \frac{\pi^r}{f}\right\rangle$. 

\item $R_r^{(2)} = R \hat{\otimes}_{T_d} T_d\left \langle f/\pi^r\right\rangle = R \left \langle \frac{f}{\pi^r}\right\rangle$. 

\item  $R_r^{(3)} = R \hat{\otimes}_{T_d} T_d\left \langle \pi^r/f,  f/\pi^r\right\rangle = R\left \langle \frac{f}{\pi^r}, \frac{\pi^r}{f}\right\rangle$. 

\end{enumerate}

We have a pullback square
from Proposition~\ref{analyticdescent}, 
\[ \xymatrix{
R \Gamma( \spec(R) , \Lambda) \ar[d]  \ar[r] &  R \Gamma( \spec(R_r^{(1)}),
\Lambda) \ar[d]  \\
R \Gamma( \spec(R_r^{(2)}), \Lambda) \ar[r] &  R \Gamma( \spec(R_r^{(3)}),
\Lambda)
}.\]
Since $R\left \langle \pi^r/f\right\rangle$ is finite \'etale over $T_d\left
\langle \pi^r/f\right\rangle$ (note that the completed tensor products in the definitions
of $R_r^{(i)}, i = 1, 2, 3$ can be replaced by tensor products), and similarly $R\left \langle \pi^r/f,
f/\pi^r\right\rangle$ is finite \'etale over $T_d \left \langle \pi^r/f,
f/\pi^r\right\rangle$, by Lemma~\ref{specialcaseartin} we find that the top right and bottom right
corners belong to $\D(\mathbb{Z})^{\leq d}$, for any $r$. 

Suppose that $x \in H^{j}( \spec(R), \Lambda)$ for $j \geq d + 2$ is nonzero. Using the long
exact sequence in cohomology from the above fiber square, we find that 
$x$ must map to a nonzero class in $H^j( \spec( R \left \langle
f/\pi^r\right\rangle, \Lambda)$ for each $r > 0$. This contradicts the fact that
the fact that the colimit of these groups is zero by the inductive hypothesis
and Proposition~\ref{qcbase}. 
This completes the proof of the claim $(\mathrm{Art}_d)$ and, by induction, the
first half of the theorem. 

For the second half of the theorem, we may assume that $\Lambda = \mathbb{F}_\ell$
and use the ``tensor power trick.'' 
For any topologically finite type $K$-algebra $A$ which is $d$-dimensional, we have seen that 
$R \Gamma( \spec(A), \mathbb{F}_\ell) \in \D(\mathbb{F}_\ell)^{\leq d+1}$. 
Since $A \hat{\otimes}_K A$ is $2d$-dimensional, we get that 
$R \Gamma ( \spec( A \hat{\otimes}_K A) , \mathbb{F}_\ell)  \in \D(
\mathbb{F}_\ell)^{\leq 2d + 1}$ again by the first half of the theorem. Using the K\"unneth formula
(Proposition~\ref{Kunnethaffinoid}), we have 
$R \Gamma ( \spec( A \hat{\otimes}_K A) , \mathbb{F}_\ell)  \simeq 
R \Gamma ( \spec( A ) , \mathbb{F}_\ell)^{\otimes 2}$, 
and
this forces $R \Gamma( \spec(A), \mathbb{F}_\ell) \in \D(\mathbb{F}_\ell)^{\leq d}$. 
\end{proof}

We now explain the proof of the full result. 
\begin{proof}[Proof of Theorem~\ref{rigidartin}] 
For simplicity we treat the $\ell$-power torsion case (i.e., the second half of
the theorem); the other case is analogous. By a filtering $\sF$ in terms of its constructible subsheaves \cite[Tag 03SA]{stacks-project} and in terms of the $\ell$-adic filtration, it suffices to prove the following statement: 

\begin{itemize}[leftmargin=0.8in, rightmargin=0.8in]
\item[$(\ast_d):$] If $A$ is a topologically finite type $K$-algebra of dimension $d$ and $\sF$ is a constructible \'etale $\mathbb{F}_\ell$-sheaf on $X := \spec(A)$, then $H^i(X, \sF) = 0$ for $i > d$. 
\end{itemize}

We already know $(\ast_d)$ for all $d$ when $\sF$ is constant
(\Cref{rigidartinconstant}). We shall reduce the general case to this one by a
standard devissage procedure. Note that if $A$ is a topologically finite type $K$-algebra of dimension
$d$, then any finite $A$-algebra (such as a quotient) is also a topologically finite type $K$-algebra
and has dimension $\leq d$. Indeed, this follows from the analogous statement for topologically finite type 
$\mathcal{O}_K$-algebras: if $R_0$ is topologically finite type over $\mathcal{O}_K$ and $S_0/R_0$ is
finite, then $S_0$ is a quotient of some $\mathcal{O}_K\left \langle
U_1, \dots, U_m\right\rangle$ and hence topologically finite type as desired.

\newcommand{\sL}{\mathcal{L}}
Let us prove $(\ast_d)$ by induction on $d$. When $d=0$, the statement is clear
as $A$ is a finite product of copies of $K$. Assume that $(\ast_{d-1})$ holds
and fix $X := \spec(A)$ and $\sF$ as in $(\ast_d)$. There exists a dense open
$j \colon U \hookrightarrow X$ such that $\sL = \sF|_U$ is locally constant. As $j$ is
dense open, the cokernel of the canonical injective map $j_! \sL \to \sF$ is
supported on some closed subset of dimension $< d$. By induction and the long
exact sequence, we may thus assume $\sF = j_! \sL$. Moreover, as $\sF$
decomposes as a finite direct sum of its restrictions to each connected
component of $U$, we may replace $\sF$ by a direct summand to assume that $U$ is
connected (though it might no longer be dense in $X$). By \cite[Tag
0A3R]{stacks-project}, there exists a finite \'etale morphism $\pi\colon V \to U$ of
degree prime to $\ell$ such that $\pi^*(\sL)$ is an iterated extension of
constant sheaves. As $\pi$ has degree prime to $\ell$, the ``m\'ethod de la
trace'' \cite[Tag 03SH]{stacks-project} coupled with the assumption on $\pi^*
\sL$ shows that we may assume that $\sL = \pi_* \mathbb{F}_\ell$. Zariski's main theorem then gives a commutative diagram
\[ \xymatrix{ V \ar[r]^-{j'} \ar[d]^-{\pi} & Y \ar[d]^-{\overline{\pi}} \\
		U \ar[r]^-{j} & X}\]
with $j'$ an open immersion and $\overline{\pi}$ finite. In particular, $Y$ is
the spectrum of an algebra topologically finite type  over $K$ of dimension $\leq d$.  By replacing $Y$ with the closure of $V$ if necessary, we may also assume $j'$ has dense image. Now $H^i(X, j_! \pi_* \mathbb{F}_\ell) \simeq H^i(Y, j'_! \mathbb{F}_\ell)$ as $\overline{\pi}_! \simeq \overline{\pi}_*$ and similarly for $\pi$ (and because these functors have no higher derived functors). We are thus reduced to showing that $H^i(Y, j'_! \mathbb{F}_\ell) = 0$ for $i > d$.  If $i\colon Z \to Y$ is the complementary embedding, then we have a short exact sequence
\[ 0 \to j'_! ( \mathbb{F}_\ell) \to \mathbb{F}_\ell \to i_* (\mathbb{F}_\ell) \to 0\]
of sheaves on $Y$. Taking the long exact sequence and using the result for constant sheaves (Proposition~\ref{rigidartinconstant}) gives the claim since $\dim(Z) < d$.
\end{proof}

\newpage

\bibliographystyle{amsalpha}
\bibliography{affine} 

\newcommand{\etalchar}[1]{$^{#1}$}
\providecommand{\bysame}{\leavevmode\hbox to3em{\hrulefill}\thinspace}
\providecommand{\MR}{\relax\ifhmode\unskip\space\fi MR }
\providecommand{\MRhref}[2]{%
  \href{http://www.ams.org/mathscinet-getitem?mr=#1}{#2}
}
\providecommand{\href}[2]{#2}
\begin{thebibliography}{EHIK20}

\bibitem[AHW17]{AHW}
Aravind Asok, Marc Hoyois, and Matthias Wendt, \emph{Affine representability
  results in {$\Bbb A^1$}-homotopy theory, {I}: vector bundles}, Duke Math. J.
  \textbf{166} (2017), no.~10, 1923--1953. \MR{3679884}

\bibitem[BGR84]{BGR}
S.~Bosch, U.~G\"untzer, and R.~Remmert, \emph{Non-{A}rchimedean analysis},
  Grundlehren der Mathematischen Wissenschaften [Fundamental Principles of
  Mathematical Sciences], vol. 261, Springer-Verlag, Berlin, 1984, A systematic
  approach to rigid analytic geometry. \MR{746961}

\bibitem[Bha16]{B}
Bhargav Bhatt, \emph{Algebraization and {T}annaka duality}, Camb. J. Math.
  \textbf{4} (2016), no.~4, 403--461. \MR{3572635}

\bibitem[BL95]{BeauvilleLaszlo}
Arnaud Beauville and Yves Laszlo, \emph{Un lemme de descente}, C. R. Acad. Sci.
  Paris S\'er. I Math. \textbf{320} (1995), no.~3, 335--340. \MR{1320381}

\bibitem[Bou98]{BourbakiCA}
Nicolas Bourbaki, \emph{Commutative algebra. {C}hapters 1--7}, Elements of
  Mathematics (Berlin), Springer-Verlag, Berlin, 1998, Translated from the
  French, Reprint of the 1989 English translation. \MR{1727221}

\bibitem[BS17]{BhattScholzeWitt}
Bhargav Bhatt and Peter Scholze, \emph{Projectivity of the {W}itt vector affine
  {G}rassmannian}, Invent. Math. \textbf{209} (2017), no.~2, 329--423.
  \MR{3674218}

\bibitem[CD16]{CD}
Denis-Charles Cisinski and Fr\'ed\'eric D\'eglise, \emph{{\'E}tale motives},
  Compos. Math. \textbf{152} (2016), no.~3, 556--666. \MR{3477640}

\bibitem[CK90]{CKmodel}
C.~C. Chang and H.~J. Keisler, \emph{Model theory}, third ed., Studies in Logic
  and the Foundations of Mathematics, vol.~73, North-Holland Publishing Co.,
  Amsterdam, 1990. \MR{1059055}

\bibitem[Cn06]{Cortinas}
Guillermo Corti\~nas, \emph{The obstruction to excision in {$K$}-theory and in
  cyclic homology}, Invent. Math. \textbf{164} (2006), no.~1, 143--173.
  \MR{2207785}

\bibitem[Del77]{finitude}
P.~Deligne, \emph{Th\'eor\`emes de finitude en cohomologie {$\ell$}-adique},
  Cohomologie \'etale, Lecture Notes in Math., vol. 569, Springer, Berlin,
  1977, pp.~233--261. \MR{3727439}

\bibitem[Del90]{Deligne90}
\bysame, \emph{Cat\'{e}gories tannakiennes}, The {G}rothendieck {F}estschrift,
  {V}ol. {II}, Progr. Math., vol.~87, Birkh\"{a}user Boston, Boston, MA, 1990,
  pp.~111--195. \MR{1106898}

\bibitem[dJ96]{dJ96}
A.~J. de~Jong, \emph{Smoothness, semi-stability and alterations}, Inst. Hautes
  \'{E}tudes Sci. Publ. Math. (1996), no.~83, 51--93. \MR{1423020}

\bibitem[EHIK20]{EHIK}
Elden Elmanto, Marc Hoyois, Ryomei Iwasa, and Shane Kelly, \emph{Milnor
  excision for motivic spectra}, Available at arXiv:2002.11647.

\bibitem[Elk73]{Elkik}
Ren\'ee Elkik, \emph{Solutions d'\'equations \`a coefficients dans un anneau
  hens\'elien}, Ann. Sci. \'Ecole Norm. Sup. (4) \textbf{6} (1973), 553--603
  (1974). \MR{0345966}

\bibitem[Fuj95]{Fujiwara}
Kazuhiro Fujiwara, \emph{Theory of tubular neighborhood in \'etale topology},
  Duke Math. J. \textbf{80} (1995), no.~1, 15--57. \MR{1360610}

\bibitem[G{\etalchar{+}}72]{SGA4}
Alexander Grothendieck et~al., \emph{Th{\'e}orie des topos et cohomologie
  {\'e}tale des sch{\'e}mas {(SGA 4)}}, Lecture Notes in Mathematics, vol. 270,
  Springer-Verlag, 1972.

\bibitem[Gab92]{Gabberpair}
Ofer Gabber, \emph{{$K$}-theory of {H}enselian local rings and {H}enselian
  pairs}, Algebraic {$K$}-theory, commutative algebra, and algebraic geometry
  ({S}anta {M}argherita {L}igure, 1989), Contemp. Math., vol. 126, Amer. Math.
  Soc., Providence, RI, 1992, pp.~59--70. \MR{1156502}

\bibitem[Gab94]{Gabber}
\bysame, \emph{Affine analog of the proper base change theorem}, Israel J.
  Math. \textbf{87} (1994), no.~1-3, 325--335. \MR{1286833}

\bibitem[GH06]{GHexc}
Thomas Geisser and Lars Hesselholt, \emph{Bi-relative algebraic {$K$}-theory
  and topological cyclic homology}, Invent. Math. \textbf{166} (2006), no.~2,
  359--395. \MR{2249803}

\bibitem[Gir64]{Gir64}
Jean Giraud, \emph{M\'{e}thode de la descente}, Bull. Soc. Math. France
  M\'{e}m. \textbf{2} (1964), viii+150. \MR{190142}

\bibitem[GJ76]{GJ60}
Leonard Gillman and Meyer Jerison, \emph{Rings of continuous functions},
  Springer-Verlag, New York-Heidelberg, 1976, Reprint of the 1960 edition,
  Graduate Texts in Mathematics, No. 43. \MR{0407579}

\bibitem[GK15]{GabberKelly}
Ofer Gabber and Shane Kelly, \emph{Points in algebraic geometry}, J. Pure Appl.
  Algebra \textbf{219} (2015), no.~10, 4667--4680. \MR{3346512}

\bibitem[GL01]{GooLi}
T.~G. Goodwillie and S.~Lichtenbaum, \emph{A cohomological bound for the
  {$h$}-topology}, Amer. J. Math. \textbf{123} (2001), no.~3, 425--443.
  \MR{1833147}

\bibitem[GO08]{GO}
O.~Gabber and F.~Orgogozo, \emph{Sur la {$p$}-dimension des corps}, Invent.
  Math. \textbf{174} (2008), no.~1, 47--80. \MR{2430976}

\bibitem[GR03]{GabberRamero}
Ofer Gabber and Lorenzo Ramero, \emph{Almost ring theory}, Lecture Notes in
  Mathematics, vol. 1800, Springer-Verlag, Berlin, 2003. \MR{2004652}

\bibitem[Gro67]{EGAIV}
A.~Grothendieck, \emph{\'{E}l\'{e}ments de g\'{e}om\'{e}trie alg\'{e}brique.
  {IV}. \'{E}tude locale des sch\'{e}mas et des morphismes de sch\'{e}mas
  {IV}}, Inst. Hautes \'{E}tudes Sci. Publ. Math. (1967), no.~32, 361.
  \MR{238860}

\bibitem[Han20]{HansenArtin}
David Hansen, \emph{Vanishing and comparison theorems in rigid analytic
  geometry}, Compos. Math. \textbf{156} (2020), no.~2, 299--324. \MR{4045974}

\bibitem[HJ14]{HJ14}
Annette Huber and Clemens J\"{o}rder, \emph{Differential forms in the
  h-topology}, Algebr. Geom. \textbf{1} (2014), no.~4, 449--478. \MR{3272910}

\bibitem[HK18]{HK18}
Annette Huber and Shane Kelly, \emph{Differential forms in positive
  characteristic, {II}: cdh-descent via functorial {R}iemann-{Z}ariski spaces},
  Algebra Number Theory \textbf{12} (2018), no.~3, 649--692. \MR{3815309}

\bibitem[HR]{HallRydh}
Jack Hall and David Rydh, \emph{{M}ayer-{V}ietoris squares in algebraic
  geometry}, Available at \url{https://arxiv.org/abs/1606.08517}.

\bibitem[Hub93a]{Hubercts}
R.~Huber, \emph{Continuous valuations}, Math. Z. \textbf{212} (1993), no.~3,
  455--477. \MR{1207303}

\bibitem[Hub93b]{Huber}
Roland Huber, \emph{{\'E}tale cohomology of {H}enselian rings and cohomology of
  abstract {R}iemann surfaces of fields}, Math. Ann. \textbf{295} (1993),
  no.~4, 703--708. \MR{1214956}

\bibitem[Hub96]{HuberBook}
\bysame, \emph{{\'E}tale cohomology of rigid analytic varieties and adic
  spaces}, Aspects of Mathematics, E30, Friedr. Vieweg \& Sohn, Braunschweig,
  1996. \MR{1734903}

\bibitem[ILO14]{Travaux}
Luc Illusie, Yves Laszlo, and Fabrice Orgogozo (eds.), \emph{Travaux de
  {G}abber sur l'uniformisation locale et la cohomologie \'etale des sch\'emas
  quasi-excellents}, Soci\'et\'e Math\'ematique de France, Paris, 2014,
  S\'eminaire \`a l'\'Ecole Polytechnique 2006--2008. [Seminar of the
  Polytechnic School 2006--2008], With the collaboration of Fr\'ed\'eric
  D\'eglise, Alban Moreau, Vincent Pilloni, Michel Raynaud, Jo\"el Riou,
  Beno\^\i t Stroh, Michael Temkin and Weizhe Zheng, Ast\'erisque No. 363-364
  (2014) (2014). \MR{3309086}

\bibitem[Jar97]{Jardine}
J.~F. Jardine, \emph{Ultraproducts and the discrete cohomology of algebraic
  groups}, Algebraic {$K$}-theory ({T}oronto, {ON}, 1996), Fields Inst.
  Commun., vol.~16, Amer. Math. Soc., Providence, RI, 1997, pp.~111--129.
  \MR{1466973}

\bibitem[Joh86]{Johnstone}
Peter~T. Johnstone, \emph{Stone spaces}, Cambridge Studies in Advanced
  Mathematics, vol.~3, Cambridge University Press, Cambridge, 1986, Reprint of
  the 1982 edition. \MR{861951}

\bibitem[Kap74]{Kaplansky}
Irving Kaplansky, \emph{Commutative rings}, revised ed., The University of
  Chicago Press, Chicago, Ill.-London, 1974. \MR{0345945}

\bibitem[LO08]{LaszloOlsson1}
Yves Laszlo and Martin Olsson, \emph{The six operations for sheaves on {A}rtin
  stacks. {I}. {F}inite coefficients}, Publ. Math. Inst. Hautes \'Etudes Sci.
  (2008), no.~107, 109--168. \MR{2434692}

\bibitem[LT19]{LT19}
Markus Land and Georg Tamme, \emph{On the {$K$}-theory of pullbacks}, Ann. of
  Math. (2) \textbf{190} (2019), no.~3, 877--930. \MR{4024564}

\bibitem[Lura]{HA}
Jacob Lurie, \emph{Higher algebra}.

\bibitem[Lurb]{SAG}
\bysame, \emph{Spectral algebraic geometry}.

\bibitem[Lur09]{HTT}
Jacob Lurie, \emph{Higher topos theory}, Annals of Mathematics Studies, vol.
  170, Princeton University Press, Princeton, NJ, 2009. \MR{2522659}

\bibitem[LZ17]{LZ}
Yifeng Liu and Weizhe Zheng, \emph{Enhanced six operations and base change
  theorem for higher {A}rtin stacks}, Available at
  \url{https://arxiv.org/abs/1211.5948}.

\bibitem[Mat16]{MathewGalois}
Akhil Mathew, \emph{The {G}alois group of a stable homotopy theory}, Adv. Math.
  \textbf{291} (2016), 403--541. \MR{3459022}

\bibitem[Mil71]{Milnor}
John Milnor, \emph{Introduction to algebraic {$K$}-theory}, Princeton
  University Press, Princeton, N.J.; University of Tokyo Press, Tokyo, 1971,
  Annals of Mathematics Studies, No. 72. \MR{0349811}

\bibitem[Mil80]{Mil80}
James~S. Milne, \emph{\'{E}tale cohomology}, Princeton Mathematical Series,
  vol.~33, Princeton University Press, Princeton, N.J., 1980. \MR{559531}

\bibitem[Pic86]{Pic}
Gabriel Picavet, \emph{Submersion et descente}, J. Algebra \textbf{103} (1986),
  no.~2, 527--591. \MR{864429}

\bibitem[Ryd]{RydhII}
David Rydh, \emph{Submersions and effective descent of {\'e}tale morphisms
  {II}}, Forthcoming.

\bibitem[Ryd10]{Rydh}
\bysame, \emph{Submersions and effective descent of \'etale morphisms}, Bull.
  Soc. Math. France \textbf{138} (2010), no.~2, 181--230. \MR{2679038}

\bibitem[Ryd15]{RNoeth}
\bysame, \emph{Noetherian approximation of algebraic spaces and stacks}, J.
  Algebra \textbf{422} (2015), 105--147. \MR{3272071}

\bibitem[Sar17]{SarussiToset}
Shai Sarussi, \emph{Totally ordered sets and the prime spectra of rings}, Comm.
  Algebra \textbf{45} (2017), no.~1, 411--419. \MR{3556583}

\bibitem[Sch]{ScholzeDiamonds}
Peter Scholze, \emph{{\'E}tale cohomology of diamonds}, Available at
  \url{https://arxiv.org/abs/1709.07343}.

\bibitem[Sch13a]{ScholzeRigid}
\bysame, \emph{{$p$}-adic {H}odge theory for rigid-analytic varieties}, Forum
  Math. Pi \textbf{1} (2013), e1, 77. \MR{3090230}

\bibitem[Sch13b]{Scholzesurvey}
\bysame, \emph{Perfectoid spaces: a survey}, Current developments in
  mathematics 2012, Int. Press, Somerville, MA, 2013, pp.~193--227.
  \MR{3204346}

\bibitem[{Sta}18]{stacks-project}
The {Stacks {P}roject authors}, \emph{The {S}tacks {P}roject},
  \url{http://stacks.math.columbia.edu}, 2018.

\bibitem[SW92]{SW}
Andrei~A. Suslin and Mariusz Wodzicki, \emph{Excision in algebraic
  {$K$}-theory}, Ann. of Math. (2) \textbf{136} (1992), no.~1, 51--122.
  \MR{1173926}

\bibitem[TT90]{TT}
R.~W. Thomason and Thomas Trobaugh, \emph{Higher algebraic {$K$}-theory of
  schemes and of derived categories}, The {G}rothendieck {F}estschrift, {V}ol.\
  {III}, Progr. Math., vol.~88, Birkh\"auser Boston, Boston, MA, 1990,
  pp.~247--435. \MR{1106918}

\bibitem[Voe96]{VHomology}
V.~Voevodsky, \emph{Homology of schemes}, Selecta Math. (N.S.) \textbf{2}
  (1996), no.~1, 111--153. \MR{1403354}

\bibitem[Wat75]{Waterhouse}
William~C. Waterhouse, \emph{Basically bounded functors and flat sheaves},
  Pacific J. Math. \textbf{57} (1975), no.~2, 597--610. \MR{396578}

\bibitem[Zar40]{Zar40}
Oscar Zariski, \emph{Local uniformization on algebraic varieties}, Ann. of
  Math. (2) \textbf{41} (1940), 852--896. \MR{2864}

\end{thebibliography}

\end{document}